\newskip\stdskip
\newcommand{\C}{\mathbb C}
\newcommand{\R}{\mathbb R}
\newcommand{\Z}{\mathbb Z}
\newcommand{\N}{\mathbb N}
\newcommand{\F}{\mathbb F}
\newcommand{\op}{\operatorname}
\newcommand{\bs}{\boldsymbol}
\newcommand{\fr}{\mathfrak{r}}
\newtheorem{thm}{Theorem}[section]
\newtheorem{lemma}[thm]{Lemma}
\newtheorem{cor}[thm]{Corollary}
\newtheorem{prop}[thm]{Proposition}
\theoremstyle{definition}
\newtheorem{dfn}[thm]{Definition}
\newtheorem{rem}[thm]{Remark}
\newtheorem{ex}[thm]{Example}
\begin{document}

\author[B. Chantraine]{Baptiste Chantraine}
\author[G. Dimitroglou Rizell]{Georgios Dimitroglou Rizell}
\author[P. Ghiggini]{Paolo Ghiggini}
\author[R. Golovko]{Roman Golovko}

\address{Universit\'e de Nantes, France.}
\email{baptiste.chantraine@univ-nantes.fr}

\address{Uppsala University, Sweden}
\email{georgios.dimitroglou@math.uu.se}

\address{Universit\'e de Nantes, France.}
\email{paolo.ghiggini@univ-nantes.fr}

\address{Charles University, Czech Republic.}
\email{golovko@karlin.mff.cuni.cz}

\subjclass[2010]{Primary 53D37; Secondary 53D40, 57R17.}

\keywords{Weinstein manifold, wrapped Fukaya category, generation, Lagrangian cocore
planes}

\title[The wrapped Fukaya category of Weinstein sectors]{Geometric generation of the
wrapped Fukaya category of Weinstein manifolds and sectors\\[1ex] Engendrement géométrique des catégories de Fukaya enroulées des variétés et secteurs de Weinstein}
\maketitle

\begin{abstract}
We prove that the wrapped Fukaya category of any $2n$-dimensional Weinstein manifold
(or, more generally, Weinstein sector) $W$
is generated by the unstable manifolds of the index $n$ critical points of its Liouville vector
field. Our proof is geometric in nature, relying on a surgery formula for Floer cohomology and
the fairly simple observation that Floer cohomology vanishes for Lagrangian submanifolds that
can be disjoined from the isotropic skeleton of the Weinstein manifold. Note that we do not need any additional assumptions on this skeleton. By applying our generation result to the diagonal in the product
$W \times W$, we obtain as a corollary that the open-closed map from the Hochschild
homology of the wrapped Fukaya category of $W$ to  its  symplectic cohomology is an
isomorphism, proving a conjecture of Seidel. We work mainly in the ``linear
setup'' for the wrapped Fukaya category, but we also extend the proofs to the ``quadratic'' and ``localisation'' setup. This is necessary for dealing with Weinstein sectors and for the applications.
\end{abstract}
\renewcommand{\abstractname}{Résumé}
\begin{abstract}
  Nous démontrons que la catégorie de Fukaya enroulée d'une variété (ou plus généralement d'un secteur) de Weinstein $W$ de dimension $2n$ est engendrée par les variétés instables des points critiques d'indice $n$ de son champ de Liouville. Notre preuve, de nature géométrique, repose sur une formule pour la cohomologie de Floer d'une chirurgie et sur l'observation relativement simple que la cohomologie de Floer d'une lagrangienne disjointe du squelette isotrope de la variété de Weinstein s'annule (aucune condition supplémentaire n'est demandée au squelette). En appliquant le critère d'engendrement au produit $W\times W$ nous obtenons en corollaire que l'application ouverte-fermée de la cohomologie de Hochschild de la catégorie de Fukaya enroulée de $W$ vers sa cohomologie symplectique est un isomorphisme, prouvant une conjecture de Seidel. Nous travaillons principalement avec la définition ``linéaire'' de la catégorie de Fukaya enroulée mais nous étendons les preuves aux définitions ``quadratique'' et ``par localisation''. Ces modifications sont nécessaires pour traiter secteurs de Weinstein et pour certaines applications.
\end{abstract}
\newpage
\setcounter{tocdepth}{1}
\renewcommand{\baselinestretch}{0}\normalsize
\tableofcontents
\renewcommand{\baselinestretch}{1}\normalsize

\section{Introduction}

The {\em wrapped Fukaya category} is an $A_\infty$-category associated to any Liouville manifold.
Its objects are exact Lagrangian submanifolds which are either compact or cylindrical at
infinity, possibly equipped with extra structure, the morphism spaces are wrapped Floer
chain complexes, and the $A_\infty$ operations are defined by counting perturbed holomorphic polygons \color{black}
with Lagrangian boundary conditions. Wrapped Floer cohomology was defined by A. Abbondandolo and M. Schwarz \cite{AbbonFloer}, at least for cotangent fibres, but the general definition and the chain level construction needed to define an $A_\infty$-category is due to M. Abouzaid and P. Seidel  \cite{OpenString}. The definition of the wrapped Fukaya category was further extended to the relative case by Z. Sylvan, who introduced the notions of {\em stop}  and {\em partially wrapped Fukaya category} in \cite{Sylvan}, and by S. Ganatra, J. Pardon and V. Shende, who later introduced the similar notion of {\em Liouville sector} in \cite{GanatraPardonShende}.

In this article we study the wrapped Fukaya category of Weinstein manifolds and sectors. In
the absolute case our main result is the following.
\begin{thm}\label{main}
If $(W, \theta, \mathfrak{f})$ is a $2n$-dimensional Weinstein manifold of finite type, then its wrapped Fukaya category  $\mathcal{WF}(W, \theta)$ is generated by the Lagrangian
cocore planes of the index $n$ critical points of $\mathfrak{f}$.
\end{thm}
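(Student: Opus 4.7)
The plan is to show that the triangulated subcategory $\mathcal{T}$ of $\mathcal{WF}(W, \theta)$ split-generated by the cocore planes is all of $\mathcal{WF}(W, \theta)$. Passing to the Verdier quotient $\mathcal{Q} := \mathcal{WF}(W, \theta)/\mathcal{T}$, this is equivalent to showing that every object $L$ becomes isomorphic to zero in $\mathcal{Q}$. Since $L \cong 0$ in $\mathcal{Q}$ if and only if $\mathrm{id}_L$ vanishes in $\mathrm{End}_{\mathcal{Q}}(L)$, the task reduces to proving a vanishing statement for endomorphisms in the quotient.

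The two key ingredients, as announced in the abstract, are the following. First, a geometric \emph{vanishing lemma}: if a cylindrical exact Lagrangian $L_0$ can be displaced from the isotropic skeleton by a Hamiltonian isotopy, then $HW^*(L_0, L_1) = 0$ for every $L_1$. The idea is that after such a displacement $L_0$ lies outside a Liouville domain exhausting the skeleton; applying the positive Liouville flow pushes $L_0$ arbitrarily far into the cylindrical end, and a maximum principle then precludes any Floer strip from $L_0$ to $L_1$. Second, a \emph{Lagrangian surgery formula}: if $L$ meets a cocore $C$ transversely at a single point and $L^{\#}$ denotes the Lagrangian obtained by surgering $L$ against $C$ there, then there is an exact triangle $C \to L \to L^{\#} \to C[1]$ in $\mathcal{WF}(W, \theta)$, so that $L$ and $L^{\#}$ become isomorphic in $\mathcal{Q}$.

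The main argument would then run as follows. Given an arbitrary object $L$, perturb $L$ generically so that it meets the top stratum of the skeleton, which consists of the $n$-dimensional stable manifolds of the index $n$ critical points of $\mathfrak{f}$, transversely at finitely many points. Near each such intersection a local Weinstein neighbourhood realises $L$ as intersecting the associated cocore, so iterated application of the surgery formula replaces $L$ in $\mathcal{Q}$ with an isomorphic Lagrangian $\widetilde{L}$ that is disjoint from the top stratum. What is left of the skeleton is the union of stable manifolds of index strictly less than $n$, hence has dimension at most $n-1$; since $\dim \widetilde{L} + (n-1) = 2n - 1 < 2n$, a further generic Hamiltonian perturbation disjoins $\widetilde{L}$ from the skeleton entirely. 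Applying the vanishing lemma yields $\mathrm{id}_{\widetilde{L}} = 0$ in $\mathcal{Q}$ and therefore $L \cong 0$ in $\mathcal{Q}$, establishing generation.

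The main obstacle, I expect, is carrying out these geometric manipulations without imposing any regularity on the skeleton, whose lower strata may a priori accumulate pathologically (the paper explicitly advertises that no additional assumption on the skeleton is needed). Both the initial transversality arrangement with the top stratum and the final disjunction from the lower strata need to be made precise in this singular setting; a plausible tactic is to localise the surgery step inside small Weinstein neighbourhoods of isolated critical points, and to effect the final disjunction by a careful use of the Liouville flow itself rather than an abstractly chosen Hamiltonian isotopy, thereby avoiding direct transversality statements for the singular skeleton. Comparable care will be required to lift these geometric statements to the chain level in the relevant $A_\infty$-model for wrapping, so that the exact triangles really hold in $\mathcal{WF}(W, \theta)$ and the quotient argument goes through.
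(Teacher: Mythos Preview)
Your overall strategy is sound and leads to a proof, but it is genuinely different from the route the paper takes, and your identification of the ``main obstacle'' is misplaced.

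\textbf{Comparison with the paper.} The paper does \emph{not} surger one intersection point at a time. Instead it considers the immersed Lagrangian $\mathbb{L}=L\cup D^w_{a_1}\cup\ldots\cup D^w_{a_k}$ (with suitably wrapped cocores $D^w_{a_i}$), constructs an augmentation $\boldsymbol{\varepsilon}$ of the Chekanov--Eliashberg algebra of the Legendrian lift $\mathbb{L}^+$ with $\boldsymbol{\varepsilon}(a_i)=1$, and performs all the surgeries simultaneously. The resulting $\mathbb{L}(a_1,\ldots,a_k)$ is only \emph{immersed} (the wrapped cocores intersect one another), which is why a large portion of the paper is devoted to wrapped Floer theory for exact Lagrangian immersions and to the inductive construction of $\boldsymbol{\varepsilon}$ (Lemma~9.4). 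The augmentation then packages all the surgery data into a single twisted complex via Lemma~8.5 and Proposition~8.7. Your iterative scheme, by contrast, stays in the embedded world: at each step $L^{(i-1)}$ and $D_{a_i}$ meet in a single point, the obstruction algebra of the two-component link has a single generator with trivial differential, the augmentation $\varepsilon(a_i)=1$ is automatic, and one obtains an ordinary exact triangle. Iterating gives $L$ as an iterated cone on cocores once the final $L^{(k)}$ is shown to be a zero object. This is closer to the independent proof of Ganatra--Pardon--Shende cited in the paper, and it bypasses the immersed theory and the delicate construction of $\boldsymbol{\varepsilon}$ entirely. What the paper's approach buys is an explicit one-shot description of the twisted complex (and a framework that also handles the sector case uniformly); what yours buys is conceptual economy.

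\textbf{Where your assessment of difficulties goes wrong.} The regularity of the skeleton is not the issue. The top-dimensional stratum consists of the smooth Lagrangian core discs $C_i$, and making $L$ transverse to them is routine; the subcritical part has dimension at most $n-1$ and is avoided by a generic compactly supported Hamiltonian isotopy (this is exactly the content of Lemma~9.2, and needs no hypothesis on the skeleton). The real work lies in two places you pass over. First, the surgery exact triangle in the \emph{wrapped} category is not free: even for a single transverse intersection one must identify the surgered Lagrangian with a cone in $\mathcal{WF}$, and the paper does this via the surgery cobordism and the Cthulhu complex (Section~8). Second, your sketch of the vanishing lemma (``maximum principle precludes any Floer strip'') is too coarse. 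The actual argument (Section~7) shows that for $\Lambda\gg 0$ the continuation map $HF(L_0,L_1;H_\lambda)\to HF(L_0,L_1;H_\Lambda)$ vanishes, by interpolating through a two-step Hamiltonian $H_{\lambda,\Lambda,R}$ and using action estimates together with the continuation map for the compactly supported safe isotopy that pushes $L_1$ past level $R$ by the Liouville flow; there is no single maximum-principle step that kills all strips. Finally, note that your Verdier-quotient framing literally targets split-generation, but since $L^{(k)}$ is a genuine zero object (its wrapped Floer cohomology with everything vanishes), the iterated triangles already place $L$ in the triangulated closure of the cocores, i.e.\ you get generation outright.
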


In the relative case (i.e. for sectors) our main result is the following. We refer to Section
\ref{subsec:sectors} for the definition of the terminology used in the statement.
\begin{thm}\label{main-sectors}
The wrapped Fukaya category of the Weinstein sector  $(S, \theta, \mathfrak{f})$ is generated by the Lagrangian cocore planes of its completion $(W, \theta_W, \mathfrak{f}_W)$ and by the spreading of the Lagrangian cocore planes of its belt $(F, \theta_F, \mathfrak{f}_F)$.
\end{thm}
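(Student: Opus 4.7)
The plan is to reduce Theorem~\ref{main-sectors} to the absolute case (Theorem~\ref{main}) applied to the completion $(W, \theta_W, \mathfrak{f}_W)$. The first step is to unpack the geometry: a sector $S$ is completed to a Weinstein manifold $W$ by attaching a suitable handlebody whose belt recovers $F$. This can be arranged so that the index $n$ critical points of $\mathfrak{f}_W$ split into two classes: those inherited from $\mathfrak{f}$, whose cocore planes are the Lagrangian cocore planes of $S$ itself, and those introduced by the completion, which are in bijection with the index $n-1$ critical points of $\mathfrak{f}_F$ via the local product structure near the belt. I would check that the cocore planes of the latter type agree, up to Hamiltonian isotopy, with the spreadings of the cocore planes of $F$ described in the statement.

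With this identification in hand, applying Theorem~\ref{main} to $W$ immediately yields generation of $\mathcal{WF}(W, \theta_W)$ by the combined list of objects appearing in Theorem~\ref{main-sectors}. The remaining task is to transfer this generation statement from $\mathcal{WF}(W, \theta_W)$ to $\mathcal{WF}(S, \theta)$. Here I would invoke the stop/localisation framework of Sylvan and Ganatra--Pardon--Shende: the wrapped Fukaya category of $S$ should be equivalent to a suitable quotient of (or subcategory related by a known functor to) the partially wrapped Fukaya category of an ambient Weinstein manifold stopped along the belt. Under this equivalence, generation by the proposed cocores in the ambient category should descend to generation in the sector.

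The main obstacle I anticipate is making the transfer step precise: verifying that the passage from the completed manifold to the sector is compatible with the disjunction-from-skeleton philosophy underlying Theorem~\ref{main}, and that none of the proposed generators is accidentally annihilated or split off when one passes through the localisation. This is exactly the reason the abstract flags the ``quadratic'' and ``localisation'' setups as essential for the sector case. Much of the work therefore lies in checking that the surgery formula for Lagrangian handle attachment behaves well in the presence of a stop, that the vanishing criterion for Lagrangians disjoinable from the skeleton still applies inside the sector, and that the spreadings of the belt cocores really correspond to the ``new'' cocores of the completion in a way that persists through the relevant category equivalences.
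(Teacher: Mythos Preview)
Your proposal rests on a misreading of the geometry. In the paper, the ``completion'' $(W,\theta_W,\mathfrak{f}_W)$ of the sector is obtained by completing the Weinstein domain $\{\mathfrak{f}\le c\}\subset S\setminus\partial S$ that contains only the \emph{interior} critical points of $\mathfrak{f}$; see Section~\ref{subsec:sectors}. Thus $W$ has strictly fewer index-$n$ critical points than $S$, not more: the boundary critical points of $\mathfrak{f}$, which correspond to the index-$(n-1)$ critical points of the belt $F$, are simply absent from $W$. Applying Theorem~\ref{main} to $W$ therefore only produces the interior cocores and tells you nothing about the spreadings of the belt cocores. Your picture of $W$ as ``$S$ with a handlebody attached along the belt'' is not the completion in play here.

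More importantly, the paper does not reduce Theorem~\ref{main-sectors} to Theorem~\ref{main} at all. Instead it reruns the entire four-step argument directly inside the sector, in the localisation model of Ganatra--Pardon--Shende. Each step is checked to survive: trivial triviality is reproved for sectors (Proposition~\ref{prop: sector trivial triviality}) using cut-off Reeb flow and coconvex neighbourhoods of $\partial V$; the Cthulhu complex and the surgery/twisted-complex correspondence are carried over using almost complex structures for which the projection $\pi$ near $\partial S$ is holomorphic; and the augmentation of Lemma~\ref{existence of the augmentation} is rebuilt with the wrapping Hamiltonians multiplied by the cut-off function $g$. The Lagrangian $L$ is isotoped to meet the skeleton of $S$ transversely in the cores of both the full handles $\mathcal{H}_i$ and the half-handles $\mathcal{H}'_j$, and the cocores of the latter are precisely the spreadings of the belt cocores (this is the lemma at the end of Section~\ref{subsec:sectors}). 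No transfer functor from $\mathcal{WF}(W)$ to $\mathcal{WF}(S)$ is invoked, and indeed none with the required properties is available: stop removal goes from partially wrapped to fully wrapped, which is the wrong direction for your plan.
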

\begin{rem}
Exact Lagrangian submanifolds are often enriched with some extra structure: Spin structures,
grading or local systems. We ignore them for simplicity, but the same arguments carry over
also when that extra structure is considered.
\end{rem}

Generators of the wrapped Fukaya category are known in many particular cases. We will not
try to give a comprehensive overview of the history of this recent but active subject because we would not be able to make justice to everybody who has contributed to it. However, it
is important to mention that F. Bourgeois, T. Ekholm and Y. Eliashberg in \cite{EffectLegendrian} sketch a proof that the Lagrangian cocore discs split-generate the wrapped Fukaya category of a Weinstein manifold of finite type.
Split-generation is a weaker notion than generation,  which is
sufficient for most applications, but not for all; see for example
\cite{LazarevGeomAlgPres}. Moreover, \color{black} Bourgeois, Ekholm
and Eliashberg's proposed proof relies on their Legendrian surgery
formula, whose analytic details { are} not complete ({see
\cite{EkholmCurve} for recent development in that direction).}

Most generation results so far, including that of
Bourgeois, Ekholm and Eliashberg, rely on Abouzaid's split-generation criterion \cite{AbouzaidGeneration}. On the contrary, our proof is more direct and similar in
spirit to Seidel's proof in \cite{Seidel_Fukaya} that the Lagrangian thimbles generate the Fukaya-Seidel category of a Lefschetz fibration or to Biran and Cornea's cone decomposition of Arnol'd type Lagrangian cobordisms \cite{BirCo2}.  Theorems \ref{main} and \ref{main-sectors} have been proved independently also by Ganatra, Pardon and Shende in \cite[Theorem 1.9]{GanatraPardonShende2}.

A product of Weinstein manifolds is a Weinstein manifold. Therefore, by applying Theorem \ref{main} to the diagonal in a twisted product, and using
results of S. Ganatra \cite{Ganatra} and Y. Gao \cite{Gao-product}, we obtain the following result.
\begin{thm}\label{thm: HH and SH}
Let $(W, \theta, \mathfrak{f})$ be a Weinstein manifold of finite type. Let ${\mathcal D}$
the full $A_\infty$ subcategory of ${\mathcal WF}(W, \theta)$ whose objects are the Lagrangian cocore planes. Then the open-closed map
\begin{equation}\label{eqn: open-closed}
{\mathcal OC} \colon HH_*({\mathcal D}, {\mathcal D}) \to SH^*(W)
\end{equation}
is an isomorphism.
\end{thm}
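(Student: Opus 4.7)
The plan is to apply Theorem \ref{main} to the diagonal of a suitable product of Weinstein manifolds and then invoke the formal machinery of Ganatra and Gao to convert the geometric generation of the diagonal into an isomorphism of the open-closed map.

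First, I would form the \emph{twisted product} $W \widetilde{\times} W$ of Gao \cite{Gao-product}, a Weinstein manifold (or, depending on conventions, a Weinstein sector) whose symplectic structure near the diagonal coincides with that of $W \times W^-$ and in which the diagonal $\Delta_W$ sits as an exact cylindrical Lagrangian. The index $2n$ critical points of its Liouville vector field are precisely the pairs $(p_i,p_j)$ of index $n$ critical points of $\mathfrak{f}$ on the two factors, with associated Lagrangian cocore planes given by the external products $C_{p_i} \times C_{p_j}$. Applying Theorem \ref{main} (or Theorem \ref{main-sectors} in the sector case) to $W \widetilde{\times} W$ immediately gives that $\Delta_W$ is generated by these external products of cocore planes in the wrapped Fukaya category of the twisted product.

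Next I would translate this geometric generation across Gao's Künneth-type equivalence: the category $\mathcal{WF}(W \widetilde{\times} W)$ is quasi-equivalent to the category of $\mathcal{D}$-bimodules, under which external products of Lagrangians correspond to tensor products of left and right Yoneda modules and the diagonal Lagrangian corresponds to the diagonal bimodule $\mathcal{D}_\Delta$. Under this dictionary the statement from the previous paragraph becomes the algebraic fact that $\mathcal{D}_\Delta$ lies in the triangulated subcategory of bimodules generated by Yoneda tensor products, that is, $\mathcal{D}$ is \emph{homologically smooth} in the sense of Ganatra. I would then invoke the theorem of \cite{Ganatra}, which asserts that such smoothness forces the open-closed map from Hochschild homology of $\mathcal{D}$ to symplectic cohomology of $W$ to be an isomorphism. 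Combined with the previous step this yields the desired conclusion.

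The main obstacle in this strategy is not the generation step, which is handed to us by Theorem \ref{main} applied to the twisted product, but the careful categorical bookkeeping required to verify that Gao's equivalence sends the specific Lagrangian cocore planes of $W \widetilde{\times} W$ to the tensor products of Yoneda bimodules over $\mathcal{D}$ that appear on the algebraic side, and that the diagonal Lagrangian represents the diagonal bimodule. This identification has to respect the cylindrical ends at infinity and the gradings/spin structures used to define the wrapped Fukaya category. Once that dictionary is in place the Ganatra criterion delivers the isomorphism statement with no further geometric input.
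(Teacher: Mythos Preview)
Your strategy is essentially the one the paper follows: apply Theorem~\ref{main} to the product $(W\times W,\pi_1^*\theta-\pi_2^*\theta,\mathfrak{f}\circ\pi_1+\mathfrak{f}\circ\pi_2)$, whose Lagrangian cocores are exactly the products $D_i\times D_j$, to generate the diagonal, and then feed this into Ganatra's machinery via Gao's comparison theorem. Two differences are worth recording.

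First, the paper sidesteps the bimodule bookkeeping you flag as the main obstacle. Rather than translating generation of $\Delta$ into homological smoothness of $\mathcal{D}$ via a bimodule dictionary, it quotes \cite[Proposition~14.1]{Ganatra} in its geometric form: if $\Delta$ is generated by the products $D_i\times D_j$ inside Ganatra's split-Hamiltonian category $\mathcal{W}^2$, then the unit of $SH^*(W)$ lies in the image of $\mathcal{OC}$; together with \cite[Theorem~1.1]{Ganatra} this yields the isomorphism. Gao's theorem \cite[Theorem~1.1]{Gao-product} enters only to identify $\mathcal{WF}(W\times W,\pi_1^*\theta-\pi_2^*\theta)$ with $\mathcal{W}^2$, not with a category of bimodules, so the part of the dictionary you are worried about is already packaged inside Ganatra's paper.

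Second, and more substantively, you have omitted a genuine technical point the paper addresses at some length: Ganatra and Gao both work in the \emph{quadratic} setup for the wrapped Fukaya category, whereas Theorem~\ref{main} was proved in the linear setup. The paper therefore revisits the two analytic ingredients---the ``trivial triviality'' vanishing and the surgery/twisted-complex identification---and checks that they go through with quadratic Hamiltonians, the latter requiring a modification of the Liouville rescaling used in Abouzaid's definition of the $A_\infty$-operations so that it is trivial on a compact part. You should at least acknowledge this setup mismatch, even if the adaptation is ultimately routine.
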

In Equation \eqref{eqn: open-closed} $HH_*$ denotes Hochschild homology, $SH^*$ denotes
symplectic cohomology and ${\mathcal OC}$ is the open-closed map defined in \cite{AbouzaidGeneration}. Theorem \ref{thm: HH and SH} in particular proves that
\begin{equation}\label{eqn: open-closed2}
{\mathcal OC} \colon HH_*({\mathcal WF}(W, \theta), {\mathcal WF}(W, \theta)) \to SH^*(W)
\end{equation}
is an isomorphism. This proves a conjecture of Seidel in \cite{SeidelConjecture} for Weinstein manifolds of finite type. Note that a proof of this conjecture, assuming the Legendrian surgery
formula of Bourgeois, Ekholm and Eliashberg was given by S. Ganatra and  M. Maydanskiy in the appendix of \cite{EffectLegendrian}.

The above result implies in particular that Abouzaid's generation criterion \cite{AbouzaidGeneration} is satisfied for the subcategory consisting of the cocore planes of a Weinstein manifold, from which one can conclude that the cocores split-generate the wrapped Fukaya category. In the exact setting under consideration this of course follows a fortiori from Theorem \ref{main}, but there are extensions of the Fukaya category in which this generation criterion has nontrivial implications. Notably, this is the case for the version of the wrapped Fukaya category for monotone Lagrangians, as we proceed to explain.

The wrapped Fukaya category as well as symplectic cohomology were defined in the monotone symplectic setting in \cite{RitterSmith} using coefficients in the Novikov field. When this construction is applied to exact Lagrangians in an exact symplectic manifold, a change of variables $x \mapsto t^{-\mathcal{A}(x)}x$, where $\mathcal{A}(x)$ is the action of the generator $x$ and $t$ is the formal Novikov parameter,  allows for an identification of the Floer complexes and the open-closed map  with the original complexes and map tensored with the Novikov field. \color{black} The generalisation of Abouzaid's generation criterion to the monotone setting established in \cite{RitterSmith} thus shows that
\begin{cor}
The wrapped Fukaya category of monotone Lagrangian submanifolds of a Weinstein manifold  which are unobstructed in the strong sense (i.e.~ with $\mu^0=0$, where $\mu^0$ is the number of Maslov index two holomorphic discs passing through a generic point) is split-generated by the Lagrangian cocore planes of the Weinstein manifold.
\end{cor}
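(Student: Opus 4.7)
The plan is to deduce the corollary by combining Theorem~\ref{thm: HH and SH} with the monotone generation criterion of Ritter--Smith, using the change of variables indicated in the paragraph preceding the statement in order to transport information from the exact setting into the Novikov-coefficient setup.

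First I would record the setup. Let $\mathcal{D}$ be the full $A_\infty$ subcategory of $\mathcal{WF}(W,\theta)$ on the Lagrangian cocore planes, viewed a priori in the exact wrapped Fukaya category. By Theorem~\ref{thm: HH and SH}, the open-closed map
\begin{equation*}
\mathcal{OC}\colon HH_*(\mathcal{D},\mathcal{D}) \to SH^*(W)
\end{equation*}
is an isomorphism; in particular, the unit $1 \in SH^*(W)$ lies in the image of $\mathcal{OC}$. This is the hypothesis one needs for Abouzaid's split-generation criterion.

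Next I would import the cocores into the monotone wrapped Fukaya category $\mathcal{WF}^{\mathrm{mon}}(W,\theta)$ constructed in \cite{RitterSmith} with Novikov coefficients, and let $\mathcal{D}^{\mathrm{mon}}$ denote the corresponding full subcategory on the cocore planes (which are exact, hence in particular monotone and strongly unobstructed). The crucial observation, emphasised in the paragraph preceding the statement, is that for exact Lagrangians in an exact symplectic manifold the rescaling $x \mapsto t^{-\mathcal{A}(x)}x$ identifies the Novikov-coefficient complexes and structure maps (including the $A_\infty$ operations and the open-closed map) with the corresponding exact complexes tensored with the Novikov field $\Lambda$. Applied uniformly to all generators in sight, this turns the isomorphism of Theorem~\ref{thm: HH and SH} into an isomorphism
\begin{equation*}
\mathcal{OC}^{\mathrm{mon}}\colon HH_*(\mathcal{D}^{\mathrm{mon}},\mathcal{D}^{\mathrm{mon}}) \to SH^*(W;\Lambda),
\end{equation*}
and in particular shows that the unit of $SH^*(W;\Lambda)$ lies in the image of $\mathcal{OC}^{\mathrm{mon}}$.

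Finally I would invoke the monotone version of Abouzaid's generation criterion established by Ritter and Smith: if $\mathcal{OC}^{\mathrm{mon}}$ restricted to a subcategory hits the unit of symplectic cohomology, then that subcategory split-generates the whole monotone wrapped Fukaya category. Applied to $\mathcal{D}^{\mathrm{mon}}$, this yields that the Lagrangian cocore planes split-generate $\mathcal{WF}^{\mathrm{mon}}(W,\theta)$, which is the statement of the corollary. The only subtle point, and the one I would want to double-check, is that the change of variables is compatible with the full $A_\infty$ structure and with the open-closed map as used in \cite{RitterSmith}; once that naturality is in hand, the corollary is an immediate formal consequence of Theorem~\ref{thm: HH and SH}.
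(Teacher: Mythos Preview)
Your proposal is correct and follows essentially the same route as the paper: deduce from Theorem~\ref{thm: HH and SH} that the open-closed map for the cocore subcategory hits the unit, use the action-rescaling change of variables to identify the Novikov-coefficient open-closed map with the exact one tensored with the Novikov field, and then invoke the Ritter--Smith monotone generation criterion. The paper presents this as an immediate consequence of the preceding paragraph rather than as a separate proof, but the logical content is identical to what you wrote.
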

\begin{rem}
The strategy employed in the proof of Theorem \ref{main} for showing generation fails for non-exact Lagrangian submanifolds in two crucial steps: in Section \ref{sec: trivial triviality} and Section \ref{sec: surgeries and cones}. First, there are well known examples of unobstructed monotone Lagrangian submanifolds in a Weinstein manifold which are Floer homologically nontrivial even if they are disjoint from the skeleton. Second, our treatment of Lagrangian surgeries requires that we lift the Lagrangian submanifolds in $W$ to Legendrian submanifolds of $W \times \R$, and this is possible only for exact Lagrangian submanifolds. It is unclear to us whether it is true that the cocores generate (and not merely split-generate) the $\mu^0=0$ part of the monotone wrapped Fukaya category.
\end{rem}

\subsection{Comparison of setups}
There are three ``setups'' in which the wrapped Fukaya category is defined: the ``linear setup'', where the Floer equations are perturbed by Hamiltonian functions with linear growth at infinity and the wrapped Floer chain complexes are defined as homotopy limits over Hamitonians with higher and higher slope, the ``quadratic setting'', where the Floer equations are perturbed by Hamiltonian functions with quadratic growth at infinity, and the ``localisation setting'', where the Floer equations are unperturbed and the wrapped Fukaya category is defined by a categorical construction called  {\em localisation}. The linear setup was introduced by Abouzaid and Seidel in \cite{OpenString} and the quadratic setup by  Abouzaid in \cite{AbouzaidGeneration}. The latter is used also in Sylvan's definition of the partially wrapped Fukaya category and in the work of Ganatra \cite{Ganatra} and Gao \cite{Gao-product} which we use in the proof of Theorem \ref{thm: HH and SH}. The localisation setup is used in \cite{GanatraPardonShende} because the linear and quadratic setups are not available on sectors for technical reasons.
All three setups are expected to produce equivalent $A_\infty$ categories on Liouville manifolds.
We chose to work in the linear setup for the proof of Theorem \ref{main}, but our proof can be adapted fairly easily to the other setups. Moreover these extensions are necessary to prove Theorem \ref{main-sectors} and Theorem \ref{thm: HH and SH}. We will describe the small modifications we need in the localisation setup in Section \ref{sec:wrapp-fukaya-categ} and those we need in the quadratic setup in Section \ref{hochschild and symplectic}.

\subsection{Strategy of the proof of Theorem 1.1}
The strategy of the proof of Theorem \ref{main} is the following. Given a cylindrical Lagrangian submanifold $L$, by a compactly supported Hamiltonian isotopy we make it transverse to the stable manifolds of the Liouville
flow.  Thus, for dimensional reasons, \color{black} it will be disjoint from the stable manifolds of the critical points of index less than $n$ and will intersect the stable manifolds of the critical points of index $n$ in finitely many points $a_1, \ldots, a_k$. For each $a_i$ we consider a Lagrangian plane $D_{a_i}$ passing through $a_i$, transverse both to $L$ and to the stable manifold, and
Hamiltonian isotopic to the unstable manifold of the same critical point.
We assume that \color{black} the Lagrangian planes are all pairwise disjoint. The unstable manifolds of the index $n$ critical points are what we call the Lagrangian cocore planes.

At each $a_i$ we perform a Lagrangian surgery between $L$ and $D_{a_i}$ so that the resulting Lagrangian $\overline{L}$ is disjoint from the skeleton of $W$. Since $\overline{L}$
will be in general immersed, we have to develop a version of wrapped Floer cohomology for
immersed Lagrangian submanifolds. To do that we borrow heavily from the construction of Legendrian contact cohomology in \cite{LCHgeneral}. In particular our wrapped Floer cohomology between
immersed Lagrangian submanifolds uses augmentations of the Chekanov-Eliashberg algebras of the Legendrian lifts as bounding cochains. A priori there is no reason why such a bounding cochain
should exists for $\overline{L}$, but it turns out that we can define it inductively provided that  $D_{a_1}, \ldots, D_{a_k}$ are isotoped in a suitable way. A large part of the technical work in this paper is devoted to the proof of this claim.

Then we prove a correspondence between twisted complexes in the wrapped Fukaya category and Lagrangian surgeries by realising a Lagrangian surgery as a Lagrangian cobordism between the Legendrian lifts  and applying the Floer theory for Lagrangian cobordisms we defined in \cite{Floer_cob}. This result can have an independent interest.
Then we can conclude that $\overline{L}$ is isomorphic, in an appropriated triangulated completion of the wrapped Fukaya category, to a twisted complex $\mathbb{L}$ built from $L, D_{a_1}, \ldots, D_{a_k}$.

Finally, we prove that the wrapped Floer cohomology of $\overline{L}$ with any other cylindrical Lagrangian is trivial. This is done by a fairly simple action argument based on
the fact that the Liouville flow displaces $\overline{L}$ from any compact set because
$\overline{L}$ is disjoint from the skeleton of $W$. Then the twisted complex $\mathbb{L}$ is a trivial object, and therefore some simple homological algebra shows that $L$ is isomorphic
to a twisted complex built from $D_{a_1}, \ldots, D_{a_k}$.

This article is organised as follows. In Section \ref{sec: setup} we recall some generalities about Weinstein manifolds and sectors. In Section \ref{sec: lch} we recall the definition and the basic properties of Legendrian contact cohomology. In Sections \ref{sec: Floer homology for immersions} and \ref{sec: continuation maps} we define the version of Floer cohomology for Lagrangian immersions that we will use in the rest of the article. Despite their length, these sections contain mostly routine verifications and can be skipped by the readers who are willing to accept that such a theory exists. In Section \ref{sec: immersed wrapped} we define wrapped Floer cohomology for Lagrangian immersions using the constructions of the previous two sections. In Section \ref{sec: trivial triviality} we prove that an immersed Lagrangian submanifold which is disjoint from the skeleton is Floer homologically trivial. In Section \ref{sec: surgeries and cones} we prove that Lagrangian surgeries correspond to twisted complexes in the wrapped Fukaya category.
In Section \ref{sec: generation} we finish the proof of Theorem \ref{main} and in particular we construct the bounding cochain for $\overline{L}$.  In Section \ref{sec:wrapp-fukaya-categ} we prove Theorem \ref{main-sectors}.  We briefly recall  the construction of the wrapped Fukaya category for sectors in the localisation setup from \cite{GanatraPardonShende} and show how all previous arguments adapt in this setting. Finally, in Section \ref{hochschild and symplectic} we prove the isomorphism between Hochschild homology and symplectic cohomology.  This requires that we adapt the proof of Theorem \ref{main} to the quadratic setting.

\subsection*{Acknowledgements}
We thank Tobias Ekholm for pushing us to extend our original result to Weinstein sectors
and Yasha Eliashberg for a useful conversation about Weinstein pairs. We also thank an anonymous referee for pointing out that it was necessary to use the localisation setup to define the wrapped Fukaya category of Liouville sectors.

\small{The third author would like to thank the Isaac Newton Institute for Mathematical Sciences
for support and hospitality during the program {\em Homology theories in low dimensional
topology} when work on this paper was undertaken. This work was supported by EPSRC Grant Number EP/K032208/1. The first author is partially supported by the ANR project COSPIN (ANR-13-JS01-0008-01). The second author is supported by the grant KAW 2016.0198 from the Knut and Alice Wallenberg Foundation. In addition, he would like to express his gratitude to Universit\'{e} de Nantes where he was professeur invit\'e in October 2016 when this project was started. The fourth author is supported by the ERC Consolidator Grant 646649 ``SymplecticEinstein'',  GA\v{C}R EXPRO Grant 19-28628X and 4EU+/20/F3/21 Minigrant. Finally, the first and third authors are partially supported by the ANR project QUANTACT (ANR-16-CE40-0017).
}

\normalsize{}
\section{Geometric setup}
\label{sec: setup}
In this section we revise some elementary symplectic geometry with the purpose of fixing
notation and conventions.
\subsection{Liouville manifolds}\label{ss: liouville}
Let $(W, \theta)$ be a Liouville manifold of finite type, from now on called simply a
Liouville manifold. This means that $d \theta$ is a
symplectic form, the Liouville vector field ${\mathcal L}$ defined by the equation
$$\iota_{\mathcal L} d \theta = \theta$$
is complete and, for some $R_0 < 0$, there exists a proper smooth function
$\mathfrak{r} \colon W \to [R_0, + \infty)$  such that, for $w \in W$,
\begin{itemize}
\item[(i)] $d_w\fr ({\mathcal L}_w) >0$ if $\fr(w)>R_0$, and
\item[(ii)] $d \fr_w({\mathcal L}_w)=1$ if $\fr(w) \ge R_0+1$.
\end{itemize}
In particular, $R_0$ is the unique critical value of $\fr$, which is of course highly
nondegenerate, and every other  level set is a contact type hypersurface.

We use the function $\fr$ to define some useful subsets of $W$.
\begin{dfn}
For every $R \in [R_0, + \infty)$ we denote $W_R= \fr^{-1}([R_0, R])$, $W_R^e =
W \setminus  \mathrm{int}(W_R)$ and $V_R = \fr^{-1}(R)$.
\end{dfn}
The subsets $W_R^e$ will be called the {\em ends} of $W$. The Liouville flow of
$(W, \theta)$ induces an identification
\begin{equation}\label{end as symplectisation}
([R_0+1, + \infty) \times V, e^r \alpha) \cong (W_{R_0+1}^e, \theta),
\end{equation}
where $V=V_0$ and $\alpha$ is the pull-back of $\theta$ to $V_0$. More precisely,
if $\phi$ denotes the flow of the Liouville vector field, the identification
\eqref{end as symplectisation} is given by $(r, v) \mapsto \phi_r(v)$. Let $\xi = \ker \alpha$
be the contact structure defined by $\alpha$. Every $V_R$, for $R>R_0$, is contactomorphic
to $(V, \xi)$.  Under the identification \eqref{end as symplectisation}, the
function $\mathfrak{r}$, restricted to $W_{R_0+1}^e$, corresponds to the projection to $[R_0+1,+ \infty)$ in the sense that the following diagram commutes
$$\xymatrix{
([R_0+1, + \infty) \times V \ar[rr]^\phi \ar[dr]& & W_{R_0+1}^e \ar[dl]_{\mathfrak{r}}.\\
& [R_0+1, + \infty) &
}$$
\begin{rem}
\label{rem: change of r}
The choice of $R_0$ in the definition of $\mathfrak{r}$ is purely arbitrary because the
Liouville flow is complete. In fact, for every map $\mathfrak{r} \colon W \to [R_0, + \infty)$
as above and for any $R_0' < R_0$ there is  a map $\mathfrak{r}' \colon W \to [R_0', + \infty)$
satisfying (i) and (ii), which moreover coincides with $\mathfrak{r}$ on $\mathfrak{r}^{-1}(
[R_0+1, + \infty))$.
\end{rem}

A diffeomorphism $\psi \colon W \to W$ is an {\em exact symplectomorphism} if
$\psi^* \theta = \theta + dq$ for some function $q \colon W \to \R$. Flows of Hamiltonian
vector fields are, of course, the main source of exact symplectomorphisms. Given a function
$H \colon [- t_-, t_+] \times W \to \R$, where $t_{\pm}\ge 0$ and are allowed to be infinite, we
define the Hamiltonian vector field $X_H$ by
$$\iota_{X_H} d \theta = -dH.$$
Here $dH$ denotes the differential in the directions tangent to $W$, and therefore
$X_H$ is a time-dependent vector field on $W$.

We spell out the change in the Liouville form induced by a Hamiltonian flow because it is a
computation that will be needed repeatedly.
\begin{lemma}\label{variation of Liouville}
Let $H \colon [- t_-, t_+] \times W \to \R$ be a Hamiltonian function and
$\varphi_t$ its Hamiltonian flow.
Then, for all $t \in [t_-, t_+]$, we have $\varphi_t^* \theta = \theta+ dq_t$, where
$$q_t=  \int_0^t (- H_\sigma + \theta(X_{H_\sigma})) \circ \varphi_\sigma  d\sigma.$$
\end{lemma}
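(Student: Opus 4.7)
The plan is to compute $\varphi_t^\ast\theta$ by differentiating in $t$ and integrating, since at $t=0$ it agrees with $\theta$ and hence the only question is to identify the exact one-form picked up along the way.

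First I would use the standard identity $\frac{d}{dt}\varphi_t^\ast \theta = \varphi_t^\ast(\mathcal{L}_{X_{H_t}}\theta)$ valid for the flow of a time-dependent vector field. Then I would apply Cartan's magic formula to get
\[
\mathcal{L}_{X_{H_t}}\theta = \iota_{X_{H_t}} d\theta + d(\iota_{X_{H_t}}\theta) = -dH_t + d\bigl(\theta(X_{H_t})\bigr),
\]
using the defining equation $\iota_{X_H}d\theta=-dH$ of the Hamiltonian vector field (note that the differential of $H_t$ here is the one in the $W$-directions, consistently with the convention fixed just before the statement). Since $\varphi_t^\ast$ commutes with $d$, this yields
\[
\tfrac{d}{dt}\varphi_t^\ast\theta = d\bigl(\varphi_t^\ast(-H_t+\theta(X_{H_t}))\bigr)=d\bigl((-H_t+\theta(X_{H_t}))\circ\varphi_t\bigr).
\]

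Next I would integrate from $0$ to $t$ and use $\varphi_0=\mathrm{id}$ to obtain
\[
\varphi_t^\ast\theta-\theta = d\!\int_0^t \bigl(-H_\sigma+\theta(X_{H_\sigma})\bigr)\circ\varphi_\sigma\, d\sigma,
\]
where interchanging $d$ with the $\sigma$-integral is the usual differentiation-under-the-integral-sign argument on the compact-in-$W$ pieces that are relevant. Setting $q_t$ equal to this integral gives exactly the asserted formula.

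There is no real obstacle; the argument is a direct Cartan-formula calculation. The only points that need a modicum of care are (i) that the Cartan identity is being applied to a time-dependent vector field, so one must use the correct version of $\frac{d}{dt}\varphi_t^\ast\alpha$ for such fields, and (ii) that $dH_t$ in the formula $\iota_{X_{H_t}}d\theta=-dH_t$ denotes the spatial differential, as spelled out in the paragraph preceding the lemma; both are standard and cause no trouble.
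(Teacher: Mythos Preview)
Your proof is correct and follows essentially the same approach as the paper: both differentiate $\varphi_t^*\theta$ in $t$, apply Cartan's formula together with $\iota_{X_{H_t}}d\theta=-dH_t$, and integrate back. The paper's argument is slightly more terse but identical in substance.
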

\begin{proof}
We compute
\begin{align*}
&\varphi_t^*\theta - \theta = \int_0^t \frac{d}{d\sigma}(\varphi_\sigma^* \theta)
d\sigma =  \int_0^t \varphi_\sigma^*(L_{X_{H_\sigma}} \theta) d\sigma =  \\ & \int_0^t
\varphi_\sigma^* (\iota_{X_{H_\sigma}} d \theta + d \iota_{X_{H_\sigma}} \theta)
d\sigma = \int_0^t \varphi_\sigma^*  (- dH_\sigma + d (\theta(X_{H_\sigma}))) d\sigma.\qedhere
\end{align*}
\end{proof}
\subsection{Weinstein manifolds}\label{ss: Weinstein manifolds}
In this article we will be concerned mostly with Weinstein manifolds of finite type.
We recall their definition, referring to \cite{SteinWeinstein} for further details.
\begin{dfn}\label{dfn:weinstein}
A Weinstein manifold $(W, \theta, \mathfrak{f})$ consists of:
\begin{itemize}
\item[(i)] an even dimensional smooth manifold $W$ without boundary,
\item[(ii)] a one-form $\theta$ on $W$ such that $d \theta$ is a symplectic form and the
Liouville vector field ${\mathcal L}$ associated to $\theta$ is complete, and
\item[(iii)] a proper Morse function $\mathfrak{f} \colon W \to \R$ bounded from below such that
${\mathcal L}$ is a pseudogradient of $\mathfrak{f}$ in the sense of
\cite[Equation (9.9)]{SteinWeinstein}:
i.e.\
$$ d \mathfrak{f}(\mathcal L)   \geq \delta(\|{\mathcal L} \|^2 + \| d \mathfrak{f} \|^2),$$
where $\delta>0$ and the norms are computed with respect to some
Riemannian metric on $W$.
\end{itemize}
The function $\mathfrak{f}$ is called a \emph{Lyapunov function} (for $\mathcal L$).

If $\mathfrak{f}$ has finitely many critical points, then $(W, \theta, \mathfrak{f})$ is
a Weinstein manifold of finite type. From now on, Weinstein manifold will always mean
Weinstein manifold of finite type.
\end{dfn}

Given a regular value $M$ of $\mathfrak{f}$ the compact manifold $\{\mathfrak{f}\leq M\}$ is called a \textit{Weinstein domain}. Any Weinstein domain can be completed to a Weinstein manifold in a canonical way by adding half a symplectisation of the contact boundary.

By Condition (iii) in Definition \ref{dfn:weinstein}, the zeroes of $\mathcal{L}$ coincide with the critical points of $\mathfrak{f}$. If $W$ has dimension $2n$, the critical points of $\mathfrak{f}$ have index at most $n$.
For each critical point $p$ of $\mathfrak{f}$ of index $n$, there is a stable manifold
$\Delta_p$  and an unstable manifold $D_p$ which are both exact Lagrangian submanifolds.
We will call the unstable manifolds $\Delta_p$ of the critical points of index $n$ the {\em Lagrangian cocore planes}.

\begin{dfn}\label{dfn:skeleton}
Let $W_0 \subset W$ be a Weinstein domain containing all critical points of $\mathfrak{f}$. The {\em Lagrangian skeleton} of $(W, \theta, \mathfrak{f})$ is the attractor of the negative
flow of the Liouville vector field on the compact part of $W$, i.e.
$$W^{\mathrm{sk}}:=\mathop{\bigcap}\limits_{t>0}\phi^{-t}(W_0),$$
where $\phi$ denotes the flow of the Liouville vector field ${\mathcal L}$.
Alternatively, $W^{\mathrm{sk}}$ can be defined as the union of unstable manifolds of all critical points of $\mathfrak{f}$.
\end{dfn}
The stable manifolds of the index $n$ critical points form the top dimensional stratum of the Lagrangian skeleton.

A Morse function gives rise to a handle decomposition. In the case of a Weinstein manifold
$(W, \theta, \mathfrak{f})$, the handle decomposition induced by $\mathfrak{f}$ is
compatible with the symplectic structure and is called the Weinstein handle decomposition
of $(W, \theta, \mathfrak{f})$. By the combination of \cite[Lemma~12.18]{SteinWeinstein}
and \cite[Corollary~12.21]{SteinWeinstein} we can assume that
${\mathcal L}$ is Morse-Smale.  This implies that we can assume that handles of
higher index are attached after handles of lower index. The deformation making
${\mathcal L}$ Morse-Smale can be performed without changing the symplectic form
$d \theta$ and so that the unstable manifolds corresponding to the critical points of index
$n$ before and after such a deformation are Hamiltonian isotopic.

We will denote the union of the handles of index strictly less than $n$ by $W^{sc}$.
This will be called the {\em subcritical subdomain} of $W$. By construction,
$\partial W^{sc}$ is a contact type hypersurface in $W$.

We choose $\fr \colon W \to [R_0, + \infty),$  and we homotope the Weinstein structure so that
$$W_{R_0} = W^{sc} \cup {\mathcal H}_1 \cup \ldots \cup {\mathcal H}_l,$$
where ${\mathcal H}_1, \ldots {\mathcal H}_l$ all are \emph{standard} Weinstein handles corresponding to the
critical points $p_1, \ldots, p_l$ of $\mathfrak{f}$ of Morse index $n$; see \cite{WeinsteinHandles} for the description of the standard model, and \cite[Section 12.5]{SteinWeinstein} for how to produce the Weinstein homotopy.
\begin{rem}
We could easily modify $\mathfrak{f}$ so that it agrees with $\fr$ on $W_0^e$. However,
this will not be necessary.
\end{rem}
The {\em core} of the Weinstein handle ${\mathcal H}_i$ is the Lagrangian disc $C_i=
\Delta_{p_i} \cap {\mathcal H}_i$. Let $D_\delta T^*C_i$ denote the disc cotangent bundle
of $C_i$ of radius $\delta >0$. By the Weinstein neighbourhood theorem, there is a
symplectic identification ${\mathcal H}_i \cong D_\delta T^*C_i$ for some $\delta$.
However, the restriction of $\theta$ to ${\mathcal H}_i$ does not correspond to the
restriction of the canonical Liouville form to  $D_\delta T^*C_i$.

\subsection{Weinstein sectors}
\label{subsec:sectors}
In this section we introduce Weinstein sectors. These will be particular cases of Liouville sectors as defined in \cite{GanatraPardonShende} characterised, roughly speaking, by retracting over a Lagrangian skeleton with boundary. In Section \ref{subsec:pairtosector} below we will then show that any Weinstein pair as introduced in \cite[Section 2]{Eliash_Wein_Revi} can be completed  to a Weinstein sector.

\begin{dfn}\label{dfn:weinsect}
A \emph{Weinstein sector} $(S,\theta, I, \mathfrak{f})$ consists of:
\begin{enumerate}
\item an even dimensional smooth manifold with boundary $S$;
\item a one-form $\theta$ on $S$ such that $d \theta$ is a symplectic form and the associated Liouville vector field ${\mathcal L}$ is complete and everywhere tangent to $\partial S$;
\item  a smooth function $I \colon \partial S \to \R$
    which satisfies
\begin{enumerate}
\item $dI({\mathcal L})= \alpha I$ for some function $\alpha\colon \partial S \to \R_+$ which is constant outside of a compact set and
\item $dI(C)>0$, where $C$ is a tangent vector field on $\partial S$ such that $\iota_C d \theta|_{\partial S}=0$ and $d\theta(C,N)>0$ for an outward pointing normal vector field $N$;
\end{enumerate}
\item a proper Morse function $\mathfrak{f} \colon S \to \R$ bounded from below having finitely many critical points, such that ${\mathcal L}$ is a pseudogradient of $\mathfrak{f}$ and satisfying moreover
\begin{enumerate}
\item $d \mathfrak{f}(C)>0$ on $\{ I >0 \}$ and $d \mathfrak{f}(C)<0$ on $\{ I <0 \}$,
\item the Hessian of a critical point of $\mathfrak{f}$ on $\partial S$ evaluates negatively on
the normal direction $N$, and
\item there is a constant $c \in \R$ whose sublevel set satisfies $\{ \mathfrak{f} \le c \} \subset S \setminus \partial S$ and contains all interior critical points of $\mathfrak{f}$.
\end{enumerate}
\end{enumerate}
\end{dfn}

For simplicity we will often drop part of the data from the notation. We will always assume that $S$ is a Weinstein sector of {\em finite type}, i.e.\ that $\mathfrak{f}$ has only finitely many critical points. A Weinstein sector is a particular case of an exact Liouville sector in the sense of \cite{GanatraPardonShende}.

 \begin{ex}
After perturbing the canonical Liouville form, the cotangent bundle of a smooth manifold $Q$ with boundary admits the structure of a Weinstein sector.
\end{ex}
To a Weinstein sector $(S, \theta, I, \mathfrak{f})$ we can associate
two Weinstein manifolds in a canonical way up to deformation: the
\emph{completion} and the \emph{belt}. The completion of $S$ is the Weinstein manifold  $(W, \theta_W, \mathfrak{f}_W)$ obtained by completing the Weinstein domain $W_0 = \{ \mathfrak{f} \le c \}$, which contains all interior critical points of $\mathfrak{f}$. The belt of $S$ is the Weinstein manifold $(F, \theta_F, \mathfrak{f}_F)$ where $F=I^{-1}(0)$, $\theta_F= \theta|_{F}$\footnote{We abuse the notation by denoting the pull back by the inclusion as a restriction.} and $\mathfrak{f}_F= \mathfrak{f}|_{F}$. To show that the belt is actually a Weinstein manifold it is enough to observe that $d \theta_F$ is a symplectic form because $F$ is transverse to the vector field $C$, and that the Liouville vector field ${\mathcal L}$ is tangent to $F$ because $dI({\mathcal L}) = \alpha I$, and therefore the Liouville vector field of $\theta_F$ is ${\mathcal L}_F={\mathcal L}|_F$.

Let $\kappa \in \R$ be a number such that all critical points of $\mathfrak{f}$ are contained in $\{ \mathfrak{f} \le \kappa \}$. We denote $S_0 = \{ \mathfrak{f} \le \kappa \}$ and $F_0=
F \cap S_0 = \{ \mathfrak{f}_F \le \kappa \}$. By Condition (4a) of Definition \ref{dfn:weinsect}, the boundary $\partial S_0$ is a contact manifold with convex boundary
with dividing set $\partial F_0$. Moreover $S \setminus S_0$ can be identified to a half symplectisation. Thus, given $R_0 \ll 0$, we can define a function  $\mathfrak{r} \colon S \to [R_0,+\infty)$ satisfying the properties analogous to those in Section \ref{ss: liouville}. We then write $S_R \coloneqq \mathfrak{r}^{-1}[R_0,R]$ and $S_R^e = S \setminus \mathrm{int}(S_R)$.

\begin{dfn}
Let $\phi$ be the flow of ${\mathcal L}$.
The \emph{skeleton} $S^{\mathrm{sk}} \subset S$ of a Weinstein sector $(S,\theta,\mathfrak{f})$ is given by
$$ S^{\mathrm{sk}} \coloneqq \bigcap_{t >0} \phi^{-t}(S_0).$$
\end{dfn}

\begin{rem}
Let $W$ and $F$ the completion and the belt, respectively, of the Weinstein sector $S$.
To understand the skeleton $S^{\mathrm{sk}}$ it is useful to note the folowing:
\begin{enumerate}
\item critical points of $\mathfrak{f}$ on $\partial S$ are also critical points of $\mathfrak{f}|_{\partial S}$ and vice versa,
\item any critical point $p \in \partial S$ of $\mathfrak{f}$ lies inside $\{I=0\} = F$  and is also a critical point of $\mathfrak{f}_F$,
 \item the Morse indices of the two functions satisfy
  the relation
$$\op{ind}_{\mathfrak{f}}(p)=\op{ind}_{\mathfrak{f}_F}(p)+1,$$
\item the skeleton satisfies $ S^{\mathrm{sk}} \cap \partial S = F^{\mathrm{sk}}$.
\end{enumerate}
\end{rem}

The top stratum of the skeleton of $(S,\theta,\mathfrak{f})$ is given by the union of the
stable manifolds of the critical points of $\mathfrak{f}$ of index $n$, where $2n$ is the dimension of $S$. Those are of two types: the stable manifolds $\Delta_p$ where $p$ is an interior critical point of $\mathfrak{f}$, which are also stable manifolds for $\mathfrak{f}_W$ in the completion, and the stable manifolds $\Theta_p$ where $p$ is a boundary critical point of $\mathfrak{f}$,  for which   $\Delta_p' = \Theta_p \cap \partial S$ is the stable manifold of $p$ for $\mathfrak{f}_F$ in $F$.

Thus the Weinstein sector $S$ can be obtained by attaching Weinstein handles, corresponding to the critical points of $\mathfrak{f}$ in the interior of $\partial S$, and Weinstein half-handles, corresponding to the critical points of $\mathfrak{f}$ in the boundary $\partial S$. We denote by $S^{sc}$ the subcritical part of $S$, i.e.\ the union of the handles and half-handles of index less than $n$ (where $2n$ is the dimension of $S$), by  $\{\mathcal{H}_i\}$ the critical handle corresponding to $\Delta_i$ and by $\{\mathcal{H}'_j\}$ the half-handle corresponding to $\Theta_j$. Finally we also choose the function $\mathfrak{r} \colon S\rightarrow [R_0,\infty)$ as in Section \ref{ss: liouville} which furthermore satisfies
$$S_{R_0}=S^{sc}\cup \mathcal{H}_1\cup\ldots\cup \mathcal{H}_l\cup \mathcal{H}'_1\cup\ldots\cup \mathcal{H}'_{l'}$$
and modify the Liouville form $\theta$ so that $\mathcal{H}_1, \ldots, \mathcal{H}_l, \mathcal{H}'_1,\ldots, \mathcal{H}'_{l'}$ are {\em standard} Weinstein handles.

It follow the from the symplectic standard neighbourhood theorem that a collar neighbourhood of $\partial S$ is symplectomorphic to
$$ (F \times T^*(-2\epsilon,0],d\theta_F + dp\wedge dq).$$

\begin{dfn}\label{dfn:spreading}
Let $S$ be a Weinstein sector and let $L$ be a Lagrangian submanifold of its belt $F$.
The {\em spreading} of $L$ is
$$\operatorname{spr}(L)=L\times T^*_{-\epsilon}(-2\epsilon, 0] \subset
F\times T^*(-2\epsilon,0)\subset S.$$
\end{dfn}

\begin{rem}
 The spreading of $L$ depends on the choice of symplectic standard neighbourhood of the collar. However, given two different choices, the corresponding spreadings are Lagrangian isotopic. Furthermore, if $L$ is exact in $F$, then $\operatorname{spr}(L)$ is exact in $S$, and thus two different spreadings are Hamiltonian isotopic.
\end{rem}

\begin{ex}
When the Weinstein sector is the cotangent bundle of a manifold with boundary, the spreading of a cotangent fibre of $T^*\partial Q$ is simply a cotangent fibre of $T^*Q$.
\end{ex}

The proof of the following lemma is immediate.
\begin{lemma}
The cocore planes of the index $n$ half-handles of $S$ are the spreading of the cocore planes of the corresponding index $n-1$ handles of $F$.
\end{lemma}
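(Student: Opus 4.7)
The plan is to verify the identification by an explicit computation in the standard model of an index-$n$ half-handle, as follows.

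First, I would invoke the setup from the paragraphs preceding the lemma, where the half-handle $\mathcal{H}'_j$ around a boundary critical point $p \in \partial S$ of $\mathfrak{f}$ of index $n$ is assumed to be a standard Weinstein half-handle. In this local model, a neighbourhood of $p$ in $S$ is identified with the product of a standard index-$(n-1)$ Weinstein handle around $p \in F$ (containing the cocore $D_p^F$) with a neighbourhood of $(0,0)$ in $T^*(-2\varepsilon,0]$, equipped with the Liouville form $\theta_F + 2p\,dq + q\,dp$. The associated Liouville vector field is
\[
\mathcal{L} = \mathcal{L}_F - q\partial_q + 2p\partial_p,
\]
which has a hyperbolic zero of Morse index $n$ at $(p,0,0)$.

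Second, I would identify the expanding eigenspace of $\mathcal{L}$ at $p$: it is the direct sum of the $(n-1)$-dimensional expanding eigenspace of $\mathcal{L}_F$ at $p$ in $F$ (which integrates, under the $\mathcal{L}_F$-flow, to the cocore $D_p^F$ of the corresponding index-$(n-1)$ handle in $F$) and the line spanned by $\partial_p$ in the collar. Because $\{q = 0\}$ is preserved both by the half-handle field above and by the standard collar field $\mathcal{L}_F + p\partial_p$ outside the half-handle, integrating these expanding directions under the full Liouville flow produces the cocore plane of the half-handle in the global form
\[
D_p^S = D_p^F \times T^*_0(-2\varepsilon,0] \subset \partial S.
\]

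Third, I would compare $D_p^S$ with the spreading $\operatorname{spr}(D_p^F) = D_p^F \times T^*_{-\varepsilon}(-2\varepsilon,0]$ of Definition \ref{dfn:spreading}. The two Lagrangians differ only by the $q$-translation sending $q = 0$ to $q = -\varepsilon$, which is generated by a compactly supported cutoff of the Hamiltonian $H = p$ on the collar and hence is a Hamiltonian isotopy of $S$. Since exact Lagrangians are considered up to Hamiltonian isotopy, this identifies $D_p^S$ with $\operatorname{spr}(D_p^F)$, proving the lemma.

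No substantive obstacle is expected; as the statement itself notes, the proof is immediate, and the only real work is the identification of the hyperbolic structure of the standard half-handle Liouville vector field. The only subtlety worth flagging is that the unstable manifold of the boundary critical point initially lies entirely in $\partial S$ and must be pushed off the boundary by a translation in the collar direction --- which is precisely the spreading construction.
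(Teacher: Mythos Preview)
Your proposal is correct and follows the natural approach: the paper itself declares the proof ``immediate'' and gives no further argument, so your explicit verification in the standard half-handle model is exactly the computation one would carry out to justify the claim. Your identification of the unstable manifold as $D_p^F \times T^*_0(-2\varepsilon,0] \subset \partial S$ and the subsequent Hamiltonian push-off into the interior (matching the spreading over $q=-\varepsilon$) is precisely the content the paper leaves implicit; the final remark you make about needing to displace the unstable manifold from $\partial S$ is the only point worth noting, and you have noted it.
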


\subsection{Going from a Weinstein pair to a Weinstein sector}
\label{subsec:pairtosector}
In this section we describe how to associate a Weinstein sector to a Weinstein pair. We recall the definition of Weinstein pair, originally introduced in \cite{Eliash_Wein_Revi}.

\begin{dfn}\label{dfn: Weinstein pair}
A {\em Weinstein pair} $(W_0, F_0)$ is a pair of Weinstein domains $(W_0,\theta_0,\mathfrak{f}_0)$ and $(F_0,\theta_F,\mathfrak{f}_F)$ together with a codimension one Liouville embedding of $F_0$ into $\partial W_0$.
\end{dfn}

We denote the completions of $(W_0, \theta_0, \mathfrak{f}_0)$ and $(F_0, \theta_F, \mathfrak{f}_F)$ by $(W, \theta_0, \mathfrak{f}_0)$ and $(F, \theta_F, \mathfrak{f}_F)$ respectively. Let $F_1 \subset F$ be a Weinstein domain retracting on $F_0$
If $F_1$ is close enough to $F_0$,
the symplectic standard neighbourhood theorem provides us with a Liouville embedding
\begin{equation} \label{neighbourhood of F_0}
((1-3\epsilon, 1] \times [-3\delta,3\delta] \times F_1, sdu+s\theta_F) \hookrightarrow (W_0, \theta_0).
\end{equation}
Here $s$ and $u$ are coordinates on the first and second factors, respectively, and we require that the preimage of $\partial W_0$ is $\{s=1\}$
and $F_1 \subset \partial W_0$ is identified with $\{(1,0,x) :  x\in F_1\}$.
We denote $\mathcal{U}$ the image of the embedding \eqref{neighbourhood of F_0}.
After deforming $\mathfrak{f}_0$ we may assume that it is of the form $\mathfrak{f}_0(s,u,x)=s$ in the same coordinates.

Let ${\mathcal L}_F$ be the Liouville vector field of $(F, \theta_F)$. We choose a smooth function
$\mathfrak{r}_F \colon F \to [R_0,+\infty)$, $R_0 \ll 0$, such that
\begin{itemize}
\item $F_0=\mathfrak{r}_F^{-1}([R_0, 0])$,
\item $d \mathfrak{r}_F ({\mathcal L}_F) > 0$ holds inside $\mathfrak{r}_F^{-1}(R_0,+\infty)$, and
\item $d \mathfrak{r}_F({\mathcal L}_F) = 1$ holds inside $\mathfrak{r}_F^{-1}(R_0+1,+\infty)$.
\end{itemize}
For simplicity of notation we  also assume that
\begin{itemize}
\item  $F_1=\mathfrak{r}_F^{-1}([R_0, 1])$,
\end{itemize}
where $F_1$  denotes the manifold appearing in Formula \eqref{neighbourhood of F_0}. This condition is apparently a loss of generality because it cannot be satisfied for every Liouville form on $F$. However, the general case can be treated with minimal changes.

Consider the smooth function
\begin{gather*}
r \colon [-3 \delta, 3 \delta] \times F_1 \to \R,\\
r(u,x) \coloneqq 2- \left (\frac{u}{3\delta} \right)^2 - \mathfrak{r}_F(x)-c
\end{gather*}
for some small number $c>0.$
\begin{lemma}
There exists a Weinstein domain $\widetilde{W}_0 \subset W$ containing all critical points of $\mathfrak{f}_0$ and which intersects some collar neighbourhood of $W_0$ containing ${\mathcal U}$ precisely in the subset
$${\mathcal C} \coloneqq \{ s \le r(u,x)\} \subset {\mathcal U}.$$
\end{lemma}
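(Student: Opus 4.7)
The plan is to construct $\widetilde{W}_0$ as the sublevel set of a modified Lyapunov function. First I extend the symplectic standard neighbourhood $\mathcal{U}$ in all coordinate directions: via the Liouville flow of $W$ in the $s$-direction and using the completion $F$ of $F_1$ in the $x$-direction. This yields a larger open set $\mathcal{U}' \subset W$ on which $(s,u,x)$ range over $(1-3\epsilon, S) \times (-3\delta - \sigma, 3\delta + \sigma) \times F_{1+\sigma}$ for some $\sigma > 0$ and some $S > \max r$. On $\mathcal{U}'$ the Liouville form is still $s(du + \theta_F)$, and a direct computation shows that the Liouville vector field is $\mathcal{L} = s\,\partial_s$.

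On $\mathcal{U}'$ I then define
\[
\widetilde{\mathfrak{f}}(s,u,x) \coloneqq s - \chi(u,x)\bigl(r(u,x) - 1\bigr),
\]
where $\chi \colon (-3\delta-\sigma, 3\delta + \sigma) \times F_{1+\sigma} \to [0,1]$ is a smooth cutoff equal to $1$ on an open neighbourhood of $[-3\delta, 3\delta] \times F_1$ and vanishing near the boundary of its domain, while outside $\mathcal{U}'$ I set $\widetilde{\mathfrak{f}} \coloneqq \mathfrak{f}_0$ (with $\mathfrak{f}_0$ extended to $W$ in the standard way as the $s$-coordinate on the collar of $\partial W_0$). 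Because $\chi$ and $r$ depend only on $(u,x)$ while $\mathcal{L}$ only differentiates $s$, a direct calculation gives $d\widetilde{\mathfrak{f}}(\mathcal{L}) = s > 0$ throughout $\mathcal{U}'$. Together with the pseudogradient property of $\mathfrak{f}_0$ outside $\mathcal{U}'$, this shows that $\widetilde{\mathfrak{f}}$ is a Lyapunov function for $\mathcal{L}$ on all of $W$, so $\widetilde{W}_0 \coloneqq \{\widetilde{\mathfrak{f}} \le 1\}$ is a Weinstein domain in $W$.

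Where $\chi \equiv 1$, the formula reduces to $\widetilde{\mathfrak{f}} = s - r + 1$, so $\widetilde{W}_0$ there consists of the points with $s \le r(u,x)$. Letting $N$ be the open neighbourhood of $\mathcal{U}$ inside $\{\chi = 1\}$, one obtains $\widetilde{W}_0 \cap N = \mathcal{C}$, giving the prescribed intersection. Meanwhile, all critical points of $\mathfrak{f}_0$ lie in some sublevel set $\{\mathfrak{f}_0 \le \kappa\}$ with $\kappa < 1$, a compact subset of the interior of $W_0$ disjoint from the support of $\chi$; on this subset $\widetilde{\mathfrak{f}} = \mathfrak{f}_0 \le \kappa < 1$, so the critical points of $\mathfrak{f}_0$ belong to $\widetilde{W}_0$. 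The main technical point I anticipate is verifying the full pseudogradient inequality (with a uniform $\delta$) across the transition region where $\chi$ drops from $1$ to $0$: this follows from the boundedness of $\chi$, $r$ and their derivatives on the compact extended $(u,x)$-domain, together with the fact that $d\widetilde{\mathfrak{f}}(\mathcal{L}) = s$ is entirely unaffected by the $(u,x)$-cutoff.
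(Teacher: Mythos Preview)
The paper states this lemma without proof, so there is no argument in the paper to compare against. Your strategy—produce $\widetilde W_0$ as a sublevel set of a perturbed Lyapunov function—is the natural one, and the computation $d\widetilde{\mathfrak f}(\mathcal L)=s$ on $\mathcal U'$ is correct. There is, however, a genuine gap.

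Your cutoff $\chi$ depends only on $(u,x)$, so along the inner face $\{s=1-3\epsilon\}$ of $\mathcal U'$ it does \emph{not} vanish; there the two prescriptions $\widetilde{\mathfrak f}=s-\chi(r-1)$ and $\widetilde{\mathfrak f}=\mathfrak f_0$ disagree by $\chi(u,x)(r(u,x)-1)$, and $\widetilde{\mathfrak f}$ is not even continuous. This matters for the sublevel set. On $[-3\delta,3\delta]\times F_1$ the function $r(u,x)=2-(u/3\delta)^2-\mathfrak r_F(x)-c$ ranges down to $-c$ (attained near $|u|=3\delta,\ \mathfrak r_F=1$), so there certainly exist $(u,x)$ with $\chi=1$ and $r<1-3\epsilon$. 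Over such $(u,x)$ the equation $s=r$ has no solution in $\mathcal U'$; the set $\{\widetilde{\mathfrak f}\le 1\}$ along that fibre is then bounded by the discontinuity locus $\{s=1-3\epsilon\}$ itself. As $(u,x)$ varies, the boundary of $\{\widetilde{\mathfrak f}\le 1\}$ is locally $\{s=\max(r(u,x),\,1-3\epsilon)\}$, which has a corner along $\{r=1-3\epsilon\}$ and is not a smooth hypersurface. So $\widetilde W_0$ as defined is not a Weinstein domain. (There is also a discontinuity at $\{s=S\}$, but since $S>\max r$ the level set $\{\widetilde{\mathfrak f}=1\}$ never reaches it; the analogous escape does not work at $s=1-3\epsilon$.)

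To repair this you must either extend the $(s,u,x)$-chart to smaller $s$ via the backward Liouville flow so that $\{s=r\}$ can be followed down—and then still close the hypersurface off where $r\le 0$—or drop the global Lyapunov function and instead construct $\partial\widetilde W_0$ directly as a smooth closed hypersurface transverse to $\mathcal L$, arranged to coincide with $\{s=r(u,x)\}$ on the collar and rounded off elsewhere. Either route works, but neither is what you wrote.
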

The goal is now to deform the Liouville form $\theta_0$ on
$$ S_0 \coloneqq W_0 \cap \widetilde{W}_0 $$
to obtain a Liouville form $\theta$ so that the completion of $(S_0, \theta)$ is the sought Weinstein sector. The deformation will be performed inside $\mathcal{C}$. Given a smooth function $\rho \colon [1-3 \epsilon, 1] \to \R$ such that:
\begin{itemize}
\item $\rho(s)=0$ for $s \in [1-3 \epsilon, 1-2 \epsilon]$,
\item $\rho(s)= 2s-1$ for $s \in [1-\epsilon, 1]$, and
\item $\rho'(s) \ge 0$ for $s \in [1-3 \epsilon, 1]$,
\end{itemize}
we define a Liouville form $\theta_{\mathcal U}$ on $\mathcal{U}$ by
$$\theta_{\mathcal U} = s(du+\theta_F)-d(\rho(s)u).$$
The proof of the following lemma is a simple computation.
\begin{lemma}\label{modified Liouville vector field}
Let $\rho \colon [1-3 \epsilon, 1] \to \R$ be a smooth function such that:

The Liouville vector field $\mathcal{L}_{\mathcal U}$ corresponding to the Liouville form
$\theta_{\mathcal U}$ on $\mathcal{U}$ is equal to
$$ \mathcal{L}_{\mathcal U}= (s-\rho(s))\partial_s+\rho'(s)u\partial_u+\frac{\rho(s)}{s}\mathcal{L}_F.$$
\end{lemma}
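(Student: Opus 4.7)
The statement is an unambiguous coordinate identity, so the plan is simply to verify that the vector field
$$\mathcal{L}_{\mathcal U}= (s-\rho(s))\partial_s+\rho'(s)u\,\partial_u+\frac{\rho(s)}{s}\mathcal{L}_F$$
satisfies $\iota_{\mathcal{L}_{\mathcal U}} d\theta_{\mathcal U}=\theta_{\mathcal U}$ on $\mathcal{U}=(1-3\epsilon,1]\times[-3\delta,3\delta]\times F_1$. First I would rewrite the Liouville form by expanding the exact term $d(\rho(s)u)=\rho'(s)u\,ds+\rho(s)\,du$, obtaining the convenient expression
$$\theta_{\mathcal U}=(s-\rho(s))\,du+s\,\theta_F-\rho'(s)u\,ds.$$
Taking the exterior derivative and noting that $d\,d(\rho(s)u)=0$, the $\rho$-dependent contributions collapse and one is left with the clean formula
$$d\theta_{\mathcal U}=ds\wedge du+ds\wedge\theta_F+s\,d\theta_F.$$

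Next I would contract each of these three two-forms with $\mathcal{L}_{\mathcal U}$ term by term. The contraction with $ds\wedge du$ gives $(s-\rho(s))\,du-\rho'(s)u\,ds$; the contraction with $s\,d\theta_F$ only sees the $F$-component of $\mathcal{L}_{\mathcal U}$ and, by the defining property $\iota_{\mathcal L_F}d\theta_F=\theta_F$, yields $\rho(s)\,\theta_F$. For $ds\wedge\theta_F$ one gets $(s-\rho(s))\theta_F-\frac{\rho(s)}{s}\theta_F(\mathcal L_F)\,ds$, and here the key observation is the universal identity
$$\theta_F(\mathcal L_F)=\iota_{\mathcal L_F}\iota_{\mathcal L_F}d\theta_F=0,$$
which kills the unwanted $ds$-term. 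Summing the three contributions reproduces the expanded form of $\theta_{\mathcal U}$ above, which completes the verification.

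Strictly speaking there is no substantive obstacle: the proof is a bookkeeping exercise in a natural product chart and uses only two properties of $\mathcal L_F$, namely $\iota_{\mathcal L_F}d\theta_F=\theta_F$ and $\iota_{\mathcal L_F}\theta_F=0$. The mild subtlety worth flagging in writing is that the hypotheses on $\rho$ (vanishing on $[1-3\epsilon,1-2\epsilon]$, equality with $2s-1$ near $s=1$, monotonicity) play no role in the formula itself; they are needed only downstream, to guarantee that $\mathcal{L}_{\mathcal U}$ glues with the ambient Liouville vector field on one side and becomes tangent to $\partial S$ with the desired behaviour on the other side. This justifies presenting the lemma as a purely algebraic identity.
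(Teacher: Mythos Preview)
Your verification is correct and is exactly the ``simple computation'' the paper alludes to without spelling out. The only remark is that the paper itself omits the details entirely, so your write-up is more explicit than the original; the approach is the same.
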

We define the Liouville form $\theta$ on $S_0$ as $\theta|_{\mathcal C}= \theta_{\mathcal U}$ and $\theta|_{S_0 \setminus \mathcal{C}}=\theta_0$. By Lemma \ref{modified Liouville vector field} the Liouville vector field ${\mathcal L}$ of $\theta$ is transverse to $\partial S_0 \setminus \partial W_0$ and is equal to
$$(1-s)\partial_s+2u\partial_u+\frac{2s-1}{s}\mathcal{L}_F$$
in a neighbourhood of $\partial S_0 \setminus \partial W_0$; in particular it is tangent to $\partial S_0 \setminus \partial W_0$. Thus we can complete $(S_0, \theta)$ to $(S, \theta)$ by adding a half-symplectisation of $\partial S_0 \setminus \mathrm{int}(\partial W_0)$. We define a function $I \colon \partial S \to \R$ by setting $I=u$ on $\partial S_0 \cap \partial W_0$ and extending it to $\partial S$ so that $dI(\mathcal{L})=2I$ everywhere. It is easy to check that
$(S, \theta, I)$ is an exact Liouville sector.

A Lyapunov function $\mathfrak{f} \colon S_0 \to \R_+$ for ${\mathcal L}$ can be obtained by interpolating between $\mathfrak{f}_0$ on $S_0 \setminus \mathcal{C}$ and $u^2 - (s-1)^2 + \mathfrak{f}_F+C$ on $\mathcal{C} \cap \{ s \in [1-\epsilon, 1] \}$ for sufficiently large $C$. The Lyapunov function $\mathfrak{f}$ can then be extended to a Lyapunov function $\mathfrak{f} \colon S \to \R_+$ in a straightforward way. The easy verification that $(S, \theta, I, \mathfrak{f})$ is a Weinstein sector is left to the reader.

\subsection{Exact Lagrangian immersions with cylindrical end}
\begin{dfn}
  Let $(W, \theta)$ be a $2n$-dimensional Liouville manifold. An {\em exact Lagrangian immersion with cylindrical end} (or, alternately, an {\em immersed exact
Lagrangian submanifold with cylindrical end}) is an immersion $\iota \colon L \to W$ such that:
\begin{enumerate}
\item $L$ is an $n$-dimensional manifold and $\iota$ is a proper immersion which is an
embedding outside finitely many points,
\item  $\iota^* \theta = df$ for some function $f \colon L \to \R$, called the {\em potential} of
$(L, \iota)$, and
\item the image of $\iota$ is tangent to the Liouville vector field of $(W, \theta)$ outside of
a compact set of $L$.
\end{enumerate}
\end{dfn}
In the rest of the article, {\em immersed exact Lagrangian submanifold} will always mean
immersed exact Lagrangian submanifold with cylindrical end.
Note that $L$ is allowed to be compact, and in that case Condition (3) is empty: a closed immersed Lagrangian submanifold is a particular case of immersed Lagrangian submanifolds with cylindrical ends.
With an abuse of notation, we will often write $L$ either for the pair $(L, \iota)$ or for the
image $\iota(L)$.
\begin{ex}
Let $(W, \theta, \mathfrak{f})$ be a Weinstein manifold. The Lagrangian cocore planes $D_p$
introduced in Section~\ref{ss: Weinstein manifolds} are Lagrangian submanifolds
with cylindrical ends.
\end{ex}

Properness of $\iota$ and Condition (3) imply that for every immersed exact Lagrangian
submanifold with cylindrical end $\iota \colon L \to W$ there is $R>0$ sufficiently large such that $\iota(L)
\cap W_R^e$ corresponds to $[R, + \infty) \times \Lambda$ under the identification
$$(W_R^e, \theta) \cong ([R, + \infty) \times V, e^r \alpha),$$
where $\Lambda$ is a Legendrian submanifold of $(V, \xi)$. Then we say that $L$ is {\em
cylindrical over $\Lambda$}. Here $\Lambda$ can be empty (if $L$ is compact) or
disconnected.

There are different natural notions of equivalence between immersed exact Lagrangian
submanifolds. The stronger one is Hamiltonian isotopy.
\begin{dfn}
Two exact Legendrian immersions $(L, \iota_0)$ and $(L, \iota_1)$ with
cylindrical ends are Hamiltonian isotopic if there exists a function $H \colon [0,1] \times W \to \R$ with Hamiltonian flow $\varphi_t$ such that $\iota_1 = \varphi_1 \circ \iota_0$, and moreover $\iota_t = \varphi_t \circ \iota_0$ has cylindrical ends for all $t \in [0,1]$.
\end{dfn}
\begin{rem}
If $f_0 \colon L \to \R$ is the potential of $(L, \iota_0)$, by Lemma~\ref{variation of Liouville}
we can choose
\begin{equation}\label{eq: change of potential}
f_1=f_0 +  \int_0^1(- H_\sigma + \theta(X_{H_\sigma})) \circ \varphi_\sigma  d\sigma.
\end{equation}
as potential for $(L, \iota_1)$.
\end{rem}

The weakest one is exact Lagrangian regular homotopy.
\begin{dfn}
Two exact Legendrian immersions $(L, \iota_0)$ and $(L, \iota_1)$ with cylindrical ends are exact Lagrangian regular homotopic if there exists a smooth path of immersions  $\iota_t \colon L \to W$ for $t \in [0,1]$ such that $(L, \iota_t)$ is an exact Lagrangian immersion with cylindrical ends for every $t \in [0,1]$.
\end{dfn}
We recall that any exact regular homotopy $\iota_t \colon L
\to W$ can be generated by a local Hamiltonian defined on $L$ in the following sense.
\begin{lemma}
\label{lem:GeneratingHamiltonian}
An exact regular Lagrangian homotopy $\iota_t$ induces a smooth family of functions $G_t \colon L \to \R$ determined uniquely, up to a constant depending on $t$, by the requirement that the equation
$$ \iota_t^*(d\theta(\cdot,X_t)) = dG_t$$
be satisfied, where $X_t \colon L \to TW$ is the vector field along the immersion that
generates $\iota_t$. When $\iota_t$ has compact support, then $dG_t$ has compact support as well.

Conversely, any Hamiltonian $G_t \colon L \to \R$ generates an exact Lagrangian isotopy
$\iota_t \colon L \to (W, \theta)$ for any initial choice of exact immersion $\iota=\iota_0.$
\end{lemma}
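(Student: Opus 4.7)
The plan is to begin with the forward direction by differentiating the exactness relation $\iota_t^*\theta = df_t$ in the parameter $t$. Extending $X_t$ locally to a vector field on $W$ and applying Cartan's magic formula yields
\[
d\dot f_t \;=\; \frac{d}{dt}\iota_t^*\theta \;=\; \iota_t^*(\mathcal L_{X_t}\theta) \;=\; \iota_t^*(\iota_{X_t}d\theta) + d\bigl(\theta(X_t)\circ \iota_t\bigr).
\]
Rearranging and using $d\theta(\cdot,X_t) = -\iota_{X_t}d\theta$ gives
\[
\iota_t^*\bigl(d\theta(\cdot,X_t)\bigr) \;=\; d\bigl(\theta(X_t)\circ \iota_t - \dot f_t\bigr),
\]
so the one-form on the left is exact; this determines $G_t$ up to a $t$-dependent additive constant, with the explicit choice $G_t = \theta(X_t)\circ \iota_t - \dot f_t$. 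Moreover, wherever the isotopy is stationary we have $X_t \equiv 0$, so $dG_t$ vanishes there and has compact support as soon as $\iota_t$ does.

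For the converse, the plan is to recover $X_t$ from $G_t$ pointwise and then integrate. Because $\iota_t$ is required to be Lagrangian, the symplectic form $d\theta$ induces a fibrewise isomorphism of the normal bundle $(\iota_t^* TW)/TL$ with $T^*L$, so the equation $\iota_t^*(d\theta(\cdot,X_t)) = dG_t$ determines $X_t$ uniquely up to a vector field along $\iota_t$ tangent to the image, and any such ambiguity only reparametrizes the source. I would make this concrete by choosing, at each preimage of a self-intersection point of $\iota$, a Weinstein tubular neighbourhood of the relevant local branch (which exists because $\iota$ is an embedding off a finite set), extending $G_t$ to a genuine Hamiltonian on that neighbourhood, and reading off $X_t$ from the corresponding Hamiltonian flow. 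Standard ODE theory on $L$ then produces a short-time isotopy $\iota_t$, and the computation of the forward direction run backwards verifies that $\iota_t^*\theta$ stays exact for all $t$.

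The main obstacle I anticipate is exactly the handling of the self-intersection points: there is in general no single globally defined Hamiltonian on $W$ whose flow realizes the regular homotopy, since distinct local branches of $\iota$ through a common double point give rise to distinct and incompatible local extensions of $G_t$ to $W$. The plan is to sidestep this by insisting that the ODE $\partial_t \iota_t = X_t$ lives on the source $L$ rather than on $W$: solving it branch-by-branch using the independent local Hamiltonians on each Weinstein chart produces a well-defined smooth isotopy $\iota_t \colon L \to W$, which is all the lemma asks for.
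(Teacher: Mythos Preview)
The paper states this lemma without proof (it is immediately followed by a remark, not a proof environment), so there is nothing to compare against directly. Your argument is the standard one and is correct: differentiating the exactness relation $\iota_t^*\theta = df_t$ and applying Cartan's formula gives the forward direction with the explicit primitive $G_t = \theta(X_t)\circ\iota_t - \dot f_t$, and for the converse the Lagrangian condition identifies the normal bundle with $T^*L$ so that $dG_t$ determines $X_t$ up to tangential ambiguity.

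One small cleanup for the converse: rather than patching Weinstein charts branch-by-branch near the double points, it is tidier to use a single symplectic \emph{immersion} of a neighbourhood of the zero section in $(T^*L, d\mathbf{p}\wedge d\mathbf{q})$ into $W$ extending $\iota_0$ (this exists by the immersed Weinstein neighbourhood theorem, since $\iota_0$ is locally an embedding). Pulling $G_t$ back to $T^*L$ via the cotangent projection gives a globally defined Hamiltonian on $T^*L$ whose flow preserves the zero section's image under the immersion, and pushing forward yields $\iota_t$. This packages your local construction into one step and makes the short-time existence and smoothness immediate.
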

\begin{rem} If $\iota_t$ is generated by an ambient Hamiltonian isotopy, then $H$ extends
to a single-valued Hamiltonian on $W$ itself. However, this is not necessarily the case for an
arbitrary exact Lagrangian regular homotopy.
\end{rem}

The limitations of our approach to define Floer cohomology for exact Lagrangian immersions require that we work with a restricted class of exact immersed Lagrangian submanifolds.
\begin{dfn}
We say that a Lagrangian immersion $(L, \iota)$ is {\em nice} if the singularities of $\iota(L)$
are all transverse double points, and for every double point $p$ the values of the potential at
the two points in the preimage of $p$ are distinct.
Then, given a double point $p$, we will denote
$\iota^{-1}(p) = \{ p^+, p^- \}$, where $f(p^+)>f(p^-)$.
\end{dfn}

\begin{rem}
If $L$ is not connected, we can shift the potential on different connected components by
independent constants. If $\iota^{-1}(p)$ is contained in a connected component of $L$,
then $f(p^+)- f(p^-)$ is still well defined. However, if the points in $\iota^{-1}(p)$ belong
to different connected components, the choice of $p^+$ and $p^-$ in $\iota^{-1}(p)$, and
$f(p^+)- f(p^-)$, depend of the choice of potential. For technical reasons related to our
definition of Floer cohomology, it seems useful, although unnatural, to consider the potential (up
to shift by an overall constant) as part of the data of an exact Lagrangian immersion.
\end{rem}

For nice immersed exact Lagrangian submanifolds we define a stronger form of exact Lagrangian regular homotopy.
\begin{dfn}\label{dfn: safe isotopy}
Let $(L, \iota_0)$ and $(L, \iota_1)$  be nice exact Lagrangian immersed submanifolds with cylindrical ends. An exact Lagrangian regular homotopy $(L, \iota_t)$ is a {\em safe isotopy}
if $(L, \iota_t)$ is nice for every $t \in [0,1]$.
\end{dfn}

Niceness can always be achieved after a $C^1$-small exact Lagrangian regular homotopy. In the rest of this article exact Lagrangian immersions will always be assumed nice.
\subsection{Contactisation and  Legendrian lifts}
We define a contact manifold $(M, \beta)$, where $M=W \times \R$, with a coordinate $z$
on $\R$, and $\beta = dz + \theta$. We call $(M, \beta)$ the {\em contactisation} of $(W,
\theta)$. A Hamiltonian isotopy $\varphi_t \colon W \to W$ which is generated by a
Hamiltonian function $H \colon [0,1] \times W \to \R$ lifts to a contact isotopy $\psi^+_t
\colon M \to M$ such that
$$\psi_t^+ (x, z) = (\psi_t(x), z-q_t(x)),$$
where $q_t \colon W \to \R$ is the function defined in Lemma~\ref{variation of Liouville}.

An immersed exact Lagrangian $(L, \iota)$ with potential $f \colon L \to \R$ uniquely
defines a Legendrian immersion
$$\iota^+ \colon L \to W \times \R, \quad \iota^+ (x) = (\iota(x), -f(x)).$$
Moreover $\iota^+$ is an embedding when $(L, \iota)$ is nice. We denote
the image of $\iota^+$ by $L^+$ and call it the {\em Legendrian lift} of $L$.
On the other hand, any Legendrian submanifold of $(M, \beta)$ projects to an immersed
Lagrangian in $W$. This projection is called the {\em Lagrangian projection} of the Legendrian
submanifold.

Double points of $L$ correspond to Reeb chords of $L^+$, and the action (i.e.~length) of the Reeb chord
projecting to a double point $p$ is $f(p^+)-f(p^-)$. If $L$ is connected, different potentials
induce Legendrian lifts which are contact isotopic by
a translation in the $z$-direction. In particular, the action of Reeb chords is independent of the
lift. On the other hand, if $L$ is disconnected, different potentials can induce
non-contactomorphic Legendrian lifts and the action of Reeb chords between different
connected components depends on the potential.

\section{Legendrian contact cohomology}
In this section we provide an overview of Legendrian contact cohomology. We recall the notion of augmentation and exaplin how they are used to define bilinearized Legendrian contact cohomology. \color{black}
\label{sec: lch}
\subsection{The Chekanov-Eliashberg algebra}
In view of the correspondence between Legendrian submanifolds of $(M, \beta)$ and exact
Lagrangian immersions in $(W, \theta)$, Floer cohomology for Lagrangian immersions will be
a variation on the theme of Legendrian contact cohomology. The latter was proposed by
Eliashberg and Hofer and later defined rigorously by Chekanov, combinatorially, in $\R^3$
with its standard contact structure in \cite{Chekanov_DGA_Legendrian}, and by Ekholm,
Etnyre and Sullivan, analytically, in the contactisation of any Liouville manifold in
\cite{LCHgeneral}. In this subsection we summarise the analytical definition.

For $d>0$, let $\widetilde{\mathcal R}^{d+1} = \op{Conf}^{d+1}(\partial D^2)$ be the
space of parametrised discs with $d+1$ punctures on the boundary. The automorphism
group $Aut(D^2)$ acts on $\widetilde{\mathcal R}^{d+1}$ and its quotient is the
Deligne-Mumford moduli space ${\mathcal R}^{d+1}$. Given $\boldsymbol{\zeta} =
(\zeta_0, \ldots, \zeta_d) \in \widetilde{\mathcal R}^{d+1}$, we will denote
$$\Delta_{\boldsymbol{\zeta}} = D^2 \setminus \{ \zeta_0, \ldots, \zeta_d \}.$$
Following \cite{Seidel_Fukaya}, near every puncture $\zeta_i$ we will define positive and
negative universal striplike ends with coordinates $(\sigma_i^+, \tau_i^+) \in (0, + \infty)
\times [0,1]$ and $(\sigma_i^-, \tau_i^-) \in ( - \infty, 0) \times [0,1]$ respectively. We will
assume that $\sigma_i^- = - \sigma_i^+$ and $\tau_i^- = 1- \tau_i^+$.
\begin{rem}
Putting both positive and negative strip-like ends near each puncture could be useful to
compare wrapped Floer cohomology and contact cohomology, which use different conventions for positive and negative punctures.
\end{rem}

\begin{dfn}
Let $(V, \alpha)$ be a contact manifold with contact structure $\xi$ and Reeb vector field
${\mathcal R}$. An almost complex structure $J$ on $\R \times V$ is {\em cylindrical} if
\begin{enumerate}
\item $J$ is invariant under translations in $\R$,
\item $J(\partial_r)= {\mathcal R}$, where $r$ is the coordinate in $\R$,
\item $J(\xi) \subset \xi$, and $J|_{\xi}$ is compatible with $d \alpha|_{\xi}$.
\end{enumerate}
\end{dfn}

\begin{dfn}
An almost complex structure $J$ on a Liouville manifold $W$ is {\em compatible with
$\theta$} if it is compatible with $d \theta$ and, outside a compact set, corresponds to
a cylindrical almost complex structure under the identification \eqref{end as symplectisation}.
We denote by ${\mathcal J}(\theta)$ the set of almost complex structures on $W$ which are
compatible with $\theta$.
\end{dfn}
It is well known that ${\mathcal J}(\theta)$ is a contractible space.

Given an exact Lagrangian immersion $(L, \iota)$ in $W$, we will consider almost complex
structures $J$ on $W$ which satisfy the following
\begin{enumerate}[label=($\dagger$), ref=($\dagger$)]
\item \label{acs} $J$ is compatible with $\theta$, integrable in a neighbourhood of the
double points of $(L, \iota)$, and for which $L$ moreover is real-analytic near the double
points.
\end{enumerate}
We will denote the set of double points of $(L, \iota)$ by $D$.

Let $u \colon \Delta_{\boldsymbol{\zeta}} \to W$ be a $J$-holomorphic map with boundary in
$L$. If $u$ has finite area and no puncture at which the lift of $u|_{\partial \Delta_{\boldsymbol{\zeta}}}$ to $L$ has a continuous extension, then $\lim \limits_{z \to \zeta_i} u(z)=p_i$ for some $p_i \in D$. Since the boundary of $u$ switches branch near $p_i$, the following dichotomy thus makes sense:
\begin{dfn}\label{positive negative}
We
say that $\zeta_i$ is a {\em positive puncture} at $p_i$ if
$$\lim \limits_{\sigma_i^+ \to + \infty} (\iota^{-1} \circ u)(\sigma_i^+, 0) = p_i^+$$
and that $\zeta_i$ is a {\em negative puncture} at $p_i$ if
$$\lim \limits_{\sigma_i^- \to - \infty} (\iota^{-1} \circ u)(\sigma_i^-, 0) = p_i^+.$$
\end{dfn}

Let $L$ be an immersed exact Lagrangian. If $p_1, \ldots, p_d$ are double points of $L$
(possibly with repetitions), we denote by $\widetilde{\mathfrak{N}}_L(p_0; p_1, \ldots,
p_d; J)$ the set of pairs $(\boldsymbol{\zeta}, u)$ where:
\begin{enumerate}
\item $\boldsymbol{\zeta} \in \widetilde{R}^{d+1}$ and
$u \colon \Delta_{\boldsymbol{\zeta}} \to W$ is a $J$-holomorphic map,
\item $u(\partial \Delta_{\boldsymbol{\zeta}}) \subset L$, and
\item $\zeta_0$ is a positive puncture at $p_0$ and $\zeta_i$, for $i= 1, \ldots, d$, is a
negative puncture at $p_i$.
\end{enumerate}
The group $\op{Aut}(D^2)$ acts on $\widetilde{\mathfrak{N}}_L(p_0; p_1, \ldots,
p_d; J)$ by reparametrisations; the quotient is the moduli space $\mathfrak{N}_L(p_0; p_1,
\ldots, p_d; J)$. Note that the set $p_1, \ldots, p_d$ can be empty. In this case, the elements
of the moduli spaces $\mathfrak{N}(p_0; J)$ are called {\em teardrops}.

Given $u \in \widetilde{\mathfrak{N}}_L(p_0; p_1, \ldots, p_d; J)$, let $D_u$ be the
linearisation of the Cauchy-Riemann operator at $u$.  By standard Fredholm theory,
$D_u$ is a Fredholm operator with index $\op{ind}(D_u)$.
We define the {\em index} of $u$ as
$$\op{ind}(u)= \op{ind}(D_u)+ d -2.$$
It is locally constant, and we denote by
$\mathfrak{N}^k_L(p_0; p_1, \ldots, p_d; J)$ the subset of
$\mathfrak{N}_L(p_0; p_1, \ldots, p_d; J)$ consisting of classes of maps $u$ with $\op{ind}(u)
=k$.

The following proposition is a version of \cite[Proposition 2.3]{LCHgeneral}:
\begin{prop}
For a generic $J$ satisfying the condition \ref{acs},  the moduli space
$$\mathfrak{N}^k_L(p_0; p_1, \ldots, p_d; J)$$ is a transversely cut out
manifold of dimension $k$. In particular, if $k<0$ it is empty; if $k=0$ it is compact, and therefore consists of a finite number of points; and if $k=1$, it can be compactified in the sense of Gromov, see
\cite[Section 2.2]{LCHgeneral}.

The boundary of the compactification of the moduli space
$\mathfrak{N}^1_L(p_0; p_1, \ldots, p_d; J)$ is
\begin{equation}\label{compactification of N^1}
\bigsqcup_{q \in D} \bigsqcup_{0 \le i < j \le d} \mathfrak{N}^0_L(p_0; p_1, \ldots,
p_i , q, p_{j+1}, \ldots, p_d; J) \times  \mathfrak{N}^0_L(q; p_{i+1}, \ldots, p_j; J).
\end{equation}
If $L$ is spin, the moduli spaces are orientable and a choice of spin structure induces a
coherent orientation of the moduli spaces; see \cite{Ekholm_&_Orientation_Homology}.
\end{prop}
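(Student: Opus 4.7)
The plan is to follow the template of \cite[Proposition 2.3]{LCHgeneral}, which establishes precisely the analogous statement in the symplectisation setting; under the contactisation correspondence described earlier, our moduli spaces of $J$-holomorphic discs in $W$ with boundary on the immersed Lagrangian $L$ are in bijection with the corresponding moduli of $J$-holomorphic discs with boundary on the Legendrian lift $L^+$, so only notational changes are needed. Three standard ingredients need to be set up: transversality, compactness, and identification of the codimension-one boundary.

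For transversality I would run the Sard--Smale argument on the universal moduli space over the Banach manifold of almost complex structures satisfying \ref{acs}. The nontrivial point is that admissible perturbations are constrained to the complement of fixed neighbourhoods of the double points of $L$, where $J$ is required to be integrable. The standard way around this is to show that every non-constant curve $u \in \widetilde{\mathfrak{N}}_L(p_0; p_1, \ldots, p_d; J)$ admits an injective interior point $z_0 \in \Delta_{\boldsymbol{\zeta}}$ with $u(z_0)$ outside the integrable neighbourhoods: otherwise, real-analyticity of $J$ and of $L$ in those neighbourhoods together with the totally real boundary condition would force $u$ to be contained in a proper complex analytic subvariety, contradicting the assumed asymptotic convergence at the positive puncture. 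Perturbing $J$ in a small ball around $u(z_0)$ then cuts out the universal moduli space transversely, and a standard Sard--Smale argument produces a comeagre set of regular $J$. The dimension formula is then Fredholm index additivity: $\op{ind}(u) = \op{ind}(D_u) + \dim {\mathcal R}^{d+1}$ with $\dim {\mathcal R}^{d+1} = d-2$.

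For compactness in dimension $k \in \{0,1\}$, sphere and disc bubbling are ruled out by exactness of $(W, \theta)$ and of $L$; escape into the cylindrical end of $W$ is ruled out by the integrated maximum principle, which applies because $L$ is cylindrical over a Legendrian in that end; and the integrability hypothesis \ref{acs} guarantees the expected exponential convergence of curves at the double-point punctures, so that the only possible degeneration is Reeb-chord breaking at some double point $q$. In the case $k=0$, additivity of the index and transversality exclude any breaking, so $\mathfrak{N}^0_L$ is already compact and hence finite. In the case $k=1$, only a single breaking at a single double point $q$ can occur; matching positive and negative punctures one identifies the resulting boundary stratum exactly with the disjoint union in formula \eqref{compactification of N^1}, and a standard Floer-theoretic gluing theorem provides a collar neighbourhood of each broken configuration. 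Coherent orientations come from the general scheme of \cite{Ekholm_&_Orientation_Homology}. The main obstacle is the transversality step: the integrability constraint \ref{acs} is indispensable for the asymptotic analysis at double points but obstructs free perturbation of $J$, and the unique-continuation argument above is what reconciles these two requirements.
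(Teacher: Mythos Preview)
Your proposal is correct and aligned with the paper: the paper does not give its own proof of this proposition but simply states it as a version of \cite[Proposition 2.3]{LCHgeneral}, and your sketch follows exactly that reference, supplying the standard transversality/compactness/gluing outline together with the orientation reference to \cite{Ekholm_&_Orientation_Homology}. There is nothing to add beyond noting that the paper treats this as a black box from the literature, whereas you have unpacked the contents of that box.
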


\begin{dfn}
We say that an almost complex structure $J$ on $W$ is $L$-regular if it satisfies $(\dagger)$ for $L$ and all moduli spaces
$\mathfrak{N}_L(p_0; p_1, \ldots, p_d; J)$ are transversely cut out.
\end{dfn}

To a Legendrian submanifold $L^+$ of $(M, \beta)$ we can associate a differential graded
algebra $(\mathfrak{A}, \mathfrak{d})$ called the {\em Chekanov-Eliashberg algebra}
(or {\em Legendrian contact cohomology algebra}) of $L^+$. As an algebra, $\mathfrak{A}$
is the free unital noncommutative algebra generated by the double points of the Lagrangian
projection $L$ or, equivalently, by the Reeb chords of $L^+$. The grading takes values in
$\Z / 2 \Z$ and is simply given by the self-intersection of the double points. If $2c_1(W)=0$
and the Maslov class of $L^+$ vanishes, it can be lifted to an integer valued grading by the
Conley-Zehnder index. We will not make explicit use of the integer grading, and therefore we
will not describe it further, referring the interested reader to \cite[Section 2.2]{LCHgeneral}
instead.

The differential $\mathfrak{d}$ is defined on the generators as:
  $$\mathfrak{d}(p_0) = \sum_{d \ge 0} \sum_{p_1, \ldots, p_d} \#  \mathfrak{N}_L^0(p_0; p_1,
\ldots, p_n;J) p_1 \ldots p_d.$$
According to \cite[Proposition 2.6]{LCHgeneral}, the Chekanov-Eliashberg algebra is a
Legendrian invariant:
\begin{thm}[\cite{LCHgeneral}] \label{dga invariance}
If $L_0^+$ and $L_1^+$ are Legendrian isotopic Legendrian submanifolds of $(M, \beta)$,
then their Chekanov-Eliashberg algebras $(\mathfrak{A}^0, \mathfrak{d}_0)$ and
$(\mathfrak{A}^1, \mathfrak{d}_1)$ are stable tame isomorphic.\color{black}
\end{thm}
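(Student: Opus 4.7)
My plan is to adapt the standard bifurcation-theoretic proof of invariance for Legendrian contact homology, following the template of Chekanov in dimension three and Ekholm--Etnyre--Sullivan in higher dimensions \cite{LCHgeneral}.

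First I would perturb the given Legendrian isotopy $L_t^+$, $t \in [0,1]$, together with a choice of a one-parameter family $J_t$ of $L_t$-regular almost complex structures, so that the pair $(L_t^+, J_t)$ is generic in a one-parametric sense. Concretely, for all but finitely many values of $t$ the pair is chord-generic, meaning that all Reeb chords are transverse and the moduli spaces $\mathfrak{N}^k_{L_t}(p_0;p_1,\ldots,p_d;J_t)$ with $k \le 0$ are transversely cut out; at the finitely many exceptional parameters the isotopy undergoes one of three standard codimension-one bifurcations: a birth or death of a pair of Reeb chords at a self-tangency moment, a triple-point degeneration, or a handle-slide where a parametric moduli space of nominal index $-1$ becomes rigid and zero-dimensional.

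Next, between two consecutive bifurcation times the set of Reeb chords together with the signed counts of rigid holomorphic discs are constant, so the Chekanov--Eliashberg algebras at two such parameters are canonically tame isomorphic by identifying generators via the isotopy; this is a consequence of compactness and gluing applied over a compact interval of parameters in conjunction with the description \eqref{compactification of N^1} of broken configurations. The substantive step is then to show that each individual bifurcation changes the DGA only by a stable tame isomorphism. For a birth or death the algebras $\mathfrak{A}^\pm$ differ by two new free generators $a,b$ with $\mathfrak{d}a = b + (\textrm{higher order})$, which is exactly the definition of a stabilisation. For triple-point and handle-slide moments, one counts the rigid discs in the attached one-parameter moduli space to write down an explicit elementary automorphism of the form $\phi(c) = c + w_c$, with $w_c$ a word in the remaining generators; the intertwining relation $\phi \circ \mathfrak{d}^- = \mathfrak{d}^+ \circ \phi$ then follows from the signed count of the boundary points of the one-parameter moduli space at the bifurcation, which decomposes according to the usual SFT breaking into pieces of the form \eqref{compactification of N^1}.

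The hard part is the signed compactness and gluing analysis at handle-slide moments, together with checking that the parametric moduli spaces are transversely cut out and carry coherent orientations; this is the technical core of \cite{LCHgeneral} (together with the orientation formalism of \cite{Ekholm_&_Orientation_Homology}). Since our ambient contact manifold is the contactisation of a general Liouville manifold rather than $\R^{2n+1}$, I would also need to verify that the relevant SFT-type compactness in $W \times \R$ and the transversality arguments go through in this setting; these modifications are purely analytic (monotonicity of area, a priori $C^0$ bounds from the cylindrical structure at infinity) and do not affect the algebraic structure of the proof, so in practice I would quote the cited results directly and only record the adaptation to the cylindrical-end Liouville setup.
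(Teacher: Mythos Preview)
The paper does not supply its own proof of this theorem: it is stated as a citation of \cite[Proposition~2.6]{LCHgeneral} and used as a black box. Your sketch is essentially the bifurcation-analysis argument carried out in that reference, so in spirit you are reproducing the cited proof rather than the paper's (nonexistent) one.

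One small comment on your outline: in the analytic setup of \cite{LCHgeneral} for contactisations of Liouville manifolds, the generic one-parameter bifurcations are just (i) birth/death of a cancelling pair of Reeb chords and (ii) handle-slides coming from isolated index $-1$ discs in the parametrised moduli space. The ``triple-point degeneration'' you list is a feature of Chekanov's combinatorial front-projection approach in $\R^3$ and does not appear as a separate case in the analytic bifurcation analysis; such moves are absorbed into the handle-slide case or produce no change at all. Apart from this terminological point, your plan matches the structure of the cited proof.
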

The definition of stable tame isomorphism of DGAs was introduced by Chekanov in
\cite{Chekanov_DGA_Legendrian}, and then discussed by Ekholm, Etnyre and Sullivan
in \cite{LCHgeneral}. We will not use it in this article but note that on the homological level a stable tame isomorphism induces an isomorphism.

\subsection{Bilinearised Legendrian contact cohomology}
Differential graded algebras are difficult objects to manipulate, and therefore Chekanov
introduced a linearisation procedure. The starting point of this procedure is the existence of an augmentation.
\begin{dfn}
Let $\mathfrak{A}$ be a differential graded algebra over a commutative ring $\F$. An
{\em augmentation} of $\mathfrak{A}$ is a unital differential graded algebra morphism
$\varepsilon \colon \mathfrak{A} \to \F$.
\end{dfn}

Let $L_0^+$ and $L_1^+$ be Legendrian submanifolds of $(M, \beta)$ with Lagrangian
projections $L_0$ and $L_1$ with potentials $f_0$ and $f_1$ respectively. We recall that the
 potential is the negative of the $z$ coordinate.
We will assume that $L_0^+$ and $L_1^+$ are {\em chord generic},
which means in this case that $L_0^+ \cap L_1^+ = \emptyset$ and all singularities of
$L_0 \cup L_1$ are transverse double points.

Let $\mathfrak{A}_0$ and $\mathfrak{A}_1$ be the Chekanov-Eliashberg algebras of
$L_0^+$ and $L_1^+$ respectively. Let $\varepsilon_0 \colon \mathfrak{A}_0 \to \F$ and
$\varepsilon_1 \colon \mathfrak{A}_1 \to \F$ be augmentations. Now we describe
the construction of the {\em bilinearised Legendrian contact cohomology} complex
$\mathrm{LCC}_{\varepsilon_0, \varepsilon_1}(L_0^+, L_1^+;J)$.

First, we introduce some notation. We denote by $D_i$ the set of double points of
$L_i$ (for $i=1,2$) and by ${\mathcal C}$ the intersection points of $L_0$ and $L_1$ such that $f_0(q) < f_1(q)$. Double points in $D_i$ correspond to
Reeb chords of $L_i^+$, and double points in ${\mathcal C}$ corresponds to Reeb
chords from $L_1^+$ to $L_0^+$ (note the order!). We define the exact immersed Lagrangian $L =
L_0 \cup L_1$ and, for $\mathbf{p}^0= (p_1^0, \ldots, p_{l_0}^0) \in D_0^{l_0}$,
$\mathbf{p}^1= (p_1^1, \ldots, p_{l_1}^1) \in D_1^{l_1}$ and $q_\pm \in {\mathcal C}$,
we denote $$\mathfrak{N}_L^i(q_+; \mathbf{p}^0 , q_-, \mathbf{p}^1 ;J):=
\mathfrak{N}_L^i(q_+;  p_1^0, \ldots, p_{l_0}^0, q_-, p_1^1, \ldots, p_{l_1}^1; J),$$ where
$J$ is an $L$-regular almost complex structure. If $\varepsilon_i$ is an
augmentation of $\mathfrak{A}_i$, we denote $\varepsilon_i(\mathbf{p}^i):=
\varepsilon_i(p_1^i) \cdots \varepsilon_i(p_{l_i}^i)$.

As an $\F$-module,  $\mathrm{LCC}_{\varepsilon_0, \varepsilon_1}(L_0^+, L_1^+;J)$ is freely generated by
 the set ${\mathcal C}$ and the differential of a generator $q_- \in {\mathcal C}$ is defined
as
\begin{equation}\label{LCH differential}
\partial_{\varepsilon_0, \varepsilon_1} (q_-) = \sum_{q_+ \in \mathcal C} \sum_{l_0, l_1 \in \N}
\sum_{\mathbf{p}^i \in D_i^{l_i}} \# \mathfrak{N}_L^0(q_+; \mathbf{p}^0 , q_-, \mathbf{p}^1;
J) \varepsilon_0(\mathbf{p}^0) \varepsilon_1(\mathbf{p}^1) q_+.
\end{equation}
The bilinearised Legendrian contact cohomology $\mathrm{LCH}_{\varepsilon_0, \varepsilon_1}(L_0^+, L_1^+)$
is the homology of this complex. The set of isomorphism classes of bilinearised
Legendrian contact cohomology groups is independent of
the choice of $J$ and is a Legendrian isotopy invariant by the
adaptation of Chekanov's argument from
\cite{Chekanov_DGA_Legendrian} due to the first author and Bourgeois
\cite{augcat}.

\section{Floer cohomology for exact Lagrangian immersions}
\label{sec: Floer homology for immersions}
In this section we define a version of Floer cohomology for exact Lagrangian immersions. Recall that our exact Lagrangian immersions are equipped with choices of potentials making their Legendrian lifts embedded. This is not new material; similar or even more general accounts can be found, for example,
in \cite{Akaho:Immersed}, \cite{ExactGradedImmersedLagrFloerTheory} and
\cite{DehnTwistsExactSequenceThroughLagrangianCobordisms}.
\subsection{Cylindrical Hamiltonians}
\begin{dfn}\label{cylindrical Hamiltonian}
Let $W$ be a Liouville manifold.
A Hamiltonian function $H \colon [0,1] \times W \to \R$ is {\em cylindrical} if there is a
function $h \colon \R^+ \to \R$ such that $H(t, w)=h(e^{\fr(w)})$ outside a compact set of
$W$.
\end{dfn}
The following example describes the behaviour of the Hamiltonian vector field of a cylindrical
Hamiltonian in an end of $W$, after taking into account the identification \eqref{end as
symplectisation}.
\begin{ex}
Let $(V, \alpha)$ be a contact manifold with Reeb vector field ${\mathcal R}$ and let
$(\R \times V, d(e^r \alpha))$ be its symplectisation. Given a smooth function
$h \colon \R^+ \to \R$, we define the autonomous Hamiltonian $H \colon \R \times V \to \R$
by $H(r, v) = h(e^r)$. Then the Hamiltonian vector field of $H$ is
$$X_H(r, v)= h'(e^r){\mathcal R}(r, v).$$
\end{ex}

Let $(L_0, \iota_0)$ and $(L_1, \iota_1)$ be two immersed exact Lagrangian submanifolds of
$W$ with cylindrical ends over Legendrian submanifolds $\Lambda_0$ and $\Lambda_1$ of
$(V, \alpha)$. Given a cylindrical Hamiltonian $H \colon [0,1] \times W \to \R$, we denote
by ${\mathcal C}_H$ --- or simply ${\mathcal C}$ when there is no risk of confusion ---
the set of Hamiltonian chords $x \colon
[0,1] \to W$ of $H$ such that $x(0) \in L_0$ and $x(1) \in L_1$.
If $\varphi_t$ denotes the Hamiltonian flow of $H$, then ${\mathcal C}_H$
is in bijection with $\varphi_1(L_0) \cap L_1$.
\begin{dfn}\label{compatible hamiltonian}
A cylindrical Hamiltonian $H \colon [0,1] \times W \to \R$, with Hamiltonian flow $\varphi_t$,
is {\em compatible} with $L_0$ and $L_1$ if
\begin{itemize}
\item[(i)] no starting point or endpoint of a chord $x \in {\mathcal C}_H$ is a
double point of $(L_0, \iota_0)$ or $(L_1, \iota_1)$,
\item[(ii)] $\varphi_1(L_0)$ intersects $L_1$ transversely,
\item[(iii)] for $\rho$ large enough $h'(\rho)= \lambda$ is constant, and
\item[(iv)] all time-one Hamiltonian chords from $L_0$ to $L_1$ are contained in a compact
region.
\end{itemize}
\end{dfn}
Condition (iv) is equivalent to asking that $\lambda$ should not be the
length of a Reeb chord from $\Lambda_0$ to $\Lambda_1$.
\begin{rem}
If cylindrical Hamiltonian $H$ is {\em compatible} with $L_0$ and $L_1$, then
${\mathcal C}_H$ is a finite set.
\end{rem}

\subsection{Obstructions}
\label{sec: obstructions}

If one tries to define Floer cohomology for immersed Lagrangian submanifolds by extending the
usual definition naively, one runs into the problem that the ``differential'' might not square
to zero because of the bubbling of teardrops in one-dimensional families of Floer strips. Thus,
if $(L, \iota)$ is an immersed Lagrangian submanifold and $J$ is $L$-regular, we define a map
$\mathfrak{d}^0 \colon D \to \Z$ by
$$\mathfrak{d}^0 (p) = \# \mathfrak{N}_L^0(p;J)$$
and extend it by linearity to the free module generated by $D$.
The map $\mathfrak{d}^0$ is called the {\em obstruction} of $(L, \iota)$. If
$\mathfrak{d}^0=0$ we say that $(L, \iota)$ is {\em uncurved}.

Typically, asking that an immersed Lagrangian submanifold be unobstructed is too
much, and a weaker condition will ensure that Floer cohomology can be defined.
We observe that $\mathfrak{d}^0$ is a component of the Chekanov-Eliashberg algebra
of the Legendrian lift of $L$, and make the following definition.

\begin{dfn}
Let $(L, \iota)$ be an immersed exact Lagrangian submanifold. The {\em obstruction algebra}
$(\mathfrak{D}, \mathfrak{d})$ of $(L, \iota)$ --- or of $(L, \iota, J)$ when the almost
complex structure is not clear from the context --- is the Chekanov-Eliashberg algebra of the
Legendrian lift $L^+$.
\end{dfn}

If $L$ is connected, its obstruction algebra $(\mathfrak{D}, \mathfrak{d})$ does not
depend on the potential. On the other hand, if $L$ is disconnected, the potential differences
at the double points of $L$ involving different connected components, and therefore what
holomorphic curves are counted in $(\mathfrak{D}, \mathfrak{d})$, depend on the choice
of the potential.

\begin{dfn}
An exact immersed Lagrangian $(L, \iota)$ is {\em unobstructed} if $(\mathfrak{D},
\mathfrak{d})$ admits an augmentation.
\end{dfn}
Unobstructedness does not depend on the choice of $L$-regular almost complex structure and is invariant under general Legendrian isotopies as a consequence of Theorem~\ref{dga invariance} (this fact will not be needed). However, we will need the invariance statement from the following proposition.
\begin{prop}\label{invariance of D}
If $L_0$ and $L_1$ are safe isotopic exact Lagrangian immersions, and $J_0$ and $J_1$ are $L_0$-regular and $L_1$-regular almost complex structures respectively, then the obstruction algebras $(\mathfrak{D}_0, \mathfrak{d}_0)$ of $L_0$ and $(\mathfrak{D}_1, \mathfrak{d}_1)$ of $L_1$ are isomorphic.
In particular, there is a bijection between the augmentations of $(\mathfrak{D}_0,
\mathfrak{d}_0)$ and the augmentations of $(\mathfrak{D}_1, \mathfrak{d}_1)$.

\end{prop}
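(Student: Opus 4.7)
The plan is to reduce the statement to the invariance of the Chekanov--Eliashberg algebra under Legendrian isotopy (Theorem~\ref{dga invariance}). The key geometric observation is that a safe isotopy $\iota_t \colon L \to W$ lifts canonically to a Legendrian isotopy $\iota_t^+ \colon L \to W \times \R$ in the contactisation: by Lemma~\ref{lem:GeneratingHamiltonian} the regular homotopy determines (up to a $t$-dependent constant) a smoothly varying family of potentials $f_t$, and niceness of $\iota_t$ at every $t \in [0,1]$ (transverse double points with distinct potential values at the two preimages) is precisely what ensures that each lift $\iota_t^+$ is an embedded Legendrian submanifold.

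Once the Legendrian isotopy is in hand, Theorem~\ref{dga invariance} yields a stable tame isomorphism between the Chekanov--Eliashberg algebras, which already induces a bijection between the sets of augmentations (augmentations extend uniquely across stabilisations). To upgrade this to a genuine DGA isomorphism I would exploit the following extra structure: along a safe isotopy, the double points of $\iota_t$---equivalently the Reeb chords of $\iota_t^+$---are in bijection throughout, because niceness is an open and transverse condition, so no chord birth/death bifurcations can occur. Inspecting the bifurcation analysis of \cite{LCHgeneral}, the only codimension-one degenerations encountered in the parametric moduli spaces of holomorphic discs are then handle-slide type breakings, and these produce a tame (hence honest) isomorphism $\Phi \colon (\mathfrak{D}_0, \mathfrak{d}_0) \to (\mathfrak{D}_1, \mathfrak{d}_1)$ with no stabilisation required. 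The bijection on augmentations is then simply pullback along $\Phi^{-1}$.

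For the stronger statement in the case $J_1 = (\varphi_1)_* J_0$, no bifurcation argument is needed. The push-forward $u \mapsto \varphi_1 \circ u$ provides canonical bijections
\[
\mathfrak{N}^k_{L_0}(p_0; p_1, \ldots, p_d; J_0) \;\xrightarrow{\;\sim\;}\; \mathfrak{N}^k_{L_1}(\varphi_1(p_0); \varphi_1(p_1), \ldots, \varphi_1(p_d); J_1)
\]
preserving indices and, for compatible choices of spin structures, coherent orientations. The map $p \mapsto \varphi_1(p)$ on generators therefore extends directly to an isomorphism of differential graded algebras.

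The main obstacle is entirely contained in the second paragraph: one must justify that chord-preserving Legendrian isotopies are realised by parametric chain maps that are \emph{honest} tame isomorphisms rather than merely stable tame ones. This is a careful but routine adaptation of the standard bifurcation arguments of \cite{Chekanov_DGA_Legendrian, LCHgeneral}, and it is precisely the place where the hypothesis of \emph{safe} isotopy---as opposed to an arbitrary exact Lagrangian regular homotopy---is essential, since a non-safe isotopy would generically pass through moments where two sheets of $\iota_t(L)$ become tangent, producing births and deaths of Reeb chords in the lift.
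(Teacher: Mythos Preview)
Your proposal is correct and follows the same overall strategy as the paper---lift the safe isotopy to a Legendrian isotopy and use bifurcation analysis, observing that no birth/death of chords occurs so the resulting DGA map is a genuine isomorphism rather than merely stable tame.

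The paper organises the argument slightly differently, and the difference is worth noting. Rather than invoking Theorem~\ref{dga invariance} as a black box and then inspecting its proof to see that stabilisations are absent, the paper first \emph{reduces} to the case of a fixed immersion with varying almost complex structure: one breaks the safe isotopy into $C^2$-small pieces, each realised by a diffeomorphism $\psi$ close to the identity; the pullback $\psi^*J$ remains tame and $L$-regular, and the bijection on double points induced by $\psi$ is then tautologically an isomorphism of obstruction algebras. This leaves only the comparison of $(\mathfrak{D},\mathfrak{d})$ and $(\mathfrak{D},\widetilde{\mathfrak{d}})$ for two $L$-regular almost complex structures on the \emph{same} $L$, which is handled by the bifurcation analysis in a one-parameter family $J^\bullet$ (only index $-1$ discs appear, yielding tame automorphisms). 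Your route and the paper's arrive at the same place; the paper's decomposition has the minor advantage of making the bifurcation analysis entirely self-contained (no need to open up the cited proof of Legendrian invariance), while yours is conceptually economical in that it piggybacks on the existing invariance theorem.
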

\begin{proof}
The safe isotopy between $L_0$ and $L_1$ induces a Legendrian isotopy between the Legendrian lifts $L_0^+$ and $L_1^+$ without birth or death of Reeb chords.
By \cite[Proposition 2.6]{LCHgeneral} the Chekanov-Eliashberg algebras of $L_0^+$ and $L_1^+$ are stably tame isomorphic, and moreover stabilisation occurs only at the birth or death of a Reeb chord.
\end{proof}
Proposition~\ref{invariance of D} is the main reason why we have made the choice of
distinguishing between the obstruction algebra of a Lagrangian immersion and the
Chekanov-Eliashberg algebra of its Legendrian lift. In fact Legendrian submanifolds are
more naturally considered up to Legendrian isotopy. However, in this article we will consider
immersed Lagrangian submanifolds only up to the weaker notion of safe isotopy.
\subsection{The differential}
We denote by $Z$ the strip $ \R \times [0,1]$ with coordinates $(s,t)$. Let
$\widetilde{\mathcal R}^{l_0|l_1} \cong \op{Conf}^{l_0}(\R) \times \op{Conf}^{l_1}(\R)$
be the set of pairs $(\boldsymbol{\zeta}^0, \boldsymbol{\zeta}^1)$ such that
$\boldsymbol{\zeta}^0= \{ \zeta_1^0, \ldots, \zeta_{l_0}^0 \} \subset \R \times \{ 0 \}$ and
$\boldsymbol{\zeta}^1= \{ \zeta_{l_1}^1, \ldots, \zeta_1^1 \} \subset \R \times \{ 1 \}$. We
assume that the $s$-coordinates of $\zeta_j^i$ are increasing in $j$ for $\zeta_j^0$ and
decreasing for $\zeta_j^1$.  We define
$$Z_{\boldsymbol{\zeta}^0, \boldsymbol{\zeta}^1}:= Z \setminus \{\zeta_1^0, \ldots,
\zeta_{l_0}^0, \zeta_1^1, \ldots, \zeta_{l_1}^1  \}.$$
The group $\op{Aut}(Z)=\R$ acts on $\widetilde{\mathcal R}^{l_0|l_1}$.

In the rest of this section we will assume that $H$ is compatible with $(L_0, \iota_0)$ and
$(L_1, \iota_1)$.
We will consider time-dependent almost complex structure $J_\bullet$ on $W$
which satisfy the following
\begin{enumerate}[label=($\dagger \dagger$), ref=($\dagger \dagger$)]
\item \label{double acs}
\begin{itemize}
\item[(i)] $J_t$ is compatible with $\theta$ for all $t$,
\item[(ii)] $J_t = J_0$ for $t \in [0, 1/4]$ in a neighbourhood of the double points of $L_0$
and $J_t=J_1$ for $t \in [3/4,1]$ in a neighbourhood of the double points of $L_1$,
\item[(iii)] $J_0$ satisfies \ref{acs} for $L_0$ and  $J_1$ satisfies \ref{acs} for $L_1$.
\end{itemize}
\end{enumerate}
Condition (ii) is necessary to ensure that $J_\bullet$ is independent of the coordinate
$\sigma_{i,j}^-$ in some neighbourhoods of the boundary punctures $\zeta_j^i$, so that
we can apply standard analytical results.

For the same reason we fix once and for all a nondecreasing function $\chi \colon \R \to [0,1]$
such that
$$\chi(t) = \begin{cases}
0 & \text{for } t \in [0, 1/4], \\
1 &  \text{for } t \in [3/4, 1]
\end{cases}$$
and $\chi'(t) \le 3$ for all $t$. It is easy to see that, if $\varphi_t$ is the Hamiltonian
flow of a Hamiltonian function $H$,  then $\varphi_{\chi(t)}$ is the Hamiltonian flow
of  the Hamiltonian function $H'$ such that $H'(t,w)=\chi'(t) H(\chi(t), w)$. We will
use $\chi'$ to cut off the Hamiltonian vector field in the Floer equation to ensure that
it has the right invariance properties in the strip-like ends corresponding to the boundary
punctures $\zeta_j^i$.

Given Hamiltonian chords
$x_+, x_- \in {\mathcal C}$ and self-intersections $p_1^0, \ldots,
p_{l_0}^0$ of $L_0$ and $p_1^1, \ldots, p_{l_1}^1$ of $L_1$ we define the moduli space
$$\widetilde{\mathfrak{M}}_{L_0, L_1}( p_1^1, \ldots, p_{l_1}^1, x_- , p_1^0,
\ldots, p_{l_0}^0, x_+; H, J_\bullet)$$
of triples $(\boldsymbol{\zeta}^0, \boldsymbol{\zeta}^1, u)$ such that
\begin{itemize}
\item $(\boldsymbol{\zeta}^0, \boldsymbol{\zeta}^1) \in \widetilde{\mathcal R}^{l_0|l_1}$
and $u \colon Z_{\boldsymbol{\zeta}^0, \boldsymbol{\zeta}^1} \to W$ is a map satisfying the
 Floer equation
\begin{equation}\label{eq: Floer}
\frac{\partial u}{\partial s} + J_t \left (\frac{\partial u}{\partial t} -  \chi'(t)
X_{H}( \chi(t), u) \right )=0,
\end{equation}
\item $\lim \limits_{s \to \pm \infty} u(s,t) = x_\pm(\chi(t))$ uniformly in $t$,
\item $u(s,0) \in L_0$ for all $(s,0) \in Z_{\boldsymbol{\zeta}^0, Z_{\boldsymbol{\zeta}^1}}$,
\item $u(s,1) \in L_1$ for all $(s,1) \in Z_{\boldsymbol{\zeta}^0, Z_{\boldsymbol{\zeta}^1}}$,
\item $\lim \limits_{z \to \zeta_j^i} u(z) = p_j^i$, and
\item $\zeta^i_j$ is a negative puncture at $p^i_j$ for $i = 0,1$ and $j = 1, \ldots, l_i$.
\end{itemize}
The group $\op{Aut}(Z)= \R$ acts on $\widetilde{\mathfrak{M}}_{L_0, L_1}( p_1^1, \ldots,
p_{l_1}^1, x_- , p_1^0, \ldots, p_{l_0}^0, x_+; H, J_\bullet)$ by
reparametrisations. We will denote the quotient by
$$\mathfrak{M}_{L_0, L_1}( p_1^1, \ldots, p_{l_1}^1, x_- , p_1^0, \ldots, p_{l_0}^0,
x_+; H, J_\bullet).$$
For $u \in \widetilde{\mathfrak{M}}_{L_0, L_1}( p_1^1, \ldots, p_{l_1}^1,
x_- , p_1^0, \ldots, p_{l_0}^0, x_+; H, J_\bullet)$, we denote by $F_u$
the linearisation of the Floer operator
$${\mathcal F}(u)= \frac{\partial u}{\partial s} + J_t \left (\frac{\partial u}{\partial t} -
 \chi'(t) X_{H}(\chi(t), u) \right )$$
at $u$. By standard Fredholm theory, $F_u$ is a Fredholm operator with index
$\op{ind}(F_u)$. We define
$$\op{ind}(u)= \op{ind}(F_u)+ l_0+l_1.$$
The index is locally constant, and we denote by
$$\mathfrak{M}^k_{L_0, L_1}( p_1^1, \ldots, p_{l_1}^1, x_- , p_1^0,
\ldots, p_{l_0}^0, x_+; H, J_\bullet)$$
the subset of $\mathfrak{M}_{L_0, L_1}( p_1^1, \ldots, p_{l_1}^1, x_- , p_1^0, \ldots,
p_{l_0}^0, x_+; H, J_\bullet)$ consisting of classes of maps $u$ with index
$\op{ind}(u)=k$.

Observe that similar construction for closed, exact, graded, immersed
Lagrangian submanifolds was considered by Alston and Bao in
\cite{ExactGradedImmersedLagrFloerTheory}, where the regularity
statement appears in \cite[Proposition 5.2]{ExactGradedImmersedLagrFloerTheory}
and compactness is discussed in \cite[Section 4]{ExactGradedImmersedLagrFloerTheory}.
In addition, the corresponding statement in the case of Legendrian contact cohomology in
$P\times \R$ was proven by Ekholm, Etnyre and Sullivan, see
\cite[Proposition 2.3]{LCHgeneral}. The following proposition translates those compactness
and regularity statements to the settings under consideration.

\begin{prop}\label{moduli spaces of Floer trajectories}
For a generic time-dependent almost complex structure $J_\bullet$ satisfying \ref{double acs},
for which moreover $J_0$ is $L_0$-regular and  $J_1$ is $L_1$-regular, the moduli space
$$\mathfrak{M}^k_{L_0, L_1}( p_1^1, \ldots, p_{l_1}^1, x_- , p_1^0, \ldots, p_{l_0}^0, x_+; H,
J_\bullet)$$
is a transversely cut-out manifold of dimension $k-1$. If $k=1$ it is compact, and
therefore consists of a finite number of points. If $k=2$ it can be compactified in
the sense of Gromov-Floer.

The boundary of the compactification of the one-dimensional moduli space\\
$\mathfrak{M}^2_{L_0, L_1}( p_1^1, \ldots, p_{l_1}^1, x_- , p_1^0, \ldots, p_{l_0}^0, x_+; H,
J_\bullet)$ is
\begin{align}\label{compactification of M^2}
 \bigsqcup_{y \in {\mathcal C}_H} \bigsqcup_{0 \le h_i \le l_i} & \mathfrak{M}^1_{L_0, L_1}(p_{h_1+1}^1, \ldots, p_{l_1}^1, y, p_1^0,
\ldots, p_{h_0}^0, x_+; H, J_\bullet) \\
& \times  \mathfrak{M}^1_{L_0, L_1} (p_1^1,
\ldots, p_{h_1}^1, x_-, p^0_{h_0+1}, \ldots, p^0_{l_0}, y; H, J_\bullet) \nonumber \\
\bigsqcup_{q \in D_1} \bigsqcup_{0 \le i \le j \le l_1} & \mathfrak{M}^1_{L_0, L_1}( p_1^1, \ldots,
p_i^1, q, p_{j+1}^1, \ldots p_{l_1}^1, x_-, p_1^0, \ldots, p_{l_0}^0, x_+; H, J_\bullet) \nonumber \\
& \times \mathfrak{N}^0_{L_1}(q; p_{i+1}^1, \ldots, p_j^1; J_1) \nonumber \\
 \bigsqcup_{q \in D_0} \bigsqcup_{0 \le i \le j \le l_0} & \mathfrak{M}^1_{L_0, L_1}( p_1^1, \ldots,
p_{l_1}^1, x_-, p_1^0, \ldots, p_i^0, q, p_{j+1}^0, \ldots p_{l_0}^0, x_+; H, J_\bullet) \nonumber \\
& \times \mathfrak{N}^0_{L_0}(q; p_{i+1}^0, \ldots, p_j^0; J_0). \nonumber
\end{align}

If both $L_0$ and $L_1$ are spin the moduli spaces are orientable, and a choice of spin
structure on each Lagrangian submanifold induces a coherent orientation on the moduli
spaces.
\end{prop}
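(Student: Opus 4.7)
My plan is to adapt two existing frameworks: the pseudo-holomorphic curve theory for curves with boundary punctures at Lagrangian double points developed by Ekholm-Etnyre-Sullivan \cite{LCHgeneral} and Alston-Bao \cite{ExactGradedImmersedLagrFloerTheory}, and the standard Floer-theoretic analysis of strips with time-dependent Hamiltonian perturbation as in \cite{Seidel_Fukaya}. The key compatibility point is that by condition (ii) of \ref{double acs} combined with the choice of cutoff $\chi$, in a neighborhood of each boundary puncture $\zeta_j^i$ the Floer equation \eqref{eq: Floer} reduces to the plain $J_i$-holomorphic curve equation (the term $\chi'(t)X_H$ vanishes and $J_t=J_i$ is time-independent). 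Each such puncture therefore behaves asymptotically exactly like a negative puncture of a teardrop at $p_j^i$, and the asymptotic Fredholm theory is identical to that of \cite{LCHgeneral}.

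For transversality I proceed in two regions. Near each boundary puncture the $L_i$-regularity of $J_i$ is built in, and the integrability of $J_i$ plus real-analyticity of $L_i$ at the double points (part of \ref{acs}) justifies the asymptotic analysis and linearisation argument of \cite{LCHgeneral}. Away from the punctures and the double points one perturbs $J_t$ freely on $t\in (1/4,3/4)$; a standard Sard-Smale argument combined with the unique continuation/somewhere-injectivity of the interior of a Floer strip yields transversality of the full parametrised moduli space. The index formula $\op{ind}(u)=\op{ind}(F_u)+l_0+l_1$ is the usual Riemann-Roch calculation for a Cauchy-Riemann operator on a punctured strip with Lagrangian boundary, and the free $\R$-action by translation produces the quotient dimension $k-1$.

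For compactness, energy is bounded in terms of the action of the chord asymptotes and the potential differences at the puncture asymptotes, while a $C^0$ bound on the image follows from the integrated maximum principle on the cylindrical end of $W$; condition (iv) of Definition \ref{compatible hamiltonian} ensures that the slope $\lambda$ is not the length of a Reeb chord from $\Lambda_0$ to $\Lambda_1$, so all configurations remain inside a fixed compact set. Exactness of $\theta$ precludes sphere bubbling and disc bubbling on either Lagrangian, so the only codimension-one degenerations occurring in a generic one-parameter family of index $2$ Floer strips are strip breaking at an intermediate Hamiltonian chord $y \in {\mathcal C}_H$ and teardrop bubbling at a double point of $L_0$ or of $L_1$. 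A soft rescaling argument identifies the limit with an element of one of the three product moduli spaces appearing in \eqref{compactification of M^2}, and the corresponding gluing theorem (Floer-strip gluing as in \cite{Seidel_Fukaya} and teardrop gluing as in \cite{LCHgeneral}) shows that these products exhaust the boundary. Coherent orientations in the spin case follow from \cite{Ekholm_&_Orientation_Homology} adapted to two Lagrangian boundary conditions in the standard way.

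The main obstacle is that the rigidity requirements of \ref{acs} and \ref{double acs} (integrability of $J_i$ and real-analyticity of $L_i$ at double points) forbid free perturbation of $J_\bullet$ in neighborhoods of the punctures. The resolution is to restrict the perturbation class to $J_t$ that is fixed on those neighborhoods and on $t\in [0,1/4]\cup[3/4,1]$, and to use the exponential decay of Floer strips at the boundary punctures so that somewhere-injectivity on the interior alone suffices to conclude transversality of the moduli space as a whole; this is precisely the mechanism already present in \cite{LCHgeneral}, and the addition of the Floer perturbation $\chi'(t)X_H$ in the interior does not interfere with it.
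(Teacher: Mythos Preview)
Your proposal is correct and aligns with the paper's own treatment: the paper does not give a proof of this proposition at all, but simply states it after remarking that the regularity and compactness statements are translations of \cite[Proposition~5.2 and Section~4]{ExactGradedImmersedLagrFloerTheory} and \cite[Proposition~2.3]{LCHgeneral} to the present setting. Your sketch fills in exactly the adaptation the paper leaves implicit, citing the same sources and correctly isolating the key compatibility point (that near each boundary puncture the cutoff $\chi$ and condition~(ii) of \ref{double acs} reduce the Floer equation to the unperturbed $J_i$-holomorphic equation, so the EES asymptotic and gluing analysis applies verbatim).
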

\begin{rem}
We use different conventions for the index of maps involved in the definition of the
obstruction algebra and for maps involved in the definition of Floer cohomology. Unfortunately
this can cause some confusion, but it is necessary to remain consistent with the standard
conventions in the literature.
\end{rem}

\begin{dfn}
We say that a time-dependent almost complex structure $J_\bullet$ on $W$ is
$(L_0, L_1)$-regular if
it satisfies \ref{double acs} for $L_0$ and $L_1$ and all moduli spaces
$$\mathfrak{M}^k_{L_0, L_1}( p_1^1, \ldots, p_{l_1}^1, x_- , p_1^0, \ldots, p_{l_0}^0, x_+; H,
J_\bullet)$$ are transversely cut out.
\end{dfn}
Note that, strictly speaking, the condition of being $(L_0, L_1)$-regular depends also on the
Hamiltonian, even if we have decided to suppress it from the notation.

Suppose that the obstruction algebras $\mathfrak{D}_0$ and $\mathfrak{D}_1$ admit
augmentations $\varepsilon_0$ and $\varepsilon_1$ respectively. To simplify the notation
we write $\mathbf{p}^i = (p_1^i, \ldots, p^i_{l_i})$,
$$\mathfrak{M}^k_{L_0,L_1}(\mathbf{p}^1, x_- , \mathbf{p}^0, x_+; H, J_\bullet) =
\mathfrak{M}^k_{L_0, L_1}( p_1^1, \ldots, p_{l_1}^1, x_- , p_1^0, \ldots, p_{l_0}^0, x_+;
H, J_\bullet)$$
 and $\varepsilon_i({\mathbf{p}^i}) = \varepsilon_i(p^1_1) \ldots \varepsilon_i(p^i_{l_i})$
for $i=0,1$. We also introduce the weighted count
$$\mathfrak{m}(\mathbf{p}^1, x_- , \mathbf{p}^0, x_+) =
\# \mathfrak{M}^0_{L_0,L_1}(\mathbf{p}^1, x_- , \mathbf{p}^0, x_+; H, J_\bullet)
\varepsilon_0(\mathbf{p}^0)\varepsilon_1(\mathbf{p}^1).$$

Then we define the Floer complex over the commutative ring $\F$
$$\mathrm{CF}((L_0, \varepsilon_0), (L_1, \varepsilon_1); H, J_\bullet) = \bigoplus_{x \in {\mathcal C}} \F x$$
with differential
$$\partial \colon \mathrm{CF}((L_0, \varepsilon_0), (L_1, \varepsilon_1); H, J_\bullet)  \to
 \mathrm{CF}((L_0, \varepsilon_0), (L_1, \varepsilon_1); H, J_\bullet)$$
defined on the generators by
\begin{equation}\label{eq: immersed differential}
\partial x_+ = \sum_{x_- \in {\mathcal C}_H} \sum_{l_0, l_1 \in \N} \sum_{\mathbf{p}^i \in D_i^{l_i}}
 \mathfrak{m}(\mathbf{p}^1, x_- , \mathbf{p}^0, x_+) x_-.
\end{equation}

The algebraic interpretation of the Gromov-Floer compactification of the one-dimensional
moduli spaces in Proposition~\ref{moduli spaces of Floer trajectories} is that $\partial^2=0$.
We will denote the homology by $\mathrm{HF}((L_0, \varepsilon_0), (L_1, \varepsilon_1);H)$. The
suppression of $J_\bullet$ from the notation is justified by Subsection~\ref{sss: changing J}.

\subsection{Comparison with bilinearised Legendrian contact cohomology}
In this subsection we compare the Lagrangian Floer cohomology of a
pair of immersed exact Lagrangian submanifolds with the bilinearised Legendrian contact cohomology of a particular Legendrian lift of theirs.

Let $L_0$ and $L_1$ be exact Lagrangian immersions, $H \colon [0,1]
\times W \to \R$ a Hamiltonian function compatible with $L_0$ and
$L_1$ with Hamiltonian flow $\varphi_t$, and $J_\bullet$ an $(L_0,
L_1)$-regular almost complex structure. We will introduce the
``backward'' isotopy $\overline{\varphi}_t = \varphi_1 \circ
\varphi_t^{-1},$ where $\varphi_1 \circ \varphi_t^{-1} \circ \varphi_1^{-1}$ can be generated by the Hamiltonian $ -H(t,\varphi_t^{-1} \circ \varphi_1^{-1}).$

Given an almost complex structure $J_\bullet$ and an arbitrary path $\phi_t$ of symplectomorphisms, we denote by
$\phi_* J_\bullet$ the almost complex structure
defined as $\phi_*J_t = d \phi_{\chi(t)}
\circ J_t \circ d \phi_{\chi(t)}^{-1}$.  The time
rescaling by $\chi$ ensures that $\phi_*J_\bullet$
satisfies \ref{double acs} for $\phi_1(L_0)$ and $L_1$ if and only if
$J_\bullet$ does for $L_0$ and $L_1$.

\begin{lemma}\label{lemma: naturality}
Denote by $\boldsymbol{0}$ the constantly zero function on $W$ and
set $q_{\pm}=x_\pm(1) \in \varphi_1(L_0) \cap L_1$, regarded as
Hamiltonian chords of $\boldsymbol{0}$. There is a bijection
$$\overline{\varphi}_* \colon \mathfrak{M}_{L_0, L_1}(\mathbf{p}^1, x_- , \mathbf{p}^0,
x_+; H, J_\bullet) \to \mathfrak{M}_{\varphi_1(L_0),
L_1}(\mathbf{p}^1, q_- , \varphi_1(\mathbf{p}^0), q_+;
\boldsymbol{0}, \overline{\varphi}_*J_\bullet)$$ defined by
$(\overline{\varphi}_*u)(s,t) = \overline{\varphi}_t(u(s,t))$, and
moreover $J_\bullet$ is $(L_0, L_1)$-regular if and only if
$\overline{\varphi}_*J_\bullet$ is $(\varphi_1(L_0), L_1)$-regular.
\end{lemma}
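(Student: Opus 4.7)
The plan is to verify that the assignment $u\mapsto\overline{\varphi}_*u$ (with the $\chi$-reparametrization of $\overline{\varphi}_t$ understood so as to be consistent with the cutoff in the Floer equation \eqref{eq: Floer}, i.e.\ using $\overline{\varphi}_{\chi(t)}$ pointwise) intertwines the two moduli problems, and then to transfer regularity by comparing linearizations. First I would verify the boundary and asymptotic conditions. Because $\overline{\varphi}_0=\varphi_1$ and $\overline{\varphi}_1=\mathrm{id}$, the boundary condition $u(s,0)\in L_0$ becomes $\tilde u(s,0)=\varphi_1(u(s,0))\in \varphi_1(L_0)$, while $u(s,1)\in L_1$ is preserved; the limits at a lower puncture $\zeta_j^0$ become $\varphi_1(p_j^0)$, and at an upper puncture $\zeta_j^1$ remain $p_j^1$. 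For the strip-like ends, any Hamiltonian chord of $H$ satisfies $x_\pm(\tau)=\varphi_\tau(x_\pm(0))$, so $\overline{\varphi}_\tau(x_\pm(\tau))=\varphi_1(x_\pm(0))=x_\pm(1)=q_\pm$, confirming that $\tilde u$ is asymptotic to the constant chord $q_\pm$ of $\boldsymbol{0}$.

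Second, I would derive the Floer equation for $\tilde u$. The time-dependent vector field generating $\overline{\varphi}_\tau=\varphi_1\circ\varphi_\tau^{-1}$ is $\dot{\overline{\varphi}}_\tau=-d\overline{\varphi}_\tau\circ X_H(\tau,\cdot)$, a direct computation. Differentiating $\tilde u(s,t)=\overline{\varphi}_{\chi(t)}(u(s,t))$ and using this identity, the Hamiltonian correction $\chi'(t)X_H(\chi(t),u)$ in \eqref{eq: Floer} is cancelled exactly by the contribution $\chi'(t)\dot{\overline{\varphi}}_{\chi(t)}$ coming from the time-dependence of $\overline{\varphi}_{\chi(t)}$. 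Applying $d\overline{\varphi}_{\chi(t)}$ to \eqref{eq: Floer} then yields
$$\partial_s\tilde u+(\overline{\varphi}_*J)_t\,\partial_t\tilde u=0,$$
which is the unperturbed Floer equation with almost complex structure $\overline{\varphi}_*J_\bullet$. The two-sided inverse is constructed analogously using $\overline{\varphi}_{\chi(t)}^{-1}$, which establishes the bijection of moduli spaces; the index formula $\op{ind}(u)=\op{ind}(F_u)+l_0+l_1$ is preserved because the number of boundary punctures is unchanged.

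Third, for the regularity claim I would observe that fibrewise multiplication by $d\overline{\varphi}_{\chi(t)}$ is a bounded invertible isomorphism between the weighted Sobolev spaces of sections of $u^*TW$ and $\tilde u^*TW$ used to set up the Fredholm theory for the two operators. This isomorphism intertwines the linearized Floer operators $F_u$ and $F_{\tilde u}$, so they are simultaneously surjective and have equal indices. This yields the equivalence of $(L_0,L_1)$-regularity and $(\varphi_1(L_0),L_1)$-regularity.

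The main obstacle is essentially one of bookkeeping: matching the cutoff $\chi$ between the Floer equation, the definition of $\overline{\varphi}_*J_\bullet$, and the pushforward $\tilde u$ so that the cancellation of Hamiltonian terms is exact, and ensuring that $\overline{\varphi}_*J_\bullet$ still satisfies \ref{double acs} with respect to the pair $(\varphi_1(L_0),L_1)$ (integrability near double points transfers because $\overline{\varphi}_{\chi(t)}$ is a diffeomorphism constant in $t$ near $t=0,1$, modulo a generic further perturbation if real-analyticity near double points is required). The immersed nature of $L_0$ and $L_1$ plays no essential role, since $\overline{\varphi}_{\chi(t)}$ sends double points to double points and breaking configurations in \eqref{compactification of M^2} to breaking configurations tautologically.
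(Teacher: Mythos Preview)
Your proposal is correct and follows essentially the same approach as the paper: both arguments compute $\partial_s\tilde u$ and $\partial_t\tilde u$ by the chain rule, observe that the time-derivative of $\overline{\varphi}_{\chi(t)}$ exactly cancels the Hamiltonian term $\chi'(t)X_H(\chi(t),u)$, and conclude that $d\overline{\varphi}_{\chi(t)}$ intertwines the two linearised Floer operators. You are in fact more careful than the paper about the $\chi$-reparametrisation (the paper writes $\overline{\varphi}_t$ in the statement and proof where $\overline{\varphi}_{\chi(t)}$ is meant), and you spell out the boundary and asymptotic verifications that the paper leaves implicit; the observation about $(\dagger\dagger)$ for $\overline{\varphi}_*J_\bullet$ is handled in the paper just before the lemma.
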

\begin{proof}
Since
\begin{align*}
\frac{\partial v}{\partial s}(s,t) & = d \overline{\varphi}_t \left
(\frac{\partial u}{\partial s}
(s,t) \right) \quad \text{and} \\
\frac{\partial v}{\partial t}(s,t) & = d \overline{\varphi}_t \left
(\frac{\partial u}{\partial t} (s,t) - \chi'(t) X(\chi(t), u(s,t))
\right ),
\end{align*}
$u$ satisfies the Floer equation with Hamiltonian $H$ if and only if
$\overline{\varphi}_*u$ satisfies the Floer equation with
Hamiltonian $\boldsymbol{0}$. The map $\overline{\varphi}_*$ is
invertible because $\overline{\varphi}_t$ is for each $t$. Finally,
we observe that $d \overline{\varphi}$ intertwines the linearised
Floer operators at $u$ and $v$.
\end{proof}

Given two Legendrian submanifolds $\Lambda_0$ and $\Lambda_1$ in $(W
\times \R, dz+\theta)=(M, \beta)$, we say that $\Lambda_0$ is {\em
above} $\Lambda_1$ if the $z$-coordinate of any point of $\Lambda_0$
is larger than the $z$-coordinate of any point of $\Lambda_1$.
\begin{lemma}\label{CF and LCC compared}
Let $L_0$ and $L_1$ be immersed exact Lagrangian submanifolds of
$(W, \theta)$ and $H$ a cylindrical Hamiltonian compatible with
$L_0$ and $L_1$. We denote by $\varphi_t$ the Hamiltonian flow of
$H$ and $\widetilde{L}_0= \varphi_1(L_0)$. We choose Legendrian
lifts of $\widetilde{L}_0$ and $L_1$ to Legendrian submanifolds
$\widetilde{L}_0^+$ and $L_1^+$ of $(M, \beta)$ such that
$\widetilde{L}_0^+$ is above $L_1^+$. If $\widetilde{J}$ is an
$L$-regular almost complex structure on $W$ for $L=\widetilde{L}_0
\cup L_1$, let $J_\bullet =
(\overline{\varphi}_t^{-1})_*\widetilde{J}$. For every pair of
augmentations  $\varepsilon_0$ and $\varepsilon_1$ of the
obstruction algebras of $(L_0, J_0)$ and $(L_1, J_1)$ respectively,
there is an isomorphism of complexes
$$\mathrm{CF}((L_0, \varepsilon_0), (L_1, \varepsilon_1); H, J_\bullet) \cong
\mathrm{LCC}_{\widetilde{\varepsilon}_0, \varepsilon_1}(\widetilde{L}_0^+,
L_1^+; \widetilde{J})$$ where $\widetilde{\varepsilon}_0 =
\varepsilon_0 \circ \varphi_1^{-1}$ is an augmentation of the
obstruction algebra of $(\widetilde{L}_0, \widetilde{J})$.
\end{lemma}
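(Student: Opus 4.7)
The proof is a direct unwinding of definitions, once Lemma \ref{lemma: naturality} is applied. The first step is to note that with $J_\bullet = (\overline{\varphi}_t^{-1})_* \widetilde{J}$, the pushforward $(\overline{\varphi}_t)_* J_\bullet = \widetilde{J}$ is time-independent, so Lemma \ref{lemma: naturality} gives a bijection
$$\mathfrak{M}_{L_0, L_1}(\mathbf{p}^1, x_-, \mathbf{p}^0, x_+; H, J_\bullet) \;\longleftrightarrow\; \mathfrak{M}_{\widetilde{L}_0, L_1}(\mathbf{p}^1, q_-, \varphi_1(\mathbf{p}^0), q_+; \boldsymbol{0}, \widetilde{J}),$$
where $q_\pm = x_\pm(1)$. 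Since the Hamiltonian term in the Floer equation vanishes on the right, its elements are just unperturbed $\widetilde{J}$-holomorphic strips with boundary on $\widetilde{L} = \widetilde{L}_0 \cup L_1$, with negative boundary punctures at the double points $\varphi_1(\mathbf{p}^0)$ and $\mathbf{p}^1$, and asymptotes $q_\pm$ at $s = \pm\infty$.

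Viewing such a strip as a disc with $2 + l_0 + l_1$ boundary punctures, I would then identify it with an element of the LCH moduli space $\mathfrak{N}^0_{\widetilde{L}}(q_+; \varphi_1(\mathbf{p}^0), q_-, \mathbf{p}^1; \widetilde{J})$ counted in the bilinearised differential \eqref{LCH differential}. The assumption that $\widetilde{L}_0^+$ lies above $L_1^+$ enters at two points. First, it ensures that every $q \in \widetilde{L}_0 \cap L_1$ satisfies $\widetilde{f}_0(q) < f_1(q)$, so $q \in \mathcal{C}$ and the generating sets of the two complexes are canonically identified via $x \mapsto x(1)$. Second, it pins down the end at $s = +\infty$ as a positive puncture in the sense of Definition \ref{positive negative}, and the end at $s = -\infty$ as a negative puncture, so that the moduli spaces indexed in \eqref{eq: immersed differential} and in \eqref{LCH differential} correspond under the bijection above.

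Finally, to match coefficients, Proposition \ref{invariance of D} applied to the Hamiltonian isotopy $\varphi_1$ supplies an isomorphism of obstruction algebras sending $p \mapsto \varphi_1(p)$; hence $\widetilde{\varepsilon}_0 = \varepsilon_0 \circ \varphi_1^{-1}$ is a bona fide augmentation and satisfies $\widetilde{\varepsilon}_0(\varphi_1(\mathbf{p}^0)) = \varepsilon_0(\mathbf{p}^0)$, so the augmentation-weighted counts agree term by term. The main obstacle is the bookkeeping of puncture signs: at a point $q$ with $\widetilde{L}_0^+$ above $L_1^+$, the preimage $q^+$ of higher potential lies on $L_1$, and one must verify that the $s \to +\infty$ end of the Floer strip, whose $t = 1$ boundary sits on $L_1$ and limits to $q_+$, is indeed a positive puncture as defined in Definition \ref{positive negative}. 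A secondary, more routine point is the coherence of the induced orientations on the moduli spaces, which follows from the fact that $d\overline{\varphi}$ intertwines the linearised Floer operators on both sides.
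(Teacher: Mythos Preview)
Your approach is the paper's own: apply Lemma~\ref{lemma: naturality} to pass to the complex with zero Hamiltonian, then identify the resulting $\widetilde{J}$-holomorphic strips with the punctured discs counted by the bilinearised differential~\eqref{LCH differential}. The augmentation matching via Proposition~\ref{invariance of D} is also handled correctly.

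However, the puncture-sign bookkeeping that you flag as the main obstacle is resolved incorrectly: the end at $s = -\infty$ is the \emph{positive} puncture and the end at $s = +\infty$ is \emph{negative}, not the other way round. One sees this by comparing actions. For an intersection point $q \in \widetilde{L}_0 \cap L_1$ viewed as a constant chord of $\boldsymbol{0}$, Equation~\eqref{action of a chord} gives $\mathcal{A}(q) = \widetilde{f}_0(q) - f_1(q) = -\mathfrak{a}(q)$. A nonconstant strip has positive energy, so $\mathcal{A}$ strictly decreases from $s=+\infty$ to $s=-\infty$, and hence $\mathfrak{a}$ strictly \emph{increases}; the asymptote at $s=-\infty$ therefore has the larger $\mathfrak{a}$, which is the defining property of the positive puncture. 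The paper makes this explicit: its biholomorphism $\Delta_{\boldsymbol{\zeta}} \cong Z_{\boldsymbol{\zeta}^0, \boldsymbol{\zeta}^1}$ sends the positive puncture $\zeta_0$ to $s = -\infty$ and the distinguished negative mixed puncture to $s = +\infty$. The notation is treacherous here because the symbols $q_\pm$ in Lemma~\ref{lemma: naturality} (where $q_\pm := x_\pm(1)$) have the opposite meaning from the $q_\pm$ in \eqref{LCH differential} (where $q_+$ denotes the positive puncture). With the correct assignment the identification reads
\[
\mathfrak{M}^0_{\widetilde{L}_0, L_1}\bigl(\mathbf{p}^1, q_-, \varphi_1(\mathbf{p}^0), q_+;\boldsymbol{0},\widetilde{J}\bigr) \;\cong\; \mathfrak{N}^0_{\widetilde{L}}\bigl(q_-;\, \varphi_1(\mathbf{p}^0),\, q_+,\, \mathbf{p}^1;\widetilde{J}\bigr),
\]
and then the Floer differential $\partial x_+ = \sum x_-$ matches the LCC differential on the nose under $x \mapsto x(1)$. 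With your reversed assignment it would instead match the transpose of the LCC differential, and the claimed isomorphism of complexes would fail.
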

\begin{proof}
By Lemma~\ref{lemma: naturality} there is an isomorphism of
complexes
$$\mathrm{CF}((L_0, \varepsilon_0), (L_1, \varepsilon_1); H, J_\bullet) \cong
\mathrm{CF}((\widetilde{L}_0, \widetilde{\varepsilon}_0), (L_1,
\varepsilon_1); \boldsymbol{0}, \widetilde{J}).$$ By definition the
obstruction algebras of $\widetilde{L}_0$ and $L_1$ are isomorphic
to the Chekanov-Eliashberg algebras of $\widetilde{L}_0^+$ and
$L_1^+$. As the chain complexes $\mathrm{CF}((\widetilde{L}_0,
\widetilde{\varepsilon}_0), (L_1, \varepsilon_1); \boldsymbol{0},
\widetilde{J})$ and $\mathrm{LCC}_{\widetilde{\varepsilon}_0,
\varepsilon_1}(\widetilde{L}_0^+, L_1^+; \widetilde{J})$ are both
generated by intersection points between $\widetilde{L}_0$ and
$L_1$ it remains only to match the differentials.

For any $i = 1, \ldots, d$ and $\boldsymbol{\zeta}= \{ \zeta_0,
\ldots, \zeta_d\} \in \widetilde{\mathcal R}^d$ there is a
biholomorphism $\Delta_{\boldsymbol{\zeta}} \cong
Z_{\boldsymbol{\zeta}^0, \boldsymbol{\zeta}^1}$, where
$\boldsymbol{\zeta}^0 = \{ \zeta_1, \ldots, \zeta_{i-1} \}$,
$\boldsymbol{\zeta}^1= \{ \zeta_{i+1}, \ldots, \zeta_d \}$,
$\zeta_i$ is mapped to $s = + \infty$ and $\zeta_0$ is mapped to
$s=- \infty$. Such biholomorphisms induce bijections between the
moduli spaces defining the boundary of $\mathrm{CF}((\widetilde{L}_0,
\widetilde{\varepsilon}_0), (L_1, \varepsilon_1); \boldsymbol{0},
\widetilde{J})$ and the moduli spaces defining the boundary of
$\mathrm{LCC}_{\widetilde{\varepsilon}_0, \varepsilon_1}(\widetilde{L}_0^+,
L_1^+; \widetilde{J})$.
\end{proof}

\subsection{Products}
\label{CF products}
After the work done for the differential, the higher order products can be easily defined.
For simplicity, in this section we will consider immersed exact Lagrangian submanifolds $L_0, \ldots, L_d$
which are pairwise transverse and cylindrical over chord generic Legendrian submanifolds.
Thus the generators of the Floer complexes will be intersection points, which we will assume
to be disjoint from the double points. The routine modifications needed to introduce
Hamiltonian functions into the picture are left to the reader.

Given $d \le 2$ and
$l_i \ge 0$ for $i=0, \ldots, d$ we define
$$\widetilde{\mathcal R}^{l_0| \ldots |l_d}= \op{Conf}^{l_0+ \ldots + l_d +d+1}(\partial D^2)$$
where $d+1$ points $\zeta_0^m, \ldots, \zeta_d^m$ (ordered counterclockwise) are labelled
as {\em mixed} and the other ones $\zeta^i_j$, with $i=0, \ldots, d$ and $j=1, \ldots, l_i$
(ordered counterclockwise and contained in the sector from $\zeta_i^m$ to $\zeta_{i+1}^m$)
are labelled as {\em pure}.
Given
$\boldsymbol{\zeta} \in \widetilde{\mathcal R}^{l_0| \ldots |l_d}$, we denote
$\Delta_{\boldsymbol{\zeta}} = D^2 \setminus \boldsymbol{\zeta}$. For $i=0, \ldots, d$ let
$\partial_i \Delta_{\boldsymbol{\zeta}}$ be the subset of $\partial
\Delta_{\boldsymbol{\zeta}}$ whose closure in $\partial D^2$ is the counterclockwise arc from
$\zeta_i^m$ to $\zeta_{i+1}^m$.

We will consider also a (generic) domain dependent almost complex structure $J_\bullet$ such
that every $J_z$, $z \in \Delta_{\boldsymbol{z}}$ satisfies \ref{acs}, and moreover is of the
form \ref{double acs} at the strip-like ends of the mixed punctures and is constant in a
neighbourhood of the arcs $\partial_i \Delta_{\boldsymbol{\zeta}}$ outside those strip-like ends.

Finally we define the moduli spaces $\mathfrak{M}_{L_0, \ldots, L_d}(\mathbf{p}^d, x_0,
\mathbf{p}^0, x_1, \ldots, \mathbf{p}^{d-1}, x_d; J_\bullet)$
of pairs $(u, \boldsymbol{\zeta})$ (up to action of $Aut(D^2)$), where:
\begin{itemize}
\item $\boldsymbol{\zeta} \in  \widetilde{\mathcal R}^{l_0| \ldots |l_d}$ and $u \colon
\Delta_{\zeta} \to W$ satisfies $du + J_\bullet \circ du \circ i =0$,
\item $u(\partial_i \Delta_{\boldsymbol{\zeta}}) \subset L_i$,
\item $\lim \limits_{z \to \zeta_i^m} u(z) = x_i$,
\item $\lim \limits_{z \to \zeta_j^i} u(z)= p_j^i$, and
\item $p_j^i$ is a negative puncture at $\zeta_j^i$ for $i=0, \ldots, d$ and $j = 1, \ldots, l_i$.
\end{itemize}
As usual, we denote by $\mathfrak{M}_{L_0, \ldots, L_d}^0(\mathbf{p}^d, x_0,
\mathbf{p}^0, x_1, \ldots, \mathbf{p}^{d-1}, x_d; J_\bullet)$ the
zero-dimensional part of the moduli spaces.

If $\varepsilon_0, \ldots, \varepsilon_d$ are augmentations for the corresponding Lagrangian
immersions, we define the weighted count
\begin{align*}
& \mathfrak{m}(\mathbf{p}^d, x_0, \mathbf{p}^0, x_1, \ldots, \mathbf{p}^{d-1}, x_d) = \\
&\# \mathfrak{M}_{L_0, \ldots, L_d}^0(\mathbf{p}^d, x_0,
\mathbf{p}^0, x_1, \ldots, \mathbf{p}^{d-1}, x_d; J_\bullet)
\varepsilon_0(\mathbf{p}^0)\ldots \varepsilon_d(\mathbf{p}^d)
\end{align*}
and define a product
\begin{equation}
\label{eqn: products}
\mu^d \colon \mathrm{CF}(L_{d-1}, L_d) \otimes \ldots \otimes \mathrm{CF}(L_0, L_1) \to \mathrm{CF}(L_0, L_d),
    \end{equation}
where we wrote $\mathrm{CF}(L_i, L_j)$ instead of $\mathrm{CF}((L_i, \varepsilon_i), (L_j, \varepsilon_j))$
for brevity sake, via the formula
\begin{equation*}
 \mu^d(x_1, \ldots, x_d) = \sum_{x_0 \in L_0 \cap L_d} \sum_{l_i \ge 0} \sum_{p_j^i \in D_i^{l_i}}
\mathfrak{m}(\mathbf{p}^d, x_0, \mathbf{p}^0, x_1, \ldots, \mathbf{p}^{d-1}, x_d) x_0.
\end{equation*}
\begin{rem}
The operations $\mu^d$ satisfy the $A_\infty$ relations.
\end{rem}

The following lemma will be useful in Section \ref{sec: surgeries and cones}. It is a straightforward corollary \color{black} of the existence of pseudoholomorphic triangles supplied by Corollary \ref{cor: triangles} below. The only point where we need to take some care is due to the fact that the Weinstein neighbourhood considered is immersed.
\begin{lemma}\label{continuation element}
Let $(L, \iota)$ be an exact Lagrangian immersion. We extend $\iota$ to a symplectic immersion $\iota_* \colon (D_\delta T^*L, d\mathbf{q}\wedge d\mathbf{p}) \to (W, d \theta)$. Let $(L, \iota')$ be safe isotopic to $(L, \iota)$ and, moreover, assume that
\begin{itemize}
\item there exists a sufficiently $C^1$-small Morse function $g \colon L \to \R$ with local minima $e_i$, all whose critical points are disjoint from the double points of $L,$ such that $\iota' = \iota_* \circ dg$, and
\item outside of some compact subset $L'$ is obtained by a small perturbation of $L$ by the negative Reeb flow.
\end{itemize}
We will denote $L=(L, \iota)$ and $L' = (L, \iota')$. Then, if $L$ admits an  augmentation
$\varepsilon$ and $\varepsilon'$ is the corresponding augmentation of $L'$, for every
cylindrical exact Lagrangian submanifold $T$ such that $\iota_*^{-1}(T)$ is a union of
cotangent disc fibres, the map
\begin{gather*}
\mu^2 (e, \cdot) \colon \mathrm{CF}(T, (L, \varepsilon)) \to \mathrm{CF}(T, (L', \varepsilon')),\\
e:=\sum_{i} e_i \in \mathrm{CF}((L,\varepsilon),(L',\varepsilon')),
\end{gather*}
is an isomorphism of complexes for a suitable almost complex structure on $W$ as in Corollary \ref{cor: triangles}.
\end{lemma}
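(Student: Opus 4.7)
The plan is to recognise $\mu^2(e,\cdot)$ as a model PSS/adiabatic map inside the immersed Weinstein neighbourhood $\iota_* \colon D_\delta T^*L \to W$, and to show that after a natural identification of the generators, its matrix is diagonal with $\pm 1$ entries. First, inside $D_\delta T^*L$ the Lagrangian $L$ is the zero section, $L'$ is the graph of $dg$, and $\iota_*^{-1}(T)$ is a disjoint union of cotangent fibres $T^*_{q_j}L$. This gives $T\cap L=\{q_j\}$ and $T\cap L'=\{q'_j\}$ with $q'_j=(q_j,dg(q_j))$, so there is a canonical bijection $q_j\leftrightarrow q'_j$ between the bases of $CF(T,(L,\varepsilon))$ and $CF(T,(L',\varepsilon'))$. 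The hypothesis that all critical points of $g$ are disjoint from the double points of $L$ is what makes both obstruction algebras and the Floer complexes well-defined and, moreover, safe isotopy invariance of augmentations (Proposition \ref{invariance of D}) matches $\varepsilon$ with $\varepsilon'$.

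Next, for each $j$ let $e_{i(j)}$ be the local minimum of $g$ towards which $q_j$ flows under the negative gradient of $g$; Corollary \ref{cor: triangles}, applied inside $D_\delta T^*L$ with a compatible almost complex structure of Morse--Bott type, provides a unique, transversely cut-out, small pseudoholomorphic triangle with boundary on $T$, $L$, $L'$ and corners at $e_{i(j)}$, $q_j$, $q'_j$. By construction this triangle stays inside the Weinstein neighbourhood, has no pure-puncture asymptotics, avoids the double points of $L$ and $L'$, and contributes $\pm 1$ to the coefficient of $q'_j$ in $\mu^2(e, q_j)$.

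Any other weighted triangle counted in the matrix coefficient of $q'_{j'}$ in $\mu^2(e,q_j)$ falls into one of three cases: (a) a ``non-diagonal'' triangle with $j'\neq j$, whose boundary on $T$ would have to join two disjoint fibres and therefore exits $D_\delta T^*L$; (b) a ``large'' diagonal triangle which also exits the neighbourhood or winds non-trivially; (c) a triangle with pure punctures on double points of $L$ or $L'$, weighted by $\varepsilon$ or $\varepsilon'$. In each case the symplectic area of such a triangle is bounded below by a positive constant depending on $\delta$ and the shortest Reeb chord lengths of the Legendrian lifts, but independent of $g$. Since the area of the small model triangle is bounded above by a constant multiple of $\|g\|_{C^0}$ and the action difference $f_{L'}(q'_j)-f_L(q_j)$ is likewise $O(\|g\|_{C^1})$, choosing $g$ sufficiently $C^1$-small forces the action of any non-small triangle to exceed the action of its asymptotic output, so all contributions from (a)--(c) vanish.

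The conclusion is that in the bases $\{q_j\}$, $\{q'_j\}$ the matrix of $\mu^2(e,\cdot)$ is diagonal with $\pm 1$ entries, so it is a bijective map of $\F$-modules. It is a chain map by the $A_\infty$ relation provided $\mu^1_{\varepsilon,\varepsilon'}(e)=0$; by the same action argument the differential $\mu^1_{\varepsilon,\varepsilon'}$ on small generators agrees with the Morse differential of $g$, for which the sum of all local minima $e=\sum_i e_i$ is a cycle. The main obstacle is the uniform positive lower bound on the area of the non-small triangles in (a)--(c) that is independent of $g$ and robust against the immersed nature of the Weinstein neighbourhood; the essential point is an SFT-type compactness argument showing that any escape from $D_\delta T^*L$, or any non-trivial use of a double-point puncture weighted by $\varepsilon$ or $\varepsilon'$, contributes a fixed quantum of energy that cannot be undone by shrinking $g$.
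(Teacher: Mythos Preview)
Your approach is the same as the paper's: both rely on Corollary \ref{cor: triangles} to produce the small model triangle associated to each gradient flow line from $q_j$ to the relevant local minimum, and then conclude that the matrix of $\mu^2(e,\cdot)$ is the identity (up to sign). The difference is in how the ``no other triangles'' step is handled.

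The paper treats this lemma as an immediate consequence of Corollary \ref{cor: triangles} because that corollary, via its proof through Lemma \ref{lem: triangles on twocopy} and \cite[Theorem 5.5]{Duality_EkholmetAl}, already gives a \emph{bijection} between rigid discs on the two-copy having a negative puncture at a local minimum and gradient flow lines. Applied (as in the proof of Corollary \ref{cor: triangles}) to the immersed Lagrangian $L \cup T$ and its push-off, this bijection rules out all contributions other than the small model triangles in one stroke --- including those with extra pure punctures at double points. Your separate energy argument is therefore redundant once the adiabatic-limit input is used at full strength.

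More importantly, your energy argument as written has a gap in case (a). For a non-diagonal triangle with inputs $e_i, q_j$ and output $q'_{j'}$ ($j' \ne j$), the action difference is not $O(\|g\|_{C^1})$: it contains the term $f_T(q_j) - f_T(q'_{j'})$, which has no reason to be small. So the contradiction you claim (``area bounded below by $\epsilon_0$'' versus ``action difference small'') does not materialise for these triangles. A monotonicity lower bound on the area of a curve exiting the neighbourhood does not help here, since the available action budget is genuinely of fixed positive size. The correct way to exclude these configurations is the adiabatic-limit bijection: in the limit $g \to 0$, a rigid triangle with a puncture at $e_i$ must correspond to a generalised disc consisting of a flow line only, which forces $j' = j$. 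Your treatment of case (c) is fine (pure punctures force the action difference to exceed $\mathfrak{a}_{\min}$), and case (b) is essentially subsumed by the uniqueness in Corollary \ref{cor: triangles}.

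Finally, the fact that $e$ is a cycle is addressed separately in the paper (Lemma \ref{unit}), again via the disc count of Lemma \ref{lem: triangles on twocopy}, rather than by an action comparison with the Morse differential.
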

In the case when $L$ is closed and embedded, the element $e$ is always a cycle which is
nontrivial in homology as was shown by Floer (it is identified with the minimum class in the Morse cohomology of $L$). In general the following holds.
\begin{lemma}
\label{unit}
Under the hypotheses of Lemma \ref{continuation element}, the element
$$e \in \mathrm{CF}((L,\varepsilon),(L',\varepsilon'))$$
is a cycle. Furthermore, $e$ is a boundary if and only if $\mathrm{CF}(T,(L,\varepsilon))=0$ for every Lagrangian $T.$
\end{lemma}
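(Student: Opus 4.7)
The plan is to derive both statements by combining the chain-isomorphism assertion of Lemma~\ref{continuation element} with the quadratic $A_\infty$ relation
\[
\mu^1\mu^2(x_1,x_2)=\pm\mu^2(\mu^1 x_1,x_2)\pm\mu^2(x_1,\mu^1 x_2).
\]
For the first claim, I would substitute $x_1=e$ and let $x_2=x$ range over $CF(T,(L,\varepsilon))$ for admissible $T$. Since $\mu^2(e,\cdot)$ is a chain map, the left-hand side equals $\pm\mu^2(e,\mu^1 x)$, so the relation collapses to $\mu^2(\mu^1 e,x)=0$ for every admissible $T$ and every such $x$. To upgrade this to $\mu^1 e=0$ itself, I would introduce a reverse continuation element $\bar e\in CF((L',\varepsilon'),(L,\varepsilon))$ obtained by running the Lagrangian isotopy $L\rightsquigarrow L'$ backwards; a symmetric form of Lemma~\ref{continuation element} gives that $\mu^2(\cdot,\bar e)\colon CF((L,\varepsilon),(L',\varepsilon'))\to CF((L,\varepsilon),(L,\varepsilon))$ is a chain isomorphism, and its injectivity forces $\mu^1 e=0$. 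A more hands-on alternative verifies this directly: the rigid strips contributing to $\mu^1 e$ split into small Morse strips in a Weinstein neighborhood of $L$ (whose signed count equals the Morse differential applied to $\sum e_i$, and vanishes because the sum of minima is the Morse-cohomology unit) and strips with double-point asymptotes weighted by $\varepsilon$ (whose count cancels by the augmentation equation $\varepsilon\circ\mathfrak{d}=0$).

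For the direction $(\Rightarrow)$ of the second claim, suppose $e=\mu^1 f$. Substituting $x_1=f$ into the $A_\infty$ relation and using $\mu^1 e=0$ exhibits $\mu^2(e,\cdot)$ as chain-homotopic to zero via the homotopy $\mu^2(f,\cdot)$. Since $\mu^2(e,\cdot)$ is a chain isomorphism by Lemma~\ref{continuation element}, composing with its inverse presents the identity on $CF(T,(L,\varepsilon))$ as chain-homotopic to zero for every admissible $T$, whence $HF(T,(L,\varepsilon))=0$.

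For the direction $(\Leftarrow)$, Lemma~\ref{continuation element} together with its symmetric counterpart (for $\bar e$) says that $[e]$ and $[\bar e]$ are mutually inverse isomorphisms between $(L,\varepsilon)$ and $(L',\varepsilon')$ in the cohomological category underlying the wrapped Fukaya category. If $HF(T,(L,\varepsilon))=0$ for every Lagrangian $T$, then $(L,\varepsilon)$ is a zero object, and every morphism to or from it is zero in cohomology; in particular $[e]=0$, so $e$ is a boundary. The main obstacle will be the careful choice of auxiliary Lagrangians (the reversed pushoff underlying $\bar e$, and embedded cylindrical pushoffs of $L$ against which to test the vanishing of $HF$) so that the chain-isomorphism statements of Lemma~\ref{continuation element} and its variants genuinely apply; once this bookkeeping is in place, both claims reduce to formal manipulations with the $A_\infty$ relations and the standard yoga of zero objects in triangulated categories.
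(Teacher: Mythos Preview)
Your treatment of the forward implication in the second claim is exactly the paper's: writing $e=\partial E$, the Leibniz rule gives $\partial\mu^2(E,x)=\mu^2(e,x)$ for closed $x$, so the chain isomorphism $\mu^2(e,\cdot)$ is null-homotopic and the homology vanishes.

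The first claim is where your primary argument has a gap. You invoke Lemma~\ref{continuation element} to say that $\mu^2(e,\cdot)$ is already known to be a chain map, and then extract $\mu^2(\mu^1 e,\cdot)=0$ from the $A_\infty$ relation. But by that same relation the chain-map property is \emph{equivalent} to $\mu^2(\mu^1 e,\cdot)=0$, and the proof of Lemma~\ref{continuation element} (via the small triangles of Corollary~\ref{cor: triangles}) only supplies the bijection on generators, not the commutation with differentials; the latter is precisely what $\mu^1 e=0$ is meant to deliver. The paper therefore argues geometrically: by Lemma~\ref{lem: triangles on twocopy} the rigid discs with a negative puncture at a local minimum $e_i$ are either Morse strips (whose contribution is the Morse-cohomology differential of the unit, hence zero) or small triangles with punctures at $q_c$, $e_i$, and one of $c\in\mathcal{Q}(L)$ or its copy $c'\in\mathcal{Q}(L_{\epsilon f})$. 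These last contributions to $\partial e$ are weighted by $\varepsilon(c)$ and $\varepsilon'(c')$ respectively and cancel in pairs because $\varepsilon'$ is, by hypothesis, the augmentation canonically identified with $\varepsilon$. Your ``hands-on alternative'' is close in spirit, but the cancellation mechanism is this $c\leftrightarrow c'$ pairing under $\varepsilon'=\varepsilon$, not the single-copy equation $\varepsilon\circ\mathfrak{d}=0$.

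Finally, the paper does not actually argue the converse direction of the second claim (and does not use it downstream). Your Yoneda-style approach is plausible, but note that $[e]$ sits in $HF((L,\varepsilon),(L',\varepsilon'))$ with the immersed $L$ in the \emph{first} slot, while the hypothesis controls groups with $(L,\varepsilon)$ in the second slot; moreover $(L,\varepsilon)$ is not made into an object of the wrapped Fukaya category here, so calling it a ``zero object'' and appealing to categorical formalities requires exactly the auxiliary-pushoff bookkeeping you flag at the end. That bookkeeping is the substance of the argument, not a side issue.
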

\begin{proof}
The assumption that the augmentation $\varepsilon'$ is identified canonically with the augmentation $\varepsilon$ implies that $e$ is a cycle by the count of  pseudoholomorphic discs \color{black} with a negative puncture at $e$ supplied by Lemma \ref{lem: triangles on twocopy}.

Assume that $\partial E=e$. The last property is then an algebraic consequence of the Leibniz rule $\partial\mu^2(E,x)=\mu^2(\partial E,x)$ in the case when $\partial x=0$, combined with the fact that $\mu^2(e,\cdot)$ is a quasi-isomorphism as established by the previous lemma.
\end{proof}

Later it will be useful to switch perspectives slightly,
and instead of with the chain $e$, work with an augmentation induced
by that chain. In general, given $(L_0, \varepsilon_0)$, $(L_1,
\varepsilon_1)$ and a chain $c \in
\mathrm{CF}((L_0,\varepsilon_0),(L_0,\varepsilon_1))$ we can consider
Legendrian lifts $L_0^+$ and $L_1^+$ such that $L_0^+$ is
above $L_1^+$ and the unital algebra morphism
$\varepsilon_c \colon \mathfrak{A}(L_0^+ \cup L_1^+) \to \F$
uniquely determined
$$ \varepsilon_c(x) \coloneqq \begin{cases}
\varepsilon_i(x),  & \text{if } x\in \mathfrak{A}(L_i),\\
\langle c,x\rangle, & \text{if } x \in L_0 \cap L_1,
\end{cases}
$$
where $\langle \cdot,\cdot \rangle$ is the coefficient of $x$ in $c$.
\begin{lemma}
\label{lem:CyclesAugmentations}
The element
$$c \in \mathrm{CF}((L_0,\varepsilon_0),(L_1,\varepsilon_1);\boldsymbol{0},J)$$
is a cycle if and only if
$$\varepsilon_c \colon \mathfrak{A}(L_0^+ \cup L_1^+) \to \F$$
is an augmentation, where the almost complex structure $J$ has been used to define the latter algebra as well, and the Legendrian lifts have been chosen so that no Reeb chord starts on $L_0^+$ and ends on $L_1^+$.
\end{lemma}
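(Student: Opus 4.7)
The plan is to compare the algebraic condition $\varepsilon_c \circ \mathfrak{d} = 0$ directly with the Floer cycle condition $\partial c = 0$ by carefully analysing the combinatorial structure of holomorphic discs contributing to $\mathfrak{d}$ on $\mathfrak{A}(L_0^+ \cup L_1^+)$ under the chord arrangement hypothesis, and then invoking Lemma~\ref{CF and LCC compared}.

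First, I would establish the following structural dichotomy for discs contributing to $\mathfrak{d}$. Because every Reeb chord from $L_0^+ \cup L_1^+$ runs from $L_1^+$ to $L_0^+$, the orientation convention forces the boundary of a holomorphic disc to transition from $L_1$ to $L_0$ (reading counterclockwise) across a positive mixed puncture, and in the opposite direction across a negative mixed puncture. Requiring the boundary to close into a single loop then yields:
(a) a disc with positive puncture at a pure chord of $L_i$ can have no mixed negative puncture and has boundary entirely on $L_i$; in particular, $\mathfrak{d}$ restricted to the subalgebra generated by the chords of $L_i^+$ coincides with the differential $\mathfrak{d}_i$ of $\mathfrak{A}(L_i^+)$;
(b) a disc with positive puncture at a mixed chord $q_+$ contains exactly one negative mixed puncture $q_-$, with the remaining negative punctures arranged counterclockwise as a word $\mathbf{p}^0\, q_-\, \mathbf{p}^1$ where $\mathbf{p}^0$ consists of chords of $L_0^+$ and $\mathbf{p}^1$ of chords of $L_1^+$.

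With this dichotomy in hand, I would evaluate $\varepsilon_c \circ \mathfrak{d}$ on each generator. For a pure generator $a$ of $\mathfrak{A}(L_i^+)$, claim (a) gives $\varepsilon_c(\mathfrak{d}(a)) = \varepsilon_i(\mathfrak{d}_i(a)) = 0$ automatically, since $\varepsilon_i$ is an augmentation. For a mixed generator $q_+$, claim (b) yields
\[
\varepsilon_c(\mathfrak{d}(q_+)) = \sum_{q_-, \mathbf{p}^0, \mathbf{p}^1} \#\mathfrak{N}^0(q_+; \mathbf{p}^0, q_-, \mathbf{p}^1; J)\, \varepsilon_0(\mathbf{p}^0)\, \langle c, q_-\rangle\, \varepsilon_1(\mathbf{p}^1),
\]
which, by inspection of Formula \eqref{LCH differential}, is precisely the coefficient of $q_+$ in the bilinearised differential $\partial_{\varepsilon_0, \varepsilon_1}(c)$. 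Under the identification of Lemma~\ref{CF and LCC compared}, this coincides with the coefficient of the Floer differential $\partial c$ at the generator corresponding to $q_+$. Thus $\varepsilon_c \circ \mathfrak{d}$ vanishes on every generator if and only if $\partial c = 0$, which is the desired equivalence.

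The main technical subtlety lies in the orientation bookkeeping in step (a)--(b): I must verify precisely that positive and negative mixed punctures induce oppositely oriented transitions between $L_0$ and $L_1$ along the disc boundary, so that the counting of switches forces the number of mixed negative punctures to be $0$ or $1$ depending on the type of the positive puncture. This is exactly where the chord arrangement (no Reeb chord from $L_0^+$ to $L_1^+$) is essential: without it, discs with higher numbers of mixed negative punctures, or mixed negative punctures on pure-positive discs, would contribute to $\mathfrak{d}$, and additional algebraic terms not controlled by $c$ alone would appear in $\varepsilon_c \circ \mathfrak{d}$.
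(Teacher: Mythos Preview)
Your proposal is correct and follows essentially the same approach as the paper's own proof, which is a terse sketch relying on the key structural fact that the DGA differential applied to a mixed chord is a sum of words each containing exactly one mixed chord (your claim (b)), together with the observation that the differential respects the filtration by components (your claim (a)). Your version simply unpacks these facts in more detail; the only minor remark is that the appeal to Lemma~\ref{CF and LCC compared} is somewhat heavier than necessary, since with $H=\boldsymbol{0}$ the Floer strips are literally the same pseudoholomorphic discs as those entering the LCH differential, so the identification of $\partial_{\varepsilon_0,\varepsilon_1}(c)$ with the Floer differential $\partial c$ is immediate without passing through that lemma.
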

\begin{proof}
Note that the Floer complex under consideration has a differential which counts $J$-pseudo\-holomorphic  strips, and that the obstruction algebra has a differential counting pseudoholomorphic discs with at least one boundary puncture. Identifying the appropriate counts of discs, the statement can be seen to follow by pure algebra, together with the fact that the differential of the DGA counts punctured pseudoholomorphic discs, and thus respects the filtration induced by the different components. The crucial property that is needed here is that, under the assumptions made on the Legendrian lifts, the differential of the Chekanov-Eliashberg algebra applied to a mixed chord is a sum of words, each of which contains precisely one mixed chord.
\end{proof}

\subsection{Existence of triangles}
In this section we prove an existence result for small pseudoholomorphic triangles with boundary on an exact Lagrangian cobordism and a small push-off. The existence of these triangles can be deduced as a consequence of the fact that the wrapped Fukaya category is homologically unital. Here we take a more direct approach based upon the adiabatic limit of pseudoholomorphic discs on a Lagrangian and its push-off from \cite{Duality_EkholmetAl}; when the latter push-off becomes sufficiently small, these discs converge to pseudoholomorphic discs on the single Lagrangian with gradient-flow lines attached (called \emph{generalised pseudoholomorphic discs} in the same paper).

Let $L \subset W$ be an exact immersed Lagrangian submanifold with cylindrical end. We recall that, as usual, we assume that every immersed Lagrangian submanifold is nice. Consider the Hamiltonian push-off $L_{\epsilon f}$, which we require to be again an exact immersed Lagrangian submanifold with cylindrical end, which is identified with the graph of $d(\epsilon f)$ for a Morse function $f \colon L \to \R$ inside a Weinstein neighbourhood $(T^*_\delta L,-d(pdq)) \looparrowright (W,d\theta)$ of $L$. We further assume that $df({\mathcal L})>0$ outside of a compact subset. (The assumption that the push-off is cylindrical at infinity does of course impose additional constraints on the precise behaviour of the Morse function outside of a compact subset.)

Now consider a Legendrian lift $L^+ \cup L_{\epsilon f}^+$ for which $(L_{\epsilon f})^+$ is above $L^+$. For $\epsilon>0$ sufficiently small, it is the case that $L \cup L_{\epsilon f}$ again has only transverse double points. Moreover, the Reeb chords on the Legendrian lift can be classified as follows, using the notation from \cite[Section 3.1]{Duality_EkholmetAl}:
\begin{itemize}
\item Reeb chords $\mathcal{Q}(L) \cong \mathcal{Q}(L_{\epsilon f})$ on the lifts of $L$ and $L_{\epsilon f}$ respectively, which stand in a canonical bijection;
\item Reeb chords $\mathcal{C}$ being in a canonical bijection with the critical points of $f$; and
\item two sets $\mathcal{Q}$ and $\mathcal{P}$ of Reeb chords from $L$ to $L_{\epsilon f}$, each in canonical bijection with $\mathcal{Q}(L)$, and where the lengths of any Reeb chord in $\mathcal{Q}$ is greater than the length of any Reeb chord in $\mathcal{P}$.
\end{itemize}
See the aforementioned reference for more details, as well as Figure \ref{fig:triangles}.

\begin{figure}[t]
\labellist
\pinlabel $c$ at 103 35
\pinlabel $q_c$ at 125 53
\pinlabel $e$ at 35 53
\pinlabel $\text{max}$ at 164 53
\pinlabel $p_c$ at 85 53
\pinlabel $c'$ at 106 70
\pinlabel $\color{blue}L_{\epsilon f}$ at 177 89
\pinlabel $L$ at 177 20
\pinlabel $x$ at 200 8
\pinlabel $x$ at 288 8
\pinlabel $L^+$ at 283 31
\pinlabel $\color{blue}(L_{\epsilon f})^+$ at 275 93
\pinlabel $y$ at 9 100
\pinlabel $z$ at 225 100
\pinlabel $c$ at 248 29
\pinlabel $c'$ at 252 90
\pinlabel $q_c$ at 266 52
\pinlabel $p_c$ at 238 66
\endlabellist
\vspace{3mm} \centering
\includegraphics[scale=1.2]{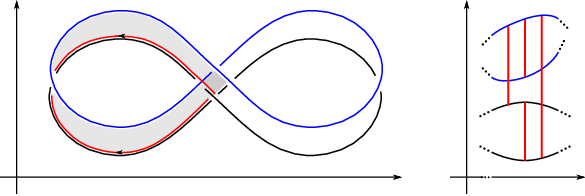}
\caption{The small triangles on the two-copy living near gradient flow-lines of $-\nabla f$ shown in red. The upper copy with respect to the $z$-coordinate is shown in blue, while the lower copy is in black. The contact form used here is $dz-ydx$.} \label{fig:triangles}
\end{figure}

\begin{lemma}[\cite{Duality_EkholmetAl}]
\label{lem: triangles on twocopy}
For a suitable generic Riemannian metric $g$ on $L$ for which $(f,g)$ constitutes a Morse-Smale pair and associated almost complex structure, which can be made to coincide with an arbitrary cylindrical almost complex structure outside of a compact subset, there is a bijection between the set of pseudoholomorphic discs which have
\begin{itemize}
\item boundary on $L^+ \cup L_{\epsilon f}^+$ and precisely one positive puncture,
\item at least one negative puncture at a local minimum $e \in \mathcal{C}$ of $f$, and
\item form a moduli space of expected dimension zero,
\end{itemize}
and the set of negative gradient flow-lines on $(L,g)$ that connect either the starting point or the end point of a Reeb chord $c \in \mathcal{Q}(L)$ with the local minimum $e$, together with the set of negative gradient flow-lines that connect some critical point of index one with $e$.

More precisely, each such pseudoholomorphic disc lives in an small neighbourhood of the aforementioned flow-line. In the first case, it is a triangle with a positive puncture at the Reeb chord $q_c \in \mathcal{Q}$ corresponding to $c$, and its additional negative punctures at $e$ and either $c$ (for the flow-line from the \emph{starting point} of $c$) or $c'$ (for the flow-line from the \emph{endpoint} of $c$); see Figure \ref{fig:triangles}. In the second case, it is a Floer strip corresponding to the negative gradient flow-line connecting the saddle point and the local minimum.
\end{lemma}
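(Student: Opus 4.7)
The plan is to identify the rigid pseudoholomorphic discs on the two-copy $L^+ \cup L_{\epsilon f}^+$ by passing to the adiabatic limit $\epsilon \to 0$, following the strategy of \cite{Duality_EkholmetAl}. The key tool is the correspondence between rigid pseudoholomorphic discs on $L \cup L_{\epsilon f}$ and rigid \emph{generalised pseudoholomorphic discs} on $L$: pseudoholomorphic discs on $L$ with a collection of gradient flow-lines of $-\nabla_g f$ attached at boundary marked points, according to a combinatorial pattern that survives when the two sheets of the two-copy collapse onto one another.

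First, I would fix a cylindrical almost complex structure $J$ on $W$ satisfying the standard regularity condition for $L$ and a Morse-Smale metric $g$ on $L$ compatible with the Weinstein neighbourhood identification $T^*_\delta L \looparrowright W$. Standard SFT compactness applied to a sequence $\epsilon_n \to 0$ shows that any sequence of rigid, index zero pseudoholomorphic discs with boundary on $L^+ \cup L_{\epsilon_n f}^+$ and a negative puncture at a local minimum $e \in \mathcal{C}$ converges to a rigid generalised disc on $L$ containing a non-trivial negative gradient trajectory ending at $e$; the non-triviality is forced by the fact that the limiting puncture sits at a critical point of $f$ rather than at a Reeb chord of $L^+$ itself.

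Next, I would classify the rigid generalised discs by dimension counting. Using the dimension formula for generalised discs, which combines the index of the underlying disc with the Morse co-indices at the attaching points, together with the action filtration provided by the separation of the chord sets $\mathcal{Q}(L)$, $\mathcal{C}$, $\mathcal{Q}$, $\mathcal{P}$, the only rigid configurations that can occur are either (a) a constant disc at the starting or end point of a Reeb chord $c \in \mathcal{Q}(L)$ with a single gradient flow-line to $e$ attached, yielding the triangle with positive puncture at $q_c$ and negative punctures at $e$ and either $c$ or $c'$, according to whether the flow-line starts at the starting point or end point of $c$; or (b) a constant disc at a Morse critical point of index one with a single gradient flow-line to $e$ attached, yielding the Floer strip described in the statement. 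The Morse-Smale hypothesis then provides the claimed bijection between rigid generalised discs and gradient flow-lines.

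Finally, the bijection is promoted from generalised to honest pseudoholomorphic discs via the adiabatic gluing theorem of \cite{Duality_EkholmetAl}: for all sufficiently small $\epsilon>0$, each rigid generalised disc is the limit of a unique family of honest rigid pseudoholomorphic discs, and transversality for small $\epsilon$ guarantees that every rigid honest disc arises in this way. The main subtlety I expect is that the gluing argument in that reference is written under the assumption that $L$ is embedded, whereas here $L$ is only immersed; but since the critical points of $f$ are disjoint from the double points of $L$, and each rigid generalised disc is localised in a small neighbourhood of either a critical point or a short gradient trajectory between critical points, the immersion points never interfere with the analytic gluing construction and the argument carries over essentially verbatim. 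This localisation step is the one I would treat most carefully in a full write-up.
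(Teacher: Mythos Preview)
Your proposal is correct and follows the same approach as the paper: both invoke the adiabatic limit correspondence from \cite{Duality_EkholmetAl} between rigid discs on the two-copy and rigid generalised discs on $L$, then classify the latter by index considerations. The paper's proof is terser, simply citing Parts (3) and (4) of \cite[Theorem 5.5]{Duality_EkholmetAl} and stating the classification, whereas you unpack the compactness-plus-gluing mechanism behind that theorem. One small imprecision: in your localisation remark you say the generalised discs live near trajectories ``between critical points'', but in case (a) the flow-line runs from an endpoint of a Reeb chord (a double point of $L$) to the minimum $e$, so the double points are not entirely avoided; the relevant point is rather that the analysis near the double point is governed by the constant-disc-plus-flow-line model, which is already handled in the cited reference.
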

\begin{proof}
This is an immediate application of Parts (3) and (4) of \cite[Theorem 5.5]{Duality_EkholmetAl}. A generalised pseudoholomorphic disc with a negative puncture at a local minimum can be rigid only if it a flow-line connecting a saddle point to the minimum, or consists of a constant pseudoholomorphic disc located at one of the Reeb chords at $c \in \mathcal{Q}(L)$ together with a flow-line from that double point to the local minimum. The aforementioned result gives a bijection between such generalised pseudoholomorphic discs and pseudoholomorphic strips and triangles on the two-copy.
\end{proof}

Now consider an auxiliary exact immersed Lagrangian $L'$ intersecting $L \cup L_{\epsilon f}$ transversely. For $\epsilon>0$ sufficiently small there is a bijection between the intersection points $L \cap L'$ and $L_{\epsilon f} \cap L'.$
\begin{cor}
\label{cor: triangles}
For a suitable Morse-Smale pair $(f,g)$ and almost complex structure as in Lemma \ref{lem: triangles on twocopy} there is a unique rigid and transversely cut out pseudoholomorphic triangle with corners at $e$, $c \in L \cap L_{\epsilon f}$ and the corresponding double point $c' \in L_{\epsilon f}$ for any connected gradient flow-line from $c \in L \cap L_{\epsilon f}$ to the local minimum $e \in \mathcal{C}$. The triangle is moreover contained inside a small neighbourhood of the same flow-line.
\end{cor}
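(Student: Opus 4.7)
The strategy is that this statement is essentially a direct translation of Lemma \ref{lem: triangles on twocopy} from the Legendrian picture to the Lagrangian projection, singling out the case of the second alternative in that lemma (the one where the flow-line originates at the endpoint of $c$ and the remaining negative puncture lands at $c'$ rather than at $c$). Concretely, under the projection $\pi \colon W \times \R \to W$ the Reeb chord $q_c \in \mathcal{Q}$ maps to the intersection point $c \in L \cap L_{\epsilon f}$ (so the same symbol is being reused for the Reeb chord $c \in \mathcal{Q}(L)$, for the corresponding self-intersection of $L$, and for the intersection point of $L$ with $L_{\epsilon f}$, via the canonical bijections recalled just before Lemma \ref{lem: triangles on twocopy}), the Reeb chord $c' \in \mathcal{Q}(L_{\epsilon f})$ maps to the self-intersection $c' \in L_{\epsilon f}$, and each Reeb chord in $\mathcal{C}$ projects to an intersection point of $L$ with $L_{\epsilon f}$ sitting near the corresponding critical point of $f$.

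Given a connected negative gradient flow-line of $f$ on $L$ from the appropriate preimage of $c$ to the local minimum of $f$ corresponding to $e$, Lemma \ref{lem: triangles on twocopy} produces a unique rigid, transversely cut out pseudoholomorphic disc $\tilde u$ with boundary on $L^+ \cup (L_{\epsilon f})^+$ whose positive puncture is at $q_c$ and whose two negative punctures are at $c'$ and $e$. The bijection supplied by that lemma is obtained, via \cite{Duality_EkholmetAl}, from an adiabatic-limit analysis, and this forces $\tilde u$ to be contained in an arbitrarily small tubular neighbourhood of the given flow-line once $\epsilon>0$ is taken small enough. Projecting by $\pi$, the image $u = \pi \circ \tilde u$ is a pseudoholomorphic triangle on $L \cup L_{\epsilon f}$ whose corners are exactly $e$, $c \in L \cap L_{\epsilon f}$ and $c' \in L_{\epsilon f}$, and whose image lies in a small neighbourhood of the flow-line. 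Uniqueness and transverse cut-outness pass through $\pi$ because $\pi$ is a local diffeomorphism away from the double points, where the statement is about the domain of $\tilde u$ rather than its image.

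The only bookkeeping to be done is to verify that the Morse--Smale pair $(f,g)$ and the almost complex structure can be chosen generically enough to guarantee regularity simultaneously for all triangles of the form above (one for every pair consisting of a double point $c$ of $L$ and a local minimum $e$, and for each flow-line connecting them). This is standard: the Morse--Smale condition is generic in $(f,g)$, and, as in the proof of Lemma \ref{lem: triangles on twocopy}, a generic perturbation of the cylindrical almost complex structure cuts out all of the countably many zero-dimensional moduli spaces under consideration transversely. I do not expect any genuinely new analytic difficulty beyond what was already dealt with in Lemma \ref{lem: triangles on twocopy}; the only subtlety worth keeping an eye on is the consistent identification between Reeb chords on the Legendrian lift and intersection or self-intersection points in the Lagrangian projection, so that the three corners of $u$ end up at $e$, $c$ and $c'$ rather than at $e$, $c$ and $c$ (which is the other case in the lemma and which does occur in general, but is not the one relevant to this corollary).
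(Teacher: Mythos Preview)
You have misread the statement. The sentence immediately preceding the corollary introduces an \emph{auxiliary} exact immersed Lagrangian $L'$ intersecting $L\cup L_{\epsilon f}$ transversely, together with the bijection between $L\cap L'$ and $L_{\epsilon f}\cap L'$. The corollary is about this three-Lagrangian configuration: the corners $c$ and $c'$ are meant to be $c\in L\cap L'$ and the corresponding $c'\in L_{\epsilon f}\cap L'$, while $e\in L\cap L_{\epsilon f}$ is the local minimum. (The printed ``$c\in L\cap L_{\epsilon f}$'' is a slip; compare the applications in Lemma~\ref{continuation element}, where the test Lagrangian $T$ plays the role of $L'$ and $\mu^2(e,\cdot)$ maps $CF(T,L)$ to $CF(T,L')$, and in Lemma~\ref{wrapping the discs}, where the triangle has boundary on $L\cup D^w_{a_i}\cup D^w_{a_j}$.) Your argument never mentions $L'$ and only produces triangles with boundary on $L\cup L_{\epsilon f}$; that is exactly the content of the second alternative of Lemma~\ref{lem: triangles on twocopy}, so you are restating the lemma rather than proving the corollary.

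The paper's argument is different: one applies Lemma~\ref{lem: triangles on twocopy} with the immersed Lagrangian there replaced by the union $L\cup L'$, and the Morse function by an extension $F\colon L\cup L'\to\R$ with $F|_L=f$. The intersection point $c\in L\cap L'$ is then a double point of this larger immersion, and the lemma yields the sought triangle near the flow-line from $c$ to the minimum $e$ on $L$. A second, minor point: the pseudoholomorphic discs in Lemma~\ref{lem: triangles on twocopy} already live in $W$ (with boundary on the Lagrangian projections, punctures labelled by Reeb chords of the Legendrian lift), so there is no projection $\pi\colon W\times\R\to W$ to perform.
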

\begin{proof}
We need to apply Lemma \ref{lem: triangles on twocopy} in the case when $L^+$ is taken to be the Legendrian lift $(L \cup L_{\epsilon f})^+$, where $L_{\epsilon f}^+$ is above $L^+$, and the push-off is taken to be $(L \cup L')_{\epsilon F}^+$ for a Morse function $F \colon L \cup L' \to \R$ that restricts to $f$ on $L$.
\end{proof}

\section{Continuation maps}\label{sec: continuation maps}
In this section we analyse what happens to the Floer cohomology when we change $J$, $H$
(in some suitable way) or move the Lagrangian submanifolds by a compactly supported safe
exact isotopy.

\subsection{Changing the almost complex structure}\label{sss: changing J}
Following~\cite{LCHgeneral} (see also \cite{FloerHFlag}) we will use the bifurcation
method to prove invariance of Floer cohomology for Lagrangian intersection under change of
almost complex structure.  It seems, in fact, that the more usual continuation method is not
well suited to describe how the obstruction algebras change when the almost complex
structure changes.

Let us fix Lagrangian immersions $(L_0, \iota_0)$ and $(L_1, \iota_1)$ and a cylindrical
Hamiltonian $H$ compatible with $L_0$ and $L_1$. For a generic one-parameter family of
 time-dependent almost complex structure $J_\bullet^\bullet$ parametrised by an interval
$[\delta_-, \delta_+] $ such that
\begin{itemize}
\item the extrema $J_\bullet^{\delta_-}$ and $J_\bullet^{\delta_+}$ are $(L_0, L_1)$-compatible,
and
\item $J_\bullet^\delta$ satisfies \ref{double acs} for all $\delta \in [\delta_-, \delta_+]$
\end{itemize}
we define the {\em parametrised} moduli spaces $$\mathfrak{M}^k_{L_0, L_1}(\mathbf{p}^1,
x_-, \mathbf{p}^0 , x_+; H, J_\bullet^\bullet)$$ consisting of pairs $(\delta, u)$ such that $\delta
\in [\delta_-, \delta_+]$ and $$u \in \mathfrak{M}^k_{L_0, L_1}(\mathbf{p}^1, x_-,
\mathbf{p}^0, x_+; H, J_\bullet^\delta).$$

Using the zero-dimensional parametrised moduli spaces, we will define a continuation map
$$\Upsilon_{J_\bullet^\bullet} \colon \mathrm{LCC}((L_0, \varepsilon_0^+), (L_1, \varepsilon_1^+); H,
J_\bullet^{\delta_+}) \to \mathrm{LCC}((L_0, \varepsilon_0^-), (L_1, \varepsilon_1^-); H,
J_\bullet^{\delta_-}).$$
\begin{prop}
For a generic one-parameter family $J_\bullet^\bullet$ of time-dependent almost complex
structures as above, the parametrised moduli space
$$\mathfrak{M}^k_{L_0, L_1}(\mathbf{q}^1, y_-, \mathbf{q}^0, y_+; H,
J_\bullet^\bullet)$$
is a transversely cut-out manifold of dimension $k$.  If $k = 0$ it is compact, and
therefore consists of a finite number of points. If $k = 1$ it can be compactified in the
sense of Gromov-Floer.

If both $L_0$ and $L_1$ are spin, the moduli spaces are orientable, and a choice of  spin
structure on each Lagrangian submanifold induces a coherent orientation on the
parametrised moduli spaces.
\end{prop}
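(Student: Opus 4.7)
The plan is to adapt the non-parametrised compactness and regularity statement (Proposition \ref{moduli spaces of Floer trajectories}) to the family setting by a standard Sard--Smale/bifurcation argument, with the usual extra dimension coming from the parameter $\delta$.

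The first step is \emph{transversality}. Following the universal moduli space approach (as in \cite{LCHgeneral} and the analogous treatment for Legendrian contact homology), I would introduce the space of paths $J_\bullet^\bullet$ satisfying \ref{double acs} at every $\delta$, with fixed endpoints $J_\bullet^{\delta_\pm}$, equipped with a suitable Floer $C^\epsilon$ topology. The universal parametrised moduli space admits a Fredholm projection to this path space whose linearisation at a solution $(\delta,u)$ agrees with $F_u$ plus an extra summand for infinitesimal variations of $J$; a standard pointwise surjectivity argument (using freedom of $J$ away from the double points and strip-like ends, where \ref{double acs} is rigid but the moduli space near the endpoints is already transversely cut out by assumption) shows that the universal moduli space is a Banach manifold. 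Sard--Smale then yields a residual set of generic paths for which every $\mathfrak{M}^k_{L_0,L_1}(\mathbf{q}^1, y_-, \mathbf{q}^0, y_+; H, J_\bullet^\bullet)$ is a transversely cut out manifold of dimension $k$, the extra $+1$ relative to the non-parametrised version coming from variation of $\delta$.

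The second step is \emph{compactness}. Because $H$ is compatible with $L_0$ and $L_1$ and cylindrical at infinity, the standard action--energy bounds together with the maximum principle for cylindrical Hamiltonians confine every element of the moduli space to a compact subset of $W$, so no solution can escape to infinity and the energies are uniformly bounded. A Gromov--Floer compactification then proceeds exactly as in Proposition \ref{moduli spaces of Floer trajectories}: limits can be decomposed into Floer strips (at some $\delta \in [\delta_-,\delta_+]$) with holomorphic disc bubbles at the double points of $L_0$ or $L_1$ attached along the boundary, the latter handled via the niceness hypothesis and \ref{double acs}. For $k=0$ there is no room for either strip breaking (which would require an index $-1$ piece at generic $\delta$) nor disc bubbling of a nontrivial disc, so the moduli space is already compact and hence finite. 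For $k=1$ the boundary of the compactification consists of (i) the strata at $\delta=\delta_\pm$ given by $\mathfrak{M}^1_{L_0,L_1}(\,\cdot\,;H,J_\bullet^{\delta_\pm})$, (ii) Floer breakings $\mathfrak{M}^1 \times \mathfrak{M}^0$ and $\mathfrak{M}^0 \times \mathfrak{M}^1$ at some interior $\delta$, and (iii) disc bubblings $\mathfrak{M}^1 \times \mathfrak{N}^0$ at a double point of $L_0$ or $L_1$ (with the second factor governed by the fixed endpoint $J_0^\delta$ or $J_1^\delta$).

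The \emph{orientation} statement follows from the general framework of \cite{Ekholm_&_Orientation_Homology}: a choice of spin structure on each $L_i$ induces a coherent orientation on the non-parametrised moduli spaces, and the extra $\R$-factor from the parameter $\delta$ provides a canonical orientation on the parametrised moduli spaces. The main obstacle I anticipate is in the compactness step, specifically in writing down a clean list of boundary configurations that accounts simultaneously for strip breaking \emph{and} disc bubbling at interior double points while keeping track of the augmentation weights $\varepsilon_i(\mathbf{p}^i)$. This is exactly what one needs to later show that the continuation map $\Upsilon_{J_\bullet^\bullet}$ is a chain map, so the bookkeeping here is forced on us by the intended application rather than being a genuine new analytic difficulty.
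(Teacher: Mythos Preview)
The paper does not give a proof of this proposition at all; it is stated as a standard parametrised analogue of Proposition~\ref{moduli spaces of Floer trajectories}, with the implicit understanding that the same references (\cite{LCHgeneral}, \cite{ExactGradedImmersedLagrFloerTheory}, \cite{FloerHFlag}) cover the parametrised case as well. Your sketch is the standard argument the paper is implicitly invoking, so in that sense your approach matches the paper's.

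One small point of precision worth tightening: in your compactness discussion for $k=0$, recall that the superscript $k$ in $\mathfrak{M}^k(\ldots;J_\bullet^\bullet)$ records the index of $u$, not the dimension of the unparametrised space, so elements of the $k=0$ parametrised space sit over the finitely many bifurcation values of $\delta$ rather than over generic $\delta$. Your dimension-count argument against breaking is still correct (a nontrivial break at fixed $\delta$ would force a translation-invariant piece of nonpositive index, which is excluded for a generic path), but the phrasing ``at generic $\delta$'' is slightly misleading. Similarly, your list of $k=1$ boundary strata is morally right but note that strata of type (ii) and (iii) can only occur over the bifurcation set $\Delta$; this is exactly the refinement the paper makes in the lemma immediately following the proposition.
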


In the following lemma we look more closely at the structure of the zero-dimensional
parametrised moduli spaces. The analogous statement in the setting of Lagrangian Floer
homology (for Lagrangian submanifolds) appears in \cite[Section 3]{FloerHFlag}.
In the case of Legendrian contact cohomology, the corresponding construction appears in
\cite[Section 2.4]{LCHgeneral}.
\begin{lemma}
For a generic $J_\bullet^\bullet$ there is a finite set $\Delta \subset (\delta_-, \delta_+)$
such that
for $\delta \in \Delta$ exactly one of the following cases holds:
\begin{itemize}
\item[(i)]  there is a unique nonempty moduli space $\mathfrak{N}^{-1}_{L_0}(q_0^0;
q_1^0, \ldots, q_d^0; J_0^\delta)$ and all other moduli spaces are transversely cut out,
\item[(ii)]   there is a unique nonempty moduli space $\mathfrak{N}^{-1}_{L_1}(q_0^1;
q_1^1, \ldots, q_d^1; J_1^\delta)$  and all other moduli spaces are transversely cut out, or
\item[(iii)] there is a unique nonempty  moduli space
$\mathfrak{M}_{L_0, L_1}^0(\mathbf{q}^1, y_-, \mathbf{q}^0, y_+; H, J_\bullet^\delta)$ and
all other moduli spaces are transversely cut out,
\end{itemize}
while for every $\delta \in [\delta_-, \delta_+] \setminus \Delta$ the moduli spaces of
negative virtual dimension are empty.
\end{lemma}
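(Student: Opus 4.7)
The strategy is a standard bifurcation analysis, following \cite[Section 2.4]{LCHgeneral} and \cite[Section 3]{FloerHFlag} with adaptations to the mixed setting considered here. First, for each possible asymptotic configuration -- either a tuple $(q^0_0; q^0_1, \ldots, q^0_d)$ of double points of $L_0$, an analogous tuple for $L_1$, or a configuration $(\mathbf{q}^1, y_-, \mathbf{q}^0, y_+)$ of double points and Hamiltonian chords -- I would construct the universal parametric moduli space consisting of pairs $(J_\bullet^\bullet, u)$, where $J_\bullet^\bullet$ ranges over a Banach manifold of smooth one-parameter families of almost complex structures satisfying \ref{acs} or \ref{double acs}, with fixed $L_i$-regular or $(L_0,L_1)$-regular endpoints $J_\bullet^{\delta_\pm}$. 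Standard Fredholm theory shows each universal space is a Banach manifold, and the projection to the space of families is Fredholm of index equal to the virtual dimension of the fibre.

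Applying the Sard-Smale theorem to each of the countably many projections and intersecting the resulting comeager subsets, I would obtain a comeager set of families $J_\bullet^\bullet$ for which every parametric moduli space is transversely cut out of dimension one more than its virtual dimension. In particular, parametric moduli spaces of virtual dimension $\leq -2$ remain empty, while those of virtual dimension exactly $-1$ -- namely $\mathfrak{N}^{-1}_{L_i}$ and $\mathfrak{M}^0_{L_0,L_1}$ -- become zero-dimensional manifolds. Gromov-Floer compactness, as in Proposition \ref{moduli spaces of Floer trajectories} and its analogue from \cite{LCHgeneral}, ensures each of these is a finite set of points lying strictly inside $(\delta_-, \delta_+)$ since the endpoints are regular. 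Moreover an action argument -- bounding the total action of the negative asymptotics of any index $-1$ disc or strip by the action of its positive asymptotic -- together with the finiteness of the sets of double points $D_i$ and of Hamiltonian chords in ${\mathcal C}_H$, bounds the arity $d$ of the relevant moduli spaces and shows that only finitely many asymptotic configurations can produce a nonempty virtual-dimension-$(-1)$ parametric moduli space. Hence the union $\Delta$ of corresponding exceptional parameters is finite.

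To arrange the exclusivity claim that at each $\delta \in \Delta$ exactly one moduli space of negative virtual dimension is nonempty, I would restrict to a further comeager subset of families for which the finitely many evaluation maps from the distinct universal zero-dimensional parametric moduli spaces to $[\delta_-, \delta_+]$ have pairwise disjoint images. This is a codimension-one genericity condition ensured by another application of Sard-Smale to the pairs of projections.

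The main obstacle is verifying the surjectivity of the linearised universal Cauchy-Riemann operator needed for the transversality step, because condition \ref{acs} restricts us to almost complex structures integrable near the double points, where $L_i$ is moreover required to be real-analytic. This is handled as in \cite[Proposition 2.3]{LCHgeneral}: by unique continuation no $J$-holomorphic curve with boundary on $L_i$ can be entirely contained in a neighbourhood of the double points, so one retains sufficient freedom to perturb $J$ away from those neighbourhoods to achieve universal transversality.
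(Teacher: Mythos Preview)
The paper does not actually supply its own proof of this lemma; it is stated immediately after the sentence ``The analogous statement in the setting of Lagrangian Floer homology (for Lagrangian submanifolds) appears in \cite[Section 3]{FloerHFlag}. In the case of Legendrian contact homology, the corresponding construction appears in \cite[Section 2.4]{LCHgeneral},'' and the authors defer entirely to those references. Your proposal is a correct and reasonably detailed sketch of exactly the standard bifurcation argument those references contain, adapted to the mixed setting where both the obstruction-algebra moduli spaces $\mathfrak{N}$ and the Floer-strip moduli spaces $\mathfrak{M}$ are in play. One small point: the clause ``all other moduli spaces are transversely cut out'' in the lemma is meant to include moduli spaces of nonnegative virtual dimension as well, so in addition to arranging that the finitely many zero-dimensional parametric spaces project to disjoint parameter values, you should also note that generically these values avoid the (likewise constrained) set of parameters where some higher-dimensional moduli space fails transversality; this is another codimension-one condition handled by the same Sard--Smale mechanism you already invoke.
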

(Of course, the self-intersection points $q_j^i$ appearing in the three cases above have
nothing to do with each other.)  Note that the lemma
does not claim that $J_\bullet^\delta$ is $(L_0, L_1)$-regular for $\delta \not \in \Delta$: for
example, if $\delta$ is a critical value of the projection
$$\mathfrak{M}_{L_0, L_1}^1(\mathbf{q}^1, y_-, \mathbf{q}^0, y_+;
H, J_\bullet^\bullet) \to [\delta_-, \delta_+],$$
then $J_\bullet^\delta$ is not $(L_0, L_1)$-regular, but $\delta \not \in \Delta$.

\begin{rem}
If $\Delta = \emptyset$, then the $\mathrm{CF}((L_0, \varepsilon_0), (L_1, \varepsilon_1); H,
J_\bullet^{\delta_-})$ and $\mathrm{CF}((L_0, \varepsilon_0), (L_1, \varepsilon_1); H,
J_\bullet^{\delta_+})$ are isomorphic. In fact the one-dimensional parametrised moduli spaces
have boundary points only at $J_\bullet^{\delta_-}$ and $J_\bullet^{\delta_+}$, and this implies
that the algebraic count of elements of the zero-dimensional moduli spaces is the same
for  $J_\bullet^{\delta_-}$ and $J_\bullet^{\delta_+}$.
\end{rem}
Next we describe what happens when we cross $\delta \in \Delta$ of type (i) or (ii). Since the
cases are symmetric, we will describe only (i) { and assume, without loss of generality, that $\delta=0$.}
\begin{lemma}\label{variation of J cases i and ii}
Suppose that $\Delta = \{ 0 \}$ and that the unique nontransversely cut out moduli space for $J_\bullet^{0}$ is $\mathfrak{N}^{-1}_{L_0}(q_0^0; q_1^0, \ldots, q_d^0;
J_0^{0})$. Then for $\delta>0$ the  differentials of
$\mathrm{CF}((L_0, \varepsilon_0^-), (L_1, \varepsilon_1^-); H, J_\bullet^{-\delta})$ and\\
$\mathrm{CF}((L_0, \varepsilon_0^+), (L_1, \varepsilon_1^+); H, J_\bullet^{\delta})$ are equal. \color{black}
\end{lemma}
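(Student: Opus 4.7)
The identification of generators of the two Floer complexes is clear: both are indexed by the time-one Hamiltonian chords from $L_0$ to $L_1$ of the fixed cylindrical Hamiltonian $H$, which do not depend on $J_\bullet$. My plan is therefore to show that the two differentials agree by performing a bifurcation analysis in the spirit of Proposition \ref{invariance of D}, tracking the jump of the weighted count $\partial^\delta$ as $\delta$ crosses the unique bifurcation value $\delta_0$. Note that $\mathfrak{D}_1$ is unaffected throughout the homotopy by hypothesis, so $\varepsilon_1^-=\varepsilon_1^+$, while $\varepsilon_0^-$ is the augmentation of $\mathfrak{D}_0$ at $\delta_-$ induced from $\varepsilon_0^+$ through the tame isomorphism $\mathfrak{Y}$ of Proposition \ref{invariance of D}.

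I would first observe that, away from $\delta_0$, the family $J_\bullet^\delta$ is $(L_0,L_1)$-regular and all counts defining the Floer differential are locally constant in $\delta$. It then remains to analyse the jump across $\delta_0$ via the parametric moduli spaces $\mathfrak{M}^1_{L_0,L_1}(\mathbf{p}^1, x_-, \mathbf{p}^0, x_+; H, J_\bullet^\bullet)$, which form a compact $1$-manifold whose boundary has two types of contributions: on one hand, the unparametric $\mathfrak{M}^1$ points at $\delta=\delta_\pm$, whose signed count is the difference of the two unweighted Floer counts; on the other hand, bubbling events occurring at $\delta=\delta_0$, where a pseudoholomorphic disc in $\mathfrak{N}^{-1}_{L_0}(q_0^0; q_1^0, \ldots, q_d^0; J_0^{\delta_0})$ splits off the $L_0$-boundary of a Floer strip at a single negative puncture $q_0^0$, replacing that puncture by the word $q_1^0\cdots q_d^0$ in the resulting $\mathbf{p}^0$-sequence. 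The vanishing of the signed boundary count produces a geometric identity relating the Floer counts at $\delta_\pm$ and the bubbling count at $\delta_0$.

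Weighting this identity by the augmentations $\varepsilon_0^+$ and $\varepsilon_1$, summing over all chord data, and using the key relation
\begin{equation*}
\varepsilon_0^-(q_0^0) - \varepsilon_0^+(q_0^0) = N \, \varepsilon_0^+(q_1^0)\cdots \varepsilon_0^+(q_d^0),
\end{equation*}
where $N := \#\mathfrak{N}^{-1}_{L_0}(q_0^0; q_1^0, \ldots, q_d^0; J_0^{\delta_0})$, supplied by Proposition \ref{invariance of D}, one verifies that the bubbling correction absorbs exactly the jump $\varepsilon_0^-(\mathbf{p}^0) - \varepsilon_0^+(\mathbf{p}^0)$ in the weighting over each $\mathbf{p}^0$. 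Summed up, this shows that the two weighted differentials coincide on every generator $x_+$, so that the identity map is the claimed isomorphism of complexes.

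The main obstacle I anticipate is the combinatorial bookkeeping when $q_0^0$ occurs several times inside $\mathbf{p}^0$: then $\varepsilon_0^-(\mathbf{p}^0) - \varepsilon_0^+(\mathbf{p}^0)$ telescopes into a sum indexed by the occurrences, and one must match it term by term with the bubbling contributions indexed by the insertion positions. A secondary technical point is verifying that no parasitic degenerations contribute at $\delta_0$: neither bubbling off $L_1$ (since $\mathfrak{D}_1$ sees no bifurcation) nor breaking of Floer strips into two pieces, the latter being excluded by the hypothesis that $\mathfrak{N}^{-1}_{L_0}(q_0^0; q_1^0, \ldots, q_d^0; J_0^{\delta_0})$ is the \emph{unique} non-transversely cut-out moduli space along the path. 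Careful orientation conventions are also required for the signed counts in the spin case.
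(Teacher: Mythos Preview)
Your proposal is correct and rests on the same two ingredients as the paper's proof: the boundary analysis of the one-dimensional parametrised moduli spaces $\mathfrak{M}^1_{L_0,L_1}(\mathbf{p}^1,x_-,\mathbf{p}^0,x_+;H,J_\bullet^\bullet)$, and the relation $\varepsilon_0^- = \varepsilon_0^+ \circ \mathfrak{Y}_0$ coming from Proposition~\ref{invariance of D}.

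The paper organises the same computation via a different decomposition that you may find instructive, precisely because it dissolves the combinatorial obstacle you flag. Rather than weighting first and comparing afterward, the paper factors each differential as
\[
\partial_{\varepsilon_0^\pm,\varepsilon_1^\pm} = (\varepsilon_1^\pm \otimes \mathrm{Id} \otimes \varepsilon_0^\pm)\circ \widetilde{\partial}_\pm,
\]
where $\widetilde{\partial}_\pm \colon CF_\pm \to \mathfrak{D}_1^\pm \otimes CF_\pm \otimes \mathfrak{D}_0^\pm$ records the boundary-puncture words without augmenting them. The parametrised moduli space argument then gives commutativity of the square $(\mathfrak{Y}_1 \otimes \mathrm{Id} \otimes \mathfrak{Y}_0)\circ \widetilde{\partial}_- = \widetilde{\partial}_+ \circ \mathrm{Id}$, and the augmentation square commutes by the very definition $\varepsilon_i^- = \varepsilon_i^+ \circ \mathfrak{Y}_i$. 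The point is that $\mathfrak{Y}_0$ is an \emph{algebra} homomorphism, so the telescoping identity you anticipate for multiple occurrences of $q_0^0$ in $\mathbf{p}^0$ is just the multiplicativity $\varepsilon_0^-(\mathbf{p}^0)=\varepsilon_0^+(\mathfrak{Y}_0(p_1^0)\cdots\mathfrak{Y}_0(p_{l_0}^0))$, and no explicit bookkeeping is required. Your direct approach works but has to redo this by hand; the paper's factorisation buys exactly that economy.
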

%\color{magenta}
\begin{proof}
We adapt the cobordism method of \cite{Ekholm_&_Orientation_Homology} (see also
\cite{LCHgeneral}). Given a positive Morse function $f \colon \R \to \R$ with local minima at $\pm 1$ satisfying $f(1)= f(-1)=1$, a local maximum at $0$ and no other critical point, we define exact Lagrangian immersions $\widetilde{\iota}_i \colon \R \times L_i \to T^*\R \times W$ following \cite[Subsection 4.3.2]{Ekholm_&_Orientation_Homology}. We denote
$\widetilde{L}_i$ the image of $\widetilde{\iota}_i$.

Let $(r, \rho)$ be the canonical coordinates on $T^*\R$. We consder the Hamiltonian function
$$\widetilde{H} \colon [0, 1] \times T^*\R \times W \to \R$$
such that $\widetilde{H}(t, r, \rho, w)=H(t,w)$.
Then its Hamiltonian vector field $X_{\widetilde{H}}$ is tangent to $\{(r, \rho) \} \times W$ for all $(r, \rho) \in T^*\R$.
Each double point $q$ of $L_i$ gives rise to three double points $q[-1], q[0], q[+1]$ of $\widetilde{L}_i$ and each each Hamiltonian chord $x$ from $L_0$ to $L_1$ gives rise to three Hamiltonian chords $x[-1]$, $x[-1], x[+1]$ from $\widetilde{L}_0$ to $\widetilde{L}_1$.

Let $\widetilde{J}_\bullet$ be the (time dependent) almost complex structure on $T^*\R \times W$ such that on $T_{(r, \rho, w)} \cong \R^2 \oplus T_wW$ is $j_0 \oplus J^{\alpha_\delta(r)}_\bullet$ where $j_0$ is the standard almost complex structure on $\R^2$ and $\alpha_\delta \colon \R \to [- \delta, \delta]$ is as in Equation (4.9) of \cite{Ekholm_&_Orientation_Homology} for $\delta$ small. Proposition~\ref{invariance of D} provides isomorphisms
$$\mathfrak{Y}_i \colon (\mathfrak{D}_i^-, \mathfrak{d}_i^-) \to (\mathfrak{D}_i^+,
\mathfrak{d}_i^+)$$
such that $\varepsilon_i^- = \varepsilon_i^+ \circ \mathfrak{Y}_i$. Those isomorphisms are the ones constructed in \cite[Subsection 4.4.3]{Ekholm_&_Orientation_Homology} and denoted by $\psi$ there. Let $\widetilde{\mathfrak{D}}_i$ be the obstruction algebra of
$\widetilde{L}_i$ and define $\widetilde{\varepsilon}_i \colon \widetilde{\mathfrak{D}}_i \to \F$ as
$$\begin{cases} \widetilde{\varepsilon}_i(q[-1])=\varepsilon_-(q), \\ \widetilde{\varepsilon}_i(q[0])= 0, \\ \widetilde{\varepsilon}_i(q[+1])=\varepsilon_+(q)
\end{cases}$$
for every double point $q$ of $L_i$.
From the structure of the differential of the Chekanov-Eliashberg algebras of $\widetilde{L}_i$ described in  \cite[Subsection 4.4.3]{Ekholm_&_Orientation_Homology} it follows that $\widetilde{\varepsilon}_i$ is an augmentation of $\widetilde{\mathfrak{D}}_i$, and therefore the Floer chain complex $\mathrm{CF}((\widetilde{L}_0, \widetilde{\varepsilon}_0), (\widetilde{L}_1, \widetilde{\varepsilon}_1); \widetilde{H}, \widetilde{J}_\bullet)$ is well defined.

Lemmas 4.14, 4.14, 4.18 and 4.19 of \cite{Ekholm_&_Orientation_Homology} have direct counterparts for Floer solutions because the projection of a solution of the Floer equation with Hamiltonian $\widetilde{H}$ to $T^*\R$ is
a holomorphic map. This implies that the differential $\widetilde{\partial}$ on $\mathrm{CF}((\widetilde{L}_0, \widetilde{\varepsilon}_0), (\widetilde{L}_1, \widetilde{\varepsilon}_1); \widetilde{H}, \widetilde{J}_\bullet)$ has the following form
\begin{equation} \label{partialtilde}
\begin{cases}
\widetilde{\partial}(x[-1])=(\partial_- x)[-1], \\
\widetilde{\partial}(x[0])= x[+1]-x[-1]+ \sum \limits_{y \in {\mathcal C}_H} a_y y[0], \\
\widetilde{\partial}(x[+1])=(\partial_+ x)[+1]
\end{cases}
\end{equation}
where $\partial_\pm$ denotes the differentials of $\mathrm{CF}((L_0, \varepsilon_0^\pm), (L_1, \varepsilon_1^\pm); H, J_\bullet^{\pm\delta})$.
The crucial point here is that no chord $y[\pm 1]$ contributes to $\widetilde{\partial}(x[0])$ if $y \ne x$.  This is a consequence of the assumption that there is no Floer strip of index $0$ for $J^\bullet_\bullet$ and of the Hamiltonian version of \cite[Lemma 4.19]{Ekholm_&_Orientation_Homology}. From $\widetilde{\partial}^2=0$ it is easy to see that $\partial_+=\partial_-$.
\end{proof}
\color{black}

Now we analyse how the complex changes when we cross $\delta \in \Delta$ of type (iii).
\begin{lemma}\label{variation of J case iii}
Suppose that $\Delta = \{ \delta_0 \}$ and that the unique nontransversely cut out moduli
space for $J_\bullet^{\delta_0}$ is $\mathfrak{M}_{L_0, L_1}^0(\mathbf{q}^1, y_-,
\mathbf{q}^0, y_+; H, J_\bullet^\delta)$. Then the map
$$\Upsilon_{J_\bullet^\bullet} \colon \mathrm{CF}((L_0, \varepsilon_0), (L_1, \varepsilon_1); H,
J_\bullet^{\delta_+}) \to \mathrm{CF}((L_0, \varepsilon_0), (L_1, \varepsilon_1); H, J_\bullet^{\delta_-})$$
defined as
$$\Upsilon_{J_\bullet^\bullet} (x) = \begin{cases}
x & \text{if } x \ne y_+, \\
y_+  + \# \mathfrak{M}_{L_0, L_1}^0(\mathbf{q}^1, y_-,
\mathbf{q}^0, y_+; H, J_\bullet^\bullet) \varepsilon_0(\mathbf{q}^0)
\varepsilon_1(\mathbf{q}^1)y_- &  \text{if } x = y_+
\end{cases}$$
is an isomorphism of complexes.
\end{lemma}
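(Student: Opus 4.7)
The plan is as follows. To begin with, $\Upsilon_{J_\bullet^\bullet}$ is evidently an isomorphism of underlying $\F$-modules: writing $N \coloneqq \Upsilon_{J_\bullet^\bullet} - \mathrm{Id}$, the map $N$ is the rank-one nilpotent endomorphism sending $y_+$ to $A y_-$, where
$$A = \sum_{\mathbf{q}^0, \mathbf{q}^1} \#\mathfrak{M}^0_{L_0,L_1}(\mathbf{q}^1, y_-, \mathbf{q}^0, y_+; H, J_\bullet^{\delta_0})\, \varepsilon_0(\mathbf{q}^0)\varepsilon_1(\mathbf{q}^1),$$
so $N^2 = 0$ and $\Upsilon_{J_\bullet^\bullet}$ has inverse $\mathrm{Id} - N$. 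The only content of the lemma is therefore the chain map property $\Upsilon_{J_\bullet^\bullet} \circ \partial_+ = \partial_- \circ \Upsilon_{J_\bullet^\bullet}$, which is equivalent to the identity
$$\partial_+ - \partial_- = \partial_- \circ N - N \circ \partial_+.$$

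To verify this, I would study the signed count of codimension-one boundary strata of the compactified one-dimensional parametrized moduli spaces
$$\mathfrak{M}^1_{L_0,L_1}(\mathbf{p}^1, x_-, \mathbf{p}^0, x_+; H, J_\bullet^\bullet),$$
weighted by $\varepsilon_0(\mathbf{p}^0)\varepsilon_1(\mathbf{p}^1)$ and summed over all $(\mathbf{p}^0, \mathbf{p}^1)$. Three flavours of boundary stratum can occur: the endpoint slices at $\delta = \delta_\pm$, which contribute $\pm$ the matrix coefficients of $\partial_\pm$; strip breakings, that is, Gromov limits degenerating into two Floer-strip factors of slice-indices $k_1 + k_2 = 1$; and obstruction-algebra disc bubblings at internal self-intersection punctures on $L_0$ or $L_1$. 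By hypothesis each $J_\bullet^\delta$ remains $L_i$-regular throughout, so the obstruction disc bubblings occur as in the standard proof that the Floer differential squares to zero for immersed Lagrangians; after weighting by the augmentations their total contribution vanishes through the identity $\varepsilon_i \circ \mathfrak{d}_i = 0$.

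The strip breakings are what produce the correction terms. Since slices of $\mathfrak{M}^k$ of virtual dimension $k-1 < 0$ are generically empty, the constraint $k_1 + k_2 = 1$ with $k_1, k_2 \geq 0$ forces one of the factors to have slice-index $0$; by the hypothesis on $J_\bullet^\bullet$ this is realised only at $\delta = \delta_0$, and the factor of slice-index zero must then lie in the exceptional moduli $\mathfrak{M}^0(\mathbf{q}^1, y_-, \mathbf{q}^0, y_+; J_\bullet^{\delta_0})$. Two subcases arise. If the exceptional factor is the top piece of the broken strip, then its positive puncture $y_+$ forces $x_+ = y_+$ and the stratum contributes exactly the matrix coefficient of $\partial_- \circ N$. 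If it is the bottom piece, then its negative puncture $y_-$ forces $x_- = y_-$ and the stratum contributes the matrix coefficient of $N \circ \partial_+$. Imposing the signed boundary count identity across all target data then produces the desired chain map equation.

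The principal technical obstacle will be the careful coherent-orientation bookkeeping for the two kinds of exceptional gluings at $\delta_0$, so that they combine with the correct relative signs to produce $\partial_- N - N \partial_+$ rather than a spurious variant. The underlying transversality, gluing and Gromov compactness inputs are routine and closely parallel the bifurcation analysis carried out in the proof of Lemma~\ref{variation of J cases i and ii}, as well as the analogous arguments in \cite[Section~3]{FloerHFlag} and \cite[Section~2.4]{LCHgeneral}.
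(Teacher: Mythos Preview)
Your proposal is correct and follows essentially the same approach as the paper: the paper's proof simply cites the bifurcation analysis in \cite{FloerHFlag} (supplemented by \cite{Ekholm_&_Orientation_Homology, LCHgeneral} for orientations over general coefficients), and what you have written is precisely a detailed outline of that argument adapted to the immersed/augmented setting. Your acknowledgement that the coherent-orientation bookkeeping is the main remaining technical point matches the paper's own remark about passing from $\Z_2$- to general coefficients.
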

\begin{proof}
The proof is the same as in \cite{FloerHFlag}.
However,  the proof in \cite{FloerHFlag} holds only in the case of $\Z_2$-coefficients.
For more general coefficients, we rely on the discussion in \cite{Ekholm_&_Orientation_Homology, LCHgeneral}.
\end{proof}
Given a generic homotopy $J_\bullet^\bullet$, we split it into pieces containing only one point of
$\Delta$ and compose the maps obtained in Lemma~\ref{variation of J cases i and ii}
and~\ref{variation of J case iii}.

\subsection{Changing the Hamiltonian}\label{sss: changing hamiltonian}
In this section we will keep the almost complex structure fixed.
Let $H_-$ and $H_+$ be time-dependent cylindrical Hamiltonian functions which are
compatible with immersed Lagrangian submanifolds $L_0$ and $L_1$.
From a one-parameter family of cylindrical Hamiltonians $H_s$ such that
\begin{itemize}
\item[(i)] $H_s=H_-$ for $s \ll 0$,
\item[(ii)] $H_s =H_+$ for all $s \gg 0$, and
\item[(iii)] $\partial_sh'_s(e^{\mathfrak{r}(w)}) \le 0$ if $\mathfrak{r}(w)$ is sufficiently large,
\end{itemize}
we will define a continuation map
$$\Phi_{H_s} \colon \mathrm{CF}((L_0, \varepsilon_0), (L_1, \varepsilon_1) ; H_+) \to
\mathrm{CF}((L_0, \varepsilon_0), (L_1, \varepsilon_1); H_-).$$

Given a time-dependent almost complex structure $J_\bullet$, an $H_-$-Hamiltonian chord
$x_-$, an $H_+$-Hamiltonian chord $x_+$ and double points $\mathbf{p}^0 =
(p_1^0, \ldots, p_{l_0}^0)$ of $L_0$ and $\mathbf{p}^1 = (p_1^1, \ldots, p_{l_1}^1)$ of
$L_1$, we define the moduli spaces
$$\mathfrak{M}_{L_0, L_1}(\mathbf{p}^1, x_-, \mathbf{p}^0, x_+; H_s, J_\bullet)$$
as the set of triples $(\boldsymbol{\zeta}^0, \boldsymbol{\zeta}^1, u)$
such that:
\begin{itemize}
\item $(\boldsymbol{\zeta}^0, \boldsymbol{\zeta}^1) \in \widetilde{\mathcal R}^{l_0|l_1}$
and $u \colon Z_{\boldsymbol{\zeta}^0, \boldsymbol{\zeta}^1} \to W$ is a map satisfying the
Floer equation
\begin{equation}\label{continuation1}
\frac{\partial u}{\partial s} + J_t \left (\frac{\partial u}{\partial t} - \chi'(t)
X_{H_s}(\chi(t), u) \right )=0,
\end{equation}
\item $\lim \limits_{s \to \pm \infty}u(s, t) = x_\pm(\chi(t))$,
\item $u(s,0) \in L_0$ for all $(s, 0) \in Z_{\boldsymbol{\zeta}^0, \boldsymbol{\zeta}^1}$,
\item $u(s,1) \in L_1$ for all $(s,1) \in Z_{\boldsymbol{\zeta}^0, \boldsymbol{\zeta}^1}$,  and
\item each $\zeta_j^i$ is a negative puncture at $p_j^i$ for $i=0,1$ and $j=1, \ldots, l_i$.
\end{itemize}
Note the only difference between Equation~\eqref{continuation1} and
Equation~\eqref{eq: Floer} is that we made $X_{H_s}$ depend on $s$ in
Equation~\eqref{continuation1}. For this reason there is no action of $\op{Aut}(Z)$
on the
moduli spaces $\mathfrak{M}_{L_0, L_1}(\mathbf{p}^1, x_-, \mathbf{p}^0, x_+; H_s, J_\bullet)$.

Let $F_u$ be the linearisation at $u$ of the Floer operator with $s$-dependent
Hamiltonian.  We define $$\op{ind}(u)=  \op{ind}(F_u) +l_0 +l_1,$$ and define
$\mathfrak{M}_{L_0, L_1}^k(\mathbf{p}^0, x_-, \mathbf{p}^1, x_+; H_s, J_\bullet)$
as the subset of $$\mathfrak{M}_{L_0, L_1}(\mathbf{p}^1, x_-, \mathbf{p}^0, x_+; H_s,
J_\bullet)$$ consisting of the maps $u$ with $\op{ind}(u)=k$.

The following statement is analogous to the statement in Morse theory, \cite[Section 3]{FloerHFlag}. A similar boundary degeneration statement in the case of Legendrian contact cohomology appears in \cite[Section 2.4]{LCHgeneral}.
\begin{prop}
Given $H_s$, for a generic time-dependent almost complex structure $J_\bullet$ satisfying
\ref{double acs} with respect to both $H_+$ and $H_-$ , the moduli space
$\mathfrak{M}_{L_0, L_1}^k (\mathbf{p}^1, x_-, \mathbf{p}^0, x_+; H_s, J_\bullet)$ is a
transversely cut-out manifold of dimension $k$. If $k=0$ it is compact, and therefore
consists of a finite set of points. If $k=1$ it can be compactified in the sense of Gromov-Floer.

If both $L_0$ and $L_1$ are spin, the choice of a spin structure on each
induces a coherent orientation {of the moduli space (see \cite{Ekholm_&_Orientation_Homology})}.
\end{prop}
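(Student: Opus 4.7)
The plan is to adapt, essentially verbatim, the transversality/compactness arguments already used in Proposition~\ref{moduli spaces of Floer trajectories} of the excerpt, with the minor modifications forced by (a) the $s$-dependent Hamiltonian (which breaks the $\R$-translation symmetry and hence shifts the virtual dimension by $+1$) and (b) the hypothesis (iii) on $H_s$ at infinity (which replaces the usual autonomous maximum principle).

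First I would set up the Fredholm theory. Fixing an $L_0$-regular $J_0$ and an $L_1$-regular $J_1$ near $t=0,1$ so that condition \ref{double acs} holds for both $H_\pm$, the linearised operator $F_u$ at a map $u\in\mathfrak{M}_{L_0,L_1}(\mathbf{p}^1,x_-,\mathbf{p}^0,x_+;H_s,J_\bullet)$ is Fredholm on the appropriate weighted Sobolev spaces (with exponential weights at the strip-like ends of the boundary punctures $\zeta^i_j$ at double points, and standard weights at the ends where one converges to a Hamiltonian chord). The index computation is the same as in Proposition~\ref{moduli spaces of Floer trajectories}, giving $\op{ind}(F_u)=k-l_0-l_1$, so the virtual dimension of $\mathfrak{M}^k_{L_0,L_1}(\ldots;H_s,J_\bullet)$ equals $k$; the absence of reparametrisation symmetry accounts for the discrepancy of one with the autonomous case. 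Transversality is then a standard Sard--Smale argument applied to the universal moduli space cut out by the Floer operator with parameter $J_\bullet$ varying in the Banach manifold of time-dependent almost complex structures satisfying \ref{double acs} near $t=0,1$; the universal operator is surjective because $H_s$ depends genuinely on $s$ in the interior of $Z$, so one can perturb $J_\bullet$ away from the boundary punctures and from the strip-like ends at the double points where $J$ must be fixed to $J_0,J_1$.

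Next I would prove compactness for $k\le 1$. The key $C^0$-estimate is the generalised maximum principle of Abouzaid--Seidel in the cylindrical end: writing $u=(r,v)$ and noting that $H_s$ is $\mathfrak{r}$-cylindrical, one computes $\Delta(e^{r\circ u})$ and uses condition (iii), $\partial_s h'_s(e^{\mathfrak{r}})\le 0$, exactly as in the proof that continuation maps exist for wrapped Floer theory. The Lagrangian boundary conditions $L_0,L_1$ are themselves cylindrical at infinity, so the usual argument confines $u$ to a compact set in $W$. Energy bounds follow from the standard identity
\[
E(u)\;\le\;\mathcal{A}(x_+)-\mathcal{A}(x_-)+\sum_{i,j}\pm\ell(p^i_j)+\int_{-\infty}^{+\infty}\!\!\!\int_0^1 -\chi'(t)(\partial_s H_s)(\chi(t),u)\,dt\,ds,
\]
where the last integrand is controlled on the (already bounded) image of $u$. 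With $C^0$ and energy bounds in place, exactness of $L_0,L_1$ rules out disc and sphere bubbling, and the standard Gromov--Floer compactification gives convergence to broken trajectories of the type listed in \eqref{compactification of M^2}, now including at one end a genuine $s$-dependent piece and at the other a Floer trajectory for $H_\pm$. In the zero-dimensional case no breaking can occur (every broken configuration has strictly lower dimension), so $\mathfrak{M}^0$ is already compact; in the one-dimensional case the boundary is governed by a single breaking combined with teardrops in $\mathfrak{N}^0_{L_i}$, exactly as in the proof of the chain-map property for $\Phi_{H_s}$.

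Finally, for orientations I would invoke the coherent orientation scheme of \cite{Ekholm_&_Orientation_Homology, LCHgeneral}; since the moduli spaces here differ from those of Proposition~\ref{moduli spaces of Floer trajectories} only by a family parameter or, equivalently, by breaking the $s$-translation symmetry, the spin structures on $L_0$ and $L_1$ induce coherent orientations by exactly the same gluing constructions.

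The main obstacle, and the step requiring genuine care, is the $C^0$-estimate near infinity: one must verify that the strip-like ends at the double-point punctures do not interfere with the maximum principle. This is handled by noting that $J_\bullet$ is fixed to $J_0,J_1$ near those punctures and that $L_0,L_1$ are real-analytic there (by \ref{acs}), so the standard elliptic estimates and integrated maximum principle apply uniformly, allowing one to combine the Abouzaid--Seidel estimate in the cylindrical end with compactness in the interior.
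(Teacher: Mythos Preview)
Your sketch is correct and follows the standard route, but you should know that the paper does not actually supply a proof for this proposition: it simply prefaces the statement with the remark that it is analogous to the Morse-theoretic situation in \cite[Section~3]{FloerHFlag}, with the boundary-degeneration analysis paralleling \cite[Section~2.4]{LCHgeneral}, and then moves on. In other words, the authors treat this as routine and defer entirely to the literature.

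What you have written is therefore strictly more than the paper provides, and it is the expected argument: Fredholm theory with weighted ends at the double-point punctures, Sard--Smale for generic $J_\bullet$, the Abouzaid--Seidel style maximum principle using condition~(iii) on $\partial_s h'_s$ for the $C^0$-bound, the action-energy identity for the energy bound, exactness to kill bubbling, and the coherent-orientation package from \cite{Ekholm_&_Orientation_Homology}. One minor point: the term $\pm\ell(p^i_j)$ in your energy identity should really be $+\mathfrak{a}(p^i_j)$ (all boundary punctures are negative here, so their actions enter with the same sign), but this does not affect the argument since the actions are positive and there are only finitely many double points, giving a uniform bound.
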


We denote ${\mathcal C}_-$ the set of Hamiltonian chords of $H_-$ and ${\mathcal C}_+$
the set of Hamiltonian chords of $H_+$. We also introduce the weighted
count
$$\mathfrak{m}(\mathbf{p}^1, x_-  \mathbf{p}^0, x_+; H_s)=
\# \mathfrak{M}_{L_0,L_1}^0( \mathbf{p}^1, x_-  \mathbf{p}^0, x_+; H_s,
J_\bullet) \varepsilon_0( \mathbf{p}^0) \varepsilon_1 ( \mathbf{p}^1).$$
Given $x_+ \in {\mathcal C}_+$, we
define the continuation map
\begin{equation}\label{continuation: change of hamiltonian}
\Phi_{H_s}(x_+) = \sum_{x_- \in {\mathcal C}_-} \sum_{l_0, l_1 \in \N} \sum_{\mathbf{p}^i \in D_i^{l_i}}
\mathfrak{m}(\mathbf{p}^1, x_-  \mathbf{p}^0, x_+; H_s) x_-.
\end{equation}
The Gromov-Floer compactification of the one-dimensional moduli spaces implies the
following lemma.
\begin{lemma}
The map $\Phi_{H_s}$ is a chain map.
\end{lemma}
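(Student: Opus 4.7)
The plan is to verify $\Phi_{H_s}\circ \partial_+ = \partial_-\circ\Phi_{H_s}$ by analysing the Gromov--Floer compactification of the $1$-dimensional moduli spaces $\mathfrak{M}_{L_0,L_1}^1(\mathbf{p}^1,x_-,\mathbf{p}^0,x_+;H_s,J_\bullet)$. I would first note that the weighted condition (iii) on $\partial_s h'_s$ provides the maximum principle in the cylindrical end that is needed to prevent solutions from escaping along $\fr\to+\infty$, so that Gromov--Floer compactness applies in the form stated in the previous proposition.

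The main step is to identify the boundary of the compactified $1$-parameter moduli space. Exactly as in Proposition~\ref{moduli spaces of Floer trajectories}, broken configurations can form in four ways, each of codimension one:
\begin{enumerate}
\item[(a)] breaking at a chord $y_+ \in {\mathcal C}_+$ at the positive end, giving a product
$\mathfrak{M}^1_{L_0,L_1}(\mathbf{q}^1,y_+,\mathbf{q}^0,x_+;H_+,J_\bullet) \times \mathfrak{M}^0_{L_0,L_1}(\mathbf{p}'^1,x_-,\mathbf{p}'^0,y_+;H_s,J_\bullet)$ with the appropriate splitting of the boundary punctures;
\item[(b)] breaking at a chord $y_- \in {\mathcal C}_-$ at the negative end, giving an analogous product with the $H_s$ part on top and the $H_-$ part on the bottom;
\item[(c)] bubbling of a pseudoholomorphic disc on $L_0$ at one of its double points, producing a factor $\mathfrak{N}^0_{L_0}(q;p_{i+1}^0,\ldots,p_j^0;J_0)$;
\item[(d)] the analogous bubbling on $L_1$, producing a factor $\mathfrak{N}^0_{L_1}(q;p_{i+1}^1,\ldots,p_j^1;J_1)$.
\end{enumerate}
Since the total signed count over the boundary of any compact oriented $1$-manifold is zero, the sum of the contributions of (a)--(d), weighted by $\varepsilon_0$ on the pure $L_0$-punctures and $\varepsilon_1$ on the pure $L_1$-punctures, vanishes.

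The contributions from (a) and (b) unpack, after a standard factorisation of the word of pure punctures into the two factors and distribution of $\varepsilon_0, \varepsilon_1$ (they are algebra morphisms), as the coefficient of $x_-$ in $\Phi_{H_s}(\partial_+ x_+) - \partial_-(\Phi_{H_s}(x_+))$, with the appropriate sign. The contributions from (c) and (d) are proportional, at each double point $q$, to $\varepsilon_i(\mathfrak{d}_i(q))$, which vanishes because $\varepsilon_0$ and $\varepsilon_1$ are augmentations, i.e.\ $\varepsilon_i\circ \mathfrak{d}_i=0$. Combining, the identity $\Phi_{H_s}\circ \partial_+ = \partial_-\circ\Phi_{H_s}$ follows.

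The main subtlety, and the step requiring most care, is the bookkeeping of the combinatorial splitting of the pure punctures among the two Floer factors in (a) and (b), together with the distribution of the augmentations; this is where one sees that the monoidal structure of the obstruction algebras together with the multiplicativity of $\varepsilon_i$ is essential. With signs included one must also check orientation coherence, which follows from the standard gluing and orientation conventions of \cite{Ekholm_&_Orientation_Homology} extended to the present setting.
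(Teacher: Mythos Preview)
Your proposal is correct and takes essentially the same approach as the paper, which simply states that the lemma follows from the Gromov--Floer compactification of the one-dimensional moduli spaces. Your write-up is a faithful unpacking of that one-sentence proof: the four boundary strata you list are exactly the expected ones, and the key point that the pure-puncture bubbles (c) and (d) are killed by the augmentation condition $\varepsilon_i\circ\mathfrak{d}_i=0$ is precisely why the weighted counts yield a chain map.
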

 We denote by
$$\Phi_{H_-, H_+}^* \colon \mathrm{HF}((L_0, \varepsilon_0), (L_1, \varepsilon_1) ; H_+) \to \mathrm{HF}((L_0,
\varepsilon_0), (L_1, \varepsilon_1); H_-)$$
the map induced in homology by $\Phi_{H_s}$ --- soon it will be apparent that the notation is
justified.
As it happens in the more standard Floer cohomology for Lagrangian submanifolds, the
continuation maps satisfy the following properties.
\begin{lemma}\label{properties of Phi}
\begin{enumerate}
\item Up to homotopy, $\Phi_{H_s}$ depends only on the endpoints $H_+$ and $H_-$ of
$H_s$, \label{qui}
\item $\Phi^*_{H,H}$ is the identity for every $H$, and \label{quo}
\item $\Phi^*_{H_-,H} \circ \Phi^*_{H, H_+} = \Phi^*_{H_-, H_+}$. \label{qua}
\end{enumerate}
\end{lemma}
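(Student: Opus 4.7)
The argument follows the classical pattern for continuation maps in Floer theory, but we must keep track of the augmentations and of the disc bubblings that enter through the obstruction algebra. Throughout, we use the parametrised moduli spaces of maps satisfying the Floer equation with boundary punctures decorated by $\varepsilon_0$ and $\varepsilon_1$, exactly as in Section~\ref{sec: Floer homology for immersions}.

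\textbf{Step 1 (part \eqref{qui}).} Let $H^0_s$ and $H^1_s$ be two admissible homotopies from $H_+$ to $H_-$. Choose a smooth two-parameter family $H^\lambda_s$, $\lambda\in[0,1]$, interpolating between them, with $H^\lambda_s=H_\pm$ for $\pm s\gg 0$ and satisfying condition (iii) of Section~\ref{sss: changing hamiltonian} uniformly in $\lambda$; pick also a generic family $J^\lambda_\bullet$. Consider the parametrised moduli space
$$\mathfrak{M}^k_{L_0,L_1}(\mathbf{p}^1,x_-,\mathbf{p}^0,x_+; H^\bullet_\bullet, J^\bullet_\bullet)$$
consisting of pairs $(\lambda,u)$ with $u$ a solution for $(H^\lambda_s,J^\lambda_\bullet)$, whose virtual dimension is now $k+1$. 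Define
$$K(x_+)=\sum_{x_-,\mathbf{p}^0,\mathbf{p}^1} \#\mathfrak{M}^{-1}_{L_0,L_1}(\mathbf{p}^1,x_-,\mathbf{p}^0,x_+;H^\bullet_\bullet,J^\bullet_\bullet)\,\varepsilon_0(\mathbf{p}^0)\varepsilon_1(\mathbf{p}^1)\, x_-.$$
Analysing the boundary of the compactified one-dimensional parametrised moduli space gives four kinds of breaking: breakings at $\lambda=0,1$ reproducing $\Phi_{H^0_s}$ and $\Phi_{H^1_s}$; splittings along a Floer strip at $s=\pm\infty$ yielding $\partial\circ K$ and $K\circ\partial$; and bubblings of a disc in $\mathfrak{N}^0_{L_i}$ at a boundary puncture, which, after weighting by $\varepsilon_i$, cancel because $\varepsilon_i$ is an augmentation (i.e.\ $\varepsilon_i\circ\mathfrak{d}_i=0$). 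The resulting identity is $\Phi_{H^1_s}-\Phi_{H^0_s}=\partial\circ K+K\circ\partial$.

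\textbf{Step 2 (part \eqref{quo}).} Take the constant homotopy $H_s\equiv H$. The moduli spaces $\mathfrak{M}^0_{L_0,L_1}(\mathbf{p}^1,x_-,\mathbf{p}^0,x_+;H,J_\bullet)$ acquire an extra $\R$-symmetry by translation in $s$, so the only rigid solutions are the constant strips at each Hamiltonian chord with no boundary punctures, forcing $\Phi_H=\mathrm{Id}$ on the chain level for an appropriate choice of data. By part \eqref{qui}, $\Phi^*_{H,H}=\mathrm{Id}$ for any admissible homotopy from $H$ to $H$.

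\textbf{Step 3 (part \eqref{qua}).} For $R\gg 0$ define the concatenated homotopy $H^R_s$ which equals $H^{1}_{s+R}$ (interpolating from $H_+$ to $H$) for $s\le 0$ and $H^{2}_{s-R}$ (interpolating from $H$ to $H_-$) for $s\ge 0$, smoothed in between. By part \eqref{qui}, $\Phi^*_{H^R_s}=\Phi^*_{H_-,H_+}$ on homology. Now consider the one-parameter family indexed by $R\in[R_0,+\infty)$ and its parametrised moduli spaces. As $R\to+\infty$ Gromov--Floer compactness produces exactly the broken configurations counted by $\Phi_{H^2_s}\circ\Phi_{H^1_s}$, while at $R=R_0$ we get $\Phi_{H^{R_0}_s}$, and the usual codimension-one degenerations along the interior of the parameter interval contribute a chain homotopy; disc bubbles at boundary punctures again cancel against $\varepsilon_i$ using that it is an augmentation. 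Passing to homology gives \eqref{qua}.

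\textbf{Main obstacle.} The only non-standard point is the cancellation of boundary-disc bubbles in the parametrised cobordism arguments. This is not a new analytic difficulty but a purely algebraic one: one has to verify that, for each way a disc in $\mathfrak{N}^0_{L_i}$ can break off at a negative boundary puncture of a strip, the resulting contribution to the boundary of the parametrised moduli space is the image under $\varepsilon_i\circ\mathfrak{d}_i$ of a lower-order term, hence vanishes. This is where the choice to decorate the strips with augmentations (rather than working with the full obstruction algebras) is essential, and is the analogue in our setting of the standard check that the bilinearised Legendrian contact cohomology differential squares to zero.
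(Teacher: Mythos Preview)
Your proof is correct and follows essentially the same approach as the paper: parametrised moduli spaces and weighted counts for the chain homotopy in \eqref{qui}, translation-invariance forcing constant strips for \eqref{quo}, and a gluing/stretching argument for \eqref{qua}. The only point the paper makes explicit that you leave implicit is, in Step~3, a compactness argument showing that for $R\gg 0$ the boundary punctures $\zeta^i_j$ cannot lie in any fixed interval $[-R',R']$ (otherwise the limit would contain a nonconstant translation-invariant index-zero strip with a boundary puncture, a contradiction); this is what guarantees the clean splitting into the product~\eqref{product of modulispaces} in the immersed setting.
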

\begin{proof}[Sketch of proof]
In order to prove \eqref{qui}, we follow the standard procedure for defining chain homotopies
in Floer theory; see \cite{FloerHFlag} for more details. Given a homotopy $H_s^\delta$,
$\delta \in [0,1]$, between $s$-dependent Hamiltonian functions $H_s^0$ and $H_s^1$ with
$H_s^\delta \equiv H_-$ for $s \ll 0$ and $H_s^\delta \equiv H_+$ for $s \gg 0$, we define
the parametrised moduli spaces
$\mathfrak{M}_{L_0, L_1}^k(\mathbf{p}^1, x_-, \mathbf{p}^0, x_+; H_s^\bullet, J_\bullet)$
of pairs $(\delta, u)$ such $\delta \in [0,1]$ and $u \in \mathfrak{M}_{L_0, L_1}^k
(\mathbf{p}^1, x_-, \mathbf{p}^0, x_+; H_s^\delta, J_\bullet)$.
We define the weighted count
$$\mathfrak{m}(\mathbf{p}^1, x_-,  \mathbf{p}^0, x_+; H_s^\bullet) =  \#
\mathfrak{M}_{L_0,L_1}^{-1}( \mathbf{p}^1, x_-,  \mathbf{p}^0, x_+; H_s^\bullet, J_\bullet)
\varepsilon_0( \mathbf{p}^0) \varepsilon_1 ( \mathbf{p}^1).$$
Then the chain homotopy
$$K \colon \mathrm{CF}((L_0, \varepsilon_0), (L_1, \varepsilon_1) ; H_+, J_\bullet) \to \mathrm{CF}((L_0,
\varepsilon_0), (L_1, \varepsilon_1); H_-, J_\bullet)$$
between $\Phi_{H_s^0}$ and $\Phi_{H_s^1}$ is defined as
$$K(x_+) = \sum_{x_- \in {\mathcal C}_-} \sum_{l_0, l_1 \in \N} \sum_{\mathbf{p}^i \in D_i^{l_i}}
\mathfrak{m}(\mathbf{p}^1, x_-,  \mathbf{p}^0, x_+; H_s^\bullet)x_-.$$

In order to prove \eqref{quo} we can choose $H_s \equiv H$: then the moduli space
$\mathfrak{M}_{L_0, L_1}^0(\mathbf{p}^0, x_-, \mathbf{p}^1, x_+; H_s, J_\bullet)$ consists
of constant strips.

We fix $s$-dependent Hamiltonian function $H_s^+$ and $H_s^-$ such that $H_s^+=H_+$
for $s \ge 1$ and $H_s^+=H$ for $s \le 0$, and  $H_s^-=H$ for $s \ge 0$ and $H_s^-=H_-$
for $s \le -1$. In order to prove \eqref{qua} we introduce the family of Hamiltonian functions
$$H_s^R = \begin{cases} H_{s-R}^+ & \text{for } s \ge 0, \text{ and }\\
                                              H_{s+R}  & \text{for } s \le 0
\end{cases}$$
with $R>0$. By \eqref{qui}, $\Phi_{H_s^R}$ induces $\Phi^*_{H_+, H_-}$ for all $R$.
For $R \gg 0$ there is an identification
\begin{align}\label{product of modulispaces}
 & \mathfrak{M}_{L_0,L_1}^0(p_1^1, \ldots, p_{l_1}^1, x_-,  p_1^0, \ldots, p_{l_0}^0, x_+;
H_s^R,J_\bullet) \cong \\
  \bigsqcup \limits_{x \in {\mathcal C}_H} \bigsqcup \limits_{0 \le h_i \le l_i} &
\mathfrak{M}_{L_0,L_1}^0( p_{h_1+1}^1, \ldots, p_{l_1}^1,x, p_1^0,
\ldots, p_{h_0}^0, x_+;
H_s^+,J_\bullet) \times \nonumber \\
& \mathfrak{M}_{L_0,L_1}^0(p_1^1, \ldots, p_{h_1}^1, x_-, p_{1+h_0}^0,
\ldots, p_{l_0}^0, x; H_s^-,J_\bullet)
\nonumber
\end{align}
which follows from standard compactness and gluing techniques, once we know that, for any
$R' >0$, there is $R_0$ such that, for all $R \ge R_0$, if
$$(\boldsymbol{\zeta}^0, \boldsymbol{\zeta}^1, u) \in \mathfrak{M}_{L_0,L_1}^0
(\mathbf{p}^1, x_- \mathbf{p}^0, x_+; H_s^R,J_\bullet),$$
then $\zeta_j^i \not \in [-R', R']$ for $i=0,1$ and $j= 1, \ldots, l_i$.

This follows from a simple compactness argument: if there is $R'$ and a sequence $R_n$
with $$(\boldsymbol{\zeta}^0_n, \boldsymbol{\zeta}^1_n, u_n) \in \mathfrak{M}^0_{L_0,
L_1}(\mathbf{p}^1, x_-, \mathbf{p}^0, x_+; H_s^{R_n}, J_\bullet)$$ and for every $n$
there is some $\zeta_j^i \in [-R', R']$, then the limit for $n \to \infty$ has one level which is
a solution of a Floer equation with $s$-invariant data and at least one boundary puncture.  For
index reasons  this level must have index zero\color{black}, but it cannot be constant
because of the boundary puncture. This is a contradiction.
\end{proof}

With lemma \ref{properties of Phi} at hand, we can prove the following invariance
property in the usual formal way.
\begin{cor}\label{doi coge}
If $H_0$ and $H_1$ are cylindrical Hamiltonian functions which are compatible with $L_0$
and $L_1$ and such that $h_0'(\mathfrak{r}(w))= h_1'(\mathfrak{r}(w))$ for $w$ outside of
a compact set, then the continuation map
$$\Phi_{H_0, H_1}^* \colon \mathrm{HF}((L_0, \varepsilon_0), (L_1, \varepsilon_1); H_0) \to
\mathrm{HF}((L_0, \varepsilon_0), (L_1, \varepsilon_1); H_1)$$
is an isomorphism.
\end{cor}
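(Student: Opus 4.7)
The strategy is the standard one: build continuation maps in both directions, and use the homotopy invariance, identity, and composition properties of Lemma \ref{properties of Phi} to show that the two compositions are homotopic to the respective identity maps. The only genuine issue is to verify that admissible interpolating homotopies exist in both directions, as the monotonicity condition (iii) in Section \ref{sss: changing hamiltonian} is asymmetric in $s$.

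This is where the hypothesis $h_0'(\rho)=h_1'(\rho)$ for $\rho$ large is crucial. Let $\lambda$ be the common asymptotic slope. Choose a smooth cut-off $\beta \colon \R \to [0,1]$ with $\beta(s)=0$ for $s \le -1$ and $\beta(s)=1$ for $s \ge 1$, and set
\[
H_s^{01} := (1-\beta(s)) H_0 + \beta(s) H_1, \qquad H_s^{10} := \beta(s) H_0 + (1-\beta(s))H_1.
\]
Outside a compact subset of $W$, both $H_0$ and $H_1$ are radial of slope $\lambda$, so $h_s^{ij\prime}(\rho) \equiv \lambda$ independently of $s$. Thus $\partial_s h_s^{ij\prime} \equiv 0$ there, and condition (iii) is trivially satisfied. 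Conditions (i) and (ii) of Section \ref{sss: changing hamiltonian} hold by construction. Hence $H_s^{01}$ and $H_s^{10}$ both define admissible homotopies, giving rise to continuation maps
\[
\Phi_{H_s^{01}}, \qquad \Phi_{H_s^{10}}.
\]

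Now apply Lemma \ref{properties of Phi}. By part (\ref{qua}),
\[
\Phi^*_{H_1,H_0} \circ \Phi^*_{H_0,H_1} = \Phi^*_{H_1,H_1}, \qquad \Phi^*_{H_0,H_1} \circ \Phi^*_{H_1,H_0} = \Phi^*_{H_0,H_0},
\]
where on each side the composition is realised by concatenating two admissible homotopies (with the slope at infinity still constant), giving another admissible homotopy between $H_i$ and $H_i$. By parts (\ref{qui}) and (\ref{quo}), each of these compositions is homotopic to $\Phi^*_{H_i,H_i}=\mathrm{id}$. Therefore $\Phi^*_{H_0,H_1}$ is an isomorphism with inverse $\Phi^*_{H_1,H_0}$, completing the proof.

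The main obstacle, such as it is, lies in ensuring that the asymmetric monotonicity condition (iii) can be met going from $H_0$ to $H_1$ and also from $H_1$ to $H_0$. Without the equality of asymptotic slopes this would be impossible: if $\lambda_0 < \lambda_1$, only the direction $H_1 \rightsquigarrow H_0$ is available, and one would obtain only a one-way continuation map. The hypothesis of the corollary sidesteps this entirely by allowing the homotopy to be asymptotically constant in $s$.
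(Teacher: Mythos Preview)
Your proof is correct and is precisely the ``usual formal way'' the paper alludes to: build admissible interpolations in both directions using the equality of asymptotic slopes (so that condition (iii) holds trivially with $\partial_s h_s' \equiv 0$ at infinity), then invoke the three parts of Lemma~\ref{properties of Phi} to conclude that the two compositions agree with the identity on homology. Your closing paragraph correctly identifies why the hypothesis is needed.
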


\subsection{Compactly supported safe isotopies}\label{sss:
compactly supported isotopies}

Let $\psi_t \colon W \to W$ be a compactly supported smooth isotopy such that $\iota_t= \psi_t \circ \iota_1 \colon L_1 \to W$ is a safe isotopy. By Lemma \ref{lem:GeneratingHamiltonian} there exists a local Hamiltonian $G_t$ defined on $L_1$ which generates the $\iota_t$ and for which $dG_t$ has compact support. (Recall that $G_t$ may not extend to a single-valued function on $W$.)

In the following we will make the further assumption that the path
$$(\psi_t)_*J_1 = d\psi_t \circ J_1 \circ d \psi_t^{-1}, \:\: t \in [0,1],$$
consists of compatible almost complex structures. This will cause no restriction, since we only need the case when $\psi_t$ is equal to the Liouville flow, which is conformally symplectic.

\begin{rem}
In the following manner more general safe isotopies can be considered. Since it is possible to present any smooth isotopy as a concatenation of $C^2$-small isotopies, it then suffices to carry out the constructions here for each step separately. Namely, since tameness is an open condition, sufficiently $C^2$-small isotopies may be assumed to preserve any given tame almost complex structure. Further control near the double points can then be obtained by assuming that $\psi_t$ actually is conformally symplectic there, which can be assumed without loss of generality.
\end{rem}

Denote by $L_1'=\psi_1\circ\iota_1(L_1)$ the image. By the usual abuse of
notation, we will write $L_1'$ or $\psi_1(L_1)$ instead of $(L_1, \psi_1 \circ \iota)$. From now
on we will assume that the Hamiltonian $H$ is compatible both with $L_0$ and $L_1$ and
with $L_0$ and $L_1'$.

The obstruction algebras $\mathfrak{D}_1$ of $(L_1, J_1)$ and $\mathfrak{D}_1'$ of
$(L_1', (\psi_1)_*J_1)$ are tautologically isomorphic because $\psi_1$ matches the generators and the holomorphic curves contributing to the differentials, and therefore any augmentation $\varepsilon_1$ of $\mathfrak{D}_1$ corresponds to an augmentation $\varepsilon_1'$ of $\mathfrak{D}_1'$.

We fix time-dependent almost complex structures $J_\bullet^+$ and $J_\bullet^-$ such that
\begin{itemize}
\item $J_t^\pm = J_0$ for $t \in [0, 1/4]$,
\item $J_t^+=J_1$ and $J_t^-= (\psi_1)_*J_1$ for $t \in [3/4, 1]$,
\item $J_\bullet^+$ is $(L_0, L_1)$-regular and $J_\bullet^-$ is $(L_0, L_1')$-regular.
\end{itemize}
Given augmentations $\varepsilon_0$ for $L_0$ and $\varepsilon_1$ for $L_1$, we will define
a chain map
$$\Psi_G \colon \mathrm{CF}((L_0, \varepsilon_0), (L_1, \varepsilon_1); H, J_\bullet^+) \to
 \mathrm{CF}((L_0, \varepsilon_0), (L_1', \varepsilon_1'); H, J_\bullet^-)$$
using a Floer equation with moving boundary conditions. The presence of self-intersection points of $L_1$ makes the construction of the moduli spaces
more subtle than in the usual case because, in order to have strip-like ends, we need to make
the moving boundary conditions constant near the boundary punctures, and therefore domain
dependent.

Recall the sets \begin{align*}&\op{Conf}^n(\R) = \{ (\zeta_1, \ldots, \zeta_n) \in \R^n | \: \zeta_1 < \ldots <
\zeta_n \}\quad \mbox{and}\\ &\overline{\op{Conf}}^n(\R) = \{ (\zeta_1, \ldots, \zeta_n) \in \R^n | \: \zeta_1
\le \ldots \le \zeta_n\}.\end{align*} (Note that this is not how configuration spaces are usually
compactified.)
Given $n \in \N$, we denote $\mathbf{n} = \{1, \ldots, n\}$ and for $m < n$
we denote $\op{hom}(\mathbf{n}, \mathbf{m})$ the set of nondecreasing and
surjective function $\phi \colon \mathbf{n} \to \mathbf{m}$. Every $\phi \in
\op{hom}(\mathbf{n}, \mathbf{m})$ induces an embedding $\phi^* \colon
\overline{\op{Conf}}^m(\R) \to \overline{\op{Conf}}^n(\R)$ defined by
$$\phi^*(\zeta_1,
\ldots, \zeta_m) = (\zeta_{\phi(1)}, \ldots, \zeta_{\phi(m)}).$$
The boundary of
$\overline{\op{Conf}}^n(\R)$ is a stratified space with dimension $m$ stratum
$$\bigsqcup \limits_{\phi \in \op{hom}(\mathbf{n}, \mathbf{m})}\phi^*(\op{Conf}^m(\R)).$$

The embeddings $\phi$ defined above extend to diffeomorphisms
$$\overline{\phi}^* \colon \overline{\op{Conf}}^m(\R) \times \R_+^{n-m} \to
\overline{\op{Conf}}^n(\R)$$
such that
$$(\zeta_1', \ldots, \zeta_n')= \overline{\phi}^*((\zeta_1, \ldots, \zeta_m), (\epsilon_1, \ldots,
\epsilon_{n-m}))$$
if $$\zeta_i' = \zeta_{\phi(i)} + \sum \limits_{k=0}^{i - \phi(i)} \epsilon_k,$$
where $\epsilon_0=0$ for the sake of the formula.

\begin{lemma}\label{boundary dependent reparametrisations}
Fix $\delta >0$. There is a family of constants $\kappa_n >0$ and smooth functions
$$\nu_n \colon \overline{\op{Conf}}^n(\R) \times \R \to [0,1]$$
such that, denoting by $s$ the coordinate in the second factor,
\begin{enumerate}
\item $\nu_n(\zeta_1, \ldots, \zeta_n, s) =0$ for $s > \kappa_n$,
\item $\nu_n(\zeta_1, \ldots, \zeta_n, s) =1$ for $s < - \kappa_n$,
\item $\partial_s \nu_n(\zeta_1, \ldots, \zeta_n, s) \in [-2, 0]$ for all $(\zeta_1, \ldots, \zeta_n, s)
\in \overline{\op{Conf}}^n(\R) \times \R$,
\item $\partial_s \nu_n(\zeta_1, \ldots, \zeta_n, s)=0$ if $|s- \zeta_i| \le \frac{\delta}{2}$ for
some $i=1, \ldots, n$, and
\item $\nu_n \circ \phi^*= \nu_m$ for all $m <n$ and all $\phi \in \op{hom}(\mathbf{n},
\mathbf{m})$.
\end{enumerate}
\end{lemma}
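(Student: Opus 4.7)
We would proceed by induction on $n$, constructing the family $(\kappa_n, \nu_n)$ so that all five conditions hold and so that the sequence $\kappa_n$ is nondecreasing.

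The base case $n=0$ is trivial: $\overline{\op{Conf}}^0(\R)$ is a point and $\nu_0$ is simply a smooth cutoff function $\R\to[0,1]$ interpolating from $1$ to $0$ with derivative in $[-2,0]$ and supported (up to its values $0$ and $1$) in $[-\kappa_0,\kappa_0]$ for some $\kappa_0>0$. Condition (5) is vacuous here, and conditions (1)--(4) are arranged by hand.

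For the inductive step, suppose $\nu_0,\ldots,\nu_{n-1}$ have been constructed with nondecreasing constants $\kappa_0\le\ldots\le \kappa_{n-1}$. The condition (5) together with the inductive hypothesis completely prescribes $\nu_n$ on the boundary $\partial\overline{\op{Conf}}^n(\R)\times\R$: for $\phi\in\op{hom}(\mathbf{n},\mathbf{m})$ we must set $\nu_n(\phi^*(\zeta_1,\ldots,\zeta_m),s)=\nu_m(\zeta_1,\ldots,\zeta_m,s)$. The first check is that this prescription is consistent on overlaps between strata: if $\phi\in\op{hom}(\mathbf{n},\mathbf{m})$ and $\psi\in\op{hom}(\mathbf{m},\mathbf{l})$ then $\phi^*\circ\psi^*=(\psi\circ\phi)^*$ (an identity that follows from the definition of $\phi^*$), and condition (5) applied inductively gives $\nu_m\circ\psi^*=\nu_l$, so both boundary prescriptions agree. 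In particular, on the boundary, $\nu_n$ inherits conditions (1)--(4).

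To extend to the interior we construct an explicit candidate and then patch it to the boundary data. Fix a smooth cutoff $\rho_\delta\colon\R\to[0,1]$ with $\rho_\delta\equiv 0$ on $[-\delta/2,\delta/2]$ and $\rho_\delta\equiv 1$ outside $[-2\delta/3,2\delta/3]$, and set
$$g_\zeta(s)=\prod_{i=1}^n \rho_\delta(s-\zeta_i),$$
a smooth function of $(\zeta,s)$ vanishing exactly where some $|s-\zeta_i|\le\delta/2$ and equal to $1$ outside a slight thickening of the forbidden set. Using the diffeomorphisms $\overline{\phi}^*$ described above, pick collar neighbourhoods of each boundary stratum of $\overline{\op{Conf}}^n(\R)$, and define $\nu_n$ on each collar by pullback from the corresponding $\nu_m$. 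In the interior, use the explicit model $\nu_n(\zeta,s)=\nu_0\!\circ\!\tau_\zeta(s)$ where $\tau_\zeta$ is a reparametrisation of $\R$ whose derivative is proportional to $g_\zeta$ and which is normalised so that $\tau_\zeta(\pm\kappa_n)=\pm\kappa_0$. Patch these local constructions together by a partition of unity on $\overline{\op{Conf}}^n(\R)$ subordinate to the covering by the collars and the interior. The result is smooth, satisfies (4) because every local model and hence every convex combination of them has derivative in $s$ vanishing on the $\delta/2$-neighbourhoods, and satisfies (1) and (2) after choosing $\kappa_n\ge\kappa_{n-1}+1$. Finally, since the forbidden set has Lebesgue measure at most $n\delta$, the slope bound (3) can be enforced by taking $\kappa_n$ sufficiently large (of order $n\delta+1$), which provides enough ``room'' for the descent from $1$ to $0$ at slope at most $2$ in absolute value.

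The main technical obstacle is ensuring that the patching via partition of unity preserves both (3) and (4) simultaneously. Condition (4) is stable under convex combinations because all local models share the same ``constancy set'' $\bigcup_i[\zeta_i-\delta/2,\zeta_i+\delta/2]$. Condition (3) is more delicate: the $\zeta$-derivatives of the partition of unity contribute only to $\partial_\zeta\nu_n$ (not $\partial_s\nu_n$), so they do not directly spoil the slope bound, and the $s$-derivatives of each local model can be kept in $[-2,0]$ by a uniform enlargement of $\kappa_n$. This completes the inductive construction.
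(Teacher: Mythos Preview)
Your approach---induction on $n$ together with the convexity of the set of functions satisfying (1)--(4)---is exactly the paper's; the paper simply compresses everything into a one-sentence proof invoking that convexity, while you spell out a concrete patching scheme.

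There is, however, a small gap in your collar model. In the interior of a collar the configuration $\zeta' = \overline{\phi}^*(\zeta,\epsilon)$ consists of $n$ genuinely distinct points $\zeta'_i = \zeta_{\phi(i)} + \sum_{k \le i-\phi(i)} \epsilon_k$, and condition (4) at $\zeta'$ demands constancy on $\bigcup_i[\zeta'_i-\delta/2,\zeta'_i+\delta/2]$. Your collar model is the naive pullback $\nu_m(\zeta,s)$, which is constant only on $\bigcup_j[\zeta_j-\delta/2,\zeta_j+\delta/2]$; once $\epsilon \ne 0$ this no longer covers the required set. So your assertion that ``all local models share the same constancy set'' fails for the collar model, and the convex-combination step does not directly deliver (4).

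The standard remedy is to strengthen the inductive hypothesis slightly: build each $\nu_m$ so that (4) holds with radius $r_m > \delta/2$, where $(r_m)$ is a strictly decreasing sequence with $\lim r_m = \delta/2$ (e.g.\ $r_m = \delta/2 + \delta\,2^{-m-1}$). Then in a collar thin enough that $|\zeta'_i - \zeta_{\phi(i)}| < r_m - r_n$, the pulled-back $\nu_m$ \emph{does} satisfy (4) with radius $r_n$ at the point $\zeta'$, and your patching argument goes through verbatim. This is a routine adjustment; the essential idea of your argument is sound and matches the paper's.
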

\begin{proof}
We can construct the sequences $\kappa_n$ and $\nu_n$ inductively over $n$ using the fact
that the set of functions satisfying (1)--(4) is convex.
\end{proof}

Given $\boldsymbol{\zeta} = (\zeta_1, \ldots, \zeta_n) \in \op{Conf}^n(\R)$, we define
$\nu_{\boldsymbol{\zeta}} \colon \R \to [0,1]$ by $\nu_{\boldsymbol{\zeta}}(s)=
\nu_n(\zeta_1, \ldots, \zeta_n, s)$.

\begin{lemma}\label{berlin}
For every $n$, there is a contractible set of smooth maps
$$\tilde{J}_n \colon \overline{\op{Conf}}^n(\R) \times Z \to {\mathcal J}(\theta)$$
such that
\begin{enumerate}
\item $\tilde{J}_n(\boldsymbol{\zeta}, s,t) = J_t^+$ if $s > \kappa_n+1$,
\item $\tilde{J}_n(\boldsymbol{\zeta}, s,t) = J_t^-$ if $s < - \kappa_n-1$,
\item $\tilde{J}_n(\boldsymbol{\zeta}, s,t)=J_0$ if $t \in [0, 1/4]$,
\item $\tilde{J}_n(\boldsymbol{\zeta}, s,t)= d\psi_{\nu_{\boldsymbol{\zeta}}(s)} \circ J_1 \circ
d\psi_{\nu_{\boldsymbol{\zeta}}(s)}^{-1}$ if $t \in [3/4, 1]$, and
\item for all $\phi \in \op{hom}(\mathbf{n}, \mathbf{n-1})$,
$\tilde{J}_n(\phi^*(\boldsymbol{\zeta}), s, t)= \tilde{J}_{n-1}(\boldsymbol{\zeta}, s,t)$.
\end{enumerate}
\end{lemma}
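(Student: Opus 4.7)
The plan is to prove the lemma by induction on $n$, exploiting the contractibility of ${\mathcal J}(\theta)$ for the extension at each step, in close analogy with the construction of $\nu_n$ in Lemma~\ref{boundary dependent reparametrisations}. The essential observation is that Conditions (1)--(4) depend on $\boldsymbol{\zeta}$ only through the function $\nu_{\boldsymbol{\zeta}}$, which itself satisfies the compatibility $\nu_n \circ \phi^* = \nu_{n-1}$, so the interior prescription is automatically compatible with the inductive boundary prescription demanded by Condition (5). Without loss of generality I may also arrange that the constants $\kappa_n$ are non-decreasing in $n$, so that the outer-band regions prescribed by (1) and (2) nest correctly as $n$ grows.

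For the base case $n=0$, the domain $\overline{\op{Conf}}^0(\R)$ is a point and Condition (5) is vacuous. I define $\tilde{J}_0(s,t)$ to equal $J_t^\pm$ for $\pm s > \kappa_0 + 1$, to equal $J_0$ for $t \in [0,1/4]$, and to equal $d\psi_{\nu_0(s)} \circ J_1 \circ d\psi_{\nu_0(s)}^{-1}$ for $t \in [3/4, 1]$. Consistency on the overlaps holds because $\nu_0(s) = 0$ and $J_t^+ = J_1$ for $s > \kappa_0+1$ and $t \in [3/4,1]$, while $\nu_0(s) = 1$ and $J_t^- = (\psi_1)_*J_1$ for $s < -\kappa_0-1$ and $t \in [3/4,1]$. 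The complement of this closed prescribed region in $Z$ is an open rectangle, over which the data extends to ${\mathcal J}(\theta)$ because the target is contractible; the same fact shows the space of such extensions is itself contractible.

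For the inductive step, assume $\tilde J_{n-1}$ has been constructed and denote by $K_n \subset \overline{\op{Conf}}^n(\R) \times Z$ the closed set on which $\tilde J_n$ is forced to take a prescribed value: namely the union of the subset cut out by Conditions (1)--(4) and the codimension-one stratum $\bigsqcup_{\phi \in \op{hom}(\mathbf{n},\mathbf{n-1})} \phi^*(\op{Conf}^{n-1}(\R)) \times Z$, where Condition (5) forces $\tilde J_n(\phi^*(\boldsymbol{\zeta}), s, t) \coloneqq \tilde J_{n-1}(\boldsymbol{\zeta}, s, t)$. Consistency on $K_n$ relies on three facts: the inductive consistency of $\tilde J_{n-1}$ already at deeper strata, the associativity $\phi_1^* \circ \phi_2^* = (\phi_2 \circ \phi_1)^*$ of the embeddings which matches the two boundary prescriptions at codimension-$\ge 2$ corners of $\overline{\op{Conf}}^n(\R)$, and the compatibility $\nu_n \circ \phi^* = \nu_{n-1}$ which matches the interior and boundary prescriptions on their overlap. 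One then extends over the complement of $K_n$ by a standard obstruction argument using the contractibility of ${\mathcal J}(\theta)$, and the same scheme applied to a pair of extensions shows the space of valid $\tilde J_n$ is contractible. The main subtlety is the bookkeeping at the higher-codimension corners of $\overline{\op{Conf}}^n(\R)$, but no further analytic input is needed beyond what is already built into Lemma~\ref{boundary dependent reparametrisations}.
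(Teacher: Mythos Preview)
Your proof is correct and follows essentially the same approach as the paper: induction on $n$, with the extension at each step granted by the contractibility of ${\mathcal J}(\theta)$. The paper's own proof is extremely terse (two sentences), and your proposal supplies the consistency checks on the overlaps—most importantly that $\nu_n \circ \phi^* = \nu_{n-1}$ makes Conditions (1)--(4) compatible with (5)—which the paper leaves implicit.
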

\begin{proof}
We build $\tilde{J}_n$ inductively on $n$. At each step, the map $\tilde{J}$ is determined in
the complement of  $\op{Conf}^n(\R) \times [- \kappa_n-1, \kappa_n+1] \times [1/4, 3/4]$.
We can extend it to $\overline{\op{Conf}}^n(\R) \times Z$ because ${\mathcal J}(\theta)$
is contractible.
\end{proof}
Given $\boldsymbol{\zeta} \in \op{Conf}^n(\R)$, we will denote
$\tilde{J}_{\boldsymbol{\zeta}}$ the $s-$ and $t$-dependent almost
complex structure obtained by restricting $\tilde{J}$ to $\{ \boldsymbol{\zeta} \}
\times Z$. Given $(\boldsymbol{\zeta}^0, \boldsymbol{\zeta}^1) \in
\widetilde{\mathcal R}^{l_0|l_1}$, we will not distinguish between $\zeta^1_j \in
\R \times \{ 1 \}$ and its $s$-coordinate, and by this abuse of notation, to
$(\boldsymbol{\zeta}^0, \boldsymbol{\zeta}^1) \in \widetilde{\mathcal R}^{l_0|l_1}$
we will associate $\nu_{\boldsymbol{\zeta}^1}$ and $\tilde{J}_{\boldsymbol{\zeta}^1}$.
For simplicity, the $s-$ and $t$-dependence of  $\tilde{J}_{\boldsymbol{\zeta}^1}$ will be
omitted in writing the Floer equation.

Consider the sets
\begin{align*}
{\mathcal C}_H & = \{ x \colon [0,1] \to W : x(0) \in L_0, x(1) \in L_1 \}, \\
{\mathcal C}_H' & = \{ x \colon [0,1] \to W : x(0) \in L_0, x(1) \in L_1' \}.
\end{align*}
\begin{dfn}\label{non ne posso piuu}
Given $x_+ \in {\mathcal C}_H$, $x_- \in {\mathcal C}_H'$, and
$\mathbf{p}^i \in D_i^{l_i}$ for $i=0,1$ and $l_i \ge 0$ we define the moduli space
$$\mathfrak{M}_{L_0, L_1}(\mathbf{p}^1, x_-, \mathbf{p}^0, x_+; H, G, \tilde{J}, \nu)$$
as the set of triples $(\boldsymbol{\zeta}^0, \boldsymbol{\zeta}^1, u)$ such that:
\begin{itemize}
\item $(\boldsymbol{\zeta}^0, \boldsymbol{\zeta}^1) \in {\mathcal R}^{l_0|l_1}$ and
$u \colon Z_{\boldsymbol{\zeta}^0, \boldsymbol{\zeta}^1} \to W$ satisfies the Floer equation
\begin{equation}
\frac{\partial u}{\partial s} + \tilde{J}_{\boldsymbol{\zeta}^1} \left (\frac{\partial u}{\partial t}
- \chi'(t) X_H(\chi(t), u) \right )=0,
\end{equation}
\item $\lim \limits_{s \to \pm \infty} u(s,t)= x_\pm(\chi(t))$,
\item $u(s, 0) \in L_0$ for all $(s,0) \in Z_{\boldsymbol{\zeta}^0, \boldsymbol{\zeta}^1}$,
\item $u(s,1) \in \psi_{\nu_{\boldsymbol{\zeta}^1}(s)}(L_1)$ for all $(s,1) \in
Z_{\boldsymbol{\zeta}^0, \boldsymbol{\zeta}^1}$, and
\item each $\zeta^0_j$ is a negative puncture at $p_j^0$ and each
$\zeta^1_j$ is a negative puncture at $\psi_{\nu_{\boldsymbol{\zeta}^1}(\zeta^1_j)}(L_1)$.
\end{itemize}
(Recall that $\psi_t \colon W \to W$ here is a smooth isotopy satisfying the assumptions made in the
beginning of this section, whose restriction to $L_1$ in particular is the compactly supported
safe isotopy generated by $G \colon \R \times L \to \R.$) We denote by
$\mathfrak{M}_{L_0, L_1}^k(\mathbf{p}^1, x_-, \mathbf{p}^0, x_+; H, G,\tilde{J}, \nu)$ the
set of triples $(\boldsymbol{\zeta}^0, \boldsymbol{\zeta}^1, u)$ where $\op{ind}(u)=k$.
\end{dfn}

\begin{prop} \label{moduli spaces with moving boundary}
For a generic $\tilde{J}$ as in Lemma~\ref{berlin}, the moduli space
$$\mathfrak{M}_{L_0, L_1}^k(\mathbf{p}^1, x_-, \mathbf{p}^0, x_+; H, G,\tilde{J}, \nu)$$ is a
transversely cut out manifold of dimension $k$. If $k=0$, it is compact, and therefore
consists of a finite set of points.  If $k=1$, it admits a compactification in the Gromov-Floer sense. \color{black}

If $L_0$ and $L_1$ are spin, a choice of a spin structure on each induces a coherent orientation {of the moduli space (see \cite{Ekholm_&_Orientation_Homology})}.
\end{prop}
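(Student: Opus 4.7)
The plan is to adapt the standard transversality, compactness, and gluing framework for Floer moduli spaces with Lagrangian boundary conditions to this setting with moving boundary, by building on the results already established in Proposition~\ref{moduli spaces of Floer trajectories} and the continuation arguments of Section~\ref{sss: changing hamiltonian}. The key observation is that, once the domain parameters $(\boldsymbol{\zeta}^0, \boldsymbol{\zeta}^1)$ are fixed, the boundary condition $u(s,1)\in \psi_{\nu_{\boldsymbol{\zeta}^1}(s)}(L_1)$ is smooth away from the boundary punctures, and near each $\zeta^1_j$ the derivative $\partial_s \nu_{\boldsymbol{\zeta}^1}$ vanishes (by property~(4) of Lemma~\ref{boundary dependent reparametrisations}), so the boundary condition is constantly $\psi_{\nu_{\boldsymbol{\zeta}^1}(\zeta^1_j)}(L_1)$ in a neighbourhood of each boundary puncture. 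This makes the analytic setup near the punctures identical to the fixed-boundary case, so the Fredholm theory, index computations, and local asymptotic analysis from Proposition~\ref{moduli spaces of Floer trajectories} all apply verbatim.

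For \emph{transversality}, I would set up the universal moduli space parametrised by admissible $\tilde J$ satisfying the constraints of Lemma~\ref{berlin}, and show surjectivity of the universal linearised operator at every solution. The only new feature is the moving boundary condition: near a point of the boundary where $\partial_s \nu \neq 0$, the family of Lagrangians sweeps out an open set in $W$, which, together with the freedom to vary $\tilde J$ in the interior region $\{|s|\le \kappa_n+1,\; t\in[1/4,3/4]\}$ (where $\tilde J$ is unconstrained by conditions~(1)--(4) of Lemma~\ref{berlin}), gives the standard somewhere-injectivity argument. One then applies Sard--Smale to extract a generic $\tilde J$ for which all moduli spaces are cut out transversely. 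The index count $\op{ind}(u) = \op{ind}(F_u) + l_0 + l_1$ gives the dimension $k$.

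For \emph{compactness}, exactness of both $L_0$ and the images $\psi_t(L_1)$ (noting that even though $G_t$ need not extend globally, its differential has compact support) together with the cylindricity of $H$ at infinity and the standard maximum principle ensure a priori energy and $C^0$ bounds. The Gromov--Floer limit of a sequence in $\mathfrak{M}^1_{L_0,L_1}$ decomposes into a broken configuration whose pieces are: ordinary Floer strips for $(L_0,L_1)$ with almost complex structure $J^+_\bullet$ at the positive end; ordinary Floer strips for $(L_0,L_1')$ with $J^-_\bullet$ at the negative end; a middle piece which is a moving-boundary solution as in Definition~\ref{non ne posso piuu}; and teardrops bubbling off at double points of $L_0$, $L_1$, or $L_1'$. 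The compatibility condition~(5) in Lemma~\ref{boundary dependent reparametrisations} together with the $\R_+^{n-m}$-factor in the diffeomorphism $\overline\phi^*$ guarantees that as two punctures $\zeta^1_i$ and $\zeta^1_{i+1}$ collide or one escapes to infinity, the function $\nu$ degenerates consistently, producing either a teardrop in $L_1$ (before the isotopy) or in $L_1'$ (after the isotopy) according to the sign of the escaping parameter---precisely as required for the statement to square with the future chain map property of $\Psi_G$. For $k=0$ compactness follows because no breaking or bubbling of nonnegative index is possible by the dimension formula.

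The main obstacle I anticipate is verifying the asymptotic behaviour of the moving boundary condition as punctures $\zeta^1_j$ collide or escape to $\pm\infty$, i.e.\ on the boundary strata of $\overline{\op{Conf}}^n(\R)$. This is where the carefully chosen compatibility conditions in Lemmas~\ref{boundary dependent reparametrisations} and~\ref{berlin} do the real work: they ensure that the degenerations of $(\nu_{\boldsymbol{\zeta}^1}, \tilde J_{\boldsymbol{\zeta}^1})$ match exactly the configurations appearing in the Gromov--Floer boundary, so that no spurious non-compact ends appear. For \emph{orientations}, in the spin case the coherent orientations of the moduli spaces are obtained by the construction of~\cite{Ekholm_&_Orientation_Homology}, using that the moving boundary condition can be contracted to a fixed one via the Lagrangian isotopy, along which the spin structure on $L_1$ transports canonically to one on $L_1'$; the orientation then extends continuously over $\overline{\op{Conf}}^n(\R)$ by the compatibility with $\phi^*$.
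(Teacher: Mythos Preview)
The paper does not actually give a proof of this proposition: it is stated without proof, in the same spirit as Proposition~\ref{moduli spaces of Floer trajectories} and the other regularity/compactness statements in Sections~\ref{sec: Floer homology for immersions} and~\ref{sec: continuation maps}, which are treated as standard results in the literature (with references to \cite{LCHgeneral}, \cite{ExactGradedImmersedLagrFloerTheory}, \cite{FloerHFlag}, and \cite{Ekholm_&_Orientation_Homology}). Your sketch is therefore not competing with an argument in the paper; rather, it is supplying the kind of outline the authors implicitly defer to the cited sources.

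That said, your outline is a sound account of how the standard machinery adapts, and it correctly identifies the one genuinely nonstandard feature: the domain-dependence of the moving boundary condition near the punctures, which is handled precisely by properties (4) and (5) of Lemma~\ref{boundary dependent reparametrisations} together with the compatibility conditions in Lemma~\ref{berlin}. Your discussion of how puncture collisions/escapes on $\partial\,\overline{\op{Conf}}^n(\R)$ produce the expected broken configurations is the right observation, and it is exactly the reason those two lemmas were set up so carefully. One minor point: for transversality you invoke somewhere-injectivity plus freedom to vary $\tilde{J}$ in the interior region $\{t\in[1/4,3/4]\}$; this is correct, but note that you do not need the moving boundary to ``sweep out an open set'' for this---the interior variation of $\tilde{J}$ alone suffices, just as in the fixed-boundary case.
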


\begin{dfn}
We define the weighted count
$$\mathfrak{m}(\mathbf{p}^1, x_-, \mathbf{p}^0, x_+; H, G) =
\# \mathfrak{M}^0_{L_0, L_1}(\mathbf{p}^1, x_-, \mathbf{p}^0, x_+; H, G, \tilde{J}, \nu)
\varepsilon_0(\mathbf{p}^0) \varepsilon_1(\mathbf{p}^1)$$
and then we define $\Psi_G$ as
\begin{equation*}
\Psi_G(x_+)= \sum_{x_- \in {\mathcal C}_H} \sum_{l_0, l_1 \in \N}
\sum_{\mathbf{p}^i \in D_i^{l_i}}  \mathfrak{m}(\mathbf{p}^1, x_-, \mathbf{p}^0, x_+;
H, G)x_-.
\end{equation*}
\color{black}
\begin{rem}
The word $\mathbf{p}^1$ consists of double points living on the different Lagrangian immersions $\psi_{\nu_{\boldsymbol{\zeta}^1}(\zeta^1_j)}(L_1).$ However, when using the pushed forward almost complex structures $(\psi_{\nu_{\boldsymbol{\zeta}^1}(\zeta^1_j)})_*J_1,$ their obstruction algebras all become canonically identified with $(\mathfrak{A}(L_1),\mathfrak{d})$ defined using $J_1$. This motivates our abuse of notation $\varepsilon_1$ for an augmentation induced by these canonical identifications.
\end{rem}
\end{dfn}

{A consideration of Proposition~\ref{moduli spaces with moving boundary} together with a count of solutions of index $-1$ which arise in a one-parametric family of moduli spaces implies the following:}

\begin{lemma}
The map $\Psi_G$ is a chain map. Moreover, up to chain homotopy, it does not depend on the
choice of $\nu$ and on the homotopy class of $\psi_t$ relative to the endpoints.
\end{lemma}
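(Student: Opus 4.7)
The plan is to analyze the Gromov–Floer compactification of the one-dimensional moduli spaces $\mathfrak{M}^1_{L_0, L_1}(\mathbf{p}^1, x_-, \mathbf{p}^0, x_+; H, G, \tilde{J}, \nu)$ whose transverse cut-out structure is guaranteed by Proposition~\ref{moduli spaces with moving boundary}. Their boundary strata fall into three families: (a) strip breaking as $s \to \pm\infty$, producing a concatenation of a $\Psi_G$-type strip with an ordinary Floer strip for either $(H, L_0, L_1)$ or $(H, L_0, L_1')$; (b) teardrop bubbling on $L_0$ at an interior $\{t=0\}$-boundary puncture, producing an element of some $\mathfrak{N}^0_{L_0}(q; p_{i+1}^0, \ldots, p_j^0; J_0)$; and (c) teardrop bubbling on some push-off $\psi_\tau(L_1)$ at an interior $\{t=1\}$-boundary puncture, producing an element of $\mathfrak{N}^0_{L_1}(q; p_{i+1}^1, \ldots, p_j^1; J_1)$ after the canonical identification of obstruction algebras supplied by Proposition~\ref{invariance of D}.

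A crucial point is that the collision of two adjacent punctures on $\partial Z$ does not contribute a separate boundary stratum: by the consistency conditions (5) of Lemmas~\ref{boundary dependent reparametrisations} and~\ref{berlin}, the moving boundary condition and the domain-dependent almost complex structure on the collided configuration coincide with the data assigned to the moduli space with one fewer puncture, so such degenerations sit in the interior of a lower-stratum moduli space. Moreover, properties (3) and (4) of Lemma~\ref{boundary dependent reparametrisations} force $\nu_{\boldsymbol{\zeta}^1}$ to be locally constant near each $\zeta_j^1$, which is exactly what is needed so that the bubble in case~(c) occurs on a single Lagrangian immersion $\psi_{\nu_{\boldsymbol{\zeta}^1}(\zeta_j^1)}(L_1)$, and is counted by a genuine element of $\mathfrak{N}^0_{L_1}$ via the canonical identification.

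Summing the algebraic boundary contributions, weighted by $\varepsilon_0(\mathbf{p}^0)\varepsilon_1(\mathbf{p}^1)$, the strata of type (a) yield $\Psi_G \partial_+ - \partial_- \Psi_G$ (on a generator), while the strata of types (b) and (c) produce sums that cancel using the augmentation relations $\varepsilon_0 \circ \mathfrak{d}_0 = 0$ and $\varepsilon_1 \circ \mathfrak{d}_1 = 0$ respectively. This gives $\partial_- \circ \Psi_G = \Psi_G \circ \partial_+$, proving the chain map property.

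For the homotopy invariance statement, given two admissible quadruples $(\nu^0, \tilde{J}^0, \psi_t^0)$ and $(\nu^1, \tilde{J}^1, \psi_t^1)$ with $\psi_t^0, \psi_t^1$ homotopic rel endpoints through safe isotopies, one chooses a generic one-parameter family $(\nu^\delta, \tilde{J}^\delta, \psi_t^\delta)$ interpolating between them; the parametric versions of Lemmas~\ref{boundary dependent reparametrisations} and~\ref{berlin} give the required domain-dependent data. The zero-dimensional parametric moduli spaces define a map $K$ between the Floer complexes, and the boundary analysis carried out one dimension higher yields the standard identity $\partial_- K + K \partial_+ = \Psi_G^1 - \Psi_G^0$, with the same three families of boundary phenomena as before plus the endpoint contributions at $\delta = 0, 1$. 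The main technical obstacle throughout is the bookkeeping near $\{t=1\}$-punctures where the boundary condition $\psi_\tau(L_1)$ varies; this is why the consistency conditions built into the constructions of $\nu_n$ and $\tilde{J}_n$ are indispensable.
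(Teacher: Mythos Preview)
Your proposal is correct and follows the same approach as the paper: the chain map property comes from the boundary analysis of the one-dimensional moduli spaces in Proposition~\ref{moduli spaces with moving boundary}, and the homotopy invariance from counting index $-1$ solutions in parametric moduli spaces. The paper's own proof is a two-sentence sketch that simply invokes Proposition~\ref{moduli spaces with moving boundary} and says the chain homotopies are ``defined, as usual, by counting index $-1$ elements in parametric moduli spaces,'' leaving the standard details to the reader; your write-up supplies precisely those details, including the role of the consistency conditions in Lemmas~\ref{boundary dependent reparametrisations} and~\ref{berlin} near colliding punctures and the augmentation cancellation of the teardrop strata.
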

\begin{lemma}\label{properties of Psi}
Let $G^0,G^1 \colon L_1 \to \R$ be local Hamiltonian functions generating the safe isotopies
$\psi_t^0 \circ \iota_1$ and $\psi_t^1 \circ \psi_1^0 \circ \iota_1$ respectively, and let $G^2 \colon L_1 \to \R$ be a local Hamiltonian function generating
$$\psi_t^2 = \begin{cases}
\psi^1_{2t} & \text{for } t \in [0, 1/2], \\
\psi^2_{2t-1} \circ \psi^1_1 & \text{for } t \in [1/2, 1].
\end{cases}$$
Then $\Psi_{G^2}$ is chain homotopic to $\Psi_{G^0} \circ \Psi_{G^1}$.
\end{lemma}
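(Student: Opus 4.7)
The plan is to carry out a parametrised moduli space argument entirely analogous to the proof of the composition property, part \eqref{qua}, of Lemma~\ref{properties of Phi}. For each $R\in[0,\infty)$ I would construct a pair $(\nu^R,\tilde J^R)$ of profile function and domain-dependent almost complex structure interpolating between the data defining $\Psi_{G^2}$ at $R=0$ and a ``fully broken'' configuration at $R=\infty$ in which two moving-boundary strips---one computing $\Psi_{G^0}$ and one computing $\Psi_{G^1}$---are joined along a neck of length $\sim 2R$ whose Lagrangian boundary condition is the fixed submanifold $\psi^0_1(L_1)$. Concretely, one reparametrises the profile $\nu^2$ used to define $\Psi_{G^2}$ so that the new profile $\nu^R$ is identically equal to $1/2$ on the interval $[-R,R]$; the key geometric input is that $\psi^2_{1/2}(L_1)=\psi^0_1(L_1)$, so that the resulting boundary condition along the long neck is genuinely a fixed Lagrangian, in canonical bijection with the source of $\Psi_{G^1}$ and with the target of $\Psi_{G^0}$.

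Transversality for the resulting one-parameter family of moduli spaces $\mathfrak{M}^k(\mathbf{p}^1,x_-,\mathbf{p}^0,x_+;H,G^\bullet,\tilde J^\bullet,\nu^\bullet)$ over $R\in[0,\infty)$, compactness of the zero-dimensional stratum, and a Gromov--Floer compactification of the one-dimensional stratum all follow by routine arguments paralleling Proposition~\ref{moduli spaces with moving boundary}; coherent orientations are obtained as usual from the chosen spin structures. The chain homotopy $K$ would then be defined by the weighted count of the index $-1$ stratum of this parametrised moduli space, exactly as in the proof of part~\eqref{qui} of Lemma~\ref{properties of Phi}.

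The boundary of the compactified one-dimensional parametrised moduli space decomposes into three types of contributions: interior strip-breakings, assembling into $\partial K+K\partial$; the $R=0$ boundary stratum, giving $\Psi_{G^2}$; and the $R=\infty$ stratum, which I claim assembles into $\Psi_{G^0}\circ \Psi_{G^1}$. The hardest step is identifying this last stratum with the composition. This is where the neck-stretching/compactness argument mirroring the end of the proof of Lemma~\ref{properties of Phi}\eqref{qua} enters: for every $R'>0$ there must exist $R_0$ such that, for all $R\ge R_0$, every element of the index-zero moduli space $\mathfrak{M}^0(\ldots;H,G^\bullet,\tilde J^\bullet,\nu^\bullet)$ has all its boundary punctures $\zeta^i_j$ outside $[-R',R']$. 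This is proved by the same index/energy argument: an accumulation of boundary punctures inside $[-R',R']$ would produce in the limit a nonconstant $s$-invariant level with an extra boundary puncture, forcing negative index, a contradiction.

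Granted this, standard compactness and gluing identify the $R=\infty$ stratum with the union, over intermediate Hamiltonian chords $x$ from $L_0$ to $\psi^0_1(L_1)$ and over all ways of splitting the double-point insertions between the two pieces, of products of the form
\[
\mathfrak{M}^0(\mathbf{p}^1,x_-,\mathbf{q}^0,x;H,G^1,\tilde J^1,\nu^1)\;\times\;\mathfrak{M}^0(\mathbf{q}^1,x,\mathbf{p}^0,x_+;H,G^0,\tilde J^0,\nu^0).
\]
Using Proposition~\ref{invariance of D} to canonically identify augmentations of the obstruction algebra of $\psi^0_1(L_1)$ with augmentations of the obstruction algebra of $L_1$ (via the symplectomorphism $\psi^0_1$), the weighted count of this product is precisely the matrix coefficient of $\Psi_{G^0}\circ \Psi_{G^1}$, and we obtain the desired relation $\partial K+K\partial=\Psi_{G^2}-\Psi_{G^0}\circ \Psi_{G^1}$. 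The independence of $K$ from the various auxiliary choices (up to further chain homotopy) is then proved by yet another parametric argument, exactly as for the continuation maps $\Psi_G$ themselves.
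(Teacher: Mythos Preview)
Your proposal is correct and follows precisely the approach the paper indicates: the paper's own ``proof'' of this lemma consists solely of the remark that it is analogous to the proof of Lemma~\ref{properties of Phi} and is left to the reader. You have carried out that analogy in detail, mirroring in particular the neck-stretching argument from part~\eqref{qua} (including the index argument excluding boundary punctures in the middle region), so there is nothing to add.
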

The proof of Lemma~\ref{properties of Psi} is analogous to the proof of
Lemma~\ref{properties of Phi}. 
\begin{cor}
If $G \colon \R \times L \to \R$ satisfies $dG_t=0$ outside a compact subset of $(0,1) \times L$, then the map
$\Psi_G$ induces an isomorphism in homology.
\end{cor}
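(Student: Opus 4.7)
The approach is to construct a chain-homotopy inverse to $\Psi_G$ by considering the reversed isotopy and invoking the composition formula of Lemma \ref{properties of Psi}.

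First, let $\bar{\psi}_t \coloneqq \psi_{1-t}\circ \psi_1^{-1}$ be the reversed ambient isotopy: it starts at the identity and sends $L_1'$ back to $L_1$. Its generating local Hamiltonian $\bar{G}$ on $L$ is essentially $\bar{G}_t = -G_{1-t}$, and is again compactly supported in the same sense, so the map
$$\Psi_{\bar{G}} \colon CF((L_0, \varepsilon_0), (L_1', \varepsilon_1'); H) \to CF((L_0, \varepsilon_0), (L_1, \varepsilon_1); H)$$
is well defined. By Lemma \ref{properties of Psi}, the composition $\Psi_{\bar{G}} \circ \Psi_G$ is chain homotopic to $\Psi_{G \cdot \bar{G}}$, where $G \cdot \bar{G}$ generates the concatenated isotopy --- a loop of safe isotopies of $L_1$ based at the identity.

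The crucial observation is that the concatenation of a path with its reverse is null-homotopic relative to its endpoints, via the standard retraction that, at parameter $r \in [0,1]$, replaces the loop by the truncated one travelling only up to $\psi_{1-r}$ before retracing back, reaching the constant loop when $r=1$. This homotopy is realised through local Hamiltonians all satisfying the compact-support hypothesis uniformly in $r$. Thus, by the invariance of $\Psi$ under homotopy rel endpoints established just before Lemma \ref{properties of Psi}, we obtain $\Psi_{G\cdot \bar{G}} \simeq \Psi_0$, where $\Psi_0$ denotes the continuation map for the trivial isotopy.

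For the trivial isotopy, the Floer equation in Definition \ref{non ne posso piuu} reduces to the standard Floer equation for the fixed pair $(L_0, L_1)$ with a domain-dependent almost complex structure interpolating between $J_\bullet^+$ and $J_\bullet^-$. Thus $\Psi_0$ coincides with a continuation map of the type considered in Section \ref{sss: changing J}, which is a chain isomorphism by the bifurcation argument therein (Lemmas \ref{variation of J cases i and ii} and \ref{variation of J case iii}). Moreover, since the chain-homotopy class of $\Psi_G$ is independent of auxiliary data, we may assume $J_\bullet^+=J_\bullet^-$, in which case $\Psi_0$ is literally the identity. The symmetric argument applied to $\bar{G}\cdot G$ gives $\Psi_G\circ \Psi_{\bar{G}} \simeq \mathrm{id}$, so $\Psi_G$ is a chain-homotopy equivalence and induces an isomorphism in homology.

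The main technical obstacle is verifying that the null-homotopy of the concatenated loop can be implemented through local Hamiltonians all respecting the compact-support hypothesis --- ensured by the explicit truncation formula above --- together with the compatibility of the augmentation $\varepsilon_1$ under the canonical identification induced by traversing this null-homotopic loop, which follows from Proposition \ref{invariance of D}.
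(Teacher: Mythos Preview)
Your proof is correct and follows precisely the approach the paper intends: the Corollary is stated immediately after Lemma~\ref{properties of Psi} (composition) and the preceding lemma on homotopy invariance of $\Psi_G$, and is meant to follow formally from these in exactly the way you describe---by taking the reversed isotopy to produce a homotopy inverse, just as Corollary~\ref{doi coge} follows from Lemma~\ref{properties of Phi}. The paper gives no explicit proof beyond this implicit deduction, so there is nothing further to compare.
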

If $\bar J_\bullet^+$ is one $(L_0, L_1)$-regular almost complex structure and $\bar
J_\bullet^-$ is another $(L_0, \psi_1(L_1))$-regular almost complex structure, instead of
repeating the construction of $\tilde{J}$ with  $\bar J_\bullet^\pm$ as starting point, we
prefer to consider $\tilde{J}$ assigned once and for all to the triple $(L_0, L_1, G)$ and
define the continuation map
$$\mathrm{CF}((L_0, \varepsilon_0), (L_1, \varepsilon_1); H, \bar J_\bullet^+) \to
\mathrm{CF}((L_0, \varepsilon_0), (L_1', \varepsilon_1); H, \bar J_\bullet^-)$$
as the composition $\Upsilon_- \circ \Psi_G \circ \Upsilon_+$, where
\begin{align*}
\Upsilon_+ \colon & \mathrm{CF}((L_0, \varepsilon_0), (L_1, \varepsilon_1); H, \bar J_\bullet^+) \to
\mathrm{CF}((L_0, \varepsilon_0), (L_1, \varepsilon_1); H, J_\bullet^+), \\
\Upsilon_- \colon & \mathrm{CF}((L_0, \varepsilon_0), (L_1', \varepsilon_1'); H, J_\bullet^-) \to
\mathrm{CF}((L_0, \varepsilon_0), (L_1', \varepsilon_1'); H, \bar J_\bullet^-)
\end{align*}
are the maps defined in Section \ref{sss: changing J}.

\subsection{{Continuation maps for almost complex structures and Hamiltonian functions commute}}
Let $H_s$ be a homotopy from $H_-$ to $H_+$ as in Section \ref{sss: changing hamiltonian} and
$J_\bullet^\bullet$ a homotopy from $J_\bullet^{-1}$ to $J_\bullet^{+1}$ as in Section \ref{sss: changing
J}. For simplicity we will denote
$$\mathrm{CF}(H_\pm, J_\bullet^\pm)= \mathrm{CF}((L_0, \varepsilon_0^\pm), (L_1, \varepsilon_1^\pm);
H_\pm, J_\bullet^{\pm 1}).$$
If $\varepsilon_i^+ = \varepsilon_i^- \circ \mathfrak{Y}_i$, we have defined there
continuation maps
\begin{align*}
\Upsilon_{\pm} \colon & \mathrm{CF}(H_\pm, J_\bullet^+) \to \mathrm{CF}(H_\pm, J_\bullet^-),\\
\Phi_{\pm} \colon & \mathrm{CF}(H_+, J_\bullet^\pm) \to \mathrm{CF}(H_-, J_\bullet^\pm)
\end{align*}
and now we will to prove that they are compatible in the following sense.
\begin{prop}\label{prop: commutation between continuation maps}
The diagram
\begin{equation} \label{commutation between continuation maps}
\xymatrix{
\mathrm{CF}(H_+, J_\bullet^+) \ar[r]^{\Phi_+} \ar[d]_{\Upsilon_+} & \mathrm{CF}(H_-, J_\bullet^+)
\ar[d]^{\Upsilon_-} \\
\mathrm{CF}(H_+, J^-) \ar[r]_{\Phi_-} & \mathrm{CF}(H_-, J_\bullet^-)
}
\end{equation}
commutes up to homotopy.
\end{prop}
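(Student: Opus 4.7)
The plan is to construct an explicit chain homotopy
$$K \colon CF(H_+, J_\bullet^+) \to CF(H_-, J_\bullet^-)$$
satisfying
$$\partial K + K \partial = \Upsilon_- \circ \Phi_+ \pm \Phi_- \circ \Upsilon_+$$
by counting isolated elements in a one-parameter family of $s$-dependent moduli spaces that interpolates between the two compositions. The strategy is standard in Floer theory, with the added subtlety that $\Upsilon_\pm$ is defined via bifurcation analysis rather than as an honest continuation map.

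I would first choose a generic two-parameter family of Floer data $(H_s^\delta, \tilde J^{s, \delta}_\bullet)$, $s \in \R$, $\delta \in [\delta_-, \delta_+]$, satisfying: $H_s^\delta \equiv H_+$ for $s \gg 0$ and $H_s^\delta \equiv H_-$ for $s \ll 0$, with the monotonicity property of Section~\ref{sss: changing hamiltonian} in $s$; for every fixed $s$, the path $\delta \mapsto \tilde J^{s,\delta}_\bullet$ is a path of almost complex structures following $J^\bullet_\bullet$ closely enough that its bifurcation instants occur at a finite set $\{\delta_i\} \subset (\delta_-, \delta_+)$ independent of $s$; $\tilde J^{s,\delta_\pm}_\bullet = J^{\pm 1}_\bullet$ for all $s$; and for $s \gg 0$ respectively $s \ll 0$, the path $\delta \mapsto \tilde J^{s,\delta}_\bullet$ coincides with $J^\bullet_\bullet$ as used to define $\Upsilon_+$ and $\Upsilon_-$. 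Define the parametrised moduli space
$$\mathfrak{M}^k(\mathbf{p}^1, x_-, \mathbf{p}^0, x_+; H_s^\bullet, \tilde J^{s,\bullet}_\bullet)$$
of pairs $(\delta, u)$ where $u$ solves the $s$-dependent Floer equation for the data $(H_s^\delta, \tilde J^{s,\delta}_\bullet)$. For generic such families these are transversely cut out manifolds of dimension $k + 1$ except at the bifurcation instants $\delta_i$, and the chain homotopy is then defined by the weighted count of the zero-dimensional stratum $\mathfrak{M}^{-1}$, namely
$$K(x_+) = \sum \#\mathfrak{M}^{-1}(\mathbf{p}^1, x_-, \mathbf{p}^0, x_+; H_s^\bullet, \tilde J^{s,\bullet}_\bullet)\, \varepsilon_0(\mathbf{p}^0)\,\varepsilon_1(\mathbf{p}^1)\, x_-.$$

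The key computation is the analysis of the Gromov--Floer compactification of the one-dimensional moduli space $\mathfrak{M}^0$. Its boundary decomposes into four kinds of contributions: genuine Floer breaking, yielding the terms $\partial K$ and $K \partial$; degeneration at the endpoints $\delta = \delta_\pm$, which produces $\Phi_+$ and $\Phi_-$ respectively (when composed with the relevant augmentations); self-intersection bubbling on $L_0$ or $L_1$ at an interior bifurcation instant of type (i) or (ii) in the sense of Lemma~\ref{variation of J cases i and ii}, which is absorbed into the compatibility of augmentations and appears symmetrically on both sides; and, crucially, at bifurcation instants of type (iii) in the sense of Lemma~\ref{variation of J case iii}, configurations consisting of an exceptional index $-1$ Floer solution at $\delta = \delta_i$ for either $s \to +\infty$ or $s \to -\infty$, glued onto an index-$0$ Floer solution for the remaining data. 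The main obstacle is to recognise precisely these last contributions as the correction terms in the formula defining $\Upsilon_\pm$, composed with $\Phi_\pm$ on the opposite side. This matching is guaranteed by our choice of the two-parameter family: at $s = \pm\infty$ the path $\delta \mapsto \tilde J^{s,\delta}_\bullet$ is exactly $J^\bullet_\bullet$, so the bifurcation instants and the exceptional moduli spaces coincide with those defining $\Upsilon_\pm$; a stretching-the-neck argument, analogous to item~\eqref{qua} of Lemma~\ref{properties of Phi}, then shows that for large $|s|$ the bifurcation contribution splits as the concatenation of $\Upsilon$ and $\Phi$ pieces, reproducing exactly $\Upsilon_- \circ \Phi_+$ and $\Phi_- \circ \Upsilon_+$. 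Signs are handled coherently via the chosen spin structures on $L_0$ and $L_1$, giving the stated identity up to homotopy.
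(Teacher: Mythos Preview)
Your approach and the paper's share the same core ingredient --- the parametrised moduli spaces $\mathfrak{M}^k(H_s,J_\bullet^\bullet)$ of continuation solutions over the interval $[\delta_-,\delta_+]$ --- but diverge in execution.  Two remarks on the setup: the two-parameter family $(H_s^\delta,\tilde J_\bullet^{s,\delta})$ is unnecessary, since the paper simply fixes $H_s$ and varies only $J_\bullet^\delta$; and your requirement that the bifurcation instants be ``independent of $s$'' is either automatic (the relevant moduli $\mathfrak{N}^{-1}_{L_i}(J^\delta)$ and $\mathfrak{M}^0(H_\pm,J^\delta)$ do not depend on $s$) or ill-posed.

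The more substantive difference is that the paper does \emph{not} attempt to build one global homotopy $K$.  Instead it subdivides $[\delta_-,\delta_+]$ into subintervals each containing a single bifurcation instant, and proves that the small diagram over each piece commutes (strictly for its types (i) and (ii), and up to an explicit one-term homotopy $K_i$ for its type (iii)); the full square then commutes by composing these small squares.  This is not just cosmetic.  Recall that $\Upsilon_\pm$ is a \emph{composition} of elementary isomorphisms, one per bifurcation instant, so the difference $\Upsilon_-\circ\Phi_+ - \Phi_-\circ\Upsilon_+$ contains cross-terms involving products of several $\Upsilon$-corrections together with $\Phi$-maps and differentials evaluated at \emph{intermediate} values of $\delta$.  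The boundary of the global one-dimensional space $\mathfrak{M}^0(H_s,J_\bullet^\bullet)$ yields a valid identity, but its terms involve $\partial^{\delta_i}_\pm$ and $\Phi^{\delta_i}$ at the specific bifurcation values rather than at the endpoints $\delta_\pm$; decoding this as the desired chain-homotopy relation is precisely what the piecewise induction achieves.  Your appeal to a ``stretching-the-neck argument'' is misplaced here: the analogy with Lemma~\ref{properties of Phi}\eqref{qua} is a stretching in the strip coordinate $s$, whereas $\Upsilon_\pm$ is defined by bifurcation in the parameter $\delta$, and there is no neck in that direction to stretch.

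A minor terminological point: what you label ``type (iii) in the sense of Lemma~\ref{variation of J case iii}'' (exceptional $\mathfrak{M}^0(H_\pm,J^\delta)$-solutions, giving the shear corrections in $\Upsilon_\pm$) is the paper's type (ii) in this proof.  The paper's type (iii) --- exceptional index $-1$ solutions of the \emph{continuation} equation $\mathfrak{M}^{-1}(H_s,J^\delta)$ --- is a new phenomenon not present in Section~\ref{sss: changing J}, and these are what supply the homotopy $K_i$ in each piece.
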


Proposition \ref{prop: commutation between continuation maps} will be proved by applying
the bifurcation method to the definition of the continuation maps $\Phi_{\pm}$: i.e. we will
study the parametrised moduli spaces
$\mathfrak{M}^0_{L_0,L_1}(\mathbf{p}^1, x_-, \mathbf{p}^0, x_+; H_s, J_ \bullet^\bullet)$
consisting of pairs $(\delta, u)$ such that $\delta \in [0,1]$ and $u \in
\mathfrak{M}^0_{L_0,L_1}(\mathbf{p}^1, x_-, \mathbf{p}^0, x_+; H_s, J_ \bullet^\delta)$.
For a generic homotopy $J_ \bullet^\bullet$, these parametrised moduli spaces are transversely
cut out manifolds of dimension one.
As before, there is a finite set $\Delta$ of bifurcation points such that, for all $\delta \in
\Delta$, there is a unique nonempty moduli space of one of the following types:
\begin{enumerate}
\item[(i)] $\mathfrak{N}_{L_0}^{-1}(q_0^0; q_1^0, \ldots, q_d^0; J_0^\delta)$ or
$\mathfrak{N}_{L_1}^{-1}(q_0^1; q_1^1, \ldots, q_d^1; J_1^\delta)$,
\item[(ii)] $\mathfrak{M}_{L_0,L_1}^0(\mathbf{q}^1, y_-, \mathbf{q}^0, y_+; H_-,
J_\bullet^\delta)$ or $\mathfrak{M}_{L_0,L_1}^0(\mathbf{q}^1, y_-,\mathbf{q}^0, y_+; H_+,
J_\bullet^\delta)$,
\item[(iii)] $\mathfrak{M}_{L_0,L_1}^{-1}(\mathbf{q}^1, y_-, \mathbf{q}^0, y_+; H_s,
J_\bullet^\delta)$.
\end{enumerate}
To these moduli spaces correspond four types of boundary configuration for
the compactification of the one-dimensional parametrised moduli spaces
$\mathfrak{M}_{L_0,L_1}^0(\mathbf{p}^1, x_-, \mathbf{p}^0, x_+; H_s, J_\bullet^\bullet)$,
which we write schematically as:
\begin{itemize}
\item[(i)]  $\mathfrak{N}_{L_0}^{-1}(J_0^\delta) \times \mathfrak{M}^0(H_s,
J_\bullet^\delta)$ or $\mathfrak{N}_{L_1}^{-1}(J_1^\delta) \times \mathfrak{M}^0
(H_s, J_\bullet^\delta)$,
\item[(ii)] $\mathfrak{M}^0(H_-, J_\bullet^\delta) \times
\mathfrak{M}^0(H_s, J_\bullet^\delta)$ or $\mathfrak{M}^0(H_s,
J_\bullet^\delta) \times \mathfrak{M}^0(H_+, J_\bullet^\delta)$,
\item[(iii)] $\mathfrak{M}^1(H_-, J_\bullet^\delta) \times
\mathfrak{M}^{-1}(H_s, J_\bullet^\delta)$ or $\mathfrak{M}^{-1}(H_s,
J_\bullet^\delta) \times \mathfrak{M}^1(H_+, J_\bullet^\delta)$,
\item[(iii)'] $\mathfrak{N}^{ 0}_{L_0}(J_0^\delta) \times \mathfrak{M}^{-1}(H_s, J_\bullet^\delta)$ or $\mathfrak{N}^{ 0}_{L_1}(J_1^\delta) \times
\mathfrak{M}^{-1}(H_s, J_\bullet^\delta)$.
\end{itemize}
There is also one fifth type of boundary configuration:
\begin{itemize}
\item[(iv)] $\mathfrak{M}^0(H_s, J_\bullet^{-1})$ or
$\mathfrak{M}^0(H_s, J_\bullet^1)$.
\end{itemize}
In order to prove Proposition \ref{prop: commutation between continuation maps} we split the homotopy $J_\bullet^\bullet$ into pieces, each of which contains only
one element of $\Delta$, and we prove that for each piece the corresponding
diagram \eqref{commutation between continuation maps} commutes up to
homotopy. Putting all pieces together, we will obtain the result. We rescale each
piece of homotopy so that it is parametrised by $[-1,1]$ and the bifurcation point is $0$. \color{black}

\begin{lemma}
Let $\Delta = \{ 0 \}$ be of type (i). Then Diagram \eqref{commutation between
continuation maps} commutes.
\end{lemma}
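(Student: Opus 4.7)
The plan is to show strict commutation at a type (i) bifurcation by analysing the boundary of the one-parameter family of Floer moduli spaces. By symmetry I assume that $\Delta=\{0\}$ corresponds to a nontransversely cut out moduli space $\mathfrak{N}^{-1}_{L_0}(q_0; q_1, \ldots, q_d; J_0^0)$ on $L_0$; the case involving $L_1$ is completely analogous. By Lemma~\ref{variation of J cases i and ii}, once the augmentations are chosen so that $\varepsilon_0^-=\varepsilon_0^+\circ \mathfrak{Y}_0$ and $\varepsilon_1^-=\varepsilon_1^+$ (since the bifurcation does not touch $L_1$, so $\mathfrak{Y}_1=\mathrm{Id}$), the vertical maps $\Upsilon_\pm$ of Diagram~\eqref{commutation between continuation maps} act as the identity on generators. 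It therefore suffices to show that $\Phi_+=\Phi_-$ as maps on the underlying $\F$-modules.

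Following the bookkeeping device in the proof of Lemma~\ref{variation of J cases i and ii}, I decompose
\[
\Phi_\pm=(\varepsilon_1^\pm\otimes \mathrm{Id}\otimes \varepsilon_0^\pm)\circ\widetilde{\Phi}_\pm,
\]
where $\widetilde{\Phi}_\pm\colon CF(H_+,J_\bullet^\pm)\to \mathfrak{D}_1^\pm\otimes CF(H_-,J_\bullet^\pm)\otimes \mathfrak{D}_0^\pm$ is the un-augmented count. Next, I would analyse the boundary of the parametrised one-dimensional moduli space $\mathfrak{M}^0_{L_0,L_1}(\mathbf{p}^1,x_-,\mathbf{p}^0,x_+;H_s,J_\bullet^\bullet)$. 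Away from the parameter endpoints $\delta=\pm 1$, which give the counts defining $\widetilde{\Phi}_\pm$, the only codimension-one boundary strata are Floer-type breakings at generic $\delta$---whose signed contributions cancel because $\widetilde{\Phi}_\pm$ are already chain maps---and, at $\delta=0$, the type (i) degeneration in which a disc $v\in \mathfrak{N}^{-1}_{L_0}(q_0; q_1,\ldots,q_d; J_0^0)$ bubbles off at an $L_0$-boundary puncture.

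A standard gluing argument identifies this last contribution with the substitution of each occurrence of $q_0$ among the negative $\mathfrak{D}_0$-outputs by $q_0+\#\mathfrak{N}^{-1}_{L_0}(q_0; q_1,\ldots,q_d; J_0^0)\cdot q_1\ldots q_d$, which is precisely the algebra isomorphism $\mathfrak{Y}_0$ from Proposition~\ref{invariance of D}. The cancellation of the parametrised boundary thus yields
\[
\widetilde{\Phi}_+=(\mathrm{Id}\otimes \mathrm{Id}\otimes \mathfrak{Y}_0)\circ\widetilde{\Phi}_-,
\]
and post-composing with $\varepsilon_1^+\otimes \mathrm{Id}\otimes \varepsilon_0^+$ together with the identity $\varepsilon_0^-=\varepsilon_0^+\circ\mathfrak{Y}_0$ gives $\Phi_+=\Phi_-$.

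The main obstacle I anticipate is the careful sign and combinatorial analysis needed to show that, at the bifurcation instant, each occurrence of $q_0$ among the output punctures receives an independent substitution matching $\mathfrak{Y}_0$, with correct orientation signs. This verification is formally parallel to the analogous identity at the level of the differential already established in the proof of Lemma~\ref{variation of J cases i and ii}, so I do not expect surprises beyond the standard bookkeeping.
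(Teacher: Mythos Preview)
Your proposal is correct and follows essentially the same approach as the paper: decompose $\Phi_\pm$ through un-augmented maps $\widetilde{\Phi}_\pm$, use the boundary of the one-dimensional parametrised continuation moduli space to relate $\widetilde{\Phi}_+$ and $\widetilde{\Phi}_-$ via the algebra isomorphism $\mathfrak{Y}_0$, and then absorb $\mathfrak{Y}_0$ into the augmentations via $\varepsilon_0^-=\varepsilon_0^+\circ\mathfrak{Y}_0$. The paper packages this as the commutativity of a two-square diagram (the left square by the parametrised moduli space analysis, the right square by the augmentation relation), but the content is the same as what you wrote.
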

\begin{proof}
We have proved in Lemma \ref{variation of J cases i and ii} that
$\Upsilon_{\pm}$ are the identity maps. Here we will prove that, under the hypothesis of the lemma, $\Phi_+=\Phi_-$. We use the same construction, and the same notation, as in the proof of Lemma \ref{variation of J cases i and ii}.

The homotopy of Hamiltonians $H_s$ on $W$ induces a homotopy of Hamiltonians $\widetilde{H}_s$ on $T^*\R \times W$. Let
$$\widetilde{\Phi} \colon \mathrm{CF}((\widetilde{L}_0, \widetilde{\epsilon}_0), (\widetilde{L}_1, \widetilde{\varepsilon}_1); \widetilde{H}_+, \widetilde{J}_\bullet) \to \mathrm{CF}((\widetilde{L}_0, \widetilde{\epsilon}_0), (\widetilde{L}_1, \widetilde{\varepsilon}_1); \widetilde{H}_-, \widetilde{J}_\bullet)$$
be the continuation map associated to the homotopy $\widetilde{H}_s$.

Solutions of the Floer equation perturbed by $\widetilde{H}_s$ are still holomorphic when projected to $T^*\R$, and therefore the counterpart of Lemmas 4.14 and 4.19 of \cite{Ekholm_&_Orientation_Homology} give that, for $\kappa=-1,0,+1$,  $\widetilde{\Phi}(x[\kappa])$ is a linear combination of chords $y[\kappa]$.  The statement for $\kappa=0$ is a consequence of the assumption that there is no Floer strip for $\widetilde{H}_s$ of index  $-1$. (Unlike in the proof of Lemma \ref{variation of J cases i and ii} we do not have the two strips between from
$x[0]$ to $x[\pm 1]$ because in the definition of $\widetilde{\Phi}$ we consider solutions of index zero.) Then $\widetilde{\partial} \circ \widetilde{\Phi}= \widetilde{\Phi} \circ \widetilde{\partial}$ and Equation \ref{partialtilde} give $\Phi_+= \Phi_-$.
\color{black}
\end{proof}

\begin{lemma}
Let $\Delta = \{ 0 \}$ be of type (ii). Then Diagram \eqref{commutation between
continuation maps} commutes.
\end{lemma}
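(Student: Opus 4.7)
The plan is to carry out a bifurcation analysis on the parametric moduli space $\mathfrak{M}^0(H_s, J_\bullet^\bullet)$. Without loss of generality, assume that the single non-transversely cut out moduli space at $\delta = 0$ lies on the $H_+$ side, i.e.\ consists of the unique rigid disc
$$u_0 \in \mathfrak{M}_{L_0, L_1}^0(\mathbf{q}^1, y_-, \mathbf{q}^0, y_+; H_+, J_\bullet^0);$$
the $H_-$-case being symmetric. By Lemma~\ref{variation of J case iii} applied with the fixed Hamiltonian $H_+$, the map $\Upsilon_+$ is the identity on every generator other than $y_+$, which it sends to
$$y_+ \longmapsto y_+ + \mathfrak{m}(\mathbf{q}^1, y_-, \mathbf{q}^0, y_+) y_-.$$
The absence of type (i) bifurcations in the family gives $\mathfrak{Y}_i = \op{Id}$ and $\varepsilon_i^+ = \varepsilon_i^-$, while the absence of a type (ii) event on the $H_-$ side gives $\Upsilon_- = \op{Id}$. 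The desired commutativity therefore reduces to the chain-level identity $\Phi_+ = \Phi_- \circ \Upsilon_+$.

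For generic $J_\bullet^\bullet$ the parametric space $\mathfrak{M}^0(H_s, J_\bullet^\bullet)$ is a transversely cut out $1$-manifold whose codimension-one boundary is constrained by index theory: a broken limiting configuration consisting of a Floer strip $u' \in \mathfrak{M}^{k_1}(H_\pm, J_\bullet^\delta)$ glued to an $H_s$-continuation strip $v' \in \mathfrak{M}^{k_2}(H_s, J_\bullet^\delta)$ can appear only if $k_1 + k_2 = 0$, and thus $k_1 = k_2 = 0$. Outside of $\delta = 0$ the space $\mathfrak{M}^0(H_\pm, J_\bullet^\delta)$ has virtual dimension $-1$ and is empty; at $\delta = 0$ it contains only $u_0$. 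Consequently the boundary of the parametric $1$-manifold reduces to the slices at $\delta = \pm 1$ which count the matrix coefficients of $\Phi_\pm$, together with the rigid configurations
$$\{u_0\} \times \mathfrak{M}^0(\mathbf{p}^1, x_-, \mathbf{p}^0, y_-; H_s, J_\bullet^0)$$
obtained by gluing $u_0$ with an $H_s$-continuation strip at $\delta = 0$. Notably, no strip-breaking or DGA degeneration at generic interior parameter values produces codimension-one boundary, and the absence of type (iii) bifurcations in the family rules out any accompanying chain-homotopy term.

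Setting the signed count of the boundary to zero, weighted by the augmentations, yields the chain-level identity
$$\Phi_+ - \Phi_- = \Phi_0 \circ (\Upsilon_+ - \op{Id}),$$
where $\Phi_0$ denotes the $H_s$-continuation at $\delta = 0$. Repeating the same parametric analysis on the subinterval $[-1, -\eta]$ for arbitrary small $\eta > 0$ —which contains no bifurcations and hence no boundary contributions beyond the two endpoint slices— shows that the matrix coefficients of $\Phi_\delta$ are locally constant in $\delta$ on $[-1, 0)$, so $\Phi_0 = \Phi_-$ on the chain level. Substituting gives $\Phi_+ = \Phi_- \circ \Upsilon_+$, as required. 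The main delicate step is the orientation-compatible identification of the bifurcation boundary with the composition $\Phi_0 \circ (\Upsilon_+ - \op{Id})$ (including the sign which must come out correctly for the above equation), which proceeds exactly as in the analogous sign analyses of~\cite{Ekholm_&_Orientation_Homology, LCHgeneral} and~\cite{FloerHFlag}.
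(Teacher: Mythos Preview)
Your proof is correct and follows essentially the same bifurcation-analysis approach as the paper: both argue by examining the boundary of the one-dimensional parametrised moduli space $\mathfrak{M}^0(H_s, J_\bullet^\bullet)$, identifying the only interior degeneration with gluing to the exceptional $H_+$-strip $u_0$, and reading off the resulting relation between $\Phi_+$ and $\Phi_-$.

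One small imprecision worth flagging: the blanket claim ``$\Phi_0 = \Phi_-$ on the chain level'' is not quite right for the input $y_+$ itself, since the count $\#\mathfrak{M}^0(\ldots,y_+;H_s,J_\bullet^\delta)$ can jump precisely at $\delta=0$ via the breaking you describe. What you actually need, and what your $[-1,-\eta]$ argument does establish, is only $\Phi_0(y_-) = \Phi_-(y_-)$ (indeed $\Phi_\delta(x)$ is constant for every $x \neq y_+$, as there is no breaking for those inputs). Since the image of $\Upsilon_+ - \op{Id}$ lies in the span of $y_-$, this suffices. The paper handles this point slightly more directly by simply noting that ``the cardinality of all other moduli spaces remains unchanged,'' thereby bypassing the intermediate object $\Phi_0$ altogether.
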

\begin{proof}
We assume, without loss of generality, that the moduli space of negative formal dimension is
$$\mathfrak{M}^0_{L_1, L_1}(\mathbf{q}^1, y_-, \mathbf{q}^0, y_+; H_+, J^0_\bullet).$$ By
Lemma~\ref{variation of J case iii} the continuation maps for the change of almost complex structure are $\Upsilon_- =
Id$ and
$$\Upsilon_+ (x) = \begin{cases}
x & \text{if } x \ne y_+, \\
y_+  + \# \mathfrak{M}_{L_0, L_1}^0(\mathbf{q}^1, y_-,
\mathbf{q}^0, y_+; H_+, J_\bullet^\delta) \varepsilon_0(\mathbf{q}^0)
\varepsilon_1(\mathbf{q}^1)y_- &  \text{if } x = y_+.
\end{cases}$$

The structure of the compactification of one-dimensional parametrised moduli spaces implies
that
\begin{align*}
&\# \mathfrak{M}^0(\mathbf{q}^1\mathbf{p}^1, x_-, \mathbf{p}^0
\mathbf{q}^0, y_+; H_s, J^1_\bullet) -  \# \mathfrak{M}^0(\mathbf{q}^1
\mathbf{p}^1, x_-, \mathbf{p}^0 \mathbf{q}^0, y_+; H_s, J^0_\bullet)  \\
& = \# \mathfrak{M}^0(\mathbf{q}^1, y_-,  \mathbf{q}^0, y_+; H_+, J^\delta_\bullet)
\# \mathfrak{M}^0(\mathbf{p}^1, x_-,  \mathbf{p}^0, y_-; H_s, J^0_\bullet),
\end{align*}
while the cardinality of all other moduli spaces remains unchanged. This implies that
Diagram \eqref{commutation between continuation maps} commutes.
\end{proof}
 We have dropped the Lagrangian labels from the notation in order to keep
the formulas compact. We will do the same in the proofs of the following lemma.
\begin{lemma}
Let $\Delta = \{ 0 \}$ be of type (iii). Then Diagram \eqref{commutation between
continuation maps} commutes up to homotopy.
\end{lemma}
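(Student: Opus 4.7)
The plan is to run the same bifurcation argument used for Lemmas~\ref{variation of J cases i and ii} and~\ref{variation of J case iii}, but now applied to the one-parameter family of \emph{continuation} moduli spaces. Since the only bifurcation in the family $J_\bullet^\bullet$ is of type (iii), no moduli space $\mathfrak{N}_{L_i}^{-1}(J_i^\delta)$ is nonempty along the path, and hence the isomorphisms $\mathfrak{Y}_i$ of Proposition~\ref{invariance of D} are identities. By Lemma~\ref{variation of J cases i and ii}, the maps $\Upsilon_\pm$ are then also identities on chains. It therefore suffices to construct a chain homotopy $K$ witnessing $\Phi_+ \simeq \Phi_-$ after these identifications.

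First I would introduce the weighted count
\begin{equation*}
K(x_+) = \sum_{x_-, \mathbf{p}^0, \mathbf{p}^1} \# \mathfrak{M}^{-1}_{L_0, L_1}(\mathbf{p}^1, x_-, \mathbf{p}^0, x_+; H_s, J_\bullet^\bullet) \, \varepsilon_0(\mathbf{p}^0) \, \varepsilon_1(\mathbf{p}^1) \, x_-,
\end{equation*}
noting that, by the type (iii) hypothesis on $\Delta$, the parametrised moduli space of virtual dimension zero is nonempty only at $\delta = 0$, and generically consists of a finite set of rigid continuation trajectories there. The homotopy identity $\Phi_- - \Phi_+ = \partial_- K + K \partial_+$ will then be extracted from the Gromov--Floer compactification of the one-dimensional parametrised moduli space $\mathfrak{M}^0_{L_0, L_1}(\cdot\,;\, H_s, J_\bullet^\bullet)$: its endpoints at $\delta = \pm 1$ supply the counts defining $\Phi_\pm$, while the type (iii) broken strata $\mathfrak{M}^1(H_\pm, J_\bullet^0) \times \mathfrak{M}^{-1}(H_s, J_\bullet^\bullet)$ yield $\partial_- K$ and $K \partial_+$ after weighting by the augmentations.

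The hard part will be to show that the type (iii)' boundary strata $\mathfrak{N}^1_{L_i}(J_i^0) \times \mathfrak{M}^{-1}(H_s, J_\bullet^\bullet)$, which account for boundary disc bubbling off an index $-1$ continuation trajectory at $\delta = 0$, contribute nothing to the signed count. My plan there is to argue that, after summing over all admissible attachment positions and bubble configurations and weighting by $\varepsilon_0, \varepsilon_1$, the contribution at each affected $L_i$-puncture of the continuation strip factors through $\varepsilon_i \circ \mathfrak{d}_i$, which vanishes because $\varepsilon_i$ is an augmentation of the obstruction algebra. Sign bookkeeping will require the coherent orientations of~\cite{Ekholm_&_Orientation_Homology}, and this is the most delicate step. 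Once the (iii)' contributions have been eliminated, the remaining boundary terms assemble into $\Phi_- - \Phi_+ = \partial_- K + K \partial_+$, and together with $\Upsilon_\pm = \mathrm{Id}$ this is the homotopy commutativity of~\eqref{commutation between continuation maps} for a single type (iii) crossing.
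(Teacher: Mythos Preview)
Your proposal is correct and follows the same approach as the paper: both define $K$ by the augmentation-weighted count of the rigid parametrised continuation moduli space $\mathfrak{M}^{-1}(H_s,J_\bullet^\bullet)$ (which is supported at $\delta=0$), read off $\Phi_+-\Phi_-=\partial K+K\partial$ from the ends of the one-dimensional parametrised moduli spaces $\mathfrak{M}^0(H_s,J_\bullet^\bullet)$, and note that $\Upsilon_\pm=\mathrm{Id}$ since no type~(i) or~(ii) bifurcation occurs. Your treatment of the (iii)$'$ bubbling strata via $\varepsilon_i\circ\mathfrak{d}_i=0$ is in fact slightly more explicit than the paper's proof, which defers this point to a one-line remark after the lemma.
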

\begin{proof}
Let $\mathfrak{M}^{-1}_{L_0, L_1}(\mathbf{q}^1, y_-, \mathbf{q}^+, y_+; H_s,
J_\bullet^0)$ be the nonempty moduli space of negative
formal dimension. In this case $\Upsilon_\pm = Id$ and we define a linear map
$K \colon \mathrm{CF}(H_+, J_\bullet^{+1}) \to
\mathrm{CF}(H_-, J_\bullet^{-1})$ by
$$K(x) = \begin{cases}
0 & \text{if } x \ne y_+, \\
\# \mathfrak{M}^{-1}_{L_0, L_1}(\mathbf{q}^1, y_-, \mathbf{q}^+, y_+; H_s,
J_\bullet^\delta) \varepsilon_0(\mathbf{q}^0) \varepsilon_1(\mathbf{q}^1) y_- &
\text{if } x=y_+.
\end{cases}$$
The structure of the boundary of the compactification of the one-dimensional parametrised
moduli spaces implies that

 \begin{align*}
&\# \mathfrak{M}^0(\mathbf{q}^1\mathbf{p}^1, x_-, \mathbf{p}^0
\mathbf{q}^0, y_+; H_s, J^1_\bullet) -  \# \mathfrak{M}^0(\mathbf{q}^1
\mathbf{p}^1, x_-, \mathbf{p}^0 \mathbf{q}^0, y_+; H_s, J^0_\bullet)  \\
& = \# \mathfrak{M}^{-1}(\mathbf{q}^1, y_-,  \mathbf{q}^0, y_+; H_s, J_\bullet^\delta)
\# \mathfrak{M}^1(\mathbf{p}^1, x_-,  \mathbf{p}^0, y_-; H_-, J^0_\bullet),\ \mbox{and}\\
 &\# \mathfrak{M}^0(\mathbf{q}^1\mathbf{p}^1, y_-, \mathbf{p}^0
\mathbf{q}^0, x_+; H_s, J^1_\bullet) -  \# \mathfrak{M}^0(\mathbf{q}^1
\mathbf{p}^1, y_-, \mathbf{p}^0 \mathbf{q}^0, x_+; H_s, J^0_\bullet)  \\
& = \# \mathfrak{M}^{-1}(\mathbf{q}^1, y_-,  \mathbf{q}^0, y_+; H_s, J_\bullet^\delta)
\# \mathfrak{M}^1(\mathbf{p}^1, x_-,  \mathbf{p}^0, y_-; H_-, J^0_\bullet).
 \end{align*}
From this it follows that $\Phi_+ - \Phi_- = \partial K + K \partial$.
\end{proof}
The degenerations of type (iii)' are cancelled algebraically by the augmentations, and therefore
we obtain the commutativity of the diagram~\eqref{commutation between continuation
maps} for a generic homotopy $J_\bullet^\bullet$.

Now we compare the continuation maps $\Phi$ for the change of Hamiltonian and the continuation maps $\Psi$ for compactly supported safe isotopies of $L_1$.
Let $G \colon \R \times L_1 \to \R$ be a local  Hamiltonian function satisfying $dG_t=0$ outside a compact subset of $(0,1) \times L_1$ which generates the safe isotopy $\psi_t \circ \iota_1,$ and denote $L_1' = \psi_1(L_1)$. If $H_+$ and $H_-$ are two Hamiltonian functions which are compatible both with $L_0$ and
$L_1$ and with $L_0$ and $L_1'$, then we have continuation maps
$$\Psi_G^\pm \colon \mathrm{CF}((L_0, \varepsilon_0), (L_1, \varepsilon_1); H_\pm, J_\bullet^+) \to
\mathrm{CF}((L_0, \varepsilon_0), (L_1', \varepsilon_1'); H_\pm, J_\bullet^-)$$
and continuation maps
\begin{align*}
\Phi \colon & \mathrm{CF}((L_0, \varepsilon_0), (L_1, \varepsilon_1); H_+, J_\bullet^+) \to
\mathrm{CF}((L_0, \varepsilon_0), (L_1, \varepsilon_1); H_-, J_\bullet^+), \\
\Phi'  \colon & \mathrm{CF}((L_0, \varepsilon_0), (L_1', \varepsilon_1'); H_+, J_\bullet^-) \to
\mathrm{CF}((L_0, \varepsilon_0), (L_1', \varepsilon_1'); H_-, J_\bullet^-)
\end{align*}
induced by a homotopy of Hamiltonians $H_s$ with $H_s=H_+$ for $s \ge 1$ and $H_s=
H_-$ for $s \le -1$.
\begin{lemma}\label{lemma: commutation between Phi and Psi}
The diagram
$$\xymatrix{
\mathrm{CF}((L_0, \varepsilon_0), (L_1, \varepsilon_1); H_+, J_\bullet^+) \ar[r]^\Phi \ar[d]_{\Psi_G^+}
&  \mathrm{CF}((L_0, \varepsilon_0), (L_1, \varepsilon_1); H_-, J_\bullet^+) \ar[d]^{\Psi_G^+} \color{black} \\
\mathrm{CF}((L_0, \varepsilon_0), (L_1', \varepsilon_1'); H_+, J_\bullet^-) \ar[r]^{\Phi'} &
\mathrm{CF}((L_0, \varepsilon_0), (L_1', \varepsilon_1'); H_-, J_\bullet^-)
}$$
commutes up to homotopy.
\end{lemma}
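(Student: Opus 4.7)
The plan is to define a chain homotopy $K$ by counting rigid elements in a suitable one-parameter family of moduli spaces that simultaneously interpolates between the ``change Hamiltonian first, then move boundary'' scheme and the ``move boundary first, then change Hamiltonian'' scheme. More precisely, for $\rho \in [0,1]$ I will construct data $(H_s^\rho, \tilde{J}^\rho, \nu^\rho)$ on the punctured strip where:
\begin{itemize}
\item $H_s^\rho$ is an $s$-dependent Hamiltonian equal to $H_+$ for $s \gg 0$ and to $H_-$ for $s \ll 0$, where the transition from $H_+$ to $H_-$ occurs in an interval centred at some time $T(\rho) \in \R$;
\item the boundary on the $t=1$ side is $\psi_{\nu^\rho_{\boldsymbol{\zeta}^1}(s)}(L_1)$ as in Definition~\ref{non ne posso piuu}, where the transition from $L_1$ to $L_1'$ happens in an interval centred at some time $S(\rho) \in \R$, and where $\nu^\rho$ still satisfies the constancy condition near each puncture $\zeta^1_j$;
\item the almost complex structure $\tilde{J}^\rho$ interpolates between $J_\bullet^+$ and $J_\bullet^-$ as in Lemma~\ref{berlin}, adapted so that the usual transversality results hold.
\end{itemize}
The pair $(T(\rho), S(\rho))$ will be chosen so that $T(0) \ll S(0)$ (the Hamiltonian has already jumped to $H_-$ by the time the boundary starts moving) and $T(1) \gg S(1)$ (the boundary has already moved to $L_1'$ by the time the Hamiltonian starts transitioning). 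A standard neck-stretching/gluing argument then identifies the moduli spaces at $\rho = 0$ with the fibre products defining $\Psi_G^- \circ \Phi$, and those at $\rho = 1$ with the fibre products defining $\Phi' \circ \Psi_G^+$.

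Once the parametrised moduli spaces $\mathfrak{M}^k(\mathbf{p}^1, x_-, \mathbf{p}^0, x_+; H_s^\bullet, G, \tilde{J}^\bullet, \nu^\bullet)$ consisting of pairs $(\rho, u)$ with $u$ of index $k$ have been defined, I define
$$K(x_+) = \sum_{x_- \in \mathcal{C}'_{H_-}} \sum_{l_0, l_1 \in \N} \sum_{\mathbf{p}^i \in D_i^{l_i}} \# \mathfrak{M}^{-1}(\mathbf{p}^1, x_-, \mathbf{p}^0, x_+; H_s^\bullet, G, \tilde{J}^\bullet, \nu^\bullet) \varepsilon_0(\mathbf{p}^0) \varepsilon_1(\mathbf{p}^1) \, x_-,$$
where the moduli spaces of index $-1$ are $0$-dimensional, compact and transversely cut out for a generic choice of homotopy, exactly as in Sections~\ref{sss: changing hamiltonian} and~\ref{sss: compactly supported isotopies}. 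The identity $\Phi' \circ \Psi_G^+ - \Psi_G^- \circ \Phi = \partial K + K \partial$ will then be read off from the codimension-one boundary of the compactification of the one-dimensional component $\mathfrak{M}^0(H_s^\bullet, G, \tilde{J}^\bullet, \nu^\bullet)$: the endpoints $\rho=0$ and $\rho=1$ contribute the two compositions, while genuine Floer-type breakings at interior $\rho$ contribute $\partial K + K \partial$. Breakings of disc bubbles at double points of $L_0$ or $L_1$ (analogous to configurations (iii)' in the proof of Proposition~\ref{prop: commutation between continuation maps}) are cancelled in the usual way by the presence of the augmentations $\varepsilon_0$ and $\varepsilon_1$, as in Equation~\eqref{LCH differential}.

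The main technical obstacle is to set up the domain-dependent data $(H_s^\rho, \tilde{J}^\rho, \nu^\rho)$ coherently. Two issues must be addressed: (a) $\nu^\rho_{\boldsymbol{\zeta}^1}$ must remain constant on a neighbourhood of each puncture $\zeta^1_j$ of $L_1$-type (so that one obtains genuine strip-like ends compatible with the shifted Lagrangians $\psi_{\nu^\rho_{\boldsymbol{\zeta}^1}(\zeta^1_j)}(L_1)$ and their pushed-forward almost complex structures), and (b) the transition region of $H_s^\rho$ must be disjoint from the transition region of $\nu^\rho$ at $\rho = 0$ and $\rho = 1$, with sufficient room to apply the stretching/gluing identification. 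Both are handled by inductively constructing the profiles $\nu^\rho$ over $\overline{\op{Conf}}^n(\R)$ following the scheme of Lemma~\ref{boundary dependent reparametrisations}, and by choosing $T(\rho), S(\rho)$ to be monotone functions whose graphs in $\R^2$ cross only once, with the crossing kept away from the regions where $\nu^\rho$ is forced to be constant near punctures. Once these choices are in place, the transversality and compactness statements go through as in Proposition~\ref{moduli spaces with moving boundary}, and the homological identity follows from the standard boundary analysis.
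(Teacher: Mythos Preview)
Your approach is correct and matches the paper's proof in spirit: both construct a one-parameter family of Floer problems in which the Hamiltonian transition and the moving-boundary transition slide past each other, count index $-1$ solutions in the parametrised moduli space to define $K$, and read off the chain homotopy from the boundary of the one-dimensional component.

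The paper's implementation is more economical than yours. Rather than varying both $T(\rho)$ and $S(\rho)$ and allowing $\nu^\rho,\tilde{J}^\rho$ to depend on the parameter, the paper keeps the moving-boundary data $(G,\tilde{J},\nu)$ completely fixed and simply translates the Hamiltonian homotopy by setting $H_s^R = H_{s-R}$ for $R\in\R$. The parametrised moduli space then consists of pairs $(R,u)$ with $u$ of index $-1$ for the data $(H_s^R,G,\tilde{J},\nu)$. As $R\to\pm\infty$ the problem splits by the usual stretching/gluing into the two compositions, and the interior breakings give $\partial K + K\partial$. This avoids your technical point~(a) entirely (since $\nu$ never changes, the constancy conditions near the boundary punctures are automatically preserved) and reduces point~(b) to the trivial observation that the support of $\partial_s H_s^R$ becomes disjoint from the support of $\nu'$ for $|R|$ large. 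Your more elaborate setup is not wrong, but the extra flexibility buys nothing here.
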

\begin{proof}[Sketch of proof]
For $R \in \R$ we define $H_s^R= H_{s-R}$. We define the moduli spaces\\
$\mathfrak{M}_{L_0, L_1}^k(\mathbf{p}^1, x_-, \mathbf{p}^0, x_+; H_s^R, G, \tilde{J},
\nu)$
as in Definition~\ref{non ne posso piuu} by replacing the Hamiltonian $H$ by the
$s$-dependent Hamiltonian $H_s^R$. Counting pairs $(R, u)$ where $R \in \R$ and
$$u \in \mathfrak{M}_{L_0, L_1}^{-1} (\mathbf{p}^1, x_-, \mathbf{p}^0, x_+; H_s^R, G,
\tilde{J}, \nu),$$ weighted by the augmentations, we obtain a homotopy between $\Psi_G^-
\circ \Phi$ and $\Phi' \circ \Psi_G^+$.
\end{proof}

\section{Wrapped Floer cohomology for exact Lagrangian immersions}
\label{sec: immersed wrapped}
In this section we define wrapped Floer cohomology  for unobstructed immersed exact
Lagrangian submanifolds. With the preparation of the previous sections in place, the definition is not
different from the usual one for Lagrangian submanifolds.
\subsection{Wrapped Floer cohomology as direct limit}
We start by defining wrapped Floer cohomology as a direct limit. This point of view will be
useful in the vanishing theorem of the following section.  A sketch of the chain level
construction, following \cite{OpenString}, will be given in the next subsection.

Let $(L_0, \iota_0)$ and $(L_1, \iota_1)$ be immersed exact Lagrangian submanifolds with augmentations
$\varepsilon_0$ and $\varepsilon_1$ respectively. We assume that all intersection points
between $L_0$ and $L_1$ are transverse, $L_0$ and $L_1$ are cylindrical over Legendrian
submanifolds $\Lambda_0$ and $\Lambda_1$ respectively, and all Reeb chords between
$\Lambda_0$ and $\Lambda_1$ are nondegenerate.

For every $\lambda \in \R$ we denote by $h_\lambda \colon  \R^+ \to \R$ the function
\begin{equation}\label{H_lambda}
h_\lambda(\rho) = \begin{cases}
 0 & \text{if} \quad \rho \in (0,1], \\
\lambda \rho - \lambda & \text{if} \quad \rho \ge 1.
\end{cases}
\end{equation}
We smooth $h_\lambda$ inside the interval $[4/5, 6/5]$ (or any sufficiently small
neighbourhood of $1$ independent of $\lambda$) and, by abuse of
notation, we still denote the resulting function by $h_\lambda$. We assume also that the
resulting smooth function satisfies $h''(\rho) \ge 0$ for all $\rho \in \R^+$.
We define time-independent cylindrical Hamiltonians $H_\lambda \colon W \to \R$ by
\begin{equation}\label{wrapping hamiltonian}
H_\lambda(w)= h_\lambda(e^ {\mathfrak{r}(w)}).
\end{equation}
Hamiltonian functions of this form will be called
{\em wrapping Hamiltonian functions}.

We fix a sequence of positive real number $\lambda_n$ such that $\lim \limits_{n \to +\infty} \lambda_n=
+ \infty$ and, for any $n$, $\lambda_n$ is not the length of a Reeb
chord from $\Lambda_0$ to $\Lambda_1$. The set of $(L_0, L_1)$-regular almost complex
structures for every $H_{\lambda_n}$ is dense, and we pick an element $J_\bullet$.

By Subsection~\ref{sss: changing hamiltonian}, for every  $m \ge n$ there are continuation
maps
$$\Phi_{\lambda_n, \lambda_m} \colon \mathrm{HF}((L_0, \varepsilon_0), (L_1, \varepsilon_1);
H_{\lambda_n}, J_\bullet) \to \mathrm{HF}((L_0, \varepsilon_0), (L_1, \varepsilon_1); H_{\lambda_m},
J_\bullet)$$
forming a direct system.
\begin{dfn}
 The wrapped Floer cohomology
of $(L_0, \varepsilon_0)$ and $(L_1, \varepsilon_1)$ is defined as
\begin{equation} \label{definition of WF}
\mathrm{HW}((L_0, \varepsilon_0), (L_1, \varepsilon_1); J_\bullet) = \varinjlim \mathrm{HF}((L_0, \varepsilon_0),
(L_1, \varepsilon_1); H_{\lambda_n}, J_\bullet).
\end{equation}
\end{dfn}

Wrapped Floer cohomology is well defined, in the sense that it is independent of the choice
of the almost complex structure $J_\bullet$, and of the smoothing of the piecewise linear
functions $H_{\lambda_n}$ and of the sequence $\lambda_n$. Invariance of the almost
complex structure follows form Proposition~\ref{prop: commutation between continuation
maps}. Invariance of the smoothing of $H_{\lambda_n}$ follows from Lemma~\ref{properties
of Phi} and Corollary~\ref{doi coge}. Finally, if $\lambda_n' \to + \infty$ is another
sequence such that no $\lambda_n'$ is not the length of a Reeb chord from $\Lambda_0$ to
$\Lambda_1$, we can make both $\lambda_n$ and $\lambda_n'$ subsequences of a diverging
sequences $\lambda_n''$ and standard properties of the direct limit give canonical
isomorphisms
\begin{align*}
\varinjlim \mathrm{HF}((L_0, \varepsilon_0), (L_1, \varepsilon_1); H_{\lambda_n''}, J_\bullet) \cong &
\varinjlim \mathrm{HF}((L_0, \varepsilon_0), (L_1, \varepsilon_1); H_{\lambda_n}, J_\bullet) \\
\cong & \varinjlim \mathrm{HF}((L_0, \varepsilon_0), (L_1, \varepsilon_1); H_{\lambda_n'}, J_\bullet).
\end{align*}
Therefore  $\mathrm{HW}((L_0, \varepsilon_0), (L_1, \varepsilon_1))$ does not depend on the sequence
$\lambda_n$ up to isomorphism. It can also be proved that it is invariant under  safe isotopies,
but we will need, and prove, only invariance under compactly supported ones.
\begin{lemma}
Let $G \colon \R \times L_1 \to \R$ be a local Hamiltonian function which satisfies $dG_t=0$ outside a compact subset of $(0,1) \times L_1$ and let $\psi_t \circ \iota_1$ be the exact regular homotopy it generates, which is assumed to be a safe isotopy. If $L_1'= \psi_1(L_1)$,
$J_\bullet'$ is an $(L_0, L_1')$-regular almost complex structure and $\varepsilon_1'$ is
the augmentation for $L_1'$ with respect to $J_\bullet'$ corresponding to $\varepsilon_1$,
then there is an isomorphism
$$\mathrm{HW}((L_0, \varepsilon_0), (L_1, \varepsilon_1); J_\bullet) \cong
\mathrm{HW}((L_0, \varepsilon_0), (L_1', \varepsilon_1'); J_\bullet').$$
\end{lemma}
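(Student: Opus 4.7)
The plan is to pass the continuation maps $\Psi_G$ from Section \ref{sss: compactly supported isotopies} to the direct limit that defines wrapped Floer cohomology. Since the local Hamiltonian $G$ satisfies $dG_t = 0$ outside a compact subset of $(0,1)\times L_1$, the safe isotopy $\psi_t\circ\iota_1$ is eventually stationary at each point and, crucially, fixes the cylindrical end of $L_1$. In particular $L_1$ and $L_1' = \psi_1(L_1)$ are cylindrical over the same Legendrian $\Lambda_1 \subset V$, so they admit the same set of admissible slopes. Fix a common sequence $\lambda_n \to +\infty$ avoiding the lengths of Reeb chords from $\Lambda_0$ to $\Lambda_1$, and wrapping Hamiltonians $H_{\lambda_n}$ compatible with both $(L_0,L_1)$ and $(L_0,L_1')$.

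For each $n$, I would combine the construction of Section \ref{sss: compactly supported isotopies}, together with the continuation maps $\Upsilon_\pm$ from Section \ref{sss: changing J} (as in the last paragraph of Section \ref{sss: compactly supported isotopies}), to obtain a chain map
$$\Psi_G^n \colon CF((L_0,\varepsilon_0),(L_1,\varepsilon_1); H_{\lambda_n}, J_\bullet) \to CF((L_0,\varepsilon_0),(L_1',\varepsilon_1'); H_{\lambda_n}, J_\bullet').$$
By the corollary stated at the end of Section \ref{sss: compactly supported isotopies}, each $\Psi_G^n$ is a quasi-isomorphism, because $dG_t$ has compact support.

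The next step is to check compatibility with the direct system. By Lemma \ref{lemma: commutation between Phi and Psi}, together with Proposition \ref{prop: commutation between continuation maps} applied to absorb the auxiliary $\Upsilon_\pm$, the square
$$\xymatrix{
HF((L_0,\varepsilon_0),(L_1,\varepsilon_1); H_{\lambda_n}, J_\bullet) \ar[r]^-{\Phi_{\lambda_n,\lambda_m}} \ar[d]_{\Psi_G^n} & HF((L_0,\varepsilon_0),(L_1,\varepsilon_1); H_{\lambda_m}, J_\bullet) \ar[d]^{\Psi_G^m} \\
HF((L_0,\varepsilon_0),(L_1',\varepsilon_1'); H_{\lambda_n}, J_\bullet') \ar[r]_-{\Phi'_{\lambda_n,\lambda_m}} & HF((L_0,\varepsilon_0),(L_1',\varepsilon_1'); H_{\lambda_m}, J_\bullet')
}$$
commutes on cohomology for every $m \ge n$. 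Therefore the collection $\{\Psi_G^n\}$ defines a morphism of the two direct systems defining $HW((L_0,\varepsilon_0),(L_1,\varepsilon_1);J_\bullet)$ and $HW((L_0,\varepsilon_0),(L_1',\varepsilon_1'); J_\bullet')$, and taking the direct limit yields a map
$$\Psi_G^\infty \colon HW((L_0,\varepsilon_0),(L_1,\varepsilon_1); J_\bullet) \to HW((L_0,\varepsilon_0),(L_1',\varepsilon_1'); J_\bullet').$$
Since $\Psi_G^n$ is an isomorphism at every finite level and the direct limit functor is exact, $\Psi_G^\infty$ is an isomorphism, as required.

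The proof is essentially the assembly of earlier results, and I do not expect any serious obstacle; the mild technical point is that the definition of $\Psi_G$ requires a specific pair of regular almost complex structures $J_\bullet^\pm$, while the direct system is built from arbitrary regular choices $J_\bullet,J_\bullet'$. This discrepancy is resolved by pre- and post-composing with the maps $\Upsilon_\pm$ of Section \ref{sss: changing J}, and verifying that the resulting composition is still compatible with the transition maps $\Phi_{\lambda_n,\lambda_m}$---which is exactly what Proposition \ref{prop: commutation between continuation maps} provides.
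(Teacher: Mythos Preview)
Your proof is correct and follows essentially the same approach as the paper: build the maps $\Psi_G^n$ at each level using Section~\ref{sss: compactly supported isotopies}, invoke Lemma~\ref{lemma: commutation between Phi and Psi} and Proposition~\ref{prop: commutation between continuation maps} to check compatibility with the $\Phi_{\lambda_n,\lambda_m}$, and pass to the direct limit. The paper's proof is terser but cites exactly the same two ingredients; your version simply spells out more of the details (common admissible slopes, the role of $\Upsilon_\pm$).
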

\begin{proof}
It is enough to observe that, for every $n$, the isomorphisms
$$\mathrm{CF}((L_0, \varepsilon_0), (L_1, \varepsilon_1); H_{\lambda_n}, J_\bullet) \cong
\mathrm{CF}((L_0, \varepsilon_0), (L_1', \varepsilon_1'); H_{\lambda_n}, J_\bullet')$$
defined in Subsection~\ref{sss: compactly supported isotopies} commute with the
continuation maps $\Phi_{\lambda_n, \lambda_m}$ and therefore define isomorphisms
of direct systems. This follows from Lemma~\ref{lemma: commutation between Phi and Psi}
and Proposition~\ref{prop: commutation between
continuation maps}.
\end{proof}
\subsection{A sketch of the chain level construction}
\label{chain level}
Here we recall very briefly the definition of the wrapped Floer complex and the
$A_\infty$-operations. Since Lagrangian immersions will appear only in an
intermediate step of the proof of the main theorem, we will not try to make them objects
of an enlarged wrapped Fukaya category. Presumably this can be done as in the embedded
case, but we have not checked the details of the construction of the necessary coherent
Hamiltonian perturbations.

Let $L_0$ and $L_1$ be exact Lagrangian immersions which intersect transversely and are
cylindrical over chord generic Legendrian submanifolds. We fix a wrapping Hamiltonian
$H \ge 0$ as in Equation~\eqref{H_lambda} such that, for every $w \in \N$, the Hamiltonian
$wH$ is compatible with $L_0$ and $L_1$ (in the sense of Definition~\ref{compatible
hamiltonian}). We also fix an $(L_0, L_1)$-regular almost complex structure $J_\bullet$.

Let $\varepsilon_0$ and $\varepsilon_1$ be augmentations of the obstruction algebras of
$(L_0, J_0)$ and $(L_1, J_1)$ respectively. Following \cite{OpenString} we define the
{\em wrapped Floer chain complex} as the $\F[q]/(q^2)$-module
\begin{equation}\label{wrapped complex}
\mathrm{CW}((L_0, \varepsilon_0), (L_1, \varepsilon_1); J_\bullet) = \bigoplus_{w=0}^\infty
\mathrm{CF}((L_0, \varepsilon_0), (L_1, \varepsilon_1); wH, J_\bullet)[q]
\end{equation}
with a differential $\mu^1$ such that, on $x+yq \in \mathrm{CF}((L_0, \varepsilon_0), (L_1,
\varepsilon_1); wH, J_\bullet)[q]$, it is defined as
$$\mu^1(x+yq)= \partial x + y + \Phi_w(y) + (\partial y)q$$
where
$$\Phi_w \colon  \mathrm{CF}((L_0, \varepsilon_0), (L_1, \varepsilon_1); wH, J_\bullet) \to
\mathrm{CF}((L_0, \varepsilon_0), (L_1, \varepsilon_1); (w+1)H, J_\bullet)$$
is the continuation map for the change of Hamiltonian defined in Subsection~\ref{sss:
changing hamiltonian}.
\begin{rem}
The endomorphism $\iota_q$ (denoted $\partial_q$ in \cite{OpenString}) defined as
$$\iota_q(x+yq)=y$$
is a chain map. However, its action in homology is trivial.
\end{rem}
\begin{rem}
\label{w=0}
The direct sum \eqref{wrapped complex} starts from
$w=1$ in \cite{OpenString}. It is equivalent to start from $w=0$, when possible, by
\cite[Lemma~3.11]{OpenString}. The homology of $\mathrm{CW}((L_0, \varepsilon_0), (L_1,
\varepsilon_1); J_\bullet)$ is isomorphic to $\mathrm{HW}((L_0, \varepsilon_0), (L_1, \varepsilon_1))$
defined as the direct limit in Equation~\eqref{definition of WF} by
\cite[Lemma~3.12]{OpenString}.
\end{rem}

The $A_\infty$-operations between wrapped Floer complexes are defined by counting
pseudoholomorphic polygons with carefully constructed Hamiltonian perturbations. In the
immersed case, those polygons will be allowed to have boundary punctures at double points
and, as usual, must be counted with a weight coming from the augmentations. The only
thing we need to know about the operations between wrapped Floer cohomology groups is
that the component
$$\mu^d \colon \mathrm{CF}(L_{d-1}, L_d) \otimes \ldots \otimes  \mathrm{CF}(L_0, L_1) \to \mathrm{CF}(L_0, L_d)$$
of the operation
$$\mu^d_{\mathcal WF} \colon \mathrm{CW}(L_{d-1}, L_d) \otimes \ldots \otimes \mathrm{CW}(L_0, L_1) \to
\mathrm{CW}(L_0, L_d)$$
coincides with the operation $\mu^d$ defined in Equation \eqref{eqn: products}. For
simplicity of notation we have dropped the augmentations from the above formulas.

\section{A trivial triviality result}
\label{sec: trivial triviality}
An exact Lagrangian embedding with cylindrical ends which is disjoint from the skeleton is known to have vanishing
wrapped Floer cohomology. This was proven in \cite[Theorem 9.11(b)]{CieliebakOancea}
but also follows from e.g.~\cite[Section 5.1]{OpenString}. Note that the statement is false
in the more general case when the Lagrangian is only assumed to be monotone. In this section we extend this
classical vanishing result to our setting of \emph{exact} Lagrangian immersions.

\subsection{Action and energy}
In this subsection we define an action for double points of immersed exact Lagrangian submanifolds and for
Hamiltonian chords and prove action estimates for various continuation maps. Let $p \in W$
be a double point of a Lagrangian immersion $(L, \iota)$ with potential $f$. We recall that
there are points $p_\pm \in L$ characterised by $\iota^{-1}(p)= \{ p_+, p_- \}$ and $f(p_+) >
f(p_-)$. We define the {\em action} of $p$ as
$$\mathfrak{a}(p)= f(p_+) - f(p_-).$$
If $L$ is disconnected and $p$ is in the intersection between the images of two connected
components, then $\mathfrak{a}(p)$ depends on the choice of the potential function $f$,
otherwise it is independent of it.

Given a holomorphic map $(\boldsymbol{\zeta}, u) \in
\widetilde{\mathfrak{N}}_L(p_0; p_1, \ldots, p_d; J)$, Stokes's theorem immediately yields
$$\int_{\Delta_{\boldsymbol{\zeta}}} u^*d \theta = \mathfrak{a}(p_0) - \sum_{i=1}^d
\mathfrak{a}(p_i).$$
Since $\int_{\Delta_{\boldsymbol{\zeta}}} u^*d \theta >0$ for a nonconstant $J$-holomorphic
map, if
$$\widetilde{\mathfrak{N}}_L(p_0; p_1, \ldots, p_d; J) \neq \emptyset,$$
we obtain
\begin{equation}
\mathfrak{a}(p_0) - \sum_{i=1}^d
\mathfrak{a}(p_i) >0.
\end{equation}

Given two Lagrangian submanifolds $L_0$ and $L_1$ with potentials $f_0$ and $f_1$ and a
Hamiltonian function $H$, we  define the action of a Hamiltonian chord $x \colon [0,1] \to W$
from $L_0$ to $L_1$ as
\begin{equation} \label{action of a chord}
{\mathcal A}(x) = \int_0^1 x^* \theta - \int_0^1 H(x(t))dt +
f_0(x(0))-f_1(x(1)).
\end{equation}
Note that this is the negative of the action used in \cite{ritter-tqft}.
\begin{ex}
Let $H \colon W \to \R$ be a cylindrical Hamiltonian such that $H(w)= h(e^{\mathfrak{r}(w)})$,
where $h \colon \R^+ \to \R$.
Then a Hamiltonian chord $x \colon [0,1] \to W$ from $L_0$ to $L_1$ is contained
in a level set $\mathfrak{r}^{-1}(r)$ and has action
\begin{equation}\label{action}
{\mathcal A}(x)= h'(e^r)e^r - h(e^r) + f_0(x(0)) - f_1(x(1)).
\end{equation}
\end{ex}

The following lemma, which we prove in the more general case of the moduli spaces of Floer
solutions with an $s$-dependent Hamiltonian, applies equally to the particular case of moduli
spaces used in the definition of the Floer differential. We introduce the following notation.
Given a set $A$ and a function $f \colon A \to \R$, we denote $\| f \|_\infty^+ := \sup \limits_{a
\in A} \max \{f(a), 0 \}$.
\begin{lemma}
Let $H_s \colon \R \times [0,1] \times W \to \R$ be an $s$-dependent cylindrical Hamiltonian
function satisfying conditions (i), (ii), and (iii) of Subsection~\ref{sss: changing hamiltonian}.
We make the simplifying assumption that $\partial_sH_s \equiv 0$ if $s \not \in [-1, 1]$. If
$\mathfrak{M}_{L_0, L_1}(\mathbf{p}^1, x_-, \mathbf{p}^0, x_+; H_s, J_\bullet) \ne
\emptyset$, then
\begin{equation}\label {esasperazione}
{\mathcal A}(x_-) \le {\mathcal A}(x_+) + 6 \| \partial_sH_s \|_\infty^+.
\end{equation}
\end{lemma}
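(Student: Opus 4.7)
The plan is to compute the nonnegative Floer energy
\[
E(u) = \int_{Z_{\boldsymbol{\zeta}^0, \boldsymbol{\zeta}^1}} \left|\frac{\partial u}{\partial s}\right|^2 ds\, dt \ge 0
\]
of a representative $u$ of the assumed nonempty moduli space in two ways and compare them. First I would use the Floer equation to rewrite the integrand as
\[
\left|\frac{\partial u}{\partial s}\right|^2 = \omega\!\left(\frac{\partial u}{\partial s}, \frac{\partial u}{\partial t}\right) - \chi'(t)\, dH_s\!\left(\frac{\partial u}{\partial s}\right),
\]
using the compatibility of $J_t$ with $d\theta$ together with $dH = \omega(\cdot, X_H)$. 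Integrating over the punctured strip identifies the first term with $\int u^* d\theta$, and an application of the identity $dH_s(\partial_s u) = \partial_s(H_s \circ u) - (\partial_s H_s)(u)$ turns the second term into a total $s$-derivative plus the curvature term in $\partial_s H_s$. Since $\chi'(t)$ vanishes on a neighbourhood of $t=0,1$, all boundary punctures lie in the region where the total $s$-derivative term is locally zero, so integration by parts in $s$ is clean and produces $\int_0^1 H_+(x_+(\tau)) - H_-(x_-(\tau)) \, d\tau$ after the substitution $\tau = \chi(t)$.

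Second, I would evaluate $\int u^* d\theta$ via Stokes' theorem on the punctured strip truncated at $s = \pm R$, taking $R \to \infty$ and shrinking half-discs around the boundary punctures. The asymptotic right and left ``circles'' contribute $\int_0^1 x_+^*\theta - \int_0^1 x_-^*\theta$. On the two boundary edges, where $u$ lies on $L_i$ and hence $u^*\theta = d(f_i\circ u)$ on each arc between punctures, the arc contributions telescope to $f_0(x_+(0)) - f_0(x_-(0)) + f_1(x_-(1)) - f_1(x_+(1))$, while at each negative boundary puncture the jump between the two branches of $\iota_i$ contributes $-\mathfrak{a}(p_j^i)$. The sign here is determined by the convention from Definition \ref{positive negative}: traversing the boundary counterclockwise past a negative puncture moves one from the branch $p^-$ (the one with smaller potential) onto the branch $p^+$, exactly as in the standard Legendrian contact homology action identity. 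Recognising the combinations $\int_0^1 x_\pm^*\theta - \int H_\pm(x_\pm) + f_0(x_\pm(0)) - f_1(x_\pm(1))$ as $\mathcal{A}(x_\pm)$, we assemble
\[
E(u) \;=\; \mathcal{A}(x_+) - \mathcal{A}(x_-) - \sum_{i,j} \mathfrak{a}(p_j^i) + \int\int \chi'(t)\,(\partial_s H_s)(u)\, ds\, dt \;\ge\; 0.
\]

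Rearranging and using $\mathfrak{a}(p_j^i) > 0$ gives
\[
\mathcal{A}(x_-) - \mathcal{A}(x_+) \;\le\; \int\int \chi'(t)\,(\partial_s H_s)(u)\, ds\, dt.
\]
The remaining integral is bounded by replacing $(\partial_s H_s)(u)$ by its positive part and estimating pointwise: by the simplifying hypothesis $\partial_s H_s \equiv 0$ outside $s \in [-1,1]$, and by construction $\chi'(t) \le 3$ on $t \in [0,1]$, so the integrand is bounded by $3\|\partial_s H_s\|_\infty^+$ on a region of area $2$. This yields the stated bound $6\|\partial_s H_s\|_\infty^+$.

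The main delicate point will be the Stokes computation at the boundary punctures, where one must correctly match the convention for positive/negative punctures from Definition \ref{positive negative} to the orientation of the strip's boundary; it is this identification that makes the $\sum \mathfrak{a}(p_j^i)$ term enter with a negative sign and so drop out harmlessly in the final bound, rather than appearing as an uncontrolled positive correction. Routine verifications about exponential asymptotic convergence at punctures (so that integrals over the shrinking half-circles vanish, and boundary values converge to the appropriate branches) are standard and will be suppressed.
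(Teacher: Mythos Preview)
Your proposal is correct and follows essentially the same approach as the paper's proof: compute the nonnegative Floer energy via the Floer equation, apply Stokes' theorem to $\int u^*d\theta$ (picking up the potential differences and the $-\sum\mathfrak{a}(p_j^i)$ contributions at the negative boundary punctures), split the Hamiltonian term using $\partial_s(H_s\circ u)=(\partial_s H_s)\circ u + dH_s(\partial_s u)$, and bound the curvature term by $3\cdot 2\cdot\|\partial_s H_s\|_\infty^+$ using $\chi'\le 3$ and the support hypothesis. Your more careful discussion of the sign at the boundary punctures is a welcome elaboration of a point the paper treats tersely.
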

Note that Equation~\eqref{esasperazione} is far from being sharp, but there will be no need
for a sharper estimate.
\begin{proof}
Let $(u, \boldsymbol{\zeta}^0,   \boldsymbol{\zeta}^1) \in
\mathfrak{M}_{L_0, L_1}(\mathbf{p}^1, x_-, \mathbf{p}^0, x_+; H_s, J_\bullet)$.
Then, in a metric on $u^*TW$ induced by $d \theta$ and $J_\bullet$,
\begin{align*}  \int_{- \infty}^{+\infty} \int_0^1 | \partial_s u |^2 dt ds & = \int_{- \infty}^{+\infty}
\int_0^1 d \theta (\partial_s u, \partial_t u - \chi'(t) X_{H_s} ( \chi(t) ,u)) dt ds \\
& = \int_{Z_{\boldsymbol{\zeta}^0,   \boldsymbol{\zeta}^1}} u^* d \theta - \int_{- \infty}^{+\infty}
\int_0^1 \chi'(t) dH_s( \chi(t) \partial_su) dt ds.
\end{align*}
Using Stokes's theorem we obtain
\begin{align*}\int_{Z_{\boldsymbol{\zeta}^0,   \boldsymbol{\zeta}^1}} u^* d \theta &= f_1(x_-(1))
-  f_1(x_+(1)) -\\ & f_0(x_-(0)) + f_0(x_+(0))  - \sum_{i=0}^1 \sum_{j=1}^{l_i}
\mathfrak{a}(p_j^i).
\end{align*}

Using the equality $\partial_s (H_s \circ u)= (\partial_s H_s) \circ u + dH_s(\partial_s u)$
we obtain
\begin{align*}
& \int_{- \infty}^{+\infty} \int_0^1 \chi'(t) dH_s(\chi(t), (\partial_s u)) dt ds = \\
& \int_{- \infty}^{+\infty} \int_0^1 \chi'(t) \partial_s (H_s (\chi(t),u(s,t)))dt ds - \\  &
\int_{- \infty}^{+\infty} \int_0^1 \chi'(t) (\partial_s H_s)(\chi(t), u(s,t)) dt ds.
\end{align*}
We can compute
\begin{align*}
& \int_{- \infty}^{+\infty} \int_0^1 \chi'(t) \partial_s (H_s (\chi(t),u(s,t)))dt ds = \\
& \int_0^1 \chi'(t) H_+ (\chi(t), x_+(\chi(t))) dt - \int_0^1 \chi'(t) H_- (\chi(t), x_-(\chi(t))) dt=
\\
& \int_0^1 H_+ (t, x_+(t)) dt - \int_0^1 H_- (t, x_-(t)) dt.
\end{align*}

Thus, rearranging the equalities, we have
\begin{align*}
{\mathcal A}(x_+) - {\mathcal A}(x_-) = & \int_{- \infty}^{+\infty}
\int_0^1 | \partial_s u |^2 dt ds + \sum_{i=0}^1 \sum_{j=1}^{l_i} \mathfrak{a}(p_j^i) -\\
&  \int_{- \infty}^{+\infty} \int_0^1 \chi'(t) \partial_sH_s(\chi(t) u(s,t)) dsdt.
\end{align*}
Finally, we estimate
$$\int_{- \infty}^{+\infty} \int_0^1 \chi'(t) (\partial_s H_s)(\chi(t), u(s,t)) dt ds \le 6 \|
\partial_sH_s \|_\infty^+$$
and obtain Equation~\eqref{esasperazione}.
\end{proof}
\begin{cor}\label{cor: action decreases}
The differential in $\mathrm{CF}(L_0, L_1; H, J_\bullet)$ decreases the action. If $\partial_sH_s \le 0$, then the continuation map $\Phi_{H_s}$ also decreases the action.
\end{cor}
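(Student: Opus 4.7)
The plan is to extract both claims directly from the action identity derived in the proof of the preceding lemma, which we may rewrite as
\begin{equation*}
{\mathcal A}(x_+) - {\mathcal A}(x_-) = \int_{-\infty}^{+\infty} \int_0^1 |\partial_s u|^2 \, dt\, ds + \sum_{i=0}^{1} \sum_{j=1}^{l_i} \mathfrak{a}(p_j^i) - \int_{-\infty}^{+\infty} \int_0^1 \chi'(t)(\partial_s H_s)(\chi(t), u(s,t))\, dt\, ds
\end{equation*}
for any $(u, \boldsymbol{\zeta}^0, \boldsymbol{\zeta}^1) \in \mathfrak{M}_{L_0, L_1}(\mathbf{p}^1, x_-, \mathbf{p}^0, x_+; H_s, J_\bullet)$. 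I would then check that each of the three terms on the right-hand side is nonnegative under the respective hypotheses.

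For the differential in $CF(L_0, L_1; H, J_\bullet)$, the Hamiltonian is $s$-independent, so $\partial_s H_s \equiv 0$ and the third summand vanishes. The first summand is manifestly nonnegative as the integral of a nonnegative function. The middle sum is nonnegative because, by the very definition of the action $\mathfrak{a}(p)=f(p_+)-f(p_-)$ at a double point together with the convention $f(p_+)>f(p_-)$ coming from niceness, each individual action $\mathfrak{a}(p_j^i)$ is strictly positive. Hence $\mathcal{A}(x_+) \ge \mathcal{A}(x_-)$ whenever $\mathfrak{M}^0_{L_0,L_1}(\mathbf{p}^1, x_-, \mathbf{p}^0, x_+; H, J_\bullet)\neq\emptyset$, and since the coefficients in the differential come from the weighted counts of such moduli spaces (weighted only by augmentation values, which do not affect whether a term can appear), the differential takes an action-$\mathcal{A}(x_+)$ generator to a combination of generators of action at most $\mathcal{A}(x_+)$.

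For the continuation map $\Phi_{H_s}$ under the assumption $\partial_s H_s \le 0$, the first two terms are nonnegative for the same reasons as above. For the third term, since $\chi'(t) \ge 0$ by the defining property of $\chi$ and $\partial_s H_s \le 0$ by hypothesis, the integrand $\chi'(t)(\partial_s H_s)(\chi(t), u(s,t))$ is pointwise nonpositive, so $-\int \int \chi'(t)(\partial_s H_s) \ge 0$. All three summands being nonnegative forces $\mathcal{A}(x_+) \ge \mathcal{A}(x_-)$ on any nonempty moduli space, and inspection of the definition of $\Phi_{H_s}$ in Equation~\eqref{continuation: change of hamiltonian} then gives the desired monotonicity.

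There is essentially no obstacle here: the corollary is a purely algebraic unpacking of the identity established in the proof of the preceding lemma, once one notes the positivity $\mathfrak{a}(p_j^i)>0$ built into the definition of the action of a double point. The only thing to be careful about is the signs of $\chi'$ and of $\partial_s H_s$, which combine favourably precisely because of the sign convention in~\eqref{action of a chord} and in the Floer equation~\eqref{eq: Floer}.
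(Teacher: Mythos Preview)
Your proof is correct and is precisely the argument the paper has in mind: the corollary is stated without proof because it follows immediately from the action identity displayed in the proof of the preceding lemma, once one observes that each of the three summands on the right-hand side is nonnegative under the relevant hypothesis. Your bookkeeping of the signs of $\chi'$, $\partial_sH_s$, and $\mathfrak{a}(p_j^i)$ is accurate.
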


Now we turn our attention at the continuation map $\Psi_G$ defined in
Subsection~\ref{sss: compactly supported isotopies}. Let
$G \colon \R \times L_1 \to \R$ be a local Hamiltonian function such that
 $dG_t=0$ outside a compact subset of $(0,1) \times L_1$ and
let $\psi_t \circ \iota_1$ be the compactly supported exact regular
homotopy it generates. Now assume that $\psi_t$ is a safe
isotopy. Denote, as usual, $L_1'= \psi_1(L_1)$.

First we make the following remark about a special type of safe
isotopy and the action of the image of the double points.
\begin{rem} Let $(L, \iota)$ be an exact Lagrangian immersion and
 $\psi_t \colon W \to W$ a smooth isotopy. If $\psi_t^*\theta=e^{c(t)}\theta$, then

\begin{enumerate}
\item $\psi_t(L)$ is a safe isotopy, and
\item if $p$ is a double point of $(L, \iota)$, then $\psi_t(p)$ is a
double point of $(L, \psi_t \circ \iota)$ whose action satisfies $\mathfrak{a}(p) =
e^{c(t)}\mathfrak{a}(\psi_t(p))$.
\end{enumerate}
\end{rem}
Given a Hamiltonian function $H \colon [0,1] \times W \to \R$ which is compatible both
with $L_0$ and $L_1$, as well as with $L_0$ and $L_1'$, let ${\mathcal C}_H$ be the
set of Hamiltonian chords of $H$ from $L_0$ to $L_1$ and let ${\mathcal C}_H'$ be the set
of Hamiltonian chords of $H$ from $L_0$ to $L_1'$.

Observe that any safe Lagrangian isotopy from $L_1$ to $L_1'$ induces a continuous
family of potentials $f_1^s.$ Fixing a choice of of local Hamiltonian $G \colon \R \times
L_1 \to \R$ generating the safe isotopy makes the potential $f_1'$ on $L_1'$ determined by the choice of potential $f_1$ on $L_1$ via a computation as in the proof of Lemma \ref{variation of Liouville}.

\begin{lemma}\label{frignolo}
For every chords $\mathbf{x}_- \in {\mathcal C}_H'$ and $\mathbf{x}_+ \in
{\mathcal C}_H$ and for every sets of self-intersection points $\mathbf{p}^0 = (p_1^0,
\ldots, p_{l_0}^0)$ of $L_0$ and $\mathbf{p}^1 = (p_1^1, \ldots, p_{l_1}^1)$ of $L_1$,
if $\mathfrak{M}_{L_0, L_1}(\mathbf{p}^1, x_-, \mathbf{p}^0, x_+; H, G, \tilde{J}, \nu) \ne
\emptyset$, then
$${\mathcal A}(x_-) \le {\mathcal A}(x_+)+ 2 \mu \| G \|_\infty,$$
where $\| G \|_\infty$ is the supremum norm of $G$ and $\mu \ge 0$ is the measure of the subset $\{s \in \R\}$ for which $G_s \colon L \to \R$ is not constantly zero.
\end{lemma}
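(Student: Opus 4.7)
The plan is to adapt the energy argument used to prove Equation~\eqref{esasperazione} to the setting of moving boundary conditions. For a solution $u$ in the moduli space, the identity
\[ 0 \le \iint |\partial_s u|^2\, dt\, ds = \int u^* d\theta - \iint \chi'(t)\, dH(\partial_s u)\, dt\, ds \]
still holds, and a standard computation shows that the last term equals $\int_0^1 [H(t, x_+(t)) - H(t, x_-(t))]\, dt$. The new step is the application of Stokes's theorem to $\int u^* d\theta$, where we must analyse the top boundary $t=1$ on which $u$ takes values in the moving Lagrangian $\psi_{\nu(s)}(L_1)$.

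To treat this boundary, I would parametrise $u(s, 1) = \iota_{\nu(s)}(\gamma(s))$ for a path $\gamma \colon \R \to L_1$, and first derive a formula for the time derivative of the potential $f_1^t$ on $\psi_t(L_1)$. Using Cartan's formula applied to $\iota_t^*\theta$ together with the defining relation for the local Hamiltonian $G_t$, we obtain $\partial_t f_1^t = \iota_t^*(\theta(X_t)) - G_t$, where $X_t$ is the vector field along $\iota_t$ generating the isotopy. A direct application of the chain rule then yields
\[ \theta(\partial_s u(s,1)) = \frac{d}{ds}\bigl(f_1^{\nu(s)}(\gamma(s))\bigr) + \nu'(s)\, G_{\nu(s)}(\gamma(s)). \]

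Integrating this identity along the top boundary, broken into segments between consecutive boundary punctures, the first term provides a telescoping sum that reproduces the standard asymptotic contributions $f_1(\gamma(+\infty))$ and $f_1'(\gamma(-\infty))$ as well as the actions of the double points at the punctures; here we crucially use property (4) of Lemma~\ref{boundary dependent reparametrisations}, according to which $\nu'$ vanishes near each puncture, so that the intermediate potentials $f_1^{\nu(\zeta_j^1)}$ are locally constant and well-defined. Assembling this with the usual contributions from the bottom boundary and the asymptotic ends, and using the non-negativity of both the energy and the double point actions, we arrive at
\[ \mathcal{A}(x_-) \le \mathcal{A}(x_+) - \int_\R \nu'(s)\, G_{\nu(s)}(\gamma(s))\, ds. \]

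The final step is to estimate the correction term using property (3) of Lemma~\ref{boundary dependent reparametrisations}, which gives $|\nu'(s)| \le 2$, together with the fact that the integrand vanishes outside the subset $\{s : G_{\nu(s)} \not\equiv 0\}$ of measure $\mu$; this yields $\bigl|\int \nu'(s)\, G_{\nu(s)}(\gamma(s))\, ds\bigr| \le 2\mu\|G\|_\infty$ and completes the proof. The main obstacle I anticipate is the careful identification of the actions of the double points $p_j^1 \in L_1$ with those of the corresponding double points on the moving Lagrangian $\psi_{\nu_j}(L_1)$ measured using the potential $f_1^{\nu_j}$; this requires invoking the canonical identification of obstruction algebras supplied by the push-forward almost complex structures $(\psi_t)_*J_1$, as indicated in the remark following the definition of $\Psi_G$.
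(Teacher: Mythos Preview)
Your approach is correct and essentially equivalent to the paper's. The paper handles the top boundary integral $\int u_1^*\theta$ by lifting $u_1$ to the symplectic suspension $\Sigma \subset W\times\R^2$ (an exact Lagrangian built from the trace of the isotopy) and using the Lagrangian condition there to identify $\int \tilde{u}_1^*\Theta$ with the potential contributions, then computing the difference $u_1^*\theta - \tilde{u}_1^*\Theta = G^\nu\,d\sigma$. Your direct derivation of the identity $\theta(\partial_s u(s,1)) = \frac{d}{ds}\bigl(f_1^{\nu(s)}(\gamma(s))\bigr) + \nu'(s)\,G_{\nu(s)}(\gamma(s))$ from the formula $\partial_t f_1^t = \iota_t^*(\theta(X_t)) - G_t$ is exactly the same computation unpacked without the auxiliary coordinates. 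Your handling of the punctures and of the double-point actions (with the potential $f_1^{\nu_j}$) is also correct and matches the paper's remark on this point.

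There is, however, a gap in your final estimate. You assert that the set $\{s : G_{\nu(s)}\not\equiv 0\} = \nu^{-1}(A)$, where $A=\{t:G_t\not\equiv 0\}$ has $|A|=\mu$, itself has measure $\mu$. This is false: since $\nu$ is only known to satisfy $-2\le\nu'\le 0$, the preimage $\nu^{-1}(A)$ can have arbitrarily large Lebesgue measure (wherever $|\nu'|$ is small). The bound $|\nu'|\le 2$ combined with this incorrect measure claim does not yield $2\mu\|G\|_\infty$. The fix is immediate once you use the \emph{monotonicity} of $\nu$ rather than just the bound on $|\nu'|$: since $\nu'\le 0$ and $\nu$ decreases from $1$ to $0$,
\[
\left|\int_\R \nu'(s)\,G_{\nu(s)}(\gamma(s))\,ds\right| \le \|G\|_\infty \int_\R |\nu'(s)|\,\mathbb{1}_A(\nu(s))\,ds = \|G\|_\infty\,|A| = \mu\|G\|_\infty
\]
by the change of variables $t=\nu(s)$, which is in fact stronger than the stated $2\mu\|G\|_\infty$. (The paper's own endgame is also terse here, invoking only $\|\nu'\|_\infty\le 2$ without spelling out the change of variables.)
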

\begin{proof}
Consider $(\boldsymbol{\zeta}^0, \boldsymbol{\zeta}^1, u) \in
\mathfrak{M}_{L_0, L_1}(\mathbf{p}^1, x_-, \mathbf{p}^0, x_+; H, G, \tilde{J}, \nu)$.
Observe that the map $u \colon Z_{\boldsymbol{\zeta}^0, \boldsymbol{\zeta}^1} \to W$
extends to a continuous map $u \colon Z \to W$. We have:
\begin{align*}
\int_{- \infty}^{+\infty} \int_0^1 | \partial_s u |^2 dt ds = &
\int_Z d u^*\theta - \int_0^1 \left (\int_{- \infty}^{+\infty}\left (
\frac{\partial}{\partial s}(H \circ u) \right ) ds \right ) dt =   \\
& \int_Zu^*d \theta  - \int_0^1 H(x_+(t))dt + \int_0^1 H(x_-(t))dt.
\end{align*}

We denote by $u_i \colon \R \to W$, for
$i=0,1$, the continuous and piecewise smooth maps  $u_i(s)=u(s,i)$ and use Stokes theorem:
$$\int_Z u^*d\theta = \int_0^1 x_+^* \theta - \int_0^1 x_-^* \theta + \int_{\R} u_0^*\theta -
\int_{\R}  u_1^*\theta.$$
Let $f_0$ and $f_1$ be the potentials of $L_0$ and $L_1$ respectively, and $\tilde{f}_1$
the potential of $\psi_1(L_1)$.
The map $u_0$ takes values in $L_0$, and therefore
$$\int_{\R} u_0^*\theta = f_0(x_+(0)) - \sum_{j=1}^{l_0} \mathfrak{a}(p_j^0) -
f_0(x_-(0)).$$
We are left with the problem of estimating $\int_{\R} u_1^* \theta$, which is slightly more
complicated here because $u_1(s) \in \psi_{\nu_{\boldsymbol{\zeta}^1}(s)}(L_1)$.
We will denote $\psi^{\nu}_s \coloneqq \psi_{\nu_{\boldsymbol{\zeta}^1}(s)}$. Recall that
$\psi^\nu_s$ is a smooth isotopy inducing a safe isotopy of $L_1$ generated by the local
Hamiltonian function $G^\nu(s, w) = \nu_{\boldsymbol{\zeta}^1}'(s)
G(\nu_{\boldsymbol{\zeta}^1}(s), w),$ $w \in L_1.$

We use the following trick.
Consider $W \times \R \times \R$ with the Liouville form $\Theta \coloneqq \theta + \tau d\sigma$,
where $\sigma$ is the coordinate of the first copy of $\R$ and $\tau$ is the coordinate in the
second copy. The notation here conflicts with the use of $(\sigma, \tau)$ as coordinates in
the strip-like ends near the boundary punctures, but this will not cause confusion.

Consider the symplectic suspension
$$\Sigma \coloneqq \{(x,s,t) \in W \times \R^2; \:\: x \in \psi^{\nu}_s(y),\:y \in L_1,\:t=-G^\nu(s, y).\}$$
of the isotopy $\psi^{\nu}_s(L_1),$ which is an exact Lagrangian immersion. This should be seen as a corrected version of the trace of the isotopy, in order to make it Lagrangian.

Lift $u_1 \colon \R \to W$ to $\tilde{u}_1 \colon \R \to W \times \R \times \R$ by defining
$$\tilde{u}_1(s) = (u_1(s), s, -G^{\nu}(s, \overline{u}_1(s))) \in \Sigma,$$
and where $\overline{u}_1$ is the lift of $u_1$ to $L$ which is smooth away from the
punctures.

Using the computation in the proof of Lemma \ref{variation of Liouville}, together with the
Lagrangian condition satisfied by $\Sigma,$ we obtain
$$\int_{-\infty}^{+\infty}\widetilde{u}_1^*\,\Theta= f'_1(x_+(1)) - \sum_{j=1}^{l_1}
\mathfrak{a}(p_j^1) - f_1(x_-(1)),$$
as well as
$$u_1^*\,\theta-\widetilde{u}_1^*\,\Theta = G^\nu(s, \overline{u}_1(s)) d\sigma.$$
Observe that we here abuse notation, and use $\mathfrak{a}(p_j^1)>0$ for the action computed with respect to the induced potential function on $\psi^{\nu}_s(L_1)$ for the corresponding value of $s \in \R.$

Since
$\|\nu_{\boldsymbol{\zeta}^1}' \|_{\infty} \le 2$, we finally obtain
$${\mathcal A}(x_-) \le {\mathcal A}(x_+) + 2 \mu \| G \|_{\infty},$$
where $\mu \ge 0$ is as required.
\end{proof}

\subsection{Pushing up}\label{ss: pushup}
In this subsection we prove the following proposition.
\begin{prop}\label{prop: trivial triviality}
Let $(L_0, \iota_0)$ and $(L_1, \iota_1)$ be exact Lagrangian immersions in a Liouville
manifold $(W, \theta)$ and let $J_\bullet$ be an $(L_0, L_1)$-regular almost
complex structure.  If the Liouville flow of $(W, \theta)$ displaces $L_1$ from any compact
set, then, for all pair of augmentations $\varepsilon_0$ and $\varepsilon_1$ of the obstruction
algebras of $(L_0, J_0)$ and $(L_1, J_1)$ respectively,
$$\mathrm{HW}((L_0, \varepsilon_0), (L_1, \varepsilon_1), J_\bullet)=0.$$
\end{prop}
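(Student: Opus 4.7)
The proof is an action and displacement argument, following the strategy used in the embedded case in \cite{CieliebakOancea} and \cite{OpenString}.

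First, I would observe that the Liouville flow $\phi_T$ applied to $L_1$ yields a \emph{compactly supported} safe isotopy in the sense of Section 7.1. On the cylindrical end $[R_1,+\infty) \times \Lambda_1$ of $L_1$ the Liouville form $\theta = e^r \alpha$ pulls back to zero, since $\alpha$ vanishes on the Legendrian $\Lambda_1$ and has no $dr$-component. Hence the potential $f_1$ is locally constant on this end, and the local Hamiltonian $G_T \colon L_1 \to \R$ generating the exact regular homotopy $\phi_T \circ \iota_1$ provided by Lemma~\ref{lem:GeneratingHamiltonian} satisfies $dG_T = -e^T df_1$, which has compact support on $L_1$. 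By the safe isotopy invariance of wrapped Floer cohomology established in Section 7.1, for every $T \geq 0$ there is a canonical isomorphism
$$HW((L_0, \varepsilon_0), (L_1, \varepsilon_1); J_\bullet) \;\cong\; HW((L_0, \varepsilon_0), (\phi_T(L_1), \varepsilon_1^T); J_\bullet^T),$$
where $\varepsilon_1^T$ is the augmentation induced by the canonical identification of obstruction algebras from Proposition~\ref{invariance of D}.

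Next I would compute the action of chords from $L_0$ to $\phi_T(L_1)$. Since $\phi_T^*\theta = e^T \theta$, the potential of $\phi_T(L_1)$ pulled back via $\phi_T \circ \iota_1$ is $e^T f_1$. A chord $x$ of a wrapping Hamiltonian $H_\lambda$ sits at a single level $\mathfrak{r} = r$, and Equation~\eqref{action} gives
$$\mathcal{A}(x) \;=\; h'_\lambda(e^r) e^r - h_\lambda(e^r) + f_0(x(0)) - e^T f_1\bigl(\phi_T^{-1}(x(1))\bigr).$$
After shifting $f_1$ by a large positive constant, an operation which merely translates the Legendrian lift in the $z$-direction and does not affect the wrapped Floer cohomology up to canonical identification, we may assume $f_1 > 0$ uniformly on $L_1$. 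The hypothesis that the Liouville flow displaces $L_1$ from any compact set further ensures that $\inf_{\phi_T(L_1)} f_1^T = e^T \inf_{L_1} f_1 \to +\infty$, so the action of every chord tends to $-\infty$ uniformly as $T \to +\infty$.

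The final step is to combine these two ingredients in the direct limit defining $HW$. A class is represented at some finite slope $\lambda$ by a chord $x$ of action $A$. Passing to $\phi_T(L_1)$ via the isomorphism above and then applying the continuation map $\Phi_{\lambda,\mu}$ for $\mu > \lambda$, whose action increase is bounded by a quantity controlled by $\partial_s H_s$ and, by the standard maximum principle, by the bounded region where the Floer strips are forced to live, we obtain a representative at slope $\mu$ whose action is bounded above by $A$ plus a constant depending only on $\mu - \lambda$ (and independent of $T$). For $T$ chosen sufficiently large compared to this constant, the representative lies strictly below the minimum action of any chord generator at slope $\mu$ for the pushed-off Lagrangian, forcing it to be zero. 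The main technical obstacle is coordinating the choices of $T$ and $\mu$ so that the $T$-independent action bound on the continuation map indeed lies below the $T$-dependent minimum action; this requires invoking the maximum principle to control the $\mathfrak{r}$-coordinate of the Floer solutions uniformly, so that the action bound on $\Phi_{\lambda,\mu}$ does not blow up with $T$.
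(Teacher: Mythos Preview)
Your argument has a genuine gap in the final step. You claim that after ``passing to $\phi_T(L_1)$ via the isomorphism above'' and then applying $\Phi_{\lambda,\mu}$, the representative has action bounded by $A$ plus a constant \emph{independent of $T$}. But the passage to $\phi_T(L_1)$ at a fixed slope is realised by the safe-isotopy continuation map $\Psi_{G_T}$ of Section~\ref{sss: compactly supported isotopies}, and Lemma~\ref{frignolo} only bounds its action shift by $2\mu\|G\|_\infty$. Your own computation $dG_t=-e^t\,df_1$ shows $\|G_t\|_\infty$ grows like $e^t$ (unless $f_1$ is constant, which would force $L_1$ to lie entirely in a cylindrical end). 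Meanwhile the minimum action of generators of $CF(L_0,\phi_T(L_1);H_\mu)$ behaves like $-e^T c$ as well. The two effects scale at the same rate in $T$, so one never drops below the other. The ``technical obstacle'' you flag at the end concerns the wrong map: $\Phi_{\lambda,\mu}$ actually \emph{decreases} action (Corollary~\ref{cor: action decreases}); it is $\Psi_{G_T}$ that spoils the estimate. More conceptually, since $\Psi_{G_T}$ is a quasi-isomorphism for every $T$, no argument that merely sends $T\to\infty$ can kill a class---the mechanism must come from another parameter.

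The paper's proof supplies exactly such a decoupling. It introduces an intermediate two-slope Hamiltonian $H_{\lambda,\Lambda,R}$ and factors the continuation map $\Phi_{\lambda,\Lambda}$ through $HF(L_0,L_1;H_{\lambda,\Lambda,R})$. By action reasons (Lemma~\ref{action separation from I and II}) the first factor lands in the subcomplex $CF^I$ generated by the low-action ``type I'' chords. The key Lemma~\ref{too much energy} then shows that the inclusion $CF^I\hookrightarrow CF$ is null in homology: one pushes $L_1$ by the Liouville flow a \emph{fixed} amount (determined by $R$, independent of $\Lambda$) into $\{\rho\ge e^R\}$, so $\|G\|_\infty$ is fixed; after this push every generator of $CF(L_0,L_1';H_{\lambda,\Lambda,R})$ has action at least $(\Lambda-\lambda)e^R+\mathrm{const}$, which tends to $+\infty$ with $\Lambda$ while the $\Psi_G$-shift stays bounded. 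Thus $\Psi_G$ annihilates $CF^I$ for $\Lambda\gg0$, and hence $\Phi_{\lambda,\Lambda}$ vanishes in homology. The essential idea you are missing is this separation of the displacement parameter (fixed) from the parameter sent to infinity ($\Lambda$).
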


We postpone the proof to after a couple of lemmas. Given $\Lambda > \lambda >0$ and $L>0$,
we define a function
$h_{\lambda, \Lambda, R} \colon \R^+ \to \R$ such that
\begin{equation}\label{two steps hamiltonian}
h_{\lambda, \Lambda, R}(\rho) = \begin{cases}
 0 & \text{for } \rho \le 1, \\
 \lambda \rho - \lambda & \text{for } \rho \in [1, e^R], \\
\Lambda \rho - (\Lambda - \lambda)e^R - \lambda & \text{for } \rho \ge e^R.
\end{cases}
\end{equation}
See Figure \ref{fig:graph1} for the graph of $h_{\lambda, \Lambda, R}$.

\begin{figure}[t]
\labellist
\pinlabel $\rho$ at 297 15
\pinlabel $1$ at 76 -1
\pinlabel $e^R$ at 153 -1
\pinlabel $\lambda e^R-\lambda$ at -13 69
\pinlabel $h'_{\lambda,\Lambda,R}(\rho)\equiv\Lambda$ at 130 103
\pinlabel $h'_{\lambda,\Lambda,R}(\rho)\equiv\lambda$ at 72 43
\pinlabel $h_{\lambda,\Lambda,R}(\rho)$ at 200 155
\endlabellist
\vspace{3mm}
\centering
\includegraphics[height=5cm]{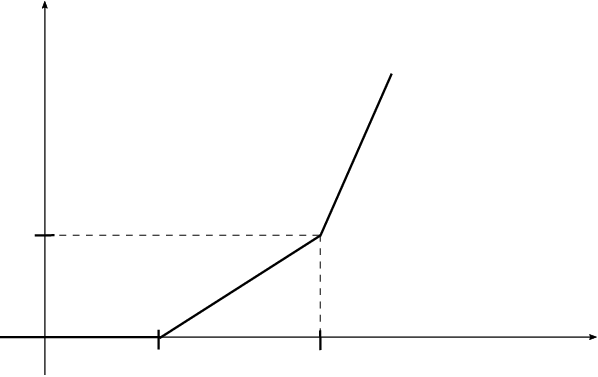}
\caption{The graph of $h_{\lambda, \Lambda, R}$.}
\label{fig:graph1}
\end{figure}

The function $h_{\lambda, \Lambda, R}$ has two corners: one at $(1,0)$ and one at $(e^R,
\lambda e^R - \lambda)$. We smooth $h_{\lambda, \Lambda, R}$ in a small neighbourhood of
these corners so that the new function (which we still denote by $h_{\lambda, \Lambda, R}$) satisfies
$h_{\lambda, \Lambda, R}'' (\rho) \ge 0$ for all $\rho$. We define the (time independent)
cylindrical Hamiltonian $H_{\lambda, \Lambda, R} \colon W \to \R$ by $H_{\lambda, \Lambda, R}(w)=
h_{\lambda, \Lambda, R}(e^{\mathfrak{r}(w)})$.
We make the assumption that there is no
Hamiltonian time-1 chord from $L_0$ to $L_1$  on $\partial W_r$ when $r$ satisfies either
$h_{\lambda, \Lambda, R}'(e^r)= \lambda$ or $h_{\lambda, \Lambda, R}(e^r)= \Lambda$.
This is equivalent to assuming that there is no Reeb chord from
$\Lambda_0$ to $\Lambda_1$ of length either $\lambda$ or $\Lambda$.

We assume, without loss of generality, that $L_0$ and $L_1$ intersect transversely, that $L_i
\cap W_1^e$ is a cylinder over a Legendrian submanifold $\Lambda_i$ and that all Reeb chords
from $\Lambda_0$ to $\Lambda_1$ are nondegenerate. Then we have three types of chords:
\begin{itemize}
\item constant chords, i.e. intersection points between $L_0$ and $L_1$, which are contained
in $W_0$,
\item chords coming from smoothing the corner of $h_{\lambda, \Lambda, R}$, which are
concentrated around $\partial W_1$, and
\item chords coming from smoothing the corner of $h_{\lambda, \Lambda, R}$ at $(e^R, \lambda
e^R - \lambda)$, which are concentrated around $\partial W_L$.
\end{itemize}
Constant chords and chords coming from smoothing the first corner will be called {\em type
I chords}, while chords coming from smoothing the second corner will be called {\em type II
chords}. We say that a chord of $H_{\Lambda, \lambda, L}$ {\em appears at slope $s$} if
it is contained in $\partial W_r$ for $r$ such that $h_{\Lambda, \lambda, L}'(e^r)=s$. By abuse of
terminology, we will consider the intersection points between $L_0$ and $L_1$ as chords
appearing at slope zero.
\begin{lemma}\label{action separation from I and II}
Given $\lambda>0$, there exists $C>0$ such that, for every $\Lambda > \lambda$ and
every $R \ge C$, every chord of type II of $H_{\lambda, \Lambda, R}$ has larger action than
any chord of type I.
\end{lemma}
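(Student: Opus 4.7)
By Equation \eqref{action}, every time-one Hamiltonian chord of $H_{\lambda,\Lambda,R}$ lies in a single level set $\mathfrak{r}^{-1}(r)$ and has action
\[
\mathcal{A}(x) = g(e^r) + f_0(x(0)) - f_1(x(1)),
\]
where $g(\rho) := \rho h'(\rho) - h(\rho)$ with $h := h_{\lambda,\Lambda,R}$. The function $g$ is nondecreasing because $g'(\rho)=\rho h''(\rho) \ge 0$. Since each $L_i$ is cylindrical over a Legendrian $\Lambda_i$ outside a compact set, the pullback of $\theta$ to the cylindrical end vanishes identically, so $f_i$ is locally constant there. Consequently the potential contribution $|f_0(x(0))-f_1(x(1))|$ is bounded by a constant $C_0 = C_0(L_0, L_1)$ independent of $\Lambda$ and $R$.

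First I will bound the actions of type I chords from above, uniformly in $\Lambda$ and $R$. Constant chords lie in the compact set $W_0 = \{\mathfrak{r}\le 0\}$, where $g \equiv 0$, so their action reduces to the bounded potential term. Chords produced by smoothing the corner at $\rho=1$ sit on levels $\rho$ in the fixed interval $[4/5,6/5]$ and have slope $h'(\rho)\le \lambda$; hence $g(\rho)\le \rho h'(\rho) \le \tfrac{6}{5}\lambda$. Combining these yields a uniform constant $M_I = M_I(\lambda, L_0, L_1)$ bounding all type I actions, independent of $\Lambda$ and $R$.

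Next I will bound type II actions from below. If no Reeb chord from $\Lambda_0$ to $\Lambda_1$ has length in $(\lambda, \Lambda)$ then there are no type II chords and the lemma is vacuous. Otherwise set
\[
s_\ast := \inf\{\,s > \lambda : s \text{ is the length of a Reeb chord from } \Lambda_0 \text{ to } \Lambda_1 \,\},
\]
which satisfies $s_\ast > \lambda$ by the chord genericity assumption on $\Lambda_0 \cup \Lambda_1$ and the assumption that $\lambda$ is not a chord length; crucially, $s_\ast$ depends only on $\lambda$ and not on $\Lambda$. I will perform the smoothing of the corner at $(e^R,\lambda e^R-\lambda)$ inside an interval $[\rho_-,\rho_+]$ satisfying $e^R - \tfrac12 \le \rho_- < \rho_+\le e^R + \tfrac12$, which is possible independently of the size of $\Lambda - \lambda$ since convex smoothings of a corner with prescribed slope jump may be performed on arbitrarily narrow windows. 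A type II chord corresponding to a Reeb chord of length $s \ge s_\ast$ sits on the level $\rho = \rho(s) \in [\rho_-,\rho_+]$ uniquely determined by $h'(\rho(s))=s$. The identity $dg/ds = \rho(s)$ follows directly from $g'(\rho) = \rho h''(\rho)$ via the chain rule, and at the left endpoint of the smoothing one computes $g(\rho_-) = \lambda\rho_- - (\lambda\rho_- - \lambda) = \lambda$. Integrating from $\lambda$ to $s$ therefore yields
\[
g(\rho(s)) = \lambda + \int_\lambda^{s}\rho(\sigma)\,d\sigma \ge \lambda + (e^R - \tfrac12)(s_\ast - \lambda),
\]
so any type II chord satisfies $\mathcal{A}(x) \ge \lambda + (e^R - \tfrac12)(s_\ast-\lambda) - C_0$.

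The lemma then follows by choosing $C$ large enough that $\lambda + (e^C - \tfrac12)(s_\ast-\lambda) - C_0 > M_I$; this is possible because the right-hand side grows exponentially in $R$ while every other constant is independent of $R$ and $\Lambda$. The one point requiring care is the Legendre transform computation combined with the uniform control on the width of the smoothing window around $e^R$, which is what keeps the estimate independent of $\Lambda$; everything else is routine.
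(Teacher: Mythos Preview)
Your proof is correct and follows the same strategy as the paper: bound type~I actions above by a constant depending only on $\lambda$ (using that the Legendre transform $g(\rho)=\rho h'(\rho)-h(\rho)$ is bounded near $\rho=1$), and bound type~II actions below by a quantity growing with $R$ (using that $g$ at the second corner is of order $(s_\ast-\lambda)e^R$). Your version is in fact more careful than the paper's sketch---you make explicit the Legendre-transform identity $dg/ds=\rho(s)$, control the smoothing window width independently of $\Lambda$, and isolate the key constant $s_\ast>\lambda$ that makes the lower bound uniform in $\Lambda$; the paper's proof contains minor slips (writing $r=\log R$ and using $R$ where $e^R$ is meant) that your treatment avoids.
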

\begin{proof}
If $x$ is a Hamiltonian chord contained in $\partial W_r$, then the action of $x$ is
\begin{equation}\label{eq: action of chords}
{\mathcal A}(x)= h_{\lambda, \Lambda, R}'(e^r)e^r - h_{\lambda, \Lambda, R}(e^r) +
f_0(x(0)) - f_1(x(1)).
\end{equation}

Observe that $|f_0(x(0)) - f_1(x(1))|$ is uniformly bounded because
$f_0$ and $f_1$ are locally constant outside of a compact set. The Hamiltonian chords  of
 type I appear at slope $\lambda_- < \lambda$ and near $\partial W_0$, and therefore $r$ in
Equation~\eqref{eq: action of chords} is close to zero. Then, there is a
constant $C_-$, depending on $f_0$, $f_1$ and the smoothing procedure at the first corner
such that, if $x$ is a chord of type I, then ${\mathcal A}(x) \le \lambda_- + C_- $.

On the other hand, if $x$ is a chord of type II, then it appears at slope
$\lambda_+>\lambda$ and around $r= \log(R)$. Then there is a constant $C_+$, depending
on $f_0$, $f_1$ and the smoothing procedure at the second corner such that
${\mathcal A}(x) \ge R \lambda_+ - \lambda R + \lambda -C_+ =
R(\lambda_+ - \lambda) + \lambda -C_+$. The lemma
follows from $\lambda_+ - \lambda>0$ and the fact that chords arise at a discrete set of
slopes.
\end{proof}
From now on we will always take $R \ge C$.
The consequence of Lemma \ref{action separation from I and II} is that the chords of type I generate a subcomplex of
$$\mathrm{CF}((L_0, \varepsilon_0), (L_1, \varepsilon_1);H_{\lambda, \Lambda, R}, J_\bullet)$$
which we will denote by $\mathrm{CF}^I((L_0, \varepsilon_0), (L_1,
\varepsilon_1); H_{\lambda, \Lambda, R}, J_\bullet)$. The main ingredient in the proof of
Proposition~\ref{prop: trivial triviality} is the following lemma.
\begin{lemma}\label{too much energy}
If the Liouville flow of $(W, \theta)$ displaces $L_1$ from any compact set, then the
inclusion map
$$\mathrm{CF}^I((L_0, \varepsilon_0), (L_1, \varepsilon_1); H_{\lambda, \Lambda, R}, J_\bullet)
\hookrightarrow \mathrm{CF}((L_0, \varepsilon_0), (L_1, \varepsilon_1); H_{\lambda, \Lambda, R}, J_\bullet)$$
is trivial in homology whenever $\Lambda \gg 0$ is sufficiently large.
\end{lemma}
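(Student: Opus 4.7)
The strategy is to push $L_1$ off the compact region containing the Type I chords by means of a safe Lagrangian isotopy, and then exploit the quasi-isomorphism property of the continuation map $\Psi_G$ from Section~\ref{sss: compactly supported isotopies} together with the action estimate of Lemma~\ref{frignolo}. The aim is to show that $\Psi_G$ vanishes identically on $CF^I$, which, combined with the fact that it is a quasi-isomorphism, forces the inclusion $\iota\colon CF^I\hookrightarrow CF$ to be zero in homology.

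First I would localize the Type I chords: by the proof of Lemma~\ref{action separation from I and II}, there exists $r_I<R$ (depending only on $\lambda$, the smoothing and the potentials) such that every Type I chord is contained in $W_{r_I}$. By hypothesis the Liouville flow displaces $L_1$ from any compact set, so there is $T>0$ with $\phi^{\mathcal L}_T(L_1)\cap W_{r_I+1}=\emptyset$. I would construct, via a suitable cutoff of the Liouville vector field supported in a large $W_{R'}$, a compactly supported smooth isotopy $\psi_t\colon W\to W$, $t\in[0,1]$, whose restriction to $L_1$ is a safe Lagrangian isotopy ending at $L_1'=\psi_1(L_1)$, with $L_1'\cap W_{r_I+1}=\emptyset$ and $L_1'=L_1$ outside a compact set. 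The local Hamiltonian $G\colon [0,1]\times L_1\to\R$ generating this isotopy (Lemma~\ref{lem:GeneratingHamiltonian}) satisfies $dG_t=0$ outside a compact subset of $L_1$, since $f_1$ is locally constant on the cylindrical end, and moreover $\|G\|_\infty\le E$ for a constant $E$ depending only on $L_1,f_1,T$ but \emph{not} on $\Lambda$.

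Next, observe that $CF((L_0,\varepsilon_0),(L_1',\varepsilon_1');H_{\lambda,\Lambda,R},J'_\bullet)$ contains no Type I generator: intersection points would have to lie in $W_0\subset W_{r_I+1}$, and first-corner chords near $\partial W_0$ would also lie in $W_{r_I+1}$, both precluded by $L_1'\cap W_{r_I+1}=\emptyset$. Hence every generator is of Type II and, by the proof of Lemma~\ref{action separation from I and II}, its action is bounded below by $(\lambda^{\min}_+-\lambda)e^R+\lambda-C_+$, where $\lambda^{\min}_+$ is the smallest Reeb chord length between $\Lambda_0$ and $\Lambda_1$ exceeding $\lambda$ (this quantity is independent of $\Lambda$). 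By Section~\ref{sss: compactly supported isotopies}, the continuation map
\[ \Psi_G\colon CF((L_0,\varepsilon_0),(L_1,\varepsilon_1);H_{\lambda,\Lambda,R},J_\bullet)\to CF((L_0,\varepsilon_0),(L_1',\varepsilon_1');H_{\lambda,\Lambda,R},J'_\bullet)\]
is a quasi-isomorphism. For any Type I generator $x_+$ one has $\mathcal A(x_+)\le \lambda+C_-$, and Lemma~\ref{frignolo} yields $\mathcal A(x_-)\le \mathcal A(x_+)+2\mu E$ for every $x_-$ appearing in $\Psi_G(x_+)$. Since such an $x_-$ must be Type II, combining the two estimates would give
\[ (\lambda^{\min}_+-\lambda)e^R+\lambda-C_+\;\le\;\lambda+C_-+2\mu E.\]
Strengthening the choice of $R$ beyond the constant $C$ of Lemma~\ref{action separation from I and II} if necessary, and then taking $\Lambda\gg 0$ large enough so that $H_{\lambda,\Lambda,R}$ is compatible with both $L_0,L_1$ and $L_0,L_1'$ (i.e.\ $\Lambda$ avoids the discrete set of Reeb chord lengths), this inequality fails. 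Hence $\Psi_G(x_+)=0$ for every Type I $x_+$, so $\Psi_G|_{CF^I}\equiv 0$ as a chain map. Since $\Psi_G$ is a quasi-isomorphism, $\iota_*=0$ follows.

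The main technical obstacle is the construction of the safe isotopy $\psi_t$ together with the uniform bound $\|G\|_\infty\le E$ independent of $\Lambda$. The key observation making this possible is that, on the cylindrical end of $L_1$, the potential $f_1$ is locally constant, so a compactly supported isotopy that acts as the Liouville flow in a neighbourhood of $W_{r_I+1}$ can be arranged to produce a bounded generating local Hamiltonian on $L_1$, with bound depending only on the compact part of $L_1$ and the displacement time $T$, and in particular independent of the slope parameter $\Lambda$.
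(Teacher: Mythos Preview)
Your overall strategy—displace $L_1$ by a safe isotopy coming from the Liouville flow and use the action estimate of Lemma~\ref{frignolo} to force $\Psi_G|_{CF^I}=0$—matches the paper's. The problem lies in \emph{how far} you push.

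You displace $L_1$ only until $L_1'\cap W_{r_I+1}=\emptyset$, just past the first corner. The non-cylindrical part of $L_1'$ (the image of the compact piece of $L_1$ under the flow) then typically sits inside the middle region where $h'_{\lambda,\Lambda,R}\equiv\lambda$. Since $L_0$ is cylindrical there but $L_1'$ is not, intersections of $\phi_1^H(L_0)$ with this non-cylindrical piece may produce Hamiltonian chords that are neither of Type~I nor of Type~II; your claim ``every generator is of Type~II'' is unsupported. Moreover, even for the genuine Type~II chords your lower bound $(\lambda_+^{\min}-\lambda)e^R+\lambda-C_+$ is independent of $\Lambda$; you then have to ``strengthen the choice of $R$'', but the lemma fixes $R\ge C$ beforehand and asks for the conclusion as $\Lambda\to\infty$, so this proves a different statement.

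The paper repairs both points at once by pushing $L_1$ all the way into $\{\rho\ge e^R\}$, past the \emph{second} corner. Every chord from $L_0$ to $L_1'$ then lies where $h_{\lambda,\Lambda,R}$ is linear of slope $\Lambda$, and has action $(\Lambda-\lambda)e^R+\lambda+f_0(x(0))-f_1'(x(1))$. The potential terms and the Lemma~\ref{frignolo} constant depend on $R$ (through the amount of flow needed) but not on $\Lambda$, so for $\Lambda$ large the required inequality fails and $\Psi_G|_{CF^I}\equiv 0$.

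A minor remark: since $L_1$ is cylindrical at infinity it is setwise preserved by the Liouville flow there, so the uncut Liouville flow already restricts to a compactly supported safe isotopy of $L_1$; your cutoff is unnecessary and only introduces further non-cylindrical behaviour.
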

\begin{proof}
The Liouville flow applied to $L_1$ gives rise to a compactly supported safe isotopy from $L_1$ to $L_1',$ and is generated by the time-dependent local Hamiltonian $G \colon \R \times L \to \R$ for which $dG_t=0$ outside of a compact subset of $(0,1) \times L \to \R$; see Lemma \ref{lem:GeneratingHamiltonian}. Since the Liouville form is \emph{conformally} symplectic, it actually preserves the space of compatible almost complex structures which are cylindrical at infinity.

We will choose to apply the Liouville flow so that $L_1' \subset \{\rho \ge e^R\}$; recall that this is possible by our assumptions.

The continuation map
$$\Psi_G \colon \mathrm{CF}((L_0, \varepsilon_0), (L_1, \varepsilon_1); H_{\lambda, \Lambda, R}, J_\bullet)
\to  \mathrm{CF}((L_0, \varepsilon_0), (L_1', \varepsilon_1'); H_{\lambda, \Lambda, R}, J_\bullet')$$
defined in Subsection~\ref{sss: compactly supported isotopies}
induces an isomorphism in homology if $J_\bullet'$ is an $(L_0, L_1')$-regular almost complex
structure such that $J_0'=J_0$ and $J_1' = (\psi_1)_*J_1$, and $\varepsilon_1'$ is the
augmentation of the obstruction algebra of $(L_1', J_1')$ defined by $\varepsilon_1' =
\varepsilon_1 \circ \psi_1^{-1}$.

 By Lemma~\ref{frignolo}, there is a constant $C$, independent of
$\Lambda$, such that $\Psi_G(x)$ is a linear combination of chords of action
at most $C$ whenever $x$ is a chord from $L_0$ to $L_1$ of type I.

On the other hand, $\mathrm{CF}((L_0, \varepsilon_0), (L_1', \varepsilon_1);
H_{\lambda, \Lambda, R})$ is generated by Hamiltonian chords $x$ of action
$${\mathcal A}(x)= (\Lambda - \lambda)e^R + \lambda + f_0(x(0))-
f_1'(x(1)).$$
Here we have used $L_1' \subset \{\rho \ge e^R\},$ together with the particular form of $H_{\lambda, \Lambda, R}$ in the same subset. This implies that, for $\Lambda$ large enough, $\Psi_G(x)=0$ for all chords $x$
of type $I$.
\end{proof}
\begin{proof}[Proof of Proposition~\ref{prop: trivial triviality}]
For every $\lambda$ and $\Lambda$ there are continuation maps
\begin{align*}
\Phi_{\lambda, \Lambda}^{(1)} \colon & \mathrm{CF}((L_0, \varepsilon_0), (L_1, \varepsilon_1);
H_\lambda, J_\bullet) \to \mathrm{CF}((L_0, \varepsilon_0), (L_1, \varepsilon_1); H_{\lambda, \Lambda, R}, J_\bullet), \\
\Phi_{\lambda, \Lambda}^{(2)} \colon & \mathrm{CF}((L_0, \varepsilon_0), (L_1, \varepsilon_1);
H_{\lambda, \Lambda, R}) \to \mathrm{CF}((L_0, \varepsilon_0), (L_1, \varepsilon_1); H_\Lambda, J_\bullet)
\text{ and}\\
\Phi_{\lambda, \Lambda} \colon & \mathrm{CF}((L_0, \varepsilon_0), (L_1, \varepsilon_1);
H_\lambda, J_\bullet) \to \mathrm{CF}((L_0, \varepsilon_0), (L_1, \varepsilon_1); H_\Lambda, J_\bullet)
\end{align*}
such that there is a chain homotopy between $\Phi_{\lambda, \Lambda}$ and
$\Phi_{\lambda, \Lambda}^{(2)} \circ\Phi_{\lambda, \Lambda}^{(1)}$.

We can assume that $\Phi_{\lambda, \Lambda}$, $\Phi_{\lambda, \Lambda}^{(1)}$ and
$\Phi_{\lambda, \Lambda}^{(2)}$ are defined using $s$-dependent Hamiltonians $H_s^*$ ($*=
\emptyset, (1), (2)$) such that $\partial_s H_s^* \le 0$, and therefore they decrease the action.
Hence, the image of  $\Phi_{\lambda, \Lambda}^{(1)}$
is contained in $$\mathrm{CF}^I((L_0, \varepsilon_0), (L_1, \varepsilon_1); H_{\lambda, \Lambda, R},
J_\bullet)$$
(here we use Lemma \ref{action separation from I and II}) and therefore it follows from Lemma~\ref{too much energy} that $\Phi_{\lambda, \Lambda}=0$
in homology. By the definition of wrapped Floer cohomology as a direct limit, this implies that
\begin{equation*}
\mathrm{HW}((L_0, \varepsilon_0), (L_1, \varepsilon_1), J_\bullet) =0.\qedhere
\end{equation*}

\end{proof}

\section{Floer cohomology and Lagrangian surgery}
\label{sec: surgeries and cones}
Lalonde and Sikorav in \cite{Lalonde_&_Lagrangiennes_exactes} and then
Polterovich in \cite{Polterovich_Surgery_Lagrange} defined a surgery
operation on Lagrangian submanifolds. It is expected that Lagrangian
surgery should correspond to a twisted complex (i.e.~an iterated
mapping cone) in the Fukaya category. Results in this direction have
been proved by Seidel in \cite{SeidelExact}, Fukaya, Oh, Ohta and Ono in
\cite{fooo-c10} and by Biran and Cornea in \cite{BirCo2}. After a first version of this article had appeared, Palmer and Woodward gave a more comprehensive treatment of Lagrangian surgery in \cite{PalmerWoodward}. Our goal in this section is to establish Proposition \ref{prop:surgeryTW}, which provides us with a result along these lines in the generality that we need.

The difficult point in handling the Lagrangian surgery from the Floer
theoretic perspective is that, except in very favourable situations,
the Lagrangian submanifolds produced are not well behaved from the point-of-view
of pseudoholomorphic discs. In our situation, we turn out to be lucky,
since only surgeries that preserve exactness are needed. Nevertheless,
there still is a complication stemming from the fact that the resulting Lagrangian only is immersed (as opposed to embedded). This is the main reason for the extra work needed, and here we rely on the theory developed in the previous sections.

The bounding cochains that we will consider in this exact immersed setting are those corresponding to augmentations of the corresponding obstruction algebras introduced in Section \ref{sec: obstructions}. This turns out to be a very useful perspective, since it enables us to apply techniques from Legendrian contact cohomology in order to study them.

\subsection{The Cthulhu complex}\label{ss: cthulhu}
In this subsection we recall, and slightly generalise, the definition of Floer cohomology for
Lagrangian cobordisms we defined in \cite{Floer_cob}.
\begin{dfn}\label{dfn: lagrangian cobordisms}
Given cylindrical exact
Lagrangian immersions $L_+$ and $L_-$ in $(W, \theta)$ which coincide outside of a
compact set, an exact Lagrangian cobordism $\Sigma$ from $L_-^+$ to $L_+^+$ is a
properly embedded submanifold
$$\Sigma \subset (\R \times M,d(e^t\beta))=(\R \times W \times \R,d(e^t(dz+\theta)))$$
such that, for $C$
and $R$ sufficiently large,
\begin{enumerate}
\item $\Sigma \cap (- \infty, -C] \times W \times \R =  (- \infty, -C] \times L_-^+$,
\item $\Sigma \cap [C, + \infty) \times W \times \R =  [C, + \infty) \times L_+^+$,
\item $\Sigma \cap (\R \times W_R^e \times \R)$ is tangent to both $\partial_s$ and the lift of
the Liouville vector field $\mathcal{L}$ of $(W, \theta)$, and  \label{eq: lateral end}
\item $e^s \alpha |_{\Sigma} = dh$ for a function $h \colon \Sigma \to \R$ which is constant on
$\Sigma \cap (- \infty, -C] \times W \times \R$.
\end{enumerate}
The intersection
$$\Sigma \cap (\R \times W_R^e \times \R)=\R \times (L^+_\pm \cap (W_R^e \times \R))$$
of \eqref{eq: lateral end} is called the {\em lateral end} of $\Sigma$.
\end{dfn}
The surgery cobordism $\Sigma(a_1, \ldots, a_k)$
defined in the previous subsection clearly satisfies all these properties when
$\mathbb L(a_1, \ldots, a_k)$ is connected.

Given two exact Lagrangian cobordisms $\Sigma^0$ and $\Sigma^1$ from $(L_-^0)^+$ to
$(L_+^0)^+$ and from $(L_-^1)^+$ to $(L_+^1)^+$ with augmentations $\varepsilon_-^0$
and $\varepsilon_-^1$ of $(L_-^0)^+$ and $(L_-^1)^+$ respectively, we define the {\em
Cthulhu complex} $\mathrm{Cth}_{\varepsilon_-^0, \varepsilon_-^1}(\Sigma^0, \Sigma^1)$ which, as an
$\F$-module, splits as a direct sum
\begin{align*}
  & \mathrm{Cth}_{\varepsilon_-^0, \varepsilon_-^1}(\Sigma^0, \Sigma^1) = \\
& \mathrm{LCC}_{\varepsilon_+^0, \varepsilon_+^1}((L_+^0)^+, (L_+^1)^+) \oplus
\mathrm{CF}_{\varepsilon_-^0, \varepsilon_-^1} (\Sigma^0, \Sigma^1) \oplus \mathrm{LCC}_{\varepsilon_-^0,
\varepsilon_-^1}((L_-^0)^+, (L_-^1)^+),
\end{align*}
where $\varepsilon_+^i$ is the augmentations of $\mathfrak{A}((L_+^i)^+)$ induced by
$\varepsilon_-^i$ and $\Sigma^i$, and $\mathrm{CF}_{\varepsilon_-^0, \varepsilon_-^1}(\Sigma^0,
\Sigma^1)$ is the $\F$-module freely generated by the intersection points $\Sigma^0 \cap
\Sigma^1$, which we assume to be transverse. Furthermore, we assume that $L_\pm^0 \cap L_\pm^1 \cap W_R^e=\emptyset,$ which is not a restriction since the ends are cylinders over Legendrian submanifolds.

The differential on the Cthulhu complex can be written as a matrix
$$\mathfrak{d}_{\varepsilon_-^0, \varepsilon_-^1}= \left ( \begin{matrix}
d_{++} & d_{+0} & d_{+-} \\
0 & d_{00} & d_{0-} \\
0 & d_{-0} & d_{--}
\end{matrix} \right )$$
where $d_{++}$ and $d_{--}$ are the differentials of
$\mathrm{LCC}_{\varepsilon_+^0, \varepsilon_+^1}((L_+^0)^+, (L_+^1)^+)$ and
$\mathrm{LCC}_{\varepsilon_-^0, \varepsilon_-^1}((L_-^0)^+, (L_-^1)^+)$ respectively, and the other
maps are defined by counting $J$-holomorphic discs in $\R \times M$ with boundary on
$\Sigma^0 \cup \Sigma^1$ and boundary punctures asymptotic to Reeb chords from
$(L_1^\pm)^+$ to $(L_1^\pm)^+$ and intersection points between $\Sigma^0$ and
$\Sigma^1$. See \cite[Section~6]{Floer_cob} for the detailed definition. The cobordisms
considered in \cite{Floer_cob} have the property that $\Sigma^i \cap [-C, C] \times M$ is
compact for every $C >0$, while here we consider cobordisms with a lateral end. The
theory developed in \cite{Floer_cob} can be extended to the present situation thank to
the following maximum principle.
\begin{lemma} \label{maximum principle}
Let $J$ and $\widetilde{J}$ be almost complex structures on $\R \times W \times \R$ and $W$, respectively, each cylindrical inside the respective symplectisation $\R \times W_R^e$ and half-symplectisation $W_R^e$ for some $R >0$. We moreover require that the the canonical projection
$$ (\R \times W_R^e \times \R,J) \to (W_R^e,\widetilde{J}) $$
is holomorphic. Then
every $J$-holomorphic map $u \colon \Delta \to \R
\times W \times \R$ with
\begin{itemize}
\item $\Delta = D^2 \setminus \{ \zeta_0, \ldots, \zeta_d \}$ where $(\zeta_0, \ldots, \zeta_d)
\in Conf^{d+1}(\partial D^2)$,
\item $u(\partial \Delta) \subset \Sigma^0 \cup \Sigma^1,$ and
\item $u$ maps some neighbourhood of the punctures $\{ \zeta_0, \ldots, \zeta_d \}$ into $\R \times W_R \times \R,$
\end{itemize}
has its entire image contained inside $\R \times W_R \times \R$.
\end{lemma}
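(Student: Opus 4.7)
The plan is to reduce to a classical Hopf-type maximum principle argument applied to the projection of $u$ to $W$.

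First, I would suppose for contradiction that $u(\Delta) \not\subset \R \times W_R \times \R$, and define
$$\Omega := u^{-1}\bigl(\R \times (W \setminus W_R) \times \R\bigr) \subset \Delta.$$
This set is open and nonempty, and by the hypothesis that $u$ maps some neighbourhood of each $\zeta_i$ into $\R \times W_R \times \R$, $\Omega$ is bounded away from the punctures; hence its closure in the closed disc is a compact subset of $\Delta$. The hypothesis that $\pi$ is $(J, \widetilde{J})$-holomorphic in the end says precisely that $\pi \circ u|_\Omega \colon \Omega \to W \setminus W_R$ is $\widetilde{J}$-holomorphic.

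Since $\widetilde{J}$ is cylindrical on $W_R^e$, the function $\mathfrak{r}$ is $\widetilde{J}$-plurisubharmonic there (the standard computation in symplectisations, giving a nonnegative form on $\widetilde{J}$-complex lines). Hence $\varphi := \mathfrak{r} \circ \pi \circ u$ is subharmonic on $\Omega$. The boundary $\partial \Omega$ splits into an interior part lying in $\mathrm{int}(\Delta)$, on which $\varphi = R$, and an exterior part on $\partial \Delta \cap \overline{\Omega}$, on which $u$ lands in the lateral end of $\Sigma^0 \cup \Sigma^1$.

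Next I would analyse the exterior boundary. By Definition~\ref{dfn: lagrangian cobordisms} the lateral end of each $\Sigma^i$ is tangent both to the symplectisation direction and to the lift of $\mathcal{L}$, so its image under $\pi$ is the conical Lagrangian $[R, +\infty) \times \Lambda^i_\pm \subset W_R^e$ for some Legendrian $\Lambda^i_\pm \subset (V, \xi)$. At a point of the exterior boundary where $\varphi$ attains its maximum, the tangential derivative of $\varphi$ vanishes, forcing $d(\pi \circ u)(\partial_\tau) \in T\Lambda^i_\pm \subset \xi$; by the Cauchy-Riemann equation $d(\pi \circ u)(\partial_\nu) = \widetilde{J}\, d(\pi \circ u)(\partial_\tau) \in \widetilde{J}\xi = \xi$, and since $d\mathfrak{r}$ vanishes on $\xi$ in the end, we get $d\varphi(\partial_\nu) = 0$ at this maximum point.

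Finally, since $\Omega$ is nonempty, $\sup_{\overline{\Omega}} \varphi > R$, so by the maximum principle the supremum is attained on $\partial \Omega$, and it cannot be attained on the interior part (where $\varphi = R$); hence it is attained at some point of the exterior boundary. The Hopf boundary point lemma then forces the outward normal derivative to be strictly positive there, contradicting the computation of the preceding paragraph, unless $\varphi$ is constant on the connected component containing this point. The latter degenerate case forces $\pi \circ u$ to map a component of $\Omega$ into a single contact hypersurface $\{\mathfrak{r} = c\}$, which by the standard vanishing-of-$d\alpha$-energy argument in symplectisations forces $\pi \circ u$ to be locally constant; examining the fibre $\pi^{-1}(w_0)$, which is $J$-complex since for cylindrical $J$ one has $J\partial_t = \partial_z$, one then sees that $u$ itself must be constant on this component, which contradicts the continuity of $u$ together with the fact that the boundary arcs of such a component must have endpoints on the interior boundary $\{\varphi = R\}$. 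The main technical point is the Neumann-type vanishing of the outward normal derivative of $\varphi$, which is a routine computation standard in the theory of pseudoholomorphic curves in symplectisations.
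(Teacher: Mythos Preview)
Your proof follows essentially the same strategy as the paper: project the curve to $W$ via the holomorphic projection and invoke the maximum principle for the symplectisation coordinate $\mathfrak{r}$ on a $\widetilde{J}$-holomorphic curve with cylindrical Lagrangian boundary conditions. The paper's proof is simply a terse version of yours, citing this maximum principle as a known result rather than working through subharmonicity and the Hopf boundary lemma.

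One point needs correction in your handling of the degenerate case. The step ``one then sees that $u$ itself must be constant on this component'' is not justified: even after you have shown $\pi\circ u\equiv w_0$, the map $u$ takes values in the $J$-complex line $\R\times\{w_0\}\times\R\cong\C$ with boundary on real lines, and such a holomorphic map need not be constant. Fortunately this step is also unnecessary. Once you know $\varphi$ is constant equal to $c>R$ on a component $\Omega_0$, the relative boundary $\overline{\Omega_0}\setminus\Omega_0$ in $\Delta$ is nonempty (otherwise $\Omega_0$ would be clopen in the connected surface $\Delta$, contradicting the puncture hypothesis), and at any such point $\varphi=R$ by continuity of $u$, contradicting $\varphi\equiv c$. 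So you can delete the detour through ``$\pi\circ u$ locally constant'' and ``$u$ constant'' entirely and go straight to this contradiction.
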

\begin{proof}
By the assumptions the image of the curve $u|_{u^{-1}(\R \times W_R^e \times \R)}$ under the canonical projection
$$(\R \times W_R^e \times \R,J) \to (W_R^e,\widetilde{J})$$
is compact with boundary on $\R \times \partial W_R^e \times \R$. The statement is now a consequence of the maximum principle for
pseudoholomorphic curves inside $W_R^e \cong [R, + \infty) \times V$ which
\begin{itemize}
\item satisfy a cylindrical boundary condition, and
\item are pseudoholomorphic for a cylindrical almost complex structure.
\end{itemize}
Namely, by e.g.~\cite[Lemma 5.5]{Khovanov}, the symplectisation coordinate
$\mathfrak{r} \colon W_R^e \to [R, + \infty)$ restricted to such a curve cannot have a local
maximum.
\end{proof}
With Lemma \ref{maximum principle} at hand, the arguments of \cite{Floer_cob} go
through, and therefore we have the following result.
\begin{thm}[\cite{Floer_cob}]\label{great cthulhu}
The map $\mathfrak{d}_{\varepsilon_-^0, \varepsilon_-^1}$ is a differential and the Cthulhu
complex
$$(\mathrm{Cth}_{\varepsilon_-^0, \varepsilon_-^1}(\Sigma^0, \Sigma^1), \mathfrak{d}_{\varepsilon_-^0, \varepsilon_-^1})$$
is acyclic.
\end{thm}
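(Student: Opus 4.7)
The proof is essentially imported from \cite{Floer_cob}, and the plan is to verify that the arguments there carry through in the present setting, which differs only in the presence of the lateral ends coming from Condition~\eqref{eq: lateral end} in Definition~\ref{dfn: lagrangian cobordisms}. The maximum principle of Lemma~\ref{maximum principle} is precisely what one needs to prevent pseudoholomorphic discs from escaping into these lateral ends, reducing all the analysis to a compact region where the arguments of \cite{Floer_cob} apply verbatim.

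First I would verify that $\mathfrak{d}_{\varepsilon_-^0,\varepsilon_-^1}^2=0$. This is standard SFT-compactness for $J$-holomorphic discs in $(\R\times M, d(e^t\beta))$ with boundary on $\Sigma^0\cup\Sigma^1$: the boundary of a generic one-dimensional moduli space consists of two-level broken configurations, which contribute exactly the terms appearing in $\mathfrak{d}^2$. The breakings involving a sub-building whose punctures are all negative Reeb chords (either at the $\pm$ ends) are precisely the terms killed by the augmentation hypothesis: one uses $\varepsilon_-^i$ directly at the negative end, and at the positive end the induced augmentations $\varepsilon_+^i$ are defined (via the usual DGA morphism induced by $\Sigma^i$) precisely so that this cancellation holds. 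The triangular form of $\mathfrak{d}_{\varepsilon_-^0,\varepsilon_-^1}$ is immediate from the fact that action increases strictly on non-constant $J$-holomorphic discs together with the direction of the asymptotic chords, which forces $d_{-+}=0$.

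For acyclicity, the strategy is a displacement argument using translation in the symplectisation direction. Let $\Sigma^1_\tau$ be the translate of $\Sigma^1$ by $\tau>0$ in the $t$-coordinate of $\R\times M$. Since vertical translation in the symplectisation corresponds to an exact Hamiltonian isotopy of $\Sigma^1$ (preserving the class of exact Lagrangian cobordisms, after suitable interpolation in the Liouville direction), the Cthulhu complexes for $(\Sigma^0,\Sigma^1)$ and $(\Sigma^0,\Sigma^1_\tau)$ are quasi-isomorphic through a continuation argument analogous to the one in Section~\ref{sec: continuation maps}. For $\tau$ sufficiently large, however, the Lagrangians $\Sigma^0$ and $\Sigma^1_\tau$ become disjoint (near the cylindrical/lateral ends they are Legendrian cylinders which get vertically separated, and the compact parts are displaced). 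Thus the middle summand $CF_{\varepsilon_-^0,\varepsilon_-^1}(\Sigma^0,\Sigma^1_\tau)$ vanishes, and the Cthulhu complex reduces to a mapping cone of a chain map
\[
d_{+-}\colon LCC_{\varepsilon_-^0,\varepsilon_-^1}((L_-^0)^+,(L_-^1)^+)\to LCC_{\varepsilon_+^0,\varepsilon_+^1}((L_+^0)^+,(L_+^1)^+).
\]

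Acyclicity therefore reduces to showing that $d_{+-}$ is a quasi-isomorphism. This is established by a further SFT-type stretching argument along the negative end, which identifies $d_{+-}$ (up to the chain homotopy provided by the continuation maps) with the bilinearised DGA morphism induced by the concatenated cobordism $\Sigma^0\sqcup\Sigma^1_\tau$. A standard bifurcation/invariance argument \`a~la Ekholm--Etnyre--Sullivan and \cite{augcat} then shows that this bilinearised map is a quasi-isomorphism, since it intertwines the differentials on the bilinearised complexes at the two ends in a way compatible with the augmentations $\varepsilon_\pm^i$. The main obstacle, beyond the lateral-end issue resolved by Lemma~\ref{maximum principle}, is controlling the SFT-compactness for the broken configurations used in this final step — in particular ensuring that the weights coming from the augmentations assemble correctly into chain homotopies — but this is precisely the analysis carried out in \cite{Floer_cob}, and no new difficulty arises in our setting.
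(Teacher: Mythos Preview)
The paper itself gives no proof here: it simply cites \cite{Floer_cob} and observes that Lemma~\ref{maximum principle} handles the new lateral ends. Your opening paragraph captures exactly this, and your sketch of $\mathfrak{d}^2=0$ via SFT compactness is fine.

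Your acyclicity sketch, however, has a genuine gap. Translation in the symplectisation ($t$-)direction is the Liouville flow of $(\R\times M,e^t\beta)$, not a Hamiltonian isotopy, so invariance of the Cthulhu complex under it is not automatic. More seriously, $t$-translation does not change the $M$-projection of $\Sigma^1$, and therefore does \emph{not} in general disjoin the two cobordisms: once the compact part of $\Sigma^1_\tau$ is pushed into the $t$-range $[C,+\infty)$ it sits against the positive cylinder $[C,+\infty)\times (L_+^0)^+$ of $\Sigma^0$, and these intersect whenever the projection to $M$ of the compact part of $\Sigma^1$ meets $(L_+^0)^+$. That is a codimension-zero condition and cannot be removed by small perturbation. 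So your step ``for $\tau$ large the cobordisms become disjoint'' fails.

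The displacement actually used in \cite{Floer_cob} is in the Reeb direction of $M=W\times\R$ (the $z$-coordinate), generated by the genuine Hamiltonian $H=e^t$. Pushing $\Sigma^1$ far in the positive $z$-direction separates the $z$-ranges, so the cobordisms become disjoint \emph{and} all mixed Reeb chords from $(L_\pm^1)^+$ to $(L_\pm^0)^+$ disappear; hence the entire Cthulhu complex vanishes, not just the middle summand. The substantive work in \cite{Floer_cob} is then proving invariance of the complex under this non-compactly-supported Hamiltonian isotopy. Your final paragraph, reducing to ``$d_{+-}$ is a quasi-isomorphism in the disjoint case'', is tautologically equivalent to acyclicity in that case, so the vague appeal to a ``standard bifurcation/invariance argument'' there is not an independent argument but a restatement of what must be proved.
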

The consequence of interest for us is the following.
\begin{cor} \label{cthulhu isomorphism}
If $\Sigma^0 \cap \Sigma^1 = \emptyset$, then the map
$$d_{+-} \colon \mathrm{LCC}_{\varepsilon_-^0, \varepsilon_-^1}((L_-^0)^+, (L_-^1)^+) \to
\mathrm{LCC}_{\varepsilon_+^0, \varepsilon_+^1}((L_+^0)^+, (L_+^1)^+)$$
is a quasi-isomorphism.
\end{cor}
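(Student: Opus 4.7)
The plan is to deduce the corollary directly from the acyclicity statement of Theorem \ref{great cthulhu}, by observing that the disappearance of the middle summand reduces the Cthulhu complex to a mapping cone. First I would note that since $\Sigma^0 \cap \Sigma^1 = \emptyset$ by assumption, the generators of the middle summand $CF_{\varepsilon_-^0, \varepsilon_-^1}(\Sigma^0, \Sigma^1)$ are in bijection with the empty set, so this summand is zero. Consequently, the components $d_{+0}$, $d_{00}$, $d_{0-}$, $d_{-0}$ of the differential all automatically vanish as well.

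Under this simplification, the Cthulhu complex reduces to the two-term complex
\begin{equation*}
\bigl(LCC_{\varepsilon_+^0, \varepsilon_+^1}((L_+^0)^+, (L_+^1)^+)\bigr) \oplus \bigl(LCC_{\varepsilon_-^0, \varepsilon_-^1}((L_-^0)^+, (L_-^1)^+)\bigr)
\end{equation*}
equipped with the differential
\begin{equation*}
\mathfrak{d}_{\varepsilon_-^0, \varepsilon_-^1} = \begin{pmatrix} d_{++} & d_{+-} \\ 0 & d_{--} \end{pmatrix}.
\end{equation*}
By inspection this is exactly the mapping cone $\operatorname{Cone}(d_{+-})$ of the chain map
\begin{equation*}
d_{+-} \colon \bigl(LCC_{\varepsilon_-^0, \varepsilon_-^1}((L_-^0)^+, (L_-^1)^+), d_{--}\bigr) \to \bigl(LCC_{\varepsilon_+^0, \varepsilon_+^1}((L_+^0)^+, (L_+^1)^+), d_{++}\bigr),
\end{equation*}
up to the usual shift/sign conventions. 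That $d_{+-}$ is indeed a chain map follows from the identity $\mathfrak{d}^2 = 0$ provided by Theorem \ref{great cthulhu}, which in the upper-right corner reads $d_{++} \circ d_{+-} + d_{+-} \circ d_{--} = 0$.

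Finally, Theorem \ref{great cthulhu} asserts that the Cthulhu complex is acyclic, i.e.\ $\operatorname{Cone}(d_{+-})$ has vanishing homology. A mapping cone is acyclic if and only if the underlying chain map is a quasi-isomorphism, by the standard long exact sequence of the cone. Thus $d_{+-}$ induces an isomorphism on bilinearised Legendrian contact cohomology, which is the desired conclusion. There is essentially no analytic obstacle here since all the work has already been done in Theorem \ref{great cthulhu}; the only point worth verifying carefully is that the hypothesis $\Sigma^0 \cap \Sigma^1 = \emptyset$ is indeed what kills every entry involving the middle summand, which is immediate from the definition of the generators of $CF_{\varepsilon_-^0, \varepsilon_-^1}(\Sigma^0, \Sigma^1)$.
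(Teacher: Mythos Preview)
Your proposal is correct and follows essentially the same approach as the paper: observe that the vanishing of the middle summand reduces the Cthulhu complex to the cone of $d_{+-}$, then invoke acyclicity from Theorem~\ref{great cthulhu} to conclude. Your write-up is slightly more detailed (e.g.\ you spell out why $d_{+-}$ is a chain map), but the argument is identical.
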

\begin{proof}
If $\Sigma^0 \cap \Sigma^1 = \emptyset$, the Cthulhu differential simplify as follows:
$$\mathfrak{d}_{\varepsilon_-^0, \varepsilon_-^1}= \left ( \begin{matrix}
d_{++} & 0 & d_{+-} \\
0 & 0 & 0 \\
0 & 0 & d_{--}
\end{matrix} \right )$$
and thus the Chthulhu complex becomes the cone of $d_{+-}$. Since it is acyclic, it follows
that $d_{+-}$ is a quasi-isomorphism.
\end{proof}
\subsection{The surgery cobordism}
\label{sec:surgery-cobordism}
 In this subsection we describe the Lagrangian surgery of
\cite{Lalonde_&_Lagrangiennes_exactes} and \cite{Polterovich_Surgery_Lagrange} from the
Legendrian viewpoint. In particular, we interpret it as as a Lagrangian cobordism between
the Legendrian lifts of the Lagrangian submanifolds before and after the surgery. We refer to
\cite{LegendrianAmbient} for more details.

We first describe the local model for Lagrangian surgery. Given
$\eta,\delta>0$,
we consider the open subset
$$\mathcal{V}_{\eta,\delta}:=\{|q|<\eta,|p|<2\delta,z\in\mathbb{R}\}$$
of $\mathcal{J}^1(\mathbb{R}^n)$. Given $\zeta>0$, we denote by
$\Lambda^+_{\eta,\delta,\zeta}$ the (disconnected) Legendrian submanifold of $\mathcal{V}_{\eta,\delta}$ given by the two sheets
$$\{(q,\pm df_{\eta,\delta,\zeta}(|q|),\pm f_{\eta,\delta,\zeta}(|q|)) : |q|<\eta \},$$
where $$f_{\eta,\delta,\zeta}(s)=\frac{\delta}{2\eta}s^2+\frac{\zeta}{2}.$$
This is a Legendrian submanifold with a single Reeb chord of length $\zeta$. Note that
$\Lambda^+_{\eta,\delta,\zeta}$ is described by the generating family
$F_{\eta,\delta,\zeta}^+ \colon \mathbb{R}^n\times \mathbb{R} \to \R$ given by
$$F_{\eta,\delta,\zeta}^+(q,\xi)=\frac{\xi^3}{3}-g^+(|q|)\xi,$$
where
$$g^+(s)=\left(\frac{3}{2}f_{\eta,\delta,\zeta}(s)\right)^{\frac{2}{3}}.$$

Note that $g^+$ is smooth because $g^+(s)>0$ holds for every $s$.
Let $g^-:\mathbb{R}_+\rightarrow\mathbb{R}$ be a function such that
\begin{itemize}
\item[(i)] $g^-(s)=\left(\frac{3}{2}f_{\eta,\delta,\zeta}(s)\right)^{\frac{2}{3}}$ for
$s>3\eta/4$,
\item[(ii)] $g^-(s)<0$ for $s<\eta/2$, and
\item[(iii)] $0<(g^-)'(s)<2\frac{\delta\eta}{\delta\eta+\zeta}$.
\end{itemize}

Note that Condition (iii) can be achieved if $\zeta<\frac{7\delta\eta}{16}$.
The Legendrian submanifold $\Lambda_{\eta,\delta,\zeta}^-$ of $\mathcal{V}_{\delta,\eta}$ generated
by $$F^-_{\eta,\delta,\zeta}(q,\xi)=\frac{\xi^3}{3}-g^-(|q|)\xi$$ coincides with
$\Lambda_{\eta,\delta,\zeta}^+$ near $|q|=\epsilon$ and has no Reeb chords (see Figure \ref{fig:legsurg}). Note that indeed $\Lambda_{\eta,\delta,\zeta}^- \subset\mathcal{V}_{\delta,\eta}$ because Condition (iii) ensures that the p coordinates of $\Lambda_{\eta,\delta,\zeta}^-$, given by $\frac{\partial F^-_{\eta,\delta,\zeta}}{\partial p_i}$ along critical values of $F(q,\cdot)$, are smaller than $2\delta$.

\begin{figure}[ht!]
 \labellist
\pinlabel $\Lambda_{\eta,\delta,\zeta}^-\subset \mathcal{V}_{\eta,\delta}$ at 660 185
\pinlabel $\Lambda_{\eta,\delta,\zeta}^+\subset \mathcal{V}_{\eta,\delta}$ at 200 185
\endlabellist
  \centering
  \includegraphics[width=\textwidth]{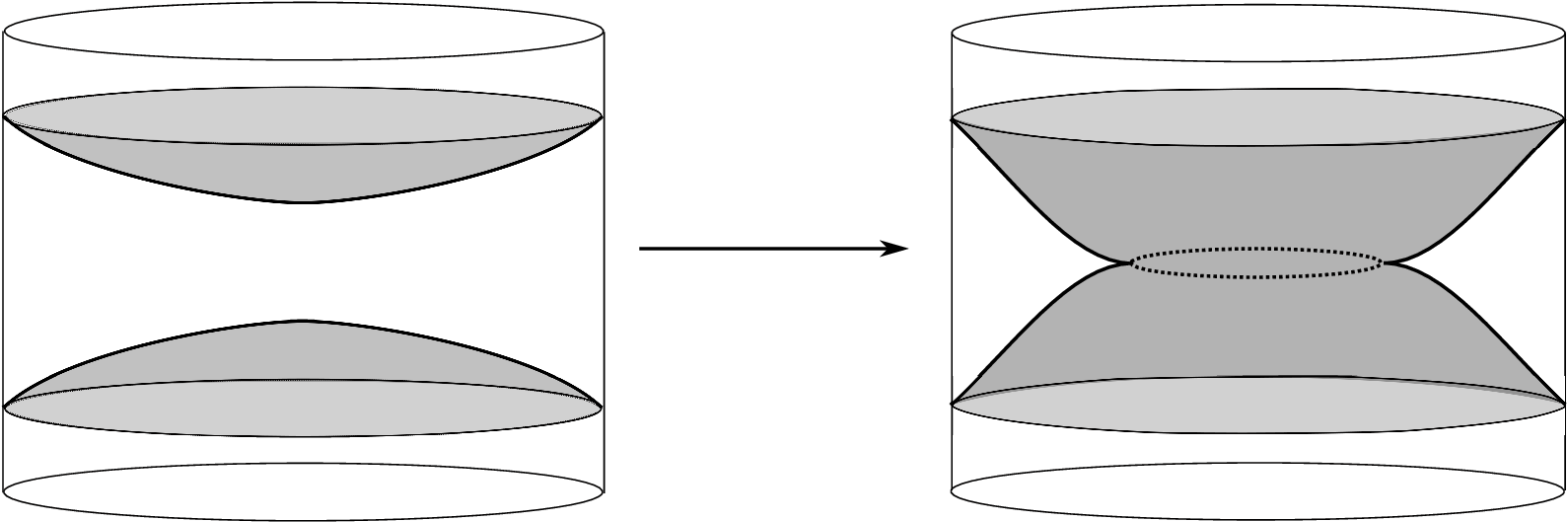}
\vspace{3mm}
  \caption{The front projections of $\Lambda^+$ and $\Lambda^-$.}
  \label{fig:legsurg}
\end{figure}

On Figure \ref{fig:frontlagsur} we see the front and Lagrangian projections of the one dimensionnal version of $\Lambda^+$ and $\Lambda^-$.

\begin{figure}[ht!]
  \centering
\labellist
\pinlabel $\Lambda^+$ at 190 60
\pinlabel $\Lambda^-$ at 660 60
\endlabellist
  \includegraphics[height=5cm]{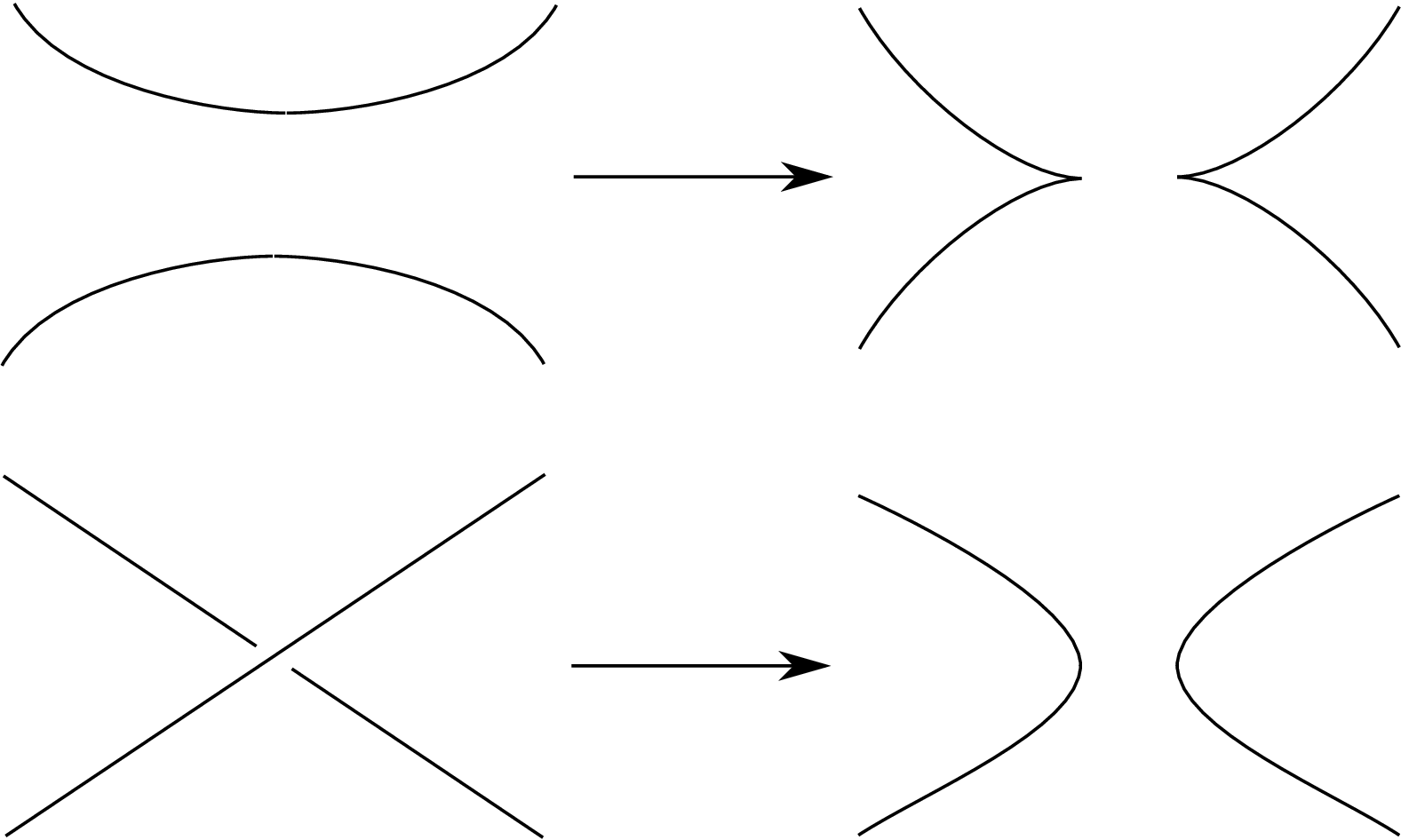}
  \caption{Front (top) and Lagrangian (bottom) projections of the Lagrangian surgery}
  \label{fig:frontlagsur}
\end{figure}

Let $\mathbb L$ be an exact Lagrangian immersion in $(W, \theta)$ with double points $a_1, \ldots,
a_k$, and let  $\mathbb L^+$ be a Legendrian lift of $\mathbb L$. The double points of $\mathbb L$ lift to Reeb chords of
$\mathbb L^+$  which we will denote with the same name by an abuse of notation.

\begin{dfn}
\label{dfn: contractible}
A set of Reeb chords $\{a_1,\ldots,a_k\}$ on $\mathbb {L}^+$ is called {\em contractible} if, for all $i=1, \ldots, k$, there is a neighbourhood $\mathcal{U}_i$ of the Reeb
chord $a_i$ in the contactisation $(M, \beta)$ of $(W, \theta)$ and
a strict contactomorphism $(\mathcal{U}_i,\mathcal{U}_i\cap {\mathbb L^+} )\cong
(\mathcal{V}_{\eta_i,\delta_i},\Lambda^1_{\eta_i,\delta_i,\zeta_i})$ for numbers
$\eta_i,\delta_i,\zeta_i$ satisfying $\zeta_i<\frac{7\delta_i\eta_i}{16}$.
\end{dfn}

\begin{rem}
This is a restrictive assumption because, in general, the lengths of the chords $a_1,\ldots,a_k$ cannot
be modified independently. An example when this is possible, and which will be the case in
our main theorem, is when $\mathbb{L}^+$ is a link with $k+1$ components, all $a_i$ are
mixed chords, and each component contains either the starting point or the end point of at
least one of the $a_i$. In this situation we can indeed modify the Legendrian link by Legendrian isotopies of each his components so that its Lagrangian projection is unchanged and all the previous conditions on the neighbourhoods are satisfied. (Note that this might not be an isotopy of the Legendrian \emph{link}.)
\end{rem}

In the following we assume that $\{a_1,\ldots,a_k\}$ is a set of contractible Reeb chords on $\mathbb {L}^+$. We denote by ${\mathbb  L^+(a_1,\ldots,a_k)}$ the Legendrian submanifold of $(M, \beta)$ obtained by
replacing each of the $\Lambda^+_{\eta_i,\delta_i,\zeta_i}$ by the corresponding
$\Lambda^-_{\eta_i,\delta_i,\zeta_i}$ and by ${\mathbb  L(a_1,\ldots,a_k)}$ the Lagrangian projection
of ${\mathbb  L^+(a_1,\ldots,a_k)}$. Observe that we here need to make use of the identifications with the standard model, which exist by the contractibility condition.

Then ${\mathbb  L(a_1,\ldots,a_k)}$ is an exact Lagrangian immersion in $(W,
\theta)$ which is the result of Lagrangian
surgery on $\mathbb L$ along the self-intersection points $a_1, \ldots, a_k$. It is evident from the
construction that $\mathbb L(a_1,\ldots,a_k)$ coincides with $\mathbb L$ outside of a neighbourhood of the
$a_i$'s and has $k$ self-intersection points removed. The latter fact follows
from the fact that since $\zeta_i$ can be chosen arbitrarily small, no Reeb chords are created when going from $\Lambda^+_{\eta_i,\delta_i,\zeta_i}$ to  $\Lambda^-_{\eta_i,\delta_i,\zeta_i}$.

Next we construct an exact Lagrangian cobordism $\Sigma(a_1, \ldots, a_k)$ in the
symplectisation of $(M, \beta)$ with $\mathbb L$ at the
positive end and $\mathbb L(a_1, \ldots, a_k)$ at the negative end. Fix $T>0$
and choose a function $G \colon (0,\epsilon)\times\mathbb{R}_+\rightarrow\mathbb{R}$
such that:
\begin{itemize}
\item $G(t,s)=g^-(s)$ for $t<1/T$,
 \item $G(t,s)=g^+(s)$ for $t>T$,
\item $\frac{\partial G}{\partial t}(t,0)> 0$, and
\item  $G(t,s)=g^+(s)=g^-(s)$ for $s>3\eta/4$.
\end{itemize}

We consider the Lagrangian submanifold of $T^*(\mathbb{R}^+\times B^n(\eta))$
described by the generating family
$$F(t,q,\xi)=t\cdot\left(\frac{\xi^3}{3}+G(t,|q|)\xi\right),$$
which is mapped by the symplectomorphism $T^*(\mathbb{R}^+\times B^n(\eta))
\cong \mathbb{R} \times\mathcal{J}^1(B^n(\eta))$ to  a Lagrangian
cobordisms $\Sigma_{\eta,\delta,\zeta}$ in the symplectisation of $(M, \beta)$ from
$\Lambda^-_{\eta_k,\delta_k,\zeta_k}$ at the negative end to
$\Lambda^+_{\eta_k,\delta_k,\zeta_k}$ at the positive end. Self-intersections of
$\Sigma_{\eta,\delta,\zeta}$ are given by the critical points of the function
$$\Delta_F(t,q,\xi_1,\xi_2)=F(t,q,\xi_1)-F(t,q,\xi_2)$$
with non $0$ critical value, and such points do not exist because of the third condition on
$G$. Thus this cobordism is embedded.

In the trivial cobordism $\mathbb{R} \times \mathbb L^+$ we replace  $\mathbb{R} \times
(\mathcal{U}_i \cap \mathbb L^+)$ with $\Sigma_{\eta_i,\delta_i,\zeta_i}$, for all $i=1, \ldots, k$,
to get a a cobordism $\Sigma(a_1,\ldots, a_k)$ from $\mathbb L^+(a_1,\ldots,a_k)$ at the
negative end to $\mathbb L^+$ at the positive end.
\subsection{Effect of surgery on Floer cohomology}
\label{sec:cthulhu-compl-surg}
In this subsection we use $\Sigma(a_1, \ldots, a_k)$ and our
Floer theory for Lagrangian cobordisms to relate the Floer cohomology of $\mathbb L$ with the Floer
homology of $\mathbb L(a_1, \ldots, a_k)$.
The Lagrangian cobordism $\Sigma(a_1, \ldots, a_k)$ induces a dga morphism
$$\Phi_{\Sigma} \colon \mathfrak{A}(\mathbb L^+) \to \mathfrak{A}(\mathbb L^+(a_1, \ldots, a_k)).$$
If follows from  \cite[Theorem 1.1]{LegendrianAmbient} that, for a suitable almost complex structure on the cobordism that has been obtained by perturbing an arbitrary cylindrical almost complex structure, we have
\begin{align}\label{dga map}
\begin{array}{ll}
\Phi_{\Sigma}(a_i)  =1& \mbox{for }\ i=1, \ldots, k, \\
\Phi_{\Sigma}(c)  = c+ \mathbf{w}& \mbox{if }\ c \ne a_i, \\
\end{array}
\end{align}
where $\mathbf{w}$ is a linear combination of products $c_1 \ldots c_m$ with
$$\mathfrak{a}(c_1) + \ldots + \mathfrak{a}(c_m) < \mathfrak{a}(c).$$

\begin{lemma} \label{lemma: push-forward}
If $\varepsilon \colon \mathfrak{A}(\mathbb L^+) \to \F$ is an augmentation such that
$\varepsilon(a_i)=1$ for $i=1, \ldots, k$, then there is an augmentation
$\overline{\varepsilon} \colon \mathfrak{A}(\mathbb L^+(a_1, \ldots, a_k)) \to \F$
such that $\varepsilon= \overline{\varepsilon} \circ \Phi_{\Sigma}$.
\end{lemma}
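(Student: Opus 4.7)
The plan is to define $\overline{\varepsilon}$ inductively on generators ordered by action, exploiting the action-filtration that \eqref{dga map} imposes on $\Phi_\Sigma$. The generators of $\mathfrak{A}(\mathbb L^+(a_1, \ldots, a_k))$ correspond bijectively to the Reeb chords of $\mathbb L^+$ other than $a_1, \ldots, a_k$. For each such generator $c$, the requirement $\overline{\varepsilon} \circ \Phi_\Sigma = \varepsilon$ together with \eqref{dga map} rewrites as
$$\overline{\varepsilon}(c) = \varepsilon(c) - \overline{\varepsilon}(\mathbf{w}),$$
where $\mathbf{w}$ is a polynomial in generators of total action strictly less than $\mathfrak{a}(c)$. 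Thus $\overline{\varepsilon}(c)$ is unambiguously determined once $\overline{\varepsilon}$ has been defined on generators of smaller action; the base case is a minimal-action chord, for which $\mathbf{w}$ is merely a scalar. I extend $\overline{\varepsilon}$ to all of $\mathfrak{A}(\mathbb L^+(a_1, \ldots, a_k))$ as a unital algebra morphism.

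By construction $\overline{\varepsilon} \circ \Phi_\Sigma = \varepsilon$ holds on every generator $c \neq a_i$, and on the remaining generators it follows from $\overline{\varepsilon}(\Phi_\Sigma(a_i)) = \overline{\varepsilon}(1) = 1 = \varepsilon(a_i)$, using the assumption on $\varepsilon$. It remains to verify that $\overline{\varepsilon}$ is an augmentation, i.e.~that $\overline{\varepsilon} \circ \overline{\mathfrak{d}} = 0$, where $\overline{\mathfrak{d}}$ denotes the differential on $\mathfrak{A}(\mathbb L^+(a_1, \ldots, a_k))$. For this I will observe that $\Phi_\Sigma$ is surjective as an algebra morphism: by induction on action, each target generator $c$ is recoverable via $c = \Phi_\Sigma(c) - \mathbf{w}$, and the inductive hypothesis places $\mathbf{w}$ inside the subalgebra $\operatorname{Im}(\Phi_\Sigma)$.

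With surjectivity in hand, the fact that $\Phi_\Sigma$ is a DGA morphism and that $\varepsilon$ is an augmentation yield
$$\overline{\varepsilon} \circ \overline{\mathfrak{d}} \circ \Phi_\Sigma = \overline{\varepsilon} \circ \Phi_\Sigma \circ \mathfrak{d} = \varepsilon \circ \mathfrak{d} = 0,$$
and surjectivity of $\Phi_\Sigma$ then forces $\overline{\varepsilon} \circ \overline{\mathfrak{d}} = 0$ throughout. No serious obstacle arises: the entire lemma is essentially pure algebra, with all the real work being done by the action-controlled form of $\Phi_\Sigma$ established in~\eqref{dga map}.
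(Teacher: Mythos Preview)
Your proof is correct. Both you and the paper rely on surjectivity of $\Phi_\Sigma$, established via the action filtration (what the paper calls ``Gauss elimination''). The difference lies in how $\overline{\varepsilon}$ is produced: the paper observes that $\varepsilon$ kills the bilateral ideal $\mathfrak{I}$ generated by $a_1-1,\ldots,a_k-1$ (since $\varepsilon(a_i)=1$), then asserts that $\ker\Phi_\Sigma=\mathfrak{I}$, so that $\Phi_\Sigma$ induces an isomorphism $\mathfrak{A}(\mathbb L^+)/\mathfrak{I}\cong\mathfrak{A}(\mathbb L^+(a_1,\ldots,a_k))$ through which $\varepsilon$ descends. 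Your approach bypasses the kernel computation entirely: you build $\overline{\varepsilon}$ directly by action-induction and then use only surjectivity to verify $\overline{\varepsilon}\circ\overline{\mathfrak d}=0$. This is arguably more economical, since identifying the kernel precisely as $\mathfrak{I}$ is the one step the paper leaves unproved; on the other hand, the paper's route yields the stronger structural statement that $\Phi_\Sigma$ realises $\mathfrak{A}(\mathbb L^+(a_1,\ldots,a_k))$ as the explicit quotient $\mathfrak{A}(\mathbb L^+)/\mathfrak{I}$.
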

\begin{proof}
Let  $\mathfrak{I}$ be the bilateral ideal generated by $a_i-1, \ldots, a_k-1$: then
$\varepsilon$ induces an augmentation $$\overline{\varepsilon} \colon \mathfrak{A}(\mathbb L^+)/
\mathfrak{I} \to \F.$$ By Equation~\eqref{dga map} $\Phi_{\Sigma}$ is surjective and its
kernel is $\mathfrak{I}$.  Surjectivity is proved by a sort of
Gauss elimination using the action filtration. Then there is an isomorphism
between $\mathfrak{A}(\mathbb L^+)/\mathfrak{I}$ and $\mathfrak{A}(\mathbb L^+(a_1, \ldots, a_k))$, and
therefore the augmentation $\overline{\varepsilon} \colon \mathfrak{A}(\mathbb L^+)/
\mathfrak{I} \to \F$ induces an augmentation on $\mathfrak{A}(\mathbb L^+(a_1, \ldots, a_k))$,
which we still denote by $\overline{\varepsilon}$.
\end{proof}
The construction of $\overline{\varepsilon}$ is not explicit because the isomorphism
$\mathfrak{A}(\mathbb L^+)/ \mathfrak{I} \cong \mathfrak{A}(\mathbb L^+(a_1, \ldots, a_k))$ is not explicit.

\begin{prop}\label{prop_coneiso} For any immersed cylindrical exact Lagrangian
submanifold $T \subset W$ with augmentation $\varepsilon'$
there is a quasi-isomorphism
$$\mathrm{LCC}_{\varepsilon', \overline{\varepsilon}}(T^+, \mathbb L^+(a_1, \ldots, a_k)) \stackrel{\simeq} \longrightarrow \mathrm{LCC}_{\varepsilon', \varepsilon}(T^+, \mathbb L^+)
,$$
under the assumption that the augmentations $\varepsilon$ and $\overline{\varepsilon}$
are as in Lemma \ref{lemma: push-forward}.
\end{prop}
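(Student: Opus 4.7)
The plan is to deduce the proposition from the acyclicity of the Cthulhu complex (Theorem \ref{great cthulhu}), applied to the pair of Lagrangian cobordisms in $\R \times M$ given by the trivial cylinder $\Sigma^0 := \R \times T^+$ and the surgery cobordism $\Sigma^1 := \Sigma(a_1, \ldots, a_k)$. The positive ends of this pair are $T^+$ and $\mathbb{L}^+$, with augmentations $\varepsilon'$ and $\varepsilon$; the negative ends are $T^+$ and $\mathbb{L}^+(a_1, \ldots, a_k)$, with augmentations $\varepsilon'$ and $\overline{\varepsilon}$. By construction, the DGA map induced by the trivial cylinder over $T^+$ is the identity, and the DGA map induced by $\Sigma(a_1, \ldots, a_k)$ is exactly the map $\Phi_\Sigma$ of Section \ref{sec:cthulhu-compl-surg}; the compatibility $\varepsilon = \overline{\varepsilon} \circ \Phi_\Sigma$ guaranteed by Lemma \ref{lemma: push-forward} is precisely the condition needed to form the Cthulhu complex.

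The key geometric input is that $\Sigma^0$ and $\Sigma^1$ can be made disjoint. Since the surgery is supported in the small neighbourhoods $\mathcal{U}_1, \ldots, \mathcal{U}_k$ of the contractible Reeb chords $a_i$, we may first apply a compactly supported Hamiltonian isotopy to $T$ so that $T$ is disjoint from the Lagrangian projections $\pi(\mathcal{U}_i) \subset W$; such an isotopy changes the two complexes in the statement by (mutually compatible) quasi-isomorphisms, so it is harmless. Over the subset where $\Sigma^1$ coincides with the trivial cylinder $\R \times \mathbb{L}^+$, we further translate the lift $T^+$ in the $\R_z$-direction so that $T^+$ and $\mathbb{L}^+$ are disjoint as Legendrians; this is possible and leaves the Lagrangian projections unchanged. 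Over the surgery regions $\R \times \mathcal{U}_i \times \R_z$, the cobordism $\Sigma^1$ is contained in the local model, which by construction does not meet $\R \times T^+$. Hence $\Sigma^0 \cap \Sigma^1 = \emptyset$.

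With these preparations, Corollary \ref{cthulhu isomorphism} applies directly: the component
$$d_{+-} \colon LCC_{\varepsilon',\overline{\varepsilon}}(T^+, \mathbb{L}^+(a_1, \ldots, a_k)) \longrightarrow LCC_{\varepsilon',\varepsilon}(T^+, \mathbb{L}^+)$$
of the Cthulhu differential is a quasi-isomorphism, proving the proposition. This map is the one obtained by counting rigid pseudoholomorphic discs in $\R \times M$ with boundary on $\Sigma^0 \cup \Sigma^1$, with one positive and one negative mixed boundary puncture at the ends, weighted by the augmentations $\varepsilon', \varepsilon, \overline{\varepsilon}$ on negative pure punctures.

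The main point to verify is that the Cthulhu machinery of \cite{Floer_cob} genuinely applies to our setup, since the ends $\mathbb{L}$ and $\mathbb{L}(a_1,\ldots, a_k)$ are only immersed (their self-intersections being encoded by the augmentations) and since our cobordisms have lateral ends over which $\mathbb{L}$ and $\mathbb{L}(a_1,\ldots,a_k)$ agree with a cylinder on a Legendrian. The immersed nature of the ends is harmless because the Legendrian lifts $\mathbb{L}^+$ and $\mathbb{L}^+(a_1,\ldots,a_k)$ are embedded, so the cobordism $\Sigma^1$ itself is an embedded exact Lagrangian (as explicitly verified in Section \ref{sec:surgery-cobordism}); the self-intersections at the ends enter the algebra only through the augmentation weights on negative pure punctures, just as in Section \ref{CF products}. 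The presence of the lateral ends is dealt with by the maximum principle established in Lemma \ref{maximum principle}, which rules out escape of pseudoholomorphic curves to infinity in the $W$-direction. Together these ensure the compactness and transversality results of \cite{Floer_cob} carry over verbatim, and Theorem \ref{great cthulhu} is valid in the present generality.
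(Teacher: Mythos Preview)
Your proof is correct and follows essentially the same approach as the paper: apply Corollary \ref{cthulhu isomorphism} to the trivial cylinder $\R \times T^+$ and the surgery cobordism $\Sigma(a_1,\ldots,a_k)$, after arranging them to be disjoint via a Hamiltonian isotopy of $T$. Your write-up is in fact more detailed than the paper's, which dispatches the disjointness in one sentence and does not spell out the augmentation compatibility or the role of Lemma \ref{maximum principle}; these elaborations are all accurate.
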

\begin{proof}
We denote by $\Sigma_T$ the trivial cobordism $\Sigma_T = \R \times T^+ \subset \R
\times M$. Recall that the surgery cobordism goes from $\mathbb L^+(a_1, \ldots, a_k)$ to $\mathbb L^+$. Since the surgery is localised in a neighbourhood of the intersection points $a_1,
\ldots, a_k$, by a Hamiltonian isotopy we can assume that $$\Sigma_T \cap \Sigma(a_1, \ldots,
a_k) = \emptyset.$$
Then Corollary \ref{cthulhu isomorphism} implies that the map $d_{+-}$ in the Cthulhu
differential for the cobordisms $\Sigma_T$ and $\Sigma(a_1, \ldots, a_k)$ is a
quasi-isomorphism.
\end{proof}

\begin{lemma}
\label{lem:PushForwardCycle}
Let $\mathbb{L}'$ be an immersed exact Lagrangian submanifold with an augmentation
$\varepsilon'$ and let $\mathbb{L}^+, (\mathbb{L}')^+$ be Legendrian lifts such that
$\mathbb{L}^+$ is above $(\mathbb{L}')^+$.
When Lemma \ref{lemma: push-forward} is applied to an augmentation
$$\varepsilon_c \colon \mathfrak{A}(\mathbb L^+ \cup (\mathbb L')^+) \to \F$$
induced by the cycle
$$c \in \mathrm{CF}((\mathbb L,\varepsilon),(\mathbb L',\varepsilon'))$$
as in Lemma \ref{lem:CyclesAugmentations}, then the push-forward of the augmentation under the DGA morphism
$$\overline{\varepsilon_c}=\overline{\varepsilon}_{\overline{c}} \colon \mathfrak{A}(\mathbb L^+(a_1,\ldots,a_k) \cup (\mathbb L')^+) \to \F$$
is induced by a cycle
$$\overline{c} \in \mathrm{CF}((\mathbb L(a_1,\ldots,a_k),\overline{\varepsilon}),(\mathbb L',\overline{\varepsilon'}))$$
which moreover is mapped to $c$ under the quasi-isomorphism from Proposition \ref{prop_coneiso} .
\end{lemma}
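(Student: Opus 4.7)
The plan is to leverage the fact that the surgery cobordism $\Sigma(a_1,\ldots,a_k)$, localised around the pure chords $a_i$ of $\mathbb{L}^+$, can be extended to act on the pair $\mathbb{L}^+ \cup (\mathbb{L}')^+$ by coupling it with the trivial cobordism $\Sigma_{\mathbb{L}'} = \R \times (\mathbb{L}')^+$. After a small Hamiltonian isotopy one may assume $\Sigma(a_1,\ldots,a_k) \cap \Sigma_{\mathbb{L}'} = \emptyset$, and the resulting pair induces a DGA morphism
$$\Phi_{\mathrm{pair}} \colon \mathfrak{A}(\mathbb{L}^+ \cup (\mathbb{L}')^+) \to \mathfrak{A}(\mathbb{L}^+(a_1,\ldots,a_k) \cup (\mathbb{L}')^+).$$
Its restriction to $\mathfrak{A}(\mathbb{L}^+)$ recovers $\Phi_\Sigma$ (so Equations \eqref{dga map} apply), while its restriction to $\mathfrak{A}((\mathbb{L}')^+)$ is the identity. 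The main observation that will require care is that the bilinearisation of $\Phi_{\mathrm{pair}}$ restricted to mixed chords --- with respect to augmentations on the pure subalgebras at either end --- coincides on the nose with the Cthulhu map $d_{+-}$ of Proposition \ref{prop_coneiso}: both count exactly the pseudoholomorphic discs on $(\Sigma_{\mathbb{L}'}, \Sigma(a_1,\ldots,a_k))$ with one positive puncture and one negative puncture at mixed chords at the two ends, with other negative punctures at pure chords that are augmented away.

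With $\Phi_{\mathrm{pair}}$ in hand, apply Lemma \ref{lemma: push-forward} to $\varepsilon_c$; the hypothesis $\varepsilon_c(a_i) = \varepsilon(a_i) = 1$ holds because each $a_i$ is a pure chord of $\mathbb{L}^+$. The resulting augmentation $\overline{\varepsilon_c}$ on $\mathfrak{A}(\mathbb{L}^+(a_1,\ldots,a_k) \cup (\mathbb{L}')^+)$ satisfies $\varepsilon_c = \overline{\varepsilon_c} \circ \Phi_{\mathrm{pair}}$, and by the construction in Lemma \ref{lemma: push-forward} (descent through the kernel of $\Phi_{\mathrm{pair}}$) it restricts on the pure subalgebras to $\overline{\varepsilon}$ and to $\varepsilon'$ respectively. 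Since $\mathbb{L}^+$ lies above $(\mathbb{L}')^+$ and the surgery produces $\mathbb{L}^+(a_1,\ldots,a_k)$ contained in the slab between the two sheets of $\mathbb{L}^+$, the lift $\mathbb{L}^+(a_1,\ldots,a_k)$ is also above $(\mathbb{L}')^+$, so no Reeb chord of the joint Legendrian starts on $\mathbb{L}^+(a_1,\ldots,a_k)$ and ends on $(\mathbb{L}')^+$. Lemma \ref{lem:CyclesAugmentations} applied in reverse then produces the desired cycle $\overline{c} \in CF((\mathbb{L}(a_1,\ldots,a_k),\overline{\varepsilon}),(\mathbb{L}',\overline{\varepsilon'}))$, defined by $\langle \overline{c},x\rangle = \overline{\varepsilon_c}(x)$ on mixed generators, giving the decomposition $\overline{\varepsilon_c} = \overline{\varepsilon}_{\overline{c}}$.

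The last step is to verify $d_{+-}(\overline{c}) = c$. I will test this against each mixed generator $c_+$ of $\mathbb{L}^+ \cup (\mathbb{L}')^+$ via the chain of equalities
$$\langle c, c_+ \rangle = \varepsilon_c(c_+) = \overline{\varepsilon_c}(\Phi_{\mathrm{pair}}(c_+)) = \langle d_{+-}(\overline{c}), c_+ \rangle;$$
the second equality is the defining relation for $\overline{\varepsilon_c}$, and the third uses both the decomposition of $\Phi_{\mathrm{pair}}(c_+)$ as a sum of words each containing precisely one mixed chord (inherited from the positioning of the Legendrian lifts, as in the proof of Lemma \ref{lem:CyclesAugmentations}) and the identification of $d_{+-}$ with the bilinearised $\Phi_{\mathrm{pair}}$ from the first paragraph. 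The main obstacle of the whole proof is thus the first step --- establishing that $d_{+-}$ arises as the bilinearisation of the DGA morphism on mixed chords --- after which the remainder reduces to essentially algebraic bookkeeping.
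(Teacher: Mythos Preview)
Your proposal is correct and follows essentially the same approach as the paper: the paper's proof is a terse two-paragraph version of exactly what you wrote, first noting that the height condition forces $\overline{\varepsilon_c}$ to have the form $\overline{\varepsilon}_{\overline{c}}$ (whence Lemma~\ref{lem:CyclesAugmentations} gives that $\overline{c}$ is a cycle), and then observing that the discs counted by the DGA morphism on the combined cobordism coincide with those defining $d_{+-}$ in Proposition~\ref{prop_coneiso}. Your write-up simply unpacks this last identification more carefully, spelling out the bilinearisation and the chain of equalities $\langle c,c_+\rangle = \varepsilon_c(c_+) = \overline{\varepsilon_c}(\Phi_{\mathrm{pair}}(c_+)) = \langle d_{+-}(\overline{c}),c_+\rangle$ that the paper leaves implicit.
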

\begin{proof}
There is no Reeb chord starting on either $\mathbb L^+$ or $\mathbb L^+(a_1,\ldots,a_k)$ and ending on $(\mathbb L')^+$, so the pushed-forward augmentation is automatically of the form $\overline{\varepsilon}_{\overline{c}}$. Lemma \ref{lem:CyclesAugmentations} then implies that $\overline{c}$ is a cycle.

The last statement is an algebraic consequence of the fact that the discs counted by the DGA morphism $\Phi_\Sigma$ induced by the surgery cobordism can be identified with the discs counted by the quasi-isomorphism from Proposition \ref{prop_coneiso}.
\end{proof}

Now assume that $\mathbb L'$ is a push off of $\mathbb{L}$ as constructed in Lemma \ref{continuation element}, and let $e \in \mathrm{CF}(\mathbb L, \mathbb L')$ be the ``unit'' defined by the sum of the local minima $e_i$ of the Morse function on the connected components of $\mathbb L$; i.e.\ $e = \sum e_i$.
\begin{cor}
\label{unit after surgery}
The cycle
$$\overline{e} \in \mathrm{LCC}_{\overline{\varepsilon},\varepsilon'}(\mathbb L^+(a_1,\ldots,a_k),(\mathbb L')^+)$$
provided by Lemma \ref{lem:PushForwardCycle} (which is mapped to $e$ under the quasi-isomorphism by Proposition \ref{prop_coneiso}) satisfies the property that
$$\mu^2(\overline{e},\cdot) \colon \mathrm{CF}(T, (\mathbb L(a_1,\ldots,a_k),\overline{\varepsilon})) \to \mathrm{CF}(T, (\mathbb L',\varepsilon'))$$
is a quasi-isomorphism for any exact Lagrangian submanifold $T$ with cylindrical end.
\end{cor}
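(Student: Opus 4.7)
My strategy is to realise both $\mu^2(e,\cdot)$ and $\mu^2(\overline{e},\cdot)$ as the cross-terms of mapping-cone differentials on bilinearised Legendrian contact cohomology complexes associated with the disjoint unions $\mathbb L \sqcup \mathbb L'$ and $\mathbb L(a_1,\ldots,a_k) \sqcup \mathbb L'$, and then to apply Proposition \ref{prop_coneiso} to the full disconnected Lagrangian.

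First I would arrange the Legendrian lifts so that $T^+$ lies below both $\mathbb L^+$ and $(\mathbb L')^+$, while $\mathbb L^+$ lies above $(\mathbb L')^+$; the latter is the ordering already used in Lemma \ref{lem:CyclesAugmentations} to promote the cycle $e$ to an augmentation $\varepsilon_e$ of $\mathfrak{A}(\mathbb L^+ \cup (\mathbb L')^+)$, and the analogous choice promotes $\overline{e}$ to an augmentation $\varepsilon_{\overline{e}}$ of $\mathfrak{A}(\mathbb L^+(a_1,\ldots,a_k) \cup (\mathbb L')^+)$. With these orderings, the generators of $LCC_{\varepsilon', \varepsilon_e}(T^+, (\mathbb L \sqcup \mathbb L')^+)$ split as $CF(T,(\mathbb L,\varepsilon)) \oplus CF(T,(\mathbb L',\varepsilon'))$. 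A direct inspection of the rigid discs counted by the LCC differential shows that its only non-diagonal contribution is a disc with boundary on $T$, $\mathbb L$ and $\mathbb L'$, one pure puncture at a mixed chord in $\mathcal{C}$ weighted by $\varepsilon_e$; by definition of $\varepsilon_e$ this weight is precisely $\langle e, \cdot\rangle$, and so the cross-term is exactly the $\mu^2$-operation of \eqref{eqn: products} applied to $e$. The differential therefore takes the form of the mapping cone
\begin{equation*}
\begin{pmatrix} \partial & 0 \\ \mu^2(e,\cdot) & \partial \end{pmatrix},
\end{equation*}
and the same decomposition identifies $LCC_{\varepsilon', \varepsilon_{\overline{e}}}(T^+, (\mathbb L(a_1,\ldots,a_k) \sqcup \mathbb L')^+)$ with the mapping cone of $\mu^2(\overline{e},\cdot)$.

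Next I would apply Proposition \ref{prop_coneiso} to the immersed Lagrangian $\mathbb L(a_1,\ldots,a_k) \sqcup \mathbb L'$, surgered only at the contractible chords $a_1,\ldots,a_k$ of its $\mathbb L(a_1,\ldots,a_k)$-component, with cobordism $\Sigma(a_1,\ldots,a_k) \sqcup (\R \times (\mathbb L')^+)$. By Lemma \ref{lem:PushForwardCycle} the pushforward of $\varepsilon_{\overline{e}}$ under this cobordism is exactly $\varepsilon_e$, so the proposition yields a quasi-isomorphism of the two mapping cones above. Since the cobordism is trivial on the $\mathbb L'$-component, this quasi-isomorphism has the form
\begin{equation*}
\begin{pmatrix} \Phi_T & 0 \\ K & \mathrm{id} \end{pmatrix},
\end{equation*}
where $\Phi_T$ is the quasi-isomorphism of Proposition \ref{prop_coneiso} for the pair $(T,\mathbb L(a_1,\ldots,a_k))$, and $K$ is some primitive. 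The chain-map identity for this matrix reads
\begin{equation*}
\mu^2(e,\cdot) \circ \Phi_T - \mu^2(\overline{e},\cdot) = \partial K + K \partial,
\end{equation*}
exhibiting the required homotopy commutativity.

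Finally, after a compactly supported Hamiltonian isotopy of $T$ (which preserves all the relevant Floer groups by the invariance results of Section \ref{sec: continuation maps}), the hypotheses of Lemma \ref{continuation element} are arranged, so $\mu^2(e,\cdot)$ is a quasi-isomorphism. Combined with the fact that $\Phi_T$ is a quasi-isomorphism, the homotopy commutativity above forces $\mu^2(\overline{e},\cdot)$ to be a quasi-isomorphism, completing the argument. The main obstacle is the identification of the LCC differential with the advertised mapping cone: while intuitively clear, it requires a careful combinatorial analysis of the rigid discs whose boundary transitions between the two components of the disjoint union, ensuring that the ordering of the Legendrian lifts permits a transition in only one of the two directions, and that the augmentation weights at the pure mixed punctures reproduce the $\mu^2$-operation; this is standard in the bilinearised Legendrian contact cohomology framework but deserves attention in the present immersed setting.
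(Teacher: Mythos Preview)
Your approach is essentially the same as the paper's: both form the union $\mathbf{L}=\mathbb L \cup \mathbb L'$, identify $LCC(T^+,\mathbf{L}^+)$ (with the augmentation $\varepsilon_e$ induced by $e$) with the mapping cone of $\mu^2(e,\cdot)$, do the same for the surgered version $\mathbf{L}(a_1,\ldots,a_k)$ with $\overline{e}$, and apply Proposition~\ref{prop_coneiso} to $\mathbf{L}$ to get a quasi-isomorphism between the two cones.

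The paper's argument is, however, more direct at the final step. Rather than analysing the matrix form of the Cthulhu map $d_{+-}$, it simply observes that the cone of $\mu^2(e,\cdot)$ is acyclic (since $\mu^2(e,\cdot)$ is a quasi-isomorphism by Lemma~\ref{continuation element}), transfers this acyclicity along the quasi-isomorphism of Proposition~\ref{prop_coneiso}, and concludes that the cone of $\mu^2(\overline{e},\cdot)$ is acyclic, hence $\mu^2(\overline{e},\cdot)$ is a quasi-isomorphism. Your detour through the triangular form $\begin{pmatrix} \Phi_T & 0 \\ K & \mathrm{id} \end{pmatrix}$ is not needed, and in fact is delicate to justify: the map $d_{+-}$ counts discs in the symplectisation with boundary on $\Sigma_T$ and on $\Sigma(a_1,\ldots,a_k) \cup (\R\times(\mathbb L')^+)$, and such discs can have pure negative punctures at mixed chords between $\mathbb L^+(a_1,\ldots,a_k)$ and $(\mathbb L')^+$ (which are augmented nontrivially by $\overline{\varepsilon}_{\overline{e}}$). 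These allow the $\Sigma^1$-boundary arc to switch components, so it is not immediate that the top-right entry vanishes or that the top-left entry is exactly the single-component map $\Phi_T$. These claims may well be true after a careful sign/orientation analysis, but the paper's route sidesteps them entirely.

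A minor point: the direction of the pushforward is reversed in your phrasing. The DGA morphism $\Phi_\Sigma$ goes from $\mathfrak{A}(\mathbb L^+)$ to $\mathfrak{A}(\mathbb L^+(a_1,\ldots,a_k))$, so it is $\varepsilon_e$ that is pushed forward to $\overline{\varepsilon_e}=\overline{\varepsilon}_{\overline{e}}$, not the other way around.
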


\begin{proof}
Consider the Legendrian lift $\mathbf{L}^+=\mathbb L^+ \cup (\mathbb L')^+$ such that
$\mathbb{L}^+$ is above $(\mathbb{L}')^+$. Then let the lift $\mathbf{L}^+(a_1, \ldots, a_k)=\mathbb L^+(a_1, \ldots, a_k) \cup (\mathbb L')^+,$ is specified uniquely by the
requirement that it coincides with the first lift outside of a compact subset.

Recall that $e$ is closed by Lemma \ref{unit} and by Lemma \ref{lem:CyclesAugmentations} there is thus an induced augmentation $\varepsilon_e$ of $\mathfrak{A}(\mathbf{L}^+)$. Recall that this augmentation coincides with $\varepsilon$ and $\varepsilon'$ when restricted to the generators on the components $\mathbb L^+$ and $(\mathbb L')^+,$ respectively, while $\varepsilon_e(e_i)=1$ holds for any chord  corresponding to a local minimum and $\varepsilon_e(c)=0$ for every other chord $c$ between $\mathbb{L}^+$ and $(\mathbb{L}')^+$.

Applying Proposition \ref{prop_coneiso} to the Legendrian $\mathbf{L}^+(a_1,\ldots,a_k)$ obtained by surgery on $\mathbf{L}$, yields a quasi-isomorphism
$$ \mathrm{LCC}_{\overline{\varepsilon}_{\overline{e}}}(T^+, \mathbf{L}^+(a_1, \ldots, a_k)) \stackrel{\cong} \longrightarrow \mathrm{LCC}_{\varepsilon_e}(T^+, \mathbf{L}^+).$$
(Here we use that $\overline{\varepsilon}_{\overline{e}}=\overline{\varepsilon_e}$ by Lemma \ref{lem:PushForwardCycle}.) The complex on the right-hand side is acyclic by Lemma \ref{continuation element}, and hence so is the complex on the left-hand side. The sought statement is now a consequence of the straight-forward algebraic fact that the complex
$$\mathrm{LCC}_{\overline{\varepsilon}_{\overline{e}}}(T^+, \mathbf{L}^+(a_1, \ldots, a_k))$$
is equal to the mapping cone of
$$\mu^2(\overline{e},\cdot) \colon \mathrm{CF}(T, (\mathbb L(a_1,\ldots,a_k),\overline{\varepsilon})) \to \mathrm{CF}(T, (\mathbb L',\varepsilon')).$$
\end{proof}

\subsection{Twisted complexes}
\label{sec:twisted-complexes}

The aim of this section is to relate the geometric notion of Lagrangian surgery to the algebraic
notion of twisted complex in the wrapped Fukaya category. We first recall the definition of a
twisted complex in an $A_\infty$-category.

Given a unital $A_\infty$-category $\mathcal{A}$, we describe the category
$\operatorname{Tw}\mathcal{A}$ of twisted complexes over $\mathcal{A}$ and recall its
basic properties. We introduce the following notation: given a number $d$ of matrices $A_i$
with coefficients in the morphism spaces of an  $A_\infty$-algebra, we denote by
$\mu^d_{\mathcal A}(A_d,\ldots,A_1)$  the matrix whose entries are obtained by
applying $\mu^d_{\mathcal A}$ to the entries of the formal product of the $A_i$'s.

\begin{dfn}
  A \textit{twisted complex} over $\mathcal{A}$ is given by the following data:
  \begin{itemize}
  \item a finite collection of objects $L_0, \ldots, L_k$ of $\mathcal{A}$ for some $k$,
\item integers $\kappa_i$ for  $i=0,\ldots, k$, and
\item a matrix $X= (x_{ij})_{0 \le i, j \le k}$ such that
$x_{ij}\in \hom_{\mathcal A}(L_i,L_j)$
and $x_{ij}=0$ if $i \ge j$, which satisfies the {\em Maurer-Cartan equation}
$$\sum_{d=1}^k \mu^d_{\mathcal A}(\underbrace{X,\ldots,X}_\text{$d$ times})=0.$$
  \end{itemize}
\end{dfn}
The integers $\kappa_i$ are degree shifts and are part of the definition only if the morphism
spaces $\hom_{\mathcal A}(L_i, L_j)$ are graded, and otherwise are suppressed.

Given two twisted complexes $\mathfrak{L}=(\{L_i\},\{\kappa_i\}, X)$ and $\mathfrak{L}'=(\{L'_i\},
\{\kappa_i'\}, X')$ we define $$\hom_{\operatorname{Tw}{\mathcal A}}(\mathfrak{L},\mathfrak{L}'):=
\bigoplus_{i,j}\hom_{\mathcal A}(L_i,L_j)[\kappa_i- \kappa_j']$$
and, given $d+1$ twisted complexes $\mathcal{L}_0, \ldots, \mathcal{L}_d$, we define
$A_{\infty}$ operations
$$\mu^d_{\operatorname{Tw}{\mathcal A}} \colon \hom_{\operatorname{Tw}{\mathcal A}}(\mathfrak{L}_{d-1},\mathfrak{L}_d)
\otimes \ldots \otimes \hom_{\operatorname{Tw}{\mathcal A}}(\mathfrak{L}_0,\mathfrak{L}_1) \to
\hom_{\operatorname{Tw}{\mathcal A}} (\mathfrak{L}_0, \mathfrak{L}_d)$$
by
\begin{align}\label{eq:mutwisted}
& \mu^d_{\operatorname{Tw}{\mathcal A}}(q_d,\ldots,q_1)=\nonumber\\
& \sum_{k_1,\ldots, k_d \ge 0}\mu_{\mathcal A}^{k_1+\ldots +k_d+d}(\underbrace{X_d, \ldots,
X_d}_{k_d}, q_d, X_{d-1}, \ldots, X_{1},q_1,\underbrace{X_0\ldots,X_0}_{k_0}).
\end{align}

It is shown in \cite[Section 3.k]{Seidel_Fukaya} that the set
of twisted complexes with operations $\mu^d_{\operatorname{Tw}{\mathcal A}}$ constitutes an $A_\infty$-category
$\operatorname{Tw}\mathcal{A}$ which contains $\mathcal{A}$ as a full subcategory. Furthermore it is
shown in \cite[Lemma 3.32 and Lemma 3.33]{Seidel_Fukaya} that  $\operatorname{Tw}\mathcal{A}$ is
the triangulated envelope of $\mathcal{A}$ and thus $H^0\operatorname{Tw}(\mathcal{A})$ is the derived
category of $\mathcal{A}$.
\begin{dfn} We say that a collection of objects $L_1,\ldots, L_k$ of $\mathcal{A}$ generates
$\mathcal{A}$ if and only if any object $L$ of $\mathcal{A}$ is quasi-isomorphic in
$\operatorname{Tw}\mathcal{A}$ to a twisted complex built from the object $L_i$'s.
\end{dfn}

\begin{lemma}\label{lem:twistedvanishing}
If there is a twisted complex $\mathfrak{L}$ built from $L_0, \ldots, L_k$ such that, for
every object $T$ of $\mathcal{A}$ we have $H \hom_{\operatorname{Tw}{\mathcal A}}(T, \mathfrak{L})=0$,
then $L_0$ is quasi-isomorphic in $\operatorname{Tw}\mathcal{A}$ to a twisted complex built from $L_1, \ldots,
L_k$.
\end{lemma}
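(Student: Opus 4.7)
The plan is to exhibit $\mathfrak{L}$ as an extension of $L_0$ by a twisted complex built from $L_1, \ldots, L_k$, and then to invoke the vanishing hypothesis together with a Yoneda-type argument to conclude that $L_0$ is quasi-isomorphic to that twisted complex (up to shift).

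I would first observe that the submatrix $X' = (x_{ij})_{1 \le i, j \le k}$ defines a twisted complex $\mathfrak{L}' = (\{L_i\}_{i=1}^k, \{\kappa_i\}_{i=1}^k, X')$. The Maurer-Cartan equation for $X$ restricts to one for $X'$, because any product $x_{ii_1} x_{i_1 i_2} \cdots x_{i_{d-1} j}$ contributing to the $(i,j)$-entry of $\mu^d_{\mathcal{A}}(X, \ldots, X)$ with $i, j \ge 1$ necessarily has all intermediate indices $\ge 1$, by the strict upper triangularity of $X$. The natural inclusion $\mathfrak{L}' \to \mathfrak{L}$ and projection $\mathfrak{L} \to L_0$ (with an appropriate grading shift) are then closed morphisms in $Tw\mathcal{A}$, fitting into an exact triangle
$$\mathfrak{L}' \to \mathfrak{L} \to L_0[\sigma] \to \mathfrak{L}'[1]$$
in $H^0 Tw\mathcal{A}$ for some shift $\sigma$. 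This triangle presents $\mathfrak{L}$ as the cone of a morphism $L_0[\sigma-1] \to \mathfrak{L}'$ whose components are read off from the first-row entries $x_{0j}$ of $X$; compare \cite[Section 3.k]{Seidel_Fukaya}.

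The main step is then to upgrade the hypothesis to the statement that $\mathfrak{L} \cong 0$ in $H^0 Tw\mathcal{A}$. Once this is known, the connecting morphism in the triangle above becomes a quasi-isomorphism between $L_0[\sigma]$ and $\mathfrak{L}'[1]$, equivalently between $L_0$ and a shift of $\mathfrak{L}'$, which is a twisted complex built from $L_1, \ldots, L_k$. To obtain $\mathfrak{L} \cong 0$, it suffices by Yoneda to establish that $H \hom_{Tw\mathcal{A}}(T, \mathfrak{L}) = 0$ for every $T \in Tw\mathcal{A}$: applied to $T = \mathfrak{L}$, this forces $\mathrm{Id}_{\mathfrak{L}} = 0$ in cohomology, from which $\mathfrak{L} \cong 0$ in the additive category $H^0 Tw\mathcal{A}$ follows. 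The vanishing is given for $T \in \mathcal{A}$ by hypothesis, and it extends to all of $Tw\mathcal{A}$ by induction on the number of constituents of $T$: at each step one uses the long exact sequence of hom spaces associated to the iterated cone triangle that attaches one more object of $\mathcal{A}$ to a smaller twisted complex, whose two flanking terms vanish by induction.

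The only mildly delicate point will be the bookkeeping of grading shifts and signs in the triangle above, but this is formal and follows directly from Equation \eqref{eq:mutwisted} applied to the block decomposition of $X$; the remainder of the argument is standard homological algebra in the pretriangulated $A_\infty$-category $Tw\mathcal{A}$.
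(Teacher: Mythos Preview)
Your proof is correct and follows essentially the same route as the paper's: both split off the sub-twisted complex $\mathfrak{L}'$ on $L_1,\ldots,L_k$, use the resulting exact triangle, and conclude via a Yoneda-type argument that the connecting morphism $L_0 \to \mathfrak{L}'$ (given by the first-row entries $(x_{0j})$) is a quasi-isomorphism. The only difference is emphasis: the paper applies $\hom(T,-)$ for $T\in\mathcal{A}$ to the triangle and deduces directly that the connecting map is a quasi-isomorphism (leaving the ``iso on $H\hom(T,-)$ for all $T\in\mathcal{A}$ implies quasi-iso'' step implicit), whereas you first upgrade the vanishing to all $T\in Tw\mathcal{A}$, conclude $\mathfrak{L}\cong 0$, and then read off the quasi-isomorphism from the triangle---a slightly more explicit but equivalent packaging of the same argument.
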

\begin{proof}
  This follows from the iterated cone description of twisted complexes
  from \cite[Lemma 3.32]{Seidel_Fukaya}. More precisely, from the
  definition of twisted complexes, for any object $T$ we have that
  $\hom_{\mathcal A}(T,L_0)$ is a quotient complex of $hom_{\operatorname{Tw}{\mathcal A}}(T,\mathfrak{L})$
by the twisted complex $\mathfrak{L}'$ built from $\mathfrak{L}$ starting
  at $L_1$ (i.e. ``chopping'' out $L_0$ from the twisted complex $\mathfrak{L}$), and thus
those three objects fit in  an exact triangle. The vanishing of $H \hom_{\operatorname{Tw}{\mathcal A}}(T,
\mathfrak{L})$  implies then that
$$H\hom_{\mathcal A}(T,L_0)\cong H\hom_{\operatorname{Tw}{\mathcal A}}(T,\mathfrak{L}').$$
The result follows now because the map from $L_0$ to $\mathfrak{L}'$, which is given by
the maps $(x_{0j})$, is a map of twisted complexes.
\end{proof}

We now relate  twisted complexes in the wrapped Fukaya category with certain
augmentations of the Chekanov-Eliashberg algebra of the Legendrian lift of the
involved Lagrangian submanifolds.
\begin{rem}
\label{rem: pre-twisted}
In the following lemma we will make a slight
abuse of notation by building twisted complexes from immersed exact Lagrangian submanifolds: to our knowledge, the wrapped Fukaya category has not yet been extended to include also exact \emph{immersed} Lagrangian submanifolds. However, since the statements and proofs only concern transversely intersecting Lagrangian submanifolds, there are no additional subtleties arising when considering the $A_\infty$ operations. In other words, we only consider morphisms between \emph{different} objects in the category. We can thus think of twisted
complexes in the ``Fukaya pre-category''. Of course if all Lagrangian submanifolds $L_i$ involved are embedded, the statements make sense also in the ordinary wrapped Fukaya category.
\end{rem}

\begin{lemma}\label{augmentations and twisted complexes}
Let $(L_i, \epsilon_i)$,  for $i=0, \ldots, k$, be unobstructed exact immersed Lagrangian submanifolds   which are assumed to be equipped with fixed potentials $f_i$. We denote $\mathbb{L}= L_1 \cup \ldots \cup L_k$ and $\mathbb{L}^+$ its Legendrian lift determined by the given potentials. We assume that $\mathbb{L}^+$ is embedded.

If $\boldsymbol{\varepsilon} \colon \mathfrak{A}(\mathbb{L}^+) \to \F$ is an
augmentation such that:
\begin{enumerate}
\item $\boldsymbol{\varepsilon}(p) = \varepsilon_i(p)$ for every pure chord $p$ of $L_i^+$,
and
\item $\boldsymbol{\varepsilon}(a) =0$ for every mixed chord $a$ from $L_i^+$ to
$L_j^+$ such that $i>j$,
\end{enumerate}
we define
$$x_{ij} := \begin{cases}
\sum \limits_{a \in L_i \cap L_j} \boldsymbol{\varepsilon}(a) a & \text{if } i<j, \\
0 & \text{if } i \ge j,
\end{cases}$$
and $X=(x_{ij})_{0 \le i,j \le k},$ where the double point $a$ is considered as an element in the summand with wrapping parameter $w=0$ (see Section \ref{chain level}).  Then (ignoring the degrees for simplicity) the pair $\mathfrak{L}=
(\{(L_i, \varepsilon_i) \}, X)$ is a twisted complex in the wrapped Fukaya category. Moreover, for any test
Lagrangian submanifold $T$,
$$H\hom_{\operatorname{Tw}{\mathcal WF}}(T, \mathfrak{L})= \mathrm{HW}(T, (\mathbb{L},
\boldsymbol{\varepsilon})).$$
\end{lemma}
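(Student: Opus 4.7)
The plan is to verify the two claims in turn, first the Maurer--Cartan equation for $X$, and then the identification of chain complexes; both follow from a careful bookkeeping of pseudoholomorphic discs with boundary on $\mathbb{L}$, together with the two hypotheses on $\boldsymbol{\varepsilon}$.

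First I would check the Maurer--Cartan equation $\sum_{d\ge 1} \mu^d_{Tw\mathcal{WF}}(X,\ldots,X)=0$. By the definition of $\mu^d_{Tw\mathcal{WF}}$ (with no non-$X$ inputs) and the strict upper-triangular structure of $X$ forced by $x_{ij}=0$ for $i\ge j$, the coefficient of a double point $c\in L_i\cap L_j$ with $i<j$ in this sum equals
$$ \sum_{d\ge 1}\sum_{i=i_0<i_1<\ldots<i_d=j}\sum_{a_l\in L_{i_{l-1}}\cap L_{i_l}} \boldsymbol{\varepsilon}(a_1)\cdots\boldsymbol{\varepsilon}(a_d)\cdot \#\mathfrak{M}^0(a_d, c, a_1,\ldots,a_{d-1};J_\bullet),$$
where, in the formula for $\mu^d$ of Section~\ref{CF products}, each $\mathfrak{M}^0$ term is already weighted by the augmentations $\varepsilon_{i_l}=\boldsymbol{\varepsilon}|_{\mathfrak{A}(L_{i_l}^+)}$ on the pure-chord insertions on each $L_{i_l}$-arc. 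This is precisely the count of discs with boundary on $\mathbb{L}$ and a single positive puncture at $c$ which appears in the coefficient of $1$ inside $\boldsymbol{\varepsilon}(\mathfrak{d}(c))$: the restriction to strictly increasing sequences $i_0<\ldots<i_d$ is enforced by hypothesis (2) on $\boldsymbol{\varepsilon}$, which kills any monomial of $\mathfrak{d}(c)$ containing a mixed chord going in the ``wrong'' direction, while the pure-chord weights come from hypothesis (1). Since $\boldsymbol{\varepsilon}$ is an augmentation, $\boldsymbol{\varepsilon}\circ\mathfrak{d}=0$, and the Maurer--Cartan equation follows.

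For the identification of complexes, both $\hom_{Tw\mathcal{WF}}(T,\mathfrak{L})$ and $CW(T,(\mathbb{L},\boldsymbol{\varepsilon}))$ decompose as $\bigoplus_{i=0}^k CW(T,L_i)$, since Hamiltonian chords from $T$ to $\mathbb{L}$ split by connected components of $\mathbb{L}$. On the twisted side, the differential is $\mu^1_{Tw\mathcal{WF}}(q)=\sum_{d\ge 0}\mu^{d+1}_{\mathcal{WF}}(X,\ldots,X,q)$, which applied to $q\in CW(T,L_i)$ produces a contribution in $CW(T,L_j)$ for each pseudoholomorphic polygon with boundary arcs on $T,L_{i_0},\ldots,L_{i_d}$ (with $i=i_0<\ldots<i_d=j$), input at $q$, output in $L_j$, mixed-chord inputs $a_l\in L_{i_{l-1}}\cap L_{i_l}$ each weighted by $\boldsymbol{\varepsilon}(a_l)$, and pure-chord punctures on each $L_{i_l}$-arc weighted by $\varepsilon_{i_l}$. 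On the Floer side, the differential of $CW(T,(\mathbb{L},\boldsymbol{\varepsilon}))$ counts pseudoholomorphic strips with boundary on $T\cup\mathbb{L}$ and boundary punctures at double points of $\mathbb{L}$ weighted by $\boldsymbol{\varepsilon}$; grouping such strips by the sequence of components of $\mathbb{L}$ visited by the boundary, and using the triangularity hypothesis (2) to restrict to strictly increasing sequences $i_0<\ldots<i_d$, yields term-by-term the same count as on the twisted side.

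The main technical obstacle will be establishing the identification between the moduli spaces counted by $\mu^{d+1}_{\mathcal{WF}}(X,\ldots,X,\cdot)$, which are disc moduli spaces with $d+2$ mixed boundary punctures and arbitrary pure punctures on each arc, and the moduli spaces of Floer strips with $d$ mixed boundary punctures and arbitrary pure punctures used to define the differential of $CW(T,(\mathbb{L},\boldsymbol{\varepsilon}))$. Since $X$ is built from $w=0$ generators, the relevant components of $\mu^{d+1}_{\mathcal{WF}}$ reduce to the operations $\mu^{d+1}$ defined in Equation~\eqref{eqn: products} by pseudoholomorphic polygons in $W$; the identification then follows by viewing a polygon with two distinguished mixed punctures (at the $T$-input and $T$-output) as a strip with boundary on $T\cup\mathbb{L}$. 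Compatibility of Hamiltonian perturbations used to define $CW$ in the linear setup with the operations at $w=0$ is a routine verification in the spirit of \cite{OpenString} and Remark~\ref{w=0}.
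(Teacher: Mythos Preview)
Your proof is correct and follows essentially the same approach as the paper's. Both arguments reduce the Maurer--Cartan equation to $\boldsymbol{\varepsilon}\circ\mathfrak{d}=0$ by observing that hypothesis~(2) kills precisely those disc contributions to $\mu^d(X,\ldots,X)$ whose mixed punctures are not all negative (equivalently, whose boundary does not traverse the components in increasing order), and both identify the twisted differential with the immersed Floer differential by using that $X$ lives in the $w=0$ summand so that the relevant $\mu^{d+1}_{\mathcal{WF}}$ reduce to counts of ordinary pseudoholomorphic polygons. Your write-up is in fact more explicit than the paper's on the second point; the paper dispatches it in a single sentence.
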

\begin{proof}
Denote by $\bs{\varepsilon}_0$ the augmentation of $\mathfrak{A}(\mathbb{L}^i)$ which vanishes on the mixed chords, while taking the value $\varepsilon_i$ on the generators living on the component $L_i$. Recall the chain model for wrapped Floer complex described in Subsection \ref{chain level}, where the homotopy direct limit $\mathrm{CW}((\mathbb {L},\bs{\varepsilon}_0), (\mathbb {L}, \bs{\varepsilon}_0); J_\bullet)$ is an infinite direct sum starting with the term with wrapping parameter $w=0,$ i.e.~the complex
$$ \mathrm{CF}((\mathbb {L}, \bs{\varepsilon}_0), (\mathbb {L}, \bs{\varepsilon}_0)); \boldsymbol{0}, J_\bullet) \oplus \mathrm{CF}((\mathbb {L}, \bs{\varepsilon}_0),(\mathbb {L}, \bs{\varepsilon}_0); \bs{0}, J_\bullet)q.$$
The bounding cochain $X$ can be identified to a sum of elements in the leftmost summand by definition.

Note that $\mathbb {L}$, of course, is  only immersed. However, in the case when it consists of a union of embeddings, it still represents an object in the twisted complexes of the ordinary wrapped Fukaya category; namely, it is the ``direct sum'' of the Lagrangian submanifolds $L_i,$ $i=1,\ldots,m$.

First we prove that $X$ satisfies the Maurer-Cartan equation. The Maurer-Cartan equation
involves a count of holomorphic polygons in moduli spaces
$\mathfrak{M}_{L_0, \ldots, L_d}^0(\mathbf{p}^d, a_0, \mathbf{p}^1, a_1, \ldots,
\mathbf{p}^{d-1}, a_d; J)$ as in Section \ref{CF products}.
On the other hand, the equation $\boldsymbol{\varepsilon} \circ \mathfrak{d}=0$ counts
holomorphic polygons in the moduli spaces $\mathfrak{N}_{\mathbb{L}}(a_0;  \mathbf{p}^1,
a_1, \ldots, a_d, \mathbf{p}^d)$, which are the subset of the previous moduli spaces
consisting of those holomorphic polygons which
satisfy the extra requirement that the intersection
points $a_1, \ldots, a_d$ should be negative punctures (in the sense of Definition
\ref{positive negative}). Condition (2) in the definition of
$\boldsymbol{\varepsilon}$ however implies that $\# \mathfrak{M}_{L_0, \ldots, L_d}^0
(\mathbf{p}^d, a_0, \mathbf{p}^1, a_1, \ldots, \mathbf{p}^{d-1}, a_d; J)$ is multiplied
by a nonzero coefficient only if $a_1, \ldots a_d$ are negative punctures. This proves that $X$
satisfies the Maurer-Cartan equation.

For the second part, note that the differential in $\hom_{\operatorname{Tw}{\mathcal WF}}(T, \mathfrak{L})$
counts the same holomorphic polygons (with Hamiltonian perturbations) as the differential in
$\mathrm{CW}(T, (\mathbb{L}, \boldsymbol{\varepsilon}))$ because the Maurer-Cartan element $X$
involves only elements in the Floer complexes defined with wrapping parameter $w=0,$ and hence vanishing Hamiltonian term.
\end{proof}

The previous lemma together with Proposition \ref{prop_coneiso} implies the following result, which is the main result of this section:

\begin{prop}\label{prop:surgeryTW}
Let $(L_1, \varepsilon_1),\ldots,(L_m, \varepsilon_m)$ be unobstructed immersed exact Lagrangian submanifolds with preferred choices of potentials $f_i$, and let $a_1,\ldots,a_k$ be a set of intersection points lifting to contractible Reeb chords on the induced Legendrian lift $\mathbb {L}^+,$ where $\mathbb {L}:=L_1\cup\ldots\cup L_m$. Assume that there is an augmentation $\boldsymbol{\varepsilon}$ of the Chekanov-Eliashberg algebra of $\mathbb {L}^+$ such that:
\begin{itemize}
\item[(1)] $\boldsymbol{\varepsilon}(c)= \varepsilon_i(c)$ if $c$ is a double point of $L_i$,
\item[(2)] $\boldsymbol{\varepsilon}(a_i)=1$ for $i=1,\ldots,k$, and
\item[(3)] $\boldsymbol{\varepsilon}(q)=0$ if $q \in L_i \cap L_j$ is an intersection point, with $i>j,$ at which $f_i(q)>f_j(q)$ (i.e.~$q$ corresponds to a Reeb chord from $L_i^+$ to $L_j^+$).
\end{itemize}
Then for any other exact Lagrangian submanifold $T$ there is a quasi-isomorphism
$$\mathrm{CW}(T, (\mathbb {L}(a_1,\ldots,a_k),\overline{\boldsymbol{\varepsilon}})) \cong \mathrm{hom}(T,\mathfrak{L}),$$
with $\overline{\boldsymbol{\varepsilon}}$ induced by $\boldsymbol{\varepsilon}$ as in Lemma \ref{lemma: push-forward}, and where $\mathfrak{L}$ is a twisted complex built from the $L_i$ with $i=1,\ldots, m$.

\end{prop}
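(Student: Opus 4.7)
The plan is to combine two ingredients established earlier in the paper: Lemma \ref{augmentations and twisted complexes}, which converts augmentations into twisted complexes, and Proposition \ref{prop_coneiso}, which relates the Floer theories of pre- and post-surgery Lagrangians. I would proceed by identifying both sides of the claimed quasi-isomorphism with the wrapped Floer complex $CW(T,(\mathbb{L}, \boldsymbol{\varepsilon}))$ of the unsurgered Lagrangian.

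First, I would verify that $\boldsymbol{\varepsilon}$ satisfies the hypotheses of Lemma \ref{augmentations and twisted complexes}: hypothesis (1) of that lemma is precisely condition (1) of the proposition, while hypothesis (2) coincides with condition (3) under the standard identification of an intersection point $q \in L_i \cap L_j$ with $f_i(q) > f_j(q)$ (and $i>j$) with a Reeb chord from $L_i^+$ to $L_j^+$. Applying the lemma produces the twisted complex $\mathfrak{L}$ built from the $L_i$ together with a quasi-isomorphism
\[ \hom_{Tw\mathcal{WF}}(T, \mathfrak{L}) \simeq CW(T, (\mathbb{L}, \boldsymbol{\varepsilon})). \]

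Second, I would establish a quasi-isomorphism
\[ CW(T, (\mathbb{L}(a_1,\ldots,a_k), \overline{\boldsymbol{\varepsilon}})) \simeq CW(T, (\mathbb{L}, \boldsymbol{\varepsilon})), \]
by upgrading Proposition \ref{prop_coneiso} from bilinearised Legendrian contact cohomology to the wrapped setting. The approach is to use Lemma \ref{lemma: naturality} to absorb the wrapping Hamiltonian into the test Lagrangian: for each element $H_{\lambda_n}$ of the defining sequence of wrapping Hamiltonians, set $T_n \coloneqq \varphi_{\lambda_n}(T)$ and note that $T_n$ is again a cylindrical exact Lagrangian, so that $CF(T,\mathbb{L};H_{\lambda_n})$ is isomorphic to an unperturbed Floer complex on $T_n$ and $\mathbb{L}$ (and similarly after surgery). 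Proposition \ref{prop_coneiso} then gives the desired level-wise quasi-isomorphism, since the surgery cobordism $\Sigma(a_1,\ldots,a_k)$, being localised near the self-intersection points of $\mathbb{L}$, can be disjoined from the trivial cobordism $\mathbb{R}\times T_n^+$ by a compactly supported Hamiltonian isotopy. Passing to the direct limit then gives the wrapped statement, provided the Cthulhu quasi-isomorphisms are compatible up to homotopy with the continuation maps of Subsection \ref{sss: changing hamiltonian}; this compatibility follows by the standard parametric moduli space argument, counting rigid solutions in a parametric Cthulhu moduli space and interpreting them as the components of a chain homotopy.

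Combining the two quasi-isomorphisms yields the proposition. The main obstacle is the second step: although each ingredient is by now routine, carefully setting up the parametric Cthulhu moduli spaces — in particular ensuring the maximum principle of Lemma \ref{maximum principle} continues to hold near the lateral ends in the presence of the wrapping-induced isotopy of $T$ — requires attention, and one must verify that all relevant cobordisms remain cylindrical at infinity throughout the family. The disjointness of the surgery cobordism from $\mathbb{R}\times T_n^+$ is preserved throughout the family because the surgery takes place in a fixed compact region while $T_n$ is cylindrical outside some (possibly $n$-dependent) compact subset.
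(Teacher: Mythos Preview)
Your first step matches the paper exactly: apply Lemma \ref{augmentations and twisted complexes} to get the twisted complex $\mathfrak{L}$ and the identification $\hom_{Tw\mathcal{WF}}(T,\mathfrak{L}) \cong CW(T,(\mathbb{L},\boldsymbol{\varepsilon}))$.

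Your second step is where you diverge from the paper. You propose to apply the Cthulhu quasi-isomorphism of Proposition \ref{prop_coneiso} level by level (for each $H_{\lambda_n}$) and then pass to the direct limit, which forces you to verify compatibility of the Cthulhu maps $d_{+-}$ with the continuation maps $\Phi_{\lambda_n,\lambda_m}$ via a parametric argument. This is a legitimate strategy and should work, but as you acknowledge, setting up the parametric Cthulhu moduli spaces and checking the chain homotopies is genuine additional labour.

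The paper sidesteps this compatibility issue entirely by a more categorical trick. Rather than comparing the two direct systems levelwise, it uses the push-off $\mathbb{L}'$ of $\mathbb{L}$ and the cycle $\overline{e} \in CF((\mathbb{L}(a_1,\ldots,a_k),\overline{\boldsymbol{\varepsilon}}),(\mathbb{L}',\boldsymbol{\varepsilon}');\boldsymbol{0},J_\bullet)$ supplied by Corollary \ref{unit after surgery}. Because $\overline{e}$ lives in the summand with wrapping parameter $w=0$, it defines a Maurer--Cartan element and hence a twisted complex (the cone of $\mu^2(\overline{e},\cdot)$) inside the wrapped category itself. Corollary \ref{unit after surgery} says $\mu^2(\overline{e},\cdot)$ is a quasi-isomorphism on each $CF(T,-;wH)$ separately, so every summand of the homotopy direct limit computing $\hom(T,\text{cone})$ is acyclic, and Lemma \ref{lem:twistedvanishing} finishes. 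The point is that once $\overline{e}$ is a morphism in the wrapped category, the $A_\infty$ structure automatically takes care of compatibility with continuation maps; there is no parametric Cthulhu argument to run.

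Both approaches are valid. Yours is conceptually more direct but analytically heavier; the paper's buys simplicity by routing the comparison through a single morphism in the category rather than a map of direct systems.
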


\begin{rem} Conditions (2) and (3) of
Proposition \ref{prop:surgeryTW} imply that if $a_k$ is an
intersection point between different Lagrangians $L_i$ and $L_j$ for
$i<j$, then $f_i(a_k)<f_j(a_k)$. Conditions (1) and (2) of
Proposition \ref{prop:surgeryTW} imply that if $a_k$ is a
self-intersection point of $L_i$, then augmentation $
\varepsilon_i$ evaluates to $1$ on $a_k$.
\end{rem}
\begin{proof}[Proof of Proposition \ref{prop:surgeryTW}]

We consider the twisted complex $\mathfrak{L}$ built from $L_i$, $i=1,\ldots,m$, that is constructed by an application of Lemma \ref{augmentations and twisted complexes} with the augmentation $\boldsymbol{\varepsilon}$. In other words, the twisted complex is defined using the Maurer-Cartan element
$$X:=a_1+\ldots+a_k \in \mathrm{CF}((\mathbb{L}, \bs{\varepsilon}_0), (\mathbb{L}, \bs{\varepsilon}_0); \bs{0}, J_\bullet) \subset \mathrm{CW}((\mathbb {L}, \bs{\varepsilon}_0), (\mathbb{L}, \bs{\varepsilon}_0);J_\bullet)$$
living in the summand with wrapping parameter $w=0.$ The quasi-isomorphism
$$ \mathrm{hom}(T,\mathfrak{L}) \cong \mathrm{CW}(T,({\mathbb L},\boldsymbol{\varepsilon}))$$
is then a consequence of the same lemma.

What remains is constructing a quasi-isomorphism
$$\mathrm{CW}(T, (\mathbb {L}(a_1,\ldots,a_k),\overline{\boldsymbol{\varepsilon}})) \cong \mathrm{hom}(T,\mathfrak{L})$$
for all test Lagrangian submanifolds $T$. This is done by considering the twisted complex corresponding to the cone of the ``unit'' $\overline{e}$ from Corollary \ref{unit after surgery}. We proceed to give the details.

Let $\mathbb{L}'$ be the push-off of $L_1\cup\ldots \cup L_m$ as considered in Lemma \ref{continuation element}.

Consider the cycle $\overline{e}\in \mathrm{CW}((\mathbb{L}(a_1,\ldots,a_k),\overline{\boldsymbol{\varepsilon}}),(\mathbb{L}',\boldsymbol{\varepsilon}'))$ supplied by Corollary \ref{unit after surgery}. As above, $\overline{e}$ is  an element in the summand
\begin{eqnarray*}
\lefteqn{\mathrm{CF}((\mathbb{L}(a_1,\ldots,a_k),\overline{\boldsymbol{\varepsilon}}),(\mathbb {L}',\boldsymbol{\varepsilon}'); \bs{0}, J_\bullet) } \\
& \subset & \mathrm{CW}((\mathbb{L}(a_1,\ldots,a_k),\overline{\boldsymbol{\varepsilon}}),(\mathbb {L}',\boldsymbol{\varepsilon}')); J_\bullet)
\end{eqnarray*}
with wrapping parameter $w=0$. Then
$$\left(\{(\mathbb{L}(a_1,\ldots,a_k),\overline{\boldsymbol{\varepsilon}}),(\mathbb{L}',\boldsymbol{\varepsilon}')\},\overline{e}\right)$$
is a twisted complex $\mathfrak{L}'$ corresponding to the cone of $\mu^2(\overline{e},\cdot).$ The last part of Corollary \ref{unit after surgery} combined with Lemma \ref{lem:twistedvanishing} then establishes the sought quasi-isomorphism. Indeed, every summand
$$ \mathrm{CF}(T,(\mathbf{L},\boldsymbol{\varepsilon}_{\overline{e}}); w\cdot H,J_\bullet) \subset \mathrm{CW}(T,(\mathbf{L},\boldsymbol{\varepsilon}_{\overline{e}});J_\bullet)$$
in the homotopy direct limit which computes the homology of the cone is acyclic by Corollary \ref{unit after surgery}. (Here $\mathbf{L}=\mathbb{L}(a_1,\ldots,a_k) \cup \mathbb{L}'$ as in the proof of the latter corollary.)

Here we remind the reader that the components of $\mathbb{L}(a_1,\ldots,a_k)$ typically are only immersed, as opposed to embedded. The statement that $$\mathrm{CW}(T, (\mathbb {L}(a_1,\ldots,a_k),\overline{\boldsymbol{\varepsilon}})) \cong \mathrm{hom}(T,\mathfrak{L}),$$
 is hence established on the level of twisted complexes on the pre-category level; c.f.~Remark \ref{rem: pre-twisted}.
\end{proof}

\section{Generating the Wrapped Fukaya category}
\label{sec: generation}
In this section we prove Theorem~\ref{main}.

\subsection{Geometric preparations}
\label{ss: preparation}
Before proving the main theorems we need some geometric preparations which will be used in the technical work of Section \ref{ss: augmentation}.
Recall that the Liouville
form $\theta$ has been modified in order to make
$(\mathcal{H}_1\cup \ldots \cup \mathcal{H}_l,\theta,\mathfrak{f})$
into a union of standard critical Weinstein handles. After adding the
differential of a function supported in a small neighbourhood of
$\mathcal{H}_1 \cup \ldots \cup \mathcal{H}_l,$ we change
the Liouville form once again so that the symplectomorphism between
$(\mathcal{H}_i,d \theta)$ and
$(D_\delta T^* C_i, d \mathbf{p} \wedge d \mathbf{q})$ maps the new
Liouville form $\theta_c$ to $\mathbf{p} d \mathbf{q}$. We make the
modification so that the new Liouville vector field ${\mathcal L}_c$
is still positively transverse to $\partial W_0$, has no zeros outside
$W_0$, and so that the new and old Lagrangian skeleta coincide. (On the other hand ${\mathcal L}_c$ it is no longer a
pseudo-gradient vector field for $\mathfrak{f}$, but this will not
impair the proof of Theorem~\ref{main}.) Note that the above
identification maps the core of a handle to the zero section and the
cocore into a cotangent fibre. Further, we perform the construction of the new Liouville form so that the corresponding Liouville vector field is still everywhere tangent to $D_i.$ The reason for changing $\theta$ to $\theta_c$ is to simplify the arguments of Subsection~\ref{ss: augmentation}.

The set of cylindrical exact Lagrangian submanifolds of $(W, \theta)$ coincide with that of
$(W, \theta_c)$ and the wrapped Floer cohomology between any two such Lagrangian submanifolds is
unaffected by the modification of $\theta$ by the invariance properties of wrapped Floer
homology; see \cite[Section 5]{OpenString}. This means that $\mathcal{WF}(W, \theta)$ is quasi-equivalent to $\mathcal{WF}(W, \theta_c)$.

With a new Liouville vector field we will choose a new function $\mathfrak{r} \colon W \to
[R_0,  + \infty)$ satisfying Conditions (i) and (ii) of Section~\ref{ss: liouville} for $R_0 \ll
0$ such that, on $\partial W_0$, the old and new $\mathfrak{r}$ coincide. From now on,
$\mathfrak{r}$ will always be defined using the new Liouville vector field ${\mathcal L}_c$.
 Later in the proof of Proposition~\ref{generating an object}, we will modify $\mathfrak{r}$ so that the new $R_0 \ll 0$ becomes sufficiently small, while keeping $\mathfrak{r}$ fixed outside of a compact subset.

Let $\psi_t$ be the Liouville flow of $(W, \theta_c)$ and let
$$\widehat{\mathcal H}_i := \bigcup \limits_{t \ge 0} \psi_t({\mathcal H}_i).$$
It follows that $\widehat{\mathcal H}_i \subset W$ are pairwise disjoint, embedded codimension zero manifolds. Moreover, there are exact symplectomorphisms
$$(\widehat{\mathcal H}_i,\theta_c) \cong (T^*C_i,\mathbf{p} d \mathbf{q})$$
with the standard symplectic cotangent bundles.

Recall Conditions (i) and (ii) from Subsection~\ref{ss: liouville}. In particular, $\mathfrak{r}^{-1}(R_0)=W^{sc} \cup \mathcal{H}_1 \cup \ldots \cup \mathcal{H}_l,$ while $\mathfrak{r}|_{\mathfrak{r}^{-1}[R_0+1,+\infty)}$ is a symplectisation coordinate induced by the hypersurface $\mathfrak{r}^{-1}(R_0+1)$ of contact type. In the following we make the further assumption that
\begin{equation}\label{weinstein neighbourhood}
\mathfrak{r}^{-1}(R_0+1) \cap \widehat{\mathcal H}_i=S^*_{r_0}T^*C_i
\end{equation}
for some $r_0>0,$ where the latter radius-$r$ spherical cotangent bundle is induced by the flat metric on $C_i$. This means that
\begin{equation} \label{strange condition}
\mathfrak{r}(\mathbf{p},\mathbf{q})=\log{\|\mathbf{p}\|}- \log{r_0}+R_0+1,\quad
\|\mathbf{p}\| \ge r_0,
\end{equation}
holds in the above canonical coordinates.

Given a point $a \in C_i$ (for some $i=1, \ldots, l$),
we denote by $D_a$ the Lagrangian
plane which satisfies $D_a \cap C_i= \{a \}$ while being everywhere tangent to the Liouville vector field. In particular, $D_a \cap \mathcal{H}_i$ corresponds to the cotangent fibre $D_\delta T^*_aC_i \subset D_\delta T^*C_i$ under the identification ${\mathcal H}_i \cong D_\delta T^*C_i$.
\begin{lemma}\label{moving a bit}
For every $i= 1, \ldots, l$ and $a \in C_i$, the Lagrangian plane $D_a$ is isotopic
to $D_i$ by a cylindrical Hamiltonian isotopy.
\end{lemma}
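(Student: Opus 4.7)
The plan is to reduce the statement to a standard computation in the cotangent bundle $T^*C_i$. Using the identification $(\widehat{\mathcal H}_i,\theta_c) \cong (T^*C_i, \mathbf{p}\,d\mathbf{q})$ constructed earlier in Section~\ref{ss: preparation}, the Lagrangian plane $D_b$ corresponds, for every $b \in C_i$, to the cotangent fibre $T^*_bC_i \subset T^*C_i$, since both are Lagrangian planes tangent to the Liouville vector field $\mathbf{p}\,\partial_{\mathbf{p}}$ and passing through $b$. In particular $D_i$ corresponds to $T^*_{p_i}C_i$ and $D_a$ to $T^*_aC_i$.

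First I would pick a smooth path $\gamma \colon [0,1] \to \mathrm{int}(C_i)$ with $\gamma(0)=p_i$ and $\gamma(1)=a$, and choose a smooth isotopy $\phi_t \colon C_i \to C_i$ supported in a compact subset $K \subset \mathrm{int}(C_i)$ with $\phi_0 = \mathrm{id}$ and $\phi_t(p_i)=\gamma(t)$. Let $X_t \colon C_i \to TC_i$ denote the time-dependent vector field generating $\phi_t$, which vanishes outside $K$. Then define the Hamiltonian function
$$H_t \colon \widehat{\mathcal H}_i \cong T^*C_i \to \R, \qquad H_t(\mathbf{q},\mathbf{p}) = \mathbf{p} \cdot X_t(\mathbf{q}),$$
which is supported in the set $\pi^{-1}(K) \subset T^*C_i$ (where $\pi$ denotes the bundle projection). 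Since $K \Subset \mathrm{int}(C_i)$ and the $\widehat{\mathcal H}_i$ are pairwise disjoint, $H_t$ extends by zero to a smooth function on all of $W$, still denoted by $H_t$.

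The Hamiltonian flow $\varphi_t$ of $H_t$ is the cotangent lift of $\phi_t$; explicitly $\varphi_t(\mathbf{q},\mathbf{p}) = (\phi_t(\mathbf{q}), (d\phi_t)_{\mathbf{q}}^{-*}\mathbf{p})$, so in particular $\varphi_t$ maps cotangent fibres to cotangent fibres and satisfies $\varphi_1(T^*_{p_i}C_i)=T^*_a C_i$. Translating back to $W$, this gives $\varphi_1(D_i)=D_a$. Moreover, since $H_t$ is homogeneous of degree one in $\mathbf{p}$, its flow commutes with the Liouville rescaling $(\mathbf{q},\mathbf{p}) \mapsto (\mathbf{q}, e^s \mathbf{p})$, and therefore each $\varphi_t(D_i)=T^*_{\phi_t(p_i)}C_i$ is again invariant under the Liouville flow, hence cylindrical at infinity over a Legendrian submanifold of $V$.

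The main (very mild) obstacle is merely bookkeeping: checking that $H_t$ extends smoothly across the interface between $\widehat{\mathcal H}_i$ and its complement in $W$, which is immediate from the support condition on $X_t$, and checking that each intermediate $\varphi_t(D_i)$ has a cylindrical end, which follows from the homogeneity of $H_t$ in the $\mathbf{p}$-variable together with the compatibility in~\eqref{strange condition} between $\mathfrak{r}$ and the canonical coordinates.
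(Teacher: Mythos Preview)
Your proof is correct and follows the same approach as the paper, which simply observes that under the identification $(\widehat{\mathcal H}_i,\theta_c) \cong (T^*C_i,\mathbf{p}\,d\mathbf{q})$ the planes $D_a$ and $D_i$ correspond to two cotangent fibres and are therefore clearly isotopic by a cylindrical Hamiltonian isotopy. Your explicit construction via the cotangent lift of a compactly supported base isotopy fills in the details the paper leaves implicit.
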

\begin{proof}
Recall that $(\widehat{\mathcal H}_i, \theta_c)$ is isomorphic to $(T^*C_i, \mathbf{p}
d \mathbf{q})$ as a Liouville manifold and $D_a$ and
$D_i$ correspond to two cotangent fibres. Therefore they are clearly isotopic by a cylindrical
Hamiltonian isotopy.
\end{proof}
In particular, $D_a$ and $D_i$ are isomorphic objects in the wrapped Fukaya category when
$a \in C_i$.

The next lemma is immediate.
\begin{lemma}\label{first preparation}
Let $L \subset W$ be a cylindrical exact Lagrangian submanifold. Then, up to a (compactly
supported) Hamiltonian isotopy, we can assume that $L \cap (C_1 \cup \ldots \cup C_l)=
\{ a_1, \ldots, a_k \}$, the intersections are transverse and $L \cap W^{sc} = \emptyset$.
\end{lemma}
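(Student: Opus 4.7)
The plan is a two-step transversality-and-push-out argument, exploiting a simple dimension count: each core $C_i$ is a compact $n$-dimensional Lagrangian disc sitting in the interior of the critical handle $\mathcal{H}_i$, while the subcritical skeleton $W^{sc,sk} \subset W^{sc}$ (the union of the cores of the subcritical handles) is a compact isotropic stratified subset of dimension strictly less than $n$.

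First, by a $C^\infty$-small compactly supported Hamiltonian isotopy I would arrange two $C^\infty$-generic conditions simultaneously: $L$ transverse to the compact submanifold $C_1 \cup \ldots \cup C_l$, which by compactness of the cores yields a finite set of transverse intersection points $a_1, \ldots, a_k$; and $L \cap W^{sc,sk} = \emptyset$, which is achievable by the dimension count $\dim L + \dim W^{sc,sk} < 2n$. Second, I would push $L$ out of $W^{sc}$ using the forward Liouville flow $\phi_t$ of $\mathcal{L}_c$: since $\mathcal{L}_c$ is outward-pointing along $\partial W^{sc}$, every forward trajectory not contained in $W^{sc,sk}$ leaves $W^{sc}$ in finite positive time, so by compactness of $L \cap W^{sc}$ and its disjointness from $W^{sc,sk}$, the set $\phi_T(L \cap W^{sc})$ lies entirely in $W \setminus W^{sc}$ for some uniform $T > 0$. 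The resulting exact Lagrangian isotopy of $L$ is compactly supported since $L$ is cylindrical at infinity, and can therefore be realized by a compactly supported ambient Hamiltonian isotopy via the standard Lagrangian isotopy extension lemma (see Lemma \ref{lem:GeneratingHamiltonian}).

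The only technical point is that the Hamiltonian implementing the second step must not disturb the transverse intersections with the cores $C_i$ established in the first step. This is handled by observing that each $C_i$ lies in the interior of the disjoint critical handle $\mathcal{H}_i$, and meets $\overline{W^{sc}}$ only along the attaching region on $\partial W^{sc}$, which is disjoint from $C_i$; we may therefore choose the cutoff so that the push-out Hamiltonian is supported in a neighborhood of $W^{sc}$ disjoint from $C_1 \cup \ldots \cup C_l$. With this choice the intersections $L \cap C_i$ are preserved throughout the second isotopy, and the composition of the two Hamiltonian isotopies gives the desired result.
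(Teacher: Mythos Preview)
Your overall strategy (transversality via a dimension count, then push out of $W^{sc}$ using the forward Liouville flow, then realise the resulting exact Lagrangian isotopy by a compactly supported Hamiltonian) is exactly the ``immediate'' argument the paper has in mind, and the first two paragraphs are correct.

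The last paragraph, however, contains a geometric error. You claim that one can choose the push-out Hamiltonian to be supported in a neighbourhood of $W^{sc}$ disjoint from $C_1 \cup \ldots \cup C_l$. This is impossible: the core $C_i = \Delta_{p_i} \cap \mathcal{H}_i$ is a closed $n$-disc whose boundary sphere $\partial C_i$ is precisely the attaching sphere of $\mathcal{H}_i$, hence lies on $\partial W^{sc}$. Any open neighbourhood of $W^{sc}$ therefore contains a collar of $\partial C_i$ in $\mathcal{H}_i$ and so meets $C_i$. (You may be confusing the cores $C_i$, which touch $\partial W^{sc}$, with the cocores $D_i$, which do not.)

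The error is easily repaired, and in fact the whole technical point is unnecessary: the lemma does not ask that the intersection points $a_i$ be preserved by the isotopy, only that the end result have finite transverse intersection with the $C_i$ and be disjoint from $W^{sc}$. The cleanest fix is to reverse the order of your two steps. First perturb $L$ off the subcritical skeleton $W^{sc,sk}$ and push it out of $W^{sc}$ by the Liouville flow exactly as in your second paragraph; the resulting $L$ is then at positive distance from the closed set $W^{sc}$. Second, by a further $C^\infty$-small Hamiltonian perturbation, arrange that $L$ is transverse to the compact discs $C_i$; since this perturbation can be taken arbitrarily small, it keeps $L$ disjoint from $W^{sc}$. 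Alternatively, keep your order but in step~1 make $L$ transverse to all strata of the full skeleton $W^{sk}$ (not merely to the $C_i$): then the global Liouville flow $\phi_T$ carries each point of $L \cap \Delta_{p_i}$ into $\operatorname{int}(C_i)$, and since $\phi_T$ is a diffeomorphism preserving $\Delta_{p_i}$ setwise, these intersections remain finite and transverse.
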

Now we are going to normalise the intersections between $L$ and the planes $D_{a_i}$.
For every $a_i$ we choose the natural symplectomorphism between a neighbourhood
$${\mathcal D}_{a_i} \subset (\widehat{\mathcal H}_i,\theta_c) \cong (T^*C_{i},\mathbf{p}
d\mathbf{q})$$
of $D_{a_i} \cong T^*_{a_i}C_i$ and $(D_\eta T^*D_{a_i},-d\tilde{\mathbf{p}} \wedge d \tilde{\mathbf{q}})$
for some $\eta >0$ small, where $(\tilde{\mathbf{p}}, \tilde{\mathbf{q}})$ are the canonical coordinates on $T^*D_{a_i}$.
It is clearly possible to make this
identification so that
\begin{equation}\label{stranger condition}
\mathfrak{r}(\tilde{\mathbf{p}},\tilde{\mathbf{q}})=\log{\|\tilde{\mathbf{q}}\|}- \log{r_0}+R_0+1, \quad
\|\tilde{\mathbf{q}}\| \ge r_0,
\end{equation}
is satisfied.

We redefine $\mathfrak{r}$ as in Remark \ref{rem: change of r}, without deforming it outside of a compact subset. After making $R_0 \ll 0$ sufficiently small in this manner, we may assume that:
\begin{itemize}
\item $R_0 + k+3 \le 0$,
\item $L \cap W_{R_0 +k+3}$ is the union of $k$ disjoint discs with centres at $a_1, \ldots, a_k$,
and
\item the connected component of $L \cap W_{R_0 +k+3}$ containing $a_i$ is identified inside
${\mathcal D}_{a_i} \cong D_\eta T^*D_{a_i}$ with the graph of the differential of a function
$g_{a_i} \colon D_{a_i} \to \R$ for $i=1, \ldots, k$.
\end{itemize}
Then we modify $L$ by a compactly supported Hamiltonian isotopy so that it satisfies the following
properties:
\begin{itemize}
\item[(L1)] The connected component of $L \cap W_{R_0+k+3}$ containing $a_i$ is contained
inside the Weinstein neighbourhood
$$\mathcal{D}_{a_i} \cap W_{R_0+k+3} \cong D_\eta T^*(D_{a_i} \cap \{\|\tilde{\mathbf{q}}\|
\le e^{k+2}r_0\}),$$
where it is described by the graph of the differential of a function $g_{a_i} \colon D_{a_i} \cap
\{\|\tilde{\mathbf{q}}\| \le e^{k+2}r_0\} \to \R$ with a nondegenerate minimum at $a_i$ and no
other critical points,
\item[(L2)] the connected components of $L \cap W_{R_0+k+3} \setminus W_{R_0+k+2}$ are
cylinders which are disjoint from all the cocores $D_{a_i}$; moreover, these cylinders are
tangent to the Liouville vector field ${\mathcal L}_c$ in the same subset; and
\item[(L3)] $\|g_{a_i} \|_{C^2} \le \epsilon'$ for $i=1, \ldots, k$ and $\epsilon'>0$ small which will be specified in Lemma~\ref{wrapping the discs}.
\end{itemize}
Conditions (L1)---(L3) provides sufficient control of the intersections of $L$ and the Lagrangian skeleton. Later in Lemma~\ref{wrapping the discs} we will use this in order to perform a deformation of the immersed Lagrangian submanifold
$$L \cup D_{a_1} \cup \ldots \cup D_{a_k}$$
by Hamiltonian isotopies applied to the different components $D_{a_i}$. The goal is to obtain an exact Lagrangian immersion admitting a suitable augmentation; the corresponding bounding cochain (see Lemma \ref{augmentations and twisted complexes}) will then give us the twisted complex which exhibits $L$ as an object built out of the different $D_{a_i}$.

\subsection{Proof of Theorem \ref{main}}
In this section we prove Theorem \ref{main} assuming the results of Section~\ref{ss: augmentation}.
The result is a corollary of the following proposition.
\begin{prop}\label{generating an object}
Let $L \subset W$ be an exact Lagrangian submanifold with cylindrical end. If $L \cap W^{\mathrm{sk}} = L \cap (C_1 \cup \ldots \cup C_l)= \{ a_1, \ldots, a_k \}$ and the intersections
are transverse, then $L$ is isomorphic in $\operatorname{Tw} \mathcal{WF}(W, \theta)$ to a twisted complex built from the objects $D_{a_1}, \ldots, D_{a_k}$.
\end{prop}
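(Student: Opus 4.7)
The plan is to implement the strategy outlined in the introduction: surgery the Lagrangian off the skeleton, apply the vanishing theorem from Section~\ref{sec: trivial triviality}, and then identify the surgered object with a twisted complex via Section~\ref{sec: surgeries and cones}. After the geometric preparations of Section~\ref{ss: preparation} (which put $L$ in the standard form (L1)--(L3) near each $a_i$ and provide pairwise disjoint cocore planes $D_{a_i}$ through the $a_i$'s, Hamiltonian isotopic to the $D_{p_{j(i)}}$ by Lemma~\ref{moving a bit}, and which are moreover cylindrical over pairwise disjoint Legendrian submanifolds), we consider the immersed exact Lagrangian $\mathbb{L} := L \cup D_{a_1} \cup \ldots \cup D_{a_k}$ together with the Legendrian lift $\mathbb{L}^+$ induced by a choice of potentials which makes the $a_i$ correspond to Reeb chords in the direction compatible with condition (3) of Proposition~\ref{prop:surgeryTW}.

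The key technical step, carried out in Section~\ref{ss: augmentation}, is to perform further cylindrical Hamiltonian isotopies of the $D_{a_i}$'s in order to construct an augmentation $\boldsymbol{\varepsilon} \colon \mathfrak{A}(\mathbb{L}^+) \to \F$ satisfying the three hypotheses of Proposition~\ref{prop:surgeryTW}: it restricts to the trivial augmentations on each embedded component, takes the value $1$ on each $a_i$, and vanishes on the remaining mixed chords going from higher to lower sheets. The chord set $\{a_1, \ldots, a_k\}$ is contractible in the sense of Definition~\ref{dfn: contractible}, since the $a_i$ lie on distinct connected components of $\mathbb{L}^+$, so their lengths can be modified independently via Legendrian isotopies of each $D_{a_i}$; hence the surgery $\overline{L} := \mathbb{L}(a_1, \ldots, a_k)$ is well-defined and, by Lemma~\ref{lemma: push-forward}, the augmentation descends to an augmentation $\overline{\boldsymbol{\varepsilon}}$ of the Chekanov--Eliashberg algebra of $\overline{L}^+$. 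By construction $\overline{L}$ coincides with $\mathbb{L}$ outside small neighbourhoods of the surgery points and avoids the cores $C_1, \ldots, C_l$ inside them; combined with $L \cap W^{sc} = \emptyset$ from Lemma~\ref{first preparation}, it follows that $\overline{L}$ is disjoint from the entire Lagrangian skeleton $W^{sk}$.

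Consequently, the negative Liouville flow (which is conformally symplectic and whose global attractor in the compact part is $W^{sk}$) displaces $\overline{L}$ from any compact subset of $W$, so Proposition~\ref{prop: trivial triviality} gives $HW(T, (\overline{L}, \overline{\boldsymbol{\varepsilon}})) = 0$ for every cylindrical exact Lagrangian $T \subset W$. Applying Proposition~\ref{prop:surgeryTW} to this data, the same wrapped Floer cohomology is quasi-isomorphic to $H\hom_{Tw\mathcal{WF}}(T, \mathfrak{L})$ for the twisted complex $\mathfrak{L}$ with components $L, D_{a_1}, \ldots, D_{a_k}$ and Maurer--Cartan element $X = a_1 + \ldots + a_k$. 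Hence $H\hom_{Tw\mathcal{WF}}(T, \mathfrak{L}) = 0$ for every test Lagrangian $T$, and Lemma~\ref{lem:twistedvanishing}, applied with $L$ as the first component of $\mathfrak{L}$, shows that $L$ is quasi-isomorphic in $Tw\mathcal{WF}(W, \theta)$ to a twisted complex built from $D_{a_1}, \ldots, D_{a_k}$.

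The main obstacle is the construction of the augmentation $\boldsymbol{\varepsilon}$ itself. There is no reason for such an augmentation to exist a priori: the three conditions of Proposition~\ref{prop:surgeryTW}, combined with the equation $\boldsymbol{\varepsilon} \circ \mathfrak{d} = 0$, form an overdetermined system on $\mathfrak{A}(\mathbb{L}^+)$. The expected strategy is an inductive one: attach the cocores $D_{a_i}$ to $L$ one at a time, and at each stage choose a cylindrical Hamiltonian isotopy of the new cocore such that the action filtration on the Chekanov--Eliashberg algebra isolates the pseudoholomorphic discs contributing to the DGA differential in a way that makes the defining equations for $\boldsymbol{\varepsilon}$ solvable. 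Executing this induction while preserving the previously constructed data is the technical heart of Section~\ref{ss: augmentation} and is by far the most delicate point in the argument.
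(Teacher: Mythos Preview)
Your proposal is correct and follows the paper's own proof closely: invoke Lemmas~\ref{wrapping the discs} and~\ref{existence of the augmentation} to produce the wrapped planes $D_{a_i}^w$ and the augmentation $\boldsymbol{\varepsilon}$, feed this into Proposition~\ref{prop:surgeryTW}, apply Proposition~\ref{prop: trivial triviality} to the surgered immersion, and conclude via Lemma~\ref{lem:twistedvanishing}.

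Two small points to watch. First, with the potentials actually used in Section~\ref{ss: augmentation} (namely $f_i^w(a_i)=f(a_i)+\epsilon$) the chords $a_i$ run \emph{from} $(D_{a_i}^w)^+$ \emph{to} $L^+$, so condition~(3) of Proposition~\ref{prop:surgeryTW} forces the ordering $L_1=D_{a_1}^w,\ldots,L_k=D_{a_k}^w,L_{k+1}=L$ with $L$ \emph{last}, not first; Lemma~\ref{lem:twistedvanishing} is phrased for $L_0$, but the same iterated-cone argument (equivalently, rotation of the triangle) lets you peel off the last object instead. Second, the Maurer--Cartan element is in general not simply $a_1+\cdots+a_k$: the augmentation built in Lemma~\ref{existence of the augmentation} may take nonzero values on the type-$c$ chords $c_{ij}^m$ between the wrapped cocores, and these appear in $X$ as well. (Also, it is the \emph{positive} Liouville flow that displaces $\overline{L}$ from compacta, the skeleton being the attractor of the negative flow.)
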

\begin{proof}
We assume that $L$ satisfies Conditions (L1), (L2) and (L3) from the previous section. Then by Lemma~\ref{wrapping the discs} combined with Lemma~\ref{existence of the
augmentation} there exist Lagrangian planes $D_{a_1}^w, \ldots, D_{a_k}^w$ satisfying the
following properties. First $D_{a_i}^w$ is Hamiltonian isotopic to $D_{a_i}$ (possibly after re-indexing the $a_1,\ldots,a_k$) by a cylindrical
Hamiltonian isotopy supported in $W \setminus W_{R_0+i}$. Second, for an appropriate Legendrian lift $\mathbb{L}^+$ of $\mathbb{L}= L \cup D_{a_1}^w \cup \ldots \cup D_{a_k}^w$ to $(W \times \R, \theta + dz)$ such that the intersection point $a_i$ lifts to a Reeb chord from $(D_{a_i}^w)^+$ to $L^+$ of
length $\epsilon >0$ for $i=1, \ldots, k$ --- see Lemma~\ref{wrapping the discs} for more details --- there exists an augmentation $\boldsymbol{\varepsilon} \colon \mathfrak{A}(\mathbb{L}^+)
\to \F$ for which
\begin{enumerate}
\item $\boldsymbol{\varepsilon}(a_i)=1$ for $i=1, \ldots, k$, and
\item $\boldsymbol{\varepsilon}(d)=0$ if $d$ is a chord from $L^+$ to $(D_{a_i}^w)^+$ for $i=1,
\ldots, k$, or a chord from $(D_{a_i}^w)^+$ to $(D_{a_j}^w)^+$ with $i>j$.
\end{enumerate}
Moreover, using Property (L3) above for $\epsilon'>0$ sufficiently small, it follows that the Reeb chords $a_i$ all are contractible (c.f.~Definition \ref{dfn: contractible}).

By Proposition \ref{prop:surgeryTW} the augmentation $\boldsymbol{\varepsilon}$ induces a twisted complex $\mathfrak{L}$ built from $L_1=D_{a_1}^w, \ldots, L_k=D_{a_k}^w,L_{k+1}=L,$ for which
$$\mathrm{hom}(T,\mathfrak{L}) \cong \mathrm{CW}(T,(\mathbb{L}(a_1,\ldots,a_k),\overline{\boldsymbol{\varepsilon}})).$$
The right-hand side is an acyclic complex by Proposition \ref{prop: trivial triviality}. Using this acyclicity, Proposition \ref{prop:surgeryTW} implies that $L$ is quasi-isomorphic to a twisted complex built from the different $D_{a_i}^w \cong D_{a_i}$ (this last isomorphism follows from the invariance properties for wrapped Floer cohomology under cylindrical Hamiltonian isotopy; see e.g.~\cite[Section 5]{OpenString}).
\end{proof}

We can therefore complete the proof of Theorem \ref{main}:

{\begin{proof}[Proof of Theorem \ref{main}]
Lemma~\ref{first preparation} and Proposition~\ref{generating an object} imply that $L$ is isomorphic to a twisted complex built out of the Lagrangia planes $D_{a_i}$. Lemma~\ref{moving a bit} and the fact that Hamiltonian isotopies generated by cylindrical Hamiltonians induce isomorphisms in the wrapped Fukaya category (see e.g.~\cite[Section 5]{OpenString}) imply that each $D_{a_i}$ is isomorphic to one of the cocore of $W$.
  \end{proof}
}

\subsection{Constructing the augmentation} \label{ss: augmentation}
We start by assuming that the modifications from Section \ref{ss: preparation} have been performed, so that in particular (L1)--(L3) are satisfied. When considering potentials in this subsection, recall that we have modified the Liouville form from $\theta$ to $\theta_c$.

Let $f \colon L \to \R$ be a potential function for $L$. We order the intersection points
$a_1, \ldots, a_k$ such that
$$f(a_k) \le \ldots \le f(a_1).$$
The Morse function $g_{a_i} \colon D_{a_i} \cap \{ \tilde{\mathbf{q}} \le e^{k+2}r_0 \} \to \R$
from (L1) can be assumed to be sufficiently small by (L3), so that $df$ is almost zero inside
$L \cap W_{R_0+k+1}.$

\begin{figure}[h!]
\labellist
\pinlabel $\rho$ at 182 103
\pinlabel $R_0+i$ at 62 115
\pinlabel $R_0+i+1$ at 144 115
\endlabellist
\centering
\includegraphics[height=5cm]{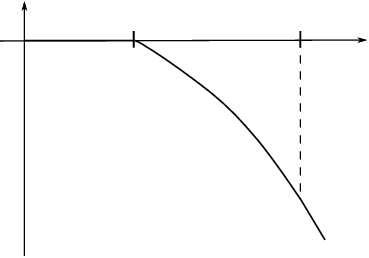}
\caption{The graph of $\mathfrak{h}_i$.}
\label{fig:graphhi}
\end{figure}

We fix functions $\mathfrak{h}_i \colon \R^+ \to \R$ such that
$$\mathfrak{h}_i(\rho) = \begin{cases}
0, & \mbox{if } \rho \le R_0+i,\\
- \rho +  R_0 +i+ \frac 12, & \mbox{if } \rho \ge  R_0+i+1,
\end{cases}$$
and $\mathfrak{h}_i''(\rho) \le 0$ for all $\rho \in \R^+$.
Then we define the cylindrical
Hamiltonians  $H^i \colon W \to \R$, $i=1, \ldots, k$, by
$$H^i(w)= \mathfrak{h}_i(e^{\mathfrak{r}(w)}).$$
The graph of $\mathfrak{h}_i$ appears in Figure \ref{fig:graphhi}.

We will denote by $\phi^i_t$ the flow of the Hamiltonian vector field of $H^i$.
Given $T_i \in \R$, we denote $D_{a_i}^w = \phi^i_{T_i}(D_{a_i})$.

We fix $\epsilon >0$, and on each Lagrangian plane $D_{a_i}$ we choose the
potential function $f_i \colon D_{a_i} \to \R$ such that
$$f_i = f(a_i)+ \epsilon.$$
Note that the functions $f_i$ indeed are constant, since the Liouville vector field is tangent to the planes $D_{a_i}$. Let  $f_i^w \colon D_{a_i}^w \to \R$ be the potential function on
$D_{a_i}^w$ induced by $f_i$ using Equation~\eqref{eq: change of potential}.

We denote by $\mathbb{L}= L \cup D_{a_1}^w \cup \ldots \cup D_{a_k}^w$, which we
regard as an exact Lagrangian immersion, and by $\mathbb{L}^+$ the Legendrian lift of
$\mathbb{L}$ to $(W \times \R, \theta + dz)$ defined using the potential functions
$f, f_1^w, \ldots ,  f_k^w$. Note that an intersection point $d \in D_{a_i}^w
\cap D_{a_j}^w$ lifts to a chord \emph{starting} on $D_{a_i}^w$ and \emph{ending} on $D_{a_j}^w$ if and only if $f_i^w(d)
> f_j^w(d)$, and similarly if one of the two discs is replaced by $L$ and its potential is
replaced by $f$.

 \begin{figure}
\vspace{5mm}
\begin{center}
\labellist
\pinlabel $\mathbf{p}$ at 4 168
\pinlabel $\color{red}L$ at 74 150
\pinlabel $\color{blue}D^w_{a_j}$ at 143 132
 \pinlabel $W^{\mathrm{sk}}$ at 204 36
 \pinlabel $D^w_{a_i}$ at 204 80
 \pinlabel $D^w_{a_{i-1}}$ at 207 55
\pinlabel $b_{ij}^m$ at 74 86
 \pinlabel $c_{ij}^m$ at 107 87
\pinlabel $a_j$ at 91 28
\pinlabel $e^{r_0+j}$ at -15 126
\pinlabel $e^{r_0+j-1}$ at -20 113
\pinlabel $0$ at -6 36
\endlabellist
\includegraphics[scale=1.4]{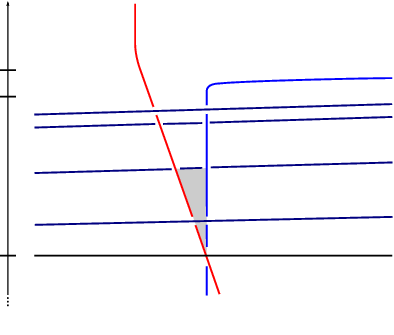}
\caption{A schematic picture of the wrapping and of the small triangle with $i<j$. (Also c.f.~Equation (\ref{strange condition}) combined with Figure \ref{fig:graphhi}.)}
\label{fig: wrapping}
\end{center}
\end{figure}

\begin{lemma}\label{wrapping the discs}
There exist real numbers $0 < T_k < \ldots < T_1$ and $\epsilon, \epsilon' >0$ such that,
if $L$ satisfies (L1)--(L3), then each chord of $\mathbb{L}^+$
is of one of the following types:
\begin{enumerate}
\item type $a$: the chords $a_i$, going from $(D_{a_i}^w)^+$ to $L^+$ for $i=1, \ldots, k$, of length
$\epsilon$,
\item type $b$: chords $b_{ij}^m$ consisting of all other chords from $(D_{a_i}^w)^+$ to $L^+$ for $1 \le i < j \le k$ and
$1 \le m \le m_0(i,j)$ for some $m_0(i,j)$,
\item type $c$: chords $c_{ij}^m$ from $(D_{a_i}^w)^+$ to $(D_{a_j}^w)^+$ for  $1 \le i < j
\le k$ and $1 \le r \le m_0(i,j)$, and
\item ``order-reversing'' type: chords from $L^+$ to $(D_{a_i}^w)^+$ for $i=1, \ldots, k$ or
chords  from $(D_{a_i}^w)^+$ to $(D_{a_j}^w)^+$ for $i=1, \ldots, k$ and $i>j$.
\end{enumerate}
(see Figure~\ref{fig: wrapping}).
Moreover, for every $i<j$ and $m$, there exists a unique rigid and transversely cut out pseudoholomorphic triangle in $W$ having boundary on $L \cup D_{a_i}^w \cup D_{a_j}^w,$ a positive puncture at $b^m_{ij},$ and negative punctures at $a_j$ and $c^m_{ij}$, in the order following the boundary orientation. (Positivity and negativity is determined by our choice of Legendrian lift.)
\end{lemma}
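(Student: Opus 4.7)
The plan is to analyse the wrapped cocores $D^w_{a_i}=\phi^i_{T_i}(D_{a_i})$ carefully. Since $H^i$ vanishes on $W_{R_0+i}$ and its Hamiltonian vector field equals $-\mathcal{R}$ outside $W_{R_0+i+1}$, the flow $\phi^i_t$ is the identity on $W_{R_0+i}$ and acts as the time-$t$ negative Reeb flow outside $W_{R_0+i+1}$. Consequently $D^w_{a_i}$ agrees with $D_{a_i}$ on $W_{R_0+i}$, so the intersection $a_i\in D^w_{a_i}\cap L$ persists; and $D^w_{a_i}$ is cylindrical at infinity over the time-$(-T_i)$ Reeb push-off of the Legendrian end of $D_{a_i}$. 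Using Lemma~\ref{variation of Liouville} together with $H^i(a_i)=X_{H^i}(a_i)=0$, the induced potential satisfies $f^w_i(a_i)=f_i(a_i)=f(a_i)+\epsilon$, so $a_i$ lifts to a Reeb chord from $(D^w_{a_i})^+$ to $L^+$ of length exactly $\epsilon$ --- this is the chord of type $a$.

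I would then choose $0<T_k<\ldots<T_1$, each small and lying in the complement of a discrete bad set, and classify the remaining intersections. Outside of the Weinstein neighbourhoods $\mathcal{D}_{a_j}$ of the cocores, both $L$ and each $D^w_{a_i}$ are tangent to the Liouville vector field, so no transverse intersections can occur there. Inside $\mathcal{D}_{a_j}\cong D_\eta T^*D_{a_j}$, the Lagrangian $D^w_{a_j}=D_{a_j}$ is the zero section, $L$ is the graph of $dg_{a_j}$, and each $D^w_{a_i}$ with $i\ne j$ which has been wrapped enough to reach $\mathcal{D}_{a_j}$ appears there as a cylindrical Lagrangian transverse to both of them; its transverse intersections with $D_{a_j}$ (resp.~$L$) are labelled $c^m_{ij}$ (resp.~$b^m_{ij}$). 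For generic small $T_i$, no spurious intersection arises in the interpolating annulus $\{R_0+i\le\mathfrak r\le R_0+i+1\}$ and each intersection inside $\mathcal{D}_{a_j}$ is transverse.

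The direction of each chord is read off the induced potentials via Lemma~\ref{variation of Liouville}: $f^w_i$ equals $f(a_i)+\epsilon$ plus a correction $\int_0^{T_i}(-H^i+\theta_c(X_{H^i}))\circ\phi^i_s\,ds$ of magnitude $O(T_i)$. Since $f(a_1)\geq\ldots\geq f(a_k)$ and $T_1>\ldots>T_k$, a direct action computation shows that at every intersection inside $\mathcal{D}_{a_j}$ the inequality $f^w_i>f$ (resp.~$f^w_i>f^w_j$) holds precisely when $i<j$, yielding the chords $b^m_{ij}$ (resp.~$c^m_{ij}$) in the claimed direction; for $i>j$ the inequalities flip, giving chords of order-reversing type. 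For the triangle statement, inside $\mathcal{D}_{a_j}$ the three Lagrangians $D_{a_j}, L, D^w_{a_i}$ realise precisely the local model of Corollary~\ref{cor: triangles}, with $D_{a_j}$ as the zero section, $L$ as the graph of $dg_{a_j}$ (whose unique nondegenerate minimum is $a_j$), and $D^w_{a_i}$ as the transverse auxiliary Lagrangian through $c^m_{ij}$ and $b^m_{ij}$, both projecting to a common point $q_m\in D_{a_j}$. Corollary~\ref{cor: triangles} then directly supplies the unique, rigid and transversely cut out pseudoholomorphic triangle with corners $a_j, c^m_{ij}, b^m_{ij}$, concentrated in a small neighbourhood of the negative gradient flow line of $g_{a_j}$ from $q_m$ to $a_j$.

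\emph{The hard part} is the middle step: ensuring by a generic choice of parameters that no spurious intersection arises --- in particular that no short chord appears in the interpolating annulus, that all transverse intersections localise inside some $\mathcal{D}_{a_j}$, and that the potential inequalities have the correct sign at every intersection. This is arranged by fixing the data in the order $\epsilon'\to T_k<\ldots<T_1\to\epsilon$: first $\epsilon'$ is taken so small that $g_{a_j}$ is $C^2$-small, then each $T_i$ is chosen in the complement of a discrete bad set of wrapping times for which unwanted Legendrian tangencies occur, and finally $\epsilon<T_k$ is taken so small that the distinguished chord $a_i$ of length $\epsilon$ is strictly shorter than every other chord from $(D^w_{a_i})^+$ to $L^+$.
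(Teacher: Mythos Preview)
There is a genuine gap in your localisation of the intersections. The claim that ``outside of the Weinstein neighbourhoods $\mathcal{D}_{a_j}$, both $L$ and each $D^w_{a_i}$ are tangent to the Liouville vector field, so no transverse intersections can occur there'' is not justified. Conditions (L1)--(L3) constrain $L$ only inside $W_{R_0+k+3}$; beyond this level, and before $L$ becomes cylindrical far out at infinity, $L$ is an arbitrary exact Lagrangian. In particular there is no reason for $D_{a_i}\cap L$ to reduce to $\{a_i\}$: the cocore plane extends to infinity and can meet $L$ again in $W_{R_0+k+3}^e$. For small $T_i$ these extra intersections persist in $D_{a_i}^w\cap L$, and at such a point $c$ the sign of $f_i^w(c)-f(c)\approx (f(a_i)+\epsilon)-f(c)$ is uncontrolled, so you cannot assign the corresponding chord to any of the listed types. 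The analogous problem arises for $D_{a_i}^w\cap D_{a_j}^w$ once any intersections are created outside the $\mathcal{D}_{a_j}$.

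The paper's argument runs in the opposite regime: the $T_i$ are chosen \emph{large}, not small. One takes $0<T_k<\ldots<T_1$ so that on the cylindrical part $D_{a_i}^w\cap W_{R_0+i+1}^e$ the potential $f_i^w$ has dropped below $\min_L f$, and so that these asymptotic values are suitably ordered in $i$. Then every intersection of $D_{a_i}^w$ with $L$ or with another $D_{a_j}^w$ lying outside the interpolating annulus $W_{R_0+i+1}\setminus W_{R_0+i}$ is automatically of order-reversing type, while the intersections inside the annulus are precisely the $c_{ij}^m$ (and, after reinstating the small Morse perturbation $g_{a_j}$, the nearby $b_{ij}^m$); a direct potential comparison there forces $i<j$. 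Your application of Corollary~\ref{cor: triangles}---with $D_{a_j}$ as base, $L$ as the graph of $dg_{a_j}$, and $D_{a_i}^w$ as the auxiliary Lagrangian---is, on the other hand, essentially the paper's own argument for the triangle.
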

Note that the set $\{c_{ij}^m\}$ could be empty for some $i,j$. In that case, we say that $m_0(i,j)=0$.

\begin{proof}
Recall that Properties (L1)--(L3) from Subsection \ref{ss: preparation} have been made to hold; in particular $L \cap W_{R_0+k+2}$ consists of a $k$ number of discs which may be assumed to be close to the discs $D_{a_i}$, $i=1,\ldots,k$.

The proof of the lemma at hand is easier to see if one starts by Hamiltonian isotoping $L$ to make it \emph{coincide} with $D_{a_1} \cup \ldots \cup D_{a_k}$ inside $W_{R_0+k+2}$. (Thus, we can argue about the intersection points of the deformations $D_{a_i}^w$ and $D_{a_j}$, as opposed to the intersection points of $D_{a_i}^w$ and the different parts of $L$.) By Property (L2) it suffices to deform $L$ in such a way that it becomes the graph $dg_{a_i}$ for a function satisfying $g_{a_i} \equiv 0$ inside the subsets $D_{a_i} \cap W_{R_0+k+2}$. Note that, in the case when $L$ and $D_{a_j} \cap W_{R_0+k+2}$ \emph{coincide}, the intersection points $b^m_{ij}$ and $c^m_{ij}$ coincide as well.

First, we observe that $L, D_{a_1}^w, \ldots, D_{a_k}^w$ are embedded exact Lagrangian
submanifolds, and therefore there is no Reeb chord either from $L^+$ to $L^+$ or from
$(D_{a_i}^w)^+$ to $(D_{a_i}^w)^+$ for any $i=1, \ldots, k$.

From Equation~\eqref{eq: change of potential}, the potential of $D_{a_i}^w$ is
$$f_i^w = f(a_i)+ \epsilon + T_i(\mathfrak{h}_i'(e^{\mathfrak{r}}) e^{\mathfrak{r}} -
\mathfrak{h}_i(e^{\mathfrak{r}})).$$
Note that the quantity $T_i(\mathfrak{h}_i'(e^r)e^r - \mathfrak{h}_i(e^r))$ is nonincreasing
in $r$ because $\mathfrak{h}_i'' \le 0$. Therefore $f_i^w$ satisfies
\begin{align*}
\begin{array}{ll}
f_i^w(w)  = f(a_i) + \epsilon & \text{if } w \in D_{a_i}^w \cap W_{R_0+i}, \\
f_i^w(w)  \in [f(a_i) + \epsilon - T_i(R_0+i+ \frac 12), f(a_i) + \epsilon] & \text{if } w \in  D_{a_i}^w \cap ( W_{R_0+i+1}\setminus  W_{R_0+i}), \\
f_i^w(w)   = f(a_i) + \epsilon - T_i(R_0+i+ \frac 12) & \text{if } w \in D_{a_i}^w \cap
 W_{R_0+i+1}^e.\\
\end{array}
\end{align*}

Note that $D_{a_i}^w \cap W_{R_0+i} = D_{a_i}\cap W_{R_0+i}$ and that $D_{a_i}^w \cap
W_{R_0+i+1}^e$ is a cylinder over a Legendrian submanifold.

We choose positive numbers $ 0<T_k< \ldots< T_1$ such that
\begin{enumerate}
\item $f(a_1)+ \epsilon -T_1(R_0+1+ \frac 12) < \ldots < f(a_k) + \epsilon - T_k (R_0+k+
\frac 12)$,
\item $f(a_i) + \epsilon - T_i (R_0+i+ \frac 12) < \min \limits_{L} f$ for all $i=1, \ldots, k$,
\label{troppo giu}
\item there are no intersection points between $L, D_{a_1}, \ldots, D_{a_k}, D_{a_1}^w, \ldots
D_{a_k}^w$ in their cylindrical parts, and
\item at every intersection point between $L, D_{a_1}, \ldots D_{a_k}, D_{a_1}^w, \ldots
D_{a_k}^w$ the respective potential functions are different, except for intersection points
$p \in L \cap D^w_{a_i}$ where $H^i(p)=\mathfrak{h}_i(e^{\mathfrak{r}})=0$.
\end{enumerate}
The last two conditions are achieved by choosing $T_1, \ldots, T_k$ generically.

We observe that, for any point $c \in D_{a_i}^w \cap D_{a_j}$ and any $i,j = 1, \ldots, k$,
the quantity $\mathfrak{a}(c)= |f_i^w(c)-f_j(c)|$ is independent of $\epsilon$. Then we choose $\epsilon >0$ sufficiently small so that
$$\epsilon < \min \{ \mathfrak{a}(c) : c \in D_{a_i}^w \cap D_{a_j} \text{ and }
\mathfrak{a}(c) \ne 0 \text{ for } i,j = 1, \ldots, k \}.$$
This implies that, for all $c \in D_{a_i}^w \cap D_{a_j}$ such that $\mathfrak{a}(c) \ne 0$,
the signs of $f_i^w(c)- f_j(c)$ and of $f_i^w(c)- f(c)$ are equal. (Recall that $f=f_j-\epsilon$ holds there by construction.)

Consider the set of points $c_{ij}^m \in D_{a_i}^w \cap D_{a_j}$ with positive action difference
$$ 0<f_i^w(c_{ij}^m) - f_j(c_{ij}^m) =f_i^w(c_{ij}^m) - (f(a_j)+\epsilon).$$
(Here $m$ is an index distinguishing the various points with the
required property.) Then $i <j$ and $c_{ij}^m \in W_{R_0+i+1} \setminus W_{R_0+i}$; in
particular $c_{ij}^m \in D^w_{a_i} \cap D^w_{a_j}$. See Figure~\ref{fig: wrapping the discs}.

\begin{figure}
\labellist \pinlabel $f(a_1)+\epsilon$  at -35 309 \pinlabel
$f(a_2)+\epsilon$  at -35 280 \pinlabel $f(a_3)+\epsilon$
at -35 251 \pinlabel $f(a_1)+\epsilon-T_1(R_0+\frac{3}{2})$ at
-75 71 \pinlabel $f(a_2)+\epsilon-T_2(R_0+\frac{5}{2})$ at -75
41 \pinlabel $f(a_3)+\epsilon-T_3(R_0+\frac{7}{2})$  at -75 13
\pinlabel $R_0$  at 15 142 \pinlabel $R_0+1$ at 58 142 \pinlabel
$R_0+2$ at 110 142 \pinlabel $R_0+3$ at 164 142 \pinlabel $R_0+4$ at
220 142 \pinlabel $\color{red}f_1$ at 24 322 \pinlabel
$\color{blue}f_2$ at 88 293 \pinlabel $\color{green}f_3$ at 142 265
\endlabellist
\hspace{20mm}
 \includegraphics[width=7.5cm]{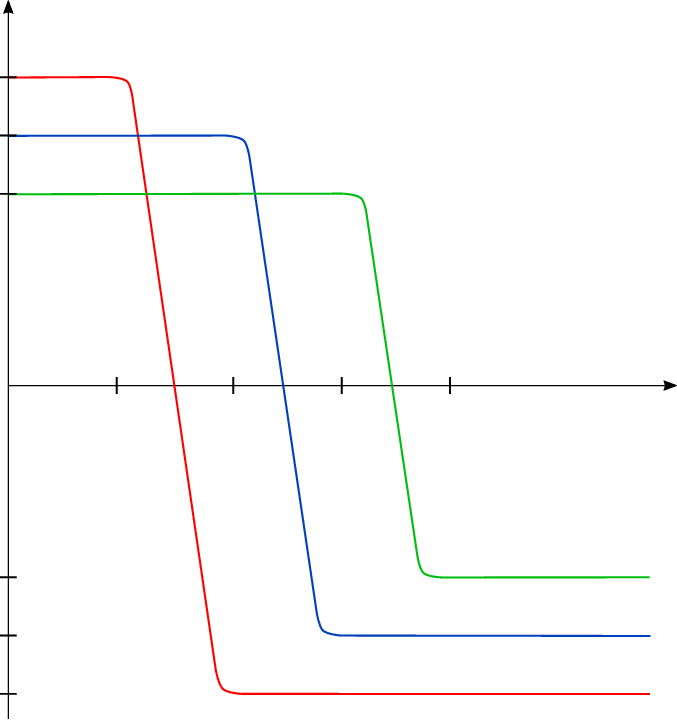}
\caption{The profiles of  $f_i^w$.}
 \label{fig: wrapping the discs}
\end{figure}

The intersection points $b^{m}_{ij}$ now coincide with $c^{m}_{ij}$, but seen as intersections of $L = D_{a_j}$ and $D_{a_i}^w$. We now perturb $L$ back to make it coincide with the graph of $dg_{a_i}$ of a sufficiently small Morse function $g_{a_i}$ near each $D_{a_i}$ having a unique critical point consisting of a global minimum. Recall that this global minimum corresponds to the intersection point $a_i \in L \cap D_{a_i}$.

We can make the Morse function satisfy $\|g_{a_i}\|_{C^2} \le \epsilon'$ for $\epsilon'>0$ sufficiently small. In particular, this means that each intersection point
$c_{ij}^m$ still corresponds to a unique intersection point $b_{ij}^m \in D_{a_i}^w \cap L$, such that moreover
$f_i^w(b_{ij}^m) - f(b_{ij}^m) >0$ is satisfied. Conversely, any intersection point $d \in D_{a_i}^w \cap L$ with $f_i^w(d) - f(d) >0$ is either $a_i$ or one of the $b_{ij}^m$. This is the case
because the only intersection point in $D_{a_i}^w \cap L \cap W_{R_0+i}$ is $a_i$ and for any intersection point $d \in D_{a_i}^w \cap L \cap W_{R_{0}+i+1}^e$ we must have $f_i^w(d) - f(d) < 0$.

The existence of the triangle  follows now by applying Corollary \ref{cor: triangles} to $L \cup D_{a_i}^w \cup D_{a_j}^w$ intersected with the subset $W_{R_0+j} \subset W$. Note that, inside this Liouville subdomain, our deformed Lagrangian $L$ is given as the graph of the differential of a small Morse function $g_{a_j}$ on $D_{a_j}$ (using a Weinstein neighbourhood of the latter); hence the lemma indeed applies. Here the monotonicity property for the symplectic area of a pseudoholomorphic disc can be used in order to deduce that the triangles of interest can be a priori confined to the same Liouville subdomain.
\end{proof}

The triangle provided by the previous lemma is the stepping stone in the inductive construction of an augmentation for $\mathfrak{A}(\mathbb{L}^+)$.
\begin{lemma}\label{existence of the augmentation}
The Chekanov-Eliashberg algebra $(\mathfrak{A}(\mathbb{L}^+), \mathfrak{d})$ of
$\mathbb{L}^+$ admits an augmentation $$\varepsilon \colon \mathfrak{A}(\mathbb{L}^+)
\to \F$$ such that $\varepsilon(a_i)=1$ for all $i=1, \ldots, k$. Moreover, this
augmentation vanishes on the order reversing chords.
\end{lemma}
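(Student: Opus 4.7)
The plan is to construct the augmentation $\varepsilon$ inductively along the action filtration on Reeb chords of $\mathbb{L}^+$. As a starting point, set $\varepsilon(a_i)=1$ for each $i$, set $\varepsilon(d)=0$ for every order-reversing chord $d$, and (as a matter of convenience) take $\varepsilon(b^m_{ij}):=0$; the remaining freedom is in the values $\varepsilon(c^m_{ij})$ for $i<j$, which I will fix by demanding that $\varepsilon(\mathfrak{d}(b^m_{ij}))=0$.

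The crucial geometric input is the triangle of Lemma~\ref{wrapping the discs}: this unique rigid disc contributes the monomial $a_j\cdot c^m_{ij}$ to $\mathfrak{d}(b^m_{ij})$. A direct action computation using (L3) yields $\mathfrak{a}(b^m_{ij})=\mathfrak{a}(a_j)+\mathfrak{a}(c^m_{ij})+O(\epsilon')$, where $\epsilon'=\|g_{a_j}\|_{C^2}$; combined with strict positivity of symplectic area for every nonconstant disc contributing to $\mathfrak{d}(b^m_{ij})$, this forces each remaining word in $\mathfrak{d}(b^m_{ij})$ to involve only chords of action strictly less than $\mathfrak{a}(c^m_{ij})$, provided $\epsilon$ and $\epsilon'$ are sufficiently small and the $T_1>\ldots>T_k$ are chosen generically. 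Using $\varepsilon(a_j)=1$, the equation $\varepsilon(\mathfrak{d}(b^m_{ij}))=0$ then uniquely determines $\varepsilon(c^m_{ij})$ in terms of augmentation values already fixed on chords of smaller action.

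It remains to verify $\varepsilon\circ\mathfrak{d}=0$ on the generators not addressed by the defining relations, namely $\xi=a_i$, $\xi=c^m_{ij}$, and $\xi$ order-reversing. For $\xi=a_i$, the bound $\mathfrak{a}(a_i)=\epsilon$ confines any rigid contributing disc to a Weinstein neighbourhood of $D_{a_i}$; inside this model, Lemma~\ref{lem: triangles on twocopy} identifies such discs with gradient trajectories of $g_{a_i}$ emanating from its minimum $a_i$, of which there are none. The remaining vanishings I plan to extract from the identity $\mathfrak{d}^2(b^m_{ij})=0$: expanding the left-hand side by the Leibniz rule and applying $\varepsilon$ produces a linear combination of expressions of the form $\varepsilon(\text{word})\cdot\varepsilon(\mathfrak{d}c)\cdot\varepsilon(\text{word})$ where $c$ ranges over chords appearing in $\mathfrak{d}(b^m_{ij})$; by strong induction on action, all such terms except $\varepsilon(\mathfrak{d}(c^m_{ij}))$ vanish, and so $\varepsilon(\mathfrak{d}(c^m_{ij}))=0$ follows. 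A similar application of $\mathfrak{d}^2$ to a suitable chord whose differential contains a given order-reversing $d$ should yield $\varepsilon(\mathfrak{d}(d))=0$.

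The main obstacle I anticipate is ensuring that this inductive $\mathfrak{d}^2=0$-argument closes up uniformly across every order-reversing chord: for each such $d$ one must locate a chord $\xi$ of sufficiently controlled action whose differential contains $d$ and to which the above extraction applies, and one must track the propagation of vanishings through the action filtration without introducing circularity. The dominance of the triangle in $\mathfrak{d}(b^m_{ij})$ arranged in Lemma~\ref{wrapping the discs}, together with the careful management of the action filtration through the parameters $T_i$, $\epsilon$ and $\epsilon'$, is precisely what is designed to make this bookkeeping work.
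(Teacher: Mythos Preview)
Your core strategy matches the paper's: set $\varepsilon(a_i)=1$, $\varepsilon(b^m_{ij})=0$, solve for $\varepsilon(c^m_{ij})$ from $\varepsilon(\mathfrak{d}b^m_{ij})=0$ using the triangle, and verify $\varepsilon(\mathfrak{d}c^m_{ij})=0$ from $\mathfrak{d}^2 b^m_{ij}=0$ by induction on action. This part is fine.

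The genuine gap is in your treatment of the order-reversing chords. Your proposal to handle $\varepsilon(\mathfrak{d}d)=0$ for order-reversing $d$ by locating some $\xi$ with $d$ appearing in $\mathfrak{d}\xi$ and applying $\mathfrak{d}^2\xi=0$ does not work: there is no reason such a $\xi$ should exist in general, and even when one does, the $\mathfrak{d}^2$-expansion will not isolate the term $\varepsilon(\mathfrak{d}d)$ without already knowing the vanishings you are trying to prove. You correctly identify this as the main obstacle, and it is not merely bookkeeping.

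The paper resolves this with a single structural observation that you are missing: the two-sided ideal of $\mathfrak{A}(\mathbb{L}^+)$ generated by the order-reversing chords is preserved by $\mathfrak{d}$. Indeed, label $L_i=D^w_{a_i}$ for $i\le k$ and $L_{k+1}=L$; going counterclockwise around the boundary of any disc with positive puncture at a chord from $L_\alpha^+$ to $L_\beta^+$, the indices of the arcs form a sequence $\beta=k_0,k_1,\ldots,k_d=\alpha$, and each negative puncture at a non-order-reversing chord forces $k_{\ell-1}>k_\ell$. If $\alpha>\beta$ (order-reversing), the strict monotonicity $k_0>k_1>\cdots>k_d$ contradicts $k_d>k_0$, so at least one negative puncture must be order-reversing. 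Hence $\mathfrak{d}d$ lies in the ideal whenever $d$ is order-reversing, and $\varepsilon(\mathfrak{d}d)=0$ is automatic once $\varepsilon$ vanishes on order-reversing generators. Equivalently, one passes to the quotient DGA $\mathfrak{A}^\to$, constructs the augmentation there, and then pulls it back via the projection; this also cleans up your argument for $a_i$, since in $\mathfrak{A}^\to$ the chords $a_i$ have minimal action and $\mathfrak{d}^\to a_i=0$ directly. With this observation in hand, your inductive scheme goes through.
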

\begin{proof}
Set $L_i:=D_{a_i}^w$ and $L_{k+1}:=L$. recall that each of the $L_i$ is embedded, and therefore there is no Reeb chord from $L_i^+$ to itself for any $i$. Thus all Reeb chords go between different connected components of $\mathbb{L}^+$ and are as described in Lemma~\ref{wrapping the discs}.

 The bilateral ideal of $\mathfrak{A}(\mathbb{L})$
generated by the order reversing chords is preserved by the
differential, and therefore the quotient algebra, which we will denote by
$\mathfrak{A}^{\to}$, inherits a differential $\mathfrak{d}^{\to}$. We can identify
$\mathfrak{A}^{\to}$ with the subalgebra of $\mathfrak{A}$ \color{black} generated by the chords
of type $a$, $b$ and $c$, and $\mathfrak{d}^{\to}$ to the portion of the differential of
${\mathfrak A}(\mathbb{L})$ involving only generators of $\mathfrak{A}^{\to}$.

On $\mathfrak{A}^\to$ we define a filtration of  algebras
\begin{equation}\label{filtration of algebras}
 \Z = \mathfrak{A}^\to_{k+1} \subset  \mathfrak{A}^\to_k \subset \ldots \subset
\mathfrak{A}^\to_0 = \mathfrak{A}^\to,
\end{equation}
where $\mathfrak{A}^\to_i$ is generated by all chords
$a_s, b_{sj}^m, c_{sj}^m$ with $s \ge i$.

Given a chord $c$ of $\mathbb{L}^+$, we denote its action by
$\mathfrak{a}(c)$. The differential $\mathfrak{d}^\to$ preserves the
action filtration on $\mathfrak{A}^\to$ (and on all its
subalgebras). We assume that
\begin{itemize}
\item[(i)] the actions of all chords $b_{ij}^m$ and
$c_{ij}^m$ are pairwise distinct and,
\item[(ii)] for all $i,j,m$, the actions
$\mathfrak{a}(b_{ij}^m)$ and $\mathfrak{a}(c_{ij}^m)$ are close enough that,
whenever $\mathfrak{a}(c_{i_- j_-}^{m_-}) <  \mathfrak{a}(c_{ij}^{m}) <
\mathfrak{a}(c_{i_+ j_+}^{m_+})$, we also have  $\mathfrak{a}(c_{i_- j_-}^{m_-}) <
\mathfrak{a}(b_{i j}^{m}) < \mathfrak{a}(c_{i_+ j_+}^{m_+})$.
\end{itemize}
The first is a generic assumption, and the second is achieved by choosing
$\epsilon'>0$ sufficiently small in Lemma~\ref{wrapping the discs}.

For each fixed $i$ we define a total order on the pairs $(j,m)$ by declaring that $(h,l) \prec_i (j,m)$ if
$\mathfrak{a}(c_{ih}^l) < \mathfrak{a}(c_{ij}^m)$. When the index $i$ is clear from the context, we will simply write $\prec$.

We know that $\mathfrak{d}^\to a_i=0$ for action reasons and $\langle \mathfrak{d}^\to b_{ij}^m,
a_jc_{ij}^m \rangle =1$ by the last part of Lemma~\ref{wrapping the discs}.
Combining this partial information on the differential $\mathfrak{d}^{\to}$ and the assumptions
(i) and (ii) above with the action filtration, we obtain the following structure for the
differential:
\begin{align*}
\mathfrak{d}^\to a_i & =0, \\
\mathfrak{d}^\to b_{ij}^m & = \alpha_j^m a_i + \sum \limits_{(h,l) \prec_i (j,m)}  \beta_{jl}^{mh} b_{ih}^l
+ a_j c_{ij}^m + \sum \limits_{(l,h) \prec_i (j,m)} w_{lj}^{hm}c_{ih}^l , \\
\mathfrak{d}^\to c_{ij}^m & = \sum \limits_{(l,h) \prec_i (j,m)}  \widetilde{w}_{lj}^{hm}c_{ih}^l
\end{align*}
with $\alpha_j^m, \beta_{jl}^{mh} \in \Z$ and $w_{lj}^{hm}, \widetilde{w}_{lj}^{hm} \in
\mathfrak{A}^\to_{i+1}$.

Then the filtration \eqref{filtration of algebras} is preserved by $\mathfrak{d}^{\to}$.
We want to define an
augmentation $\varepsilon \colon \mathfrak{A}^\to \to \Z$ such that
$\varepsilon(a_i)=1$ for all $i=1, \ldots, k$ working by induction on
$i$.

For $i=k+1$, there is nothing to prove since
$\mathfrak{A}^\to_{k+1} = \Z$.

Suppose now we have defined an augmentation
$\varepsilon \colon \mathfrak{A}^\to_{i+1} \to \Z$. We will extend it
to an augmentation $\varepsilon \colon \mathfrak{A}^\to_i \to \Z$ by
an inductive argument over the action of the chords $c_{ij}^m$. For this reason in the following discussion $i$ will be fixed.

We define $\varepsilon(a_i)=1$ and $\varepsilon(b_{ij}^m)=0$ for all
$j$ and $m$. To define $\varepsilon$ on $c_{ij}^m$ we work inductively
with respect to the order $\prec$ induced by the action.  Suppose that we
have defined $\varepsilon(c_{ih}^l)$ for all $c_{il}^h$ such that
$(h,l) \prec (j,m)$. Then we can achieve
$\varepsilon(\mathfrak{d}^\to b_{ij}^m)=0$ by prescribing an appropriate value to
$$\varepsilon(c_{ij}^m)=\varepsilon(a_jc_{ij}^m),$$
since the values of $\varepsilon$ on all other chords appearing in the expression of $\mathfrak{d}^\to b_{ij}^m$ already have been determined.

Now we have defined $\varepsilon$ on all generators of
$\mathfrak{A}_i^\to$ and, by construction,
$\varepsilon(\mathfrak{d}^\to d)=0$ for every chord $d$ in
$\mathfrak{A}^\to_i$ except possibly for the chords $c_{ij}^m$. We will prove
that in fact $\varepsilon(\mathfrak{d}^\to c_{ij}^m)=0$ holds as well, and thus show that $\varepsilon$ is an augmentation on $\mathfrak{A}^\to_i$. Once again
we will argue by induction on the action of the chords $c_{ij}^m$.

If $(j,m)$ is the minimal element for the order $\prec$, then
$\mathfrak{d}^\to c_{ij}^m=0$ and therefore
$\varepsilon(\mathfrak{d}^\to c_{ij}^k)=0$. Suppose now that we have
verified that $\varepsilon(\mathfrak{d}^\to c_{ih}^l)=0$ for all
$(h,l) \prec (j,m)$. From $\mathfrak{d}^\to (\mathfrak{d}^\to b_{ij}^m)=0$ and
$\varepsilon (\mathfrak{d}^\to ( a_jc_{ij}^m))= \varepsilon(\mathfrak{d}^\to
c_{ij}^m)$ we obtain
$$\varepsilon(\mathfrak{d}^\to c_{ij}^m) + \sum \limits_{(h,l) \prec (j,m)}  \beta_{jl}^{mh}
\varepsilon(\mathfrak{d}^\to b_{ih}^l) + \sum \limits_{(l,h) \prec (j,m)} \varepsilon(\mathfrak{d}^\to(w_{lj}^{hm}c_{ih}^l))=0.$$
We have $\varepsilon(\mathfrak{d}^\to b_{ih}^l)=0$ by construction and
$\varepsilon(\mathfrak{d}^\to(w_{lj}^{hm}c_{ih}^l))=0$ by the induction
hypothesis.
From this we conclude that $\varepsilon(\mathfrak{d}^\to c_{ij}^m)=0$.

Finally we simply precompose $\varepsilon$ with the projection $\mathfrak{A}(\mathbb{L}) \to \mathfrak{A}^{\to}$ and
obtain an augmentation of $\mathfrak{A}(\mathbb{L})$ satisfying the required conditions.
\end{proof}

\section{Generation of the Wrapped Fukaya category of Weinstein sectors.}
\label{sec:wrapp-fukaya-categ}
In this section we prove Theorem \ref{main-sectors}. We recall that the ``linear setup'', introduced by Abouza\"{\i}d and Seidel in \cite{OpenString} and used in the proof of Theorem \ref{main}, is not available for sectors; instead, Ganatra, Pardon and Shende in \cite{GanatraPardonShende} define the wrapped Fukaya category of a Liouville sector by a localisation procedure. However, the strategy of the proof of Theorem \ref{main} applies to
the ``localisation setup'' as well, with only minor modifications of some technical details.
The goal of this section is to explain those modifications, which in most cases will be simplications. Before proceeding, we recall that the proof of Theorem \ref{main} had four main steps:
\begin{enumerate}
\item an extension of the construction of wrapped Floer cohomology to certain exact immersed Lagrangian submanifolds,
\item triviality of wrapped Floer cohomology for immersed exact Lagrangian submanifolds which are disjoint from the skeleton (``trivial triviality''),
\item identification of certain twisted complexes in the wrapped Fukaya category with Lagrangian surgeries, and
\item construction of the bounding cochain after a suitable modification of the Lagrangian cocores.
\end{enumerate}

\subsection{The wrapped Fukaya category for sectors}
In this subsection we recall briefly the definition of the wrapped Floer cohomology and the wrapped Fukaya category for sectors following \cite{GanatraPardonShende} and show that
our construction of the wrapped Floer cohomology of an exact immersed Lagrangian submanifold
can be carried over to this setting.

Given $\epsilon >0$ we, denote $\C_{0 \le \Re < \epsilon}= \{ x+iy \in \C : 0 \le x < \epsilon \}$. If $(S, \theta, I)$ is a Liouville sector, by \cite[Proposition 2.24]{GanatraPardonShende} there is an identification
\begin{equation}\label{near the boundary}
(\mathrm{Nbd}(\partial S), \theta) \cong \left(F \times \C_{0 \le \Re < \epsilon}, \theta_F + \frac 12 (x dy -y dx) + df\right)
\end{equation}
where $f \colon F \times \C_{0 \le \Re < \epsilon} \to \R$ satisfies the following properties:
\begin{itemize}
\item the support of $f$ is contained in $F_0 \times  \C_{0 \le \Re < \epsilon}$ for some Liouville
domain $F_0 \subset F$, and
\item $f$ coincides with $f_{\pm \infty} \colon F \to \R$ for $|y| \gg 0$.
\end{itemize}
We denote $\pi \colon \mathrm{Nbd}(\partial S) \to \C_{0 \le \Re < \epsilon}$ the projection induced by
the identification \eqref{near the boundary}.

We will consider almost complex structures $J$ on $S$ which are cylindrical with respect to the Liouville vector field ${\mathcal L}$ of $\theta$ and make the projection $\pi$ holomorphic (where, of course, we endow $\C$ with its standard complex structure). It is easy to see that this choice of almost complex structures constrains the holomorphic curves with boundary in $\mathrm{int}(S)$ so that they stay away from the boundary $\partial S$; see \cite[Lemma 2.41]{GanatraPardonShende}. Thus, if $L_0, L_1$ are two transversely intersecting exact Lagrangian submanifolds with cylindrical ends, the Floer chain complex {\em with zero Hamiltonian} $\mathrm{CF}(L_0, L_1)$ is defined.

Let $L_\bullet = \{ L_t \}_{t \in I}$ be an isotopy of exact Lagrangian submanifolds which are cylindrical at infinity over Legendrian submanifolds $\Lambda_t$ in the contact manifold $(V, \alpha)$ which is the boundary at infinity of $(S, \theta, I)$. Let $X_t$ be a vector field along $L_t$ directing the isotopy. We can choose this vector field so that, where the isotopy is cylindrical, it is the lift of a vector field along $\Lambda_t$ which we denote by $X_t^\infty$.  We say that the Lagrangian isotopy is {\em positive} if $\alpha(X_t^\infty) \ge 0$ everywhere. We say that the  isotopy is {\em short} if its trace
$\bigcup \limits_{t \in I} \Lambda_t$ is embedded. This implies that Legendrian links $\Lambda_0 \cup \Lambda_t$ are embedded and thus Legendrian isotopic to each other for all $t \in I \setminus \{0\}$.

Following \cite[Subsection 3.3]{GanatraPardonShende}, to any positive isotopy $L_\bullet$ of exact Lagrangian submanifolds with cylindrical ends, we associate a {\em continuation element} $c(L_\bullet) \in \mathrm{HF}(L_1, L_0)$ as follows. If the isotopy is small, there is a map $H^*(L_0) \to \mathrm{HF}(L_1, L_0)$, and we define $c(L_\bullet)$ as the image of the unit in $H^*(L_0)$ under this map. If $L_\bullet$ is not small, then we decompose it into a concatenation of small isotopies and define $c(L_\bullet)$ as the composition (by the triangle product) of the continuation elements of the small isotopies. Then, for any Lagrangian submanifold $K$ which is transverse with both $L_0$ and $L_1$, we define the continuation map
\begin{equation}\label{continuation map for sectors}
\mathrm{HF}(L_0, K) \xrightarrow{[\mu_2(\cdot, c(L_\bullet))]} \mathrm{HF}(L_1, K).
\end{equation}
See \cite[Lemma 3.26]{GanatraPardonShende} for the properties of the continuation element.

Given a Lagrangian submanifold $L$ with cylindrical end, following \cite[Subsection 3.4]{GanatraPardonShende} we consider its {\em wrapping category}
$(L \rightarrow -)^+$, which is the category whose objects are isotopies of Lagrangian submanifolds $\phi \colon L \to L^w$ and morphisms from $(\phi \colon L \to L^w)$ to $(\phi' \colon L \to L^{w'})$ are {\em homotopy classes} of {\em positive} isotopies $\psi \colon L^w \to L^{w'}$ such that $\phi \# \psi= \phi'$.

With all this in place, wrapped Floer cohomology is defined as
\begin{equation}\label{HW for sectors}
\mathrm{HW}(L, K) = \varinjlim \limits_{(L \to L^w)^+} \mathrm{HF}(L^w, K)
\end{equation}
where the maps in the direct system are the continuation maps defined above.

Now suppose that $K$ is immersed and $\varepsilon$ is an augmentation of its obstruction algebra. Then $\mathrm{CF}(L, (K, \epsilon))$ is defined as in Section \ref{sec: Floer homology for immersions}, as long as we use the trivial Hamiltonian $H= \boldsymbol{0}$ in the definition --- being in a Liouville sector makes no difference in any other aspect of the construction.
The definition of $\mathrm{HW}(L, (W, \varepsilon))$ is then the same as in Equation \eqref{HW for sectors} using the product $\mu_2$ defined in Section \ref{CF products}.

\begin{rem}\label{rem: immersions too}
This definition is sufficient for our needs because in the proof of Theorem \ref{main-sectors} we only need wrapped Floer cohomology with immersed Lagrangian submanifolds in the right entry. However, it is possible to extend the definition to the case of immersed Lagrangian submanifolds on the left by identifying augmentations of $L$ with augmentations of $L^w$ and defining the continuation element for small isotopies using Lemmas \ref{continuation element} and \ref{unit}. Note that these lemmas are stated for Floer cohomology with trivial Hamiltonian, and therefore they extend immediately to Liouville sectors.
\end{rem}

Now we sketch the construction of the wrapped Fukaya category following \cite[Subsection 3.5]{GanatraPardonShende}. We recall that we do not need to extend the definition so that it includes immersed Lagrangian submanifolds, even if it would probably not be too difficult. We fix a countable set $I$ of exact Lagrangian submanifolds with cylindrical ends so that any cylindrical Hamiltonian isotopy class has at least one representative and, for every $L \in I$, we fix a a cofinal sequence $L=L^{(0)} \to L^{(1)} \to \ldots$ in $(L \rightarrow -)^+$. We denote by ${\mathcal O}$ the set of all these
Lagrangian submanifolds.  We assume that we have chosen the elements in ${\mathcal O}$ so that all finite subsets $\{ L_1^{i_1}, \ldots, L_k^{i_k} \}$ with $i_1 < \ldots < i_k$ consist of mutually transverse Lagrangian submanifolds.

We make ${\mathcal O}$ into a strictly unital $A_\infty$-category by defining
$$\mathrm{hom}_{\mathcal O} (L^{(i)}, K^{(j)})=
\begin{cases} \mathrm{CF}(L^{(i)}, K^{(j)}) & \text{if } i > j, \\
 \Z & \text{if } L^{(i)}= K^{(j)}, \\  0 & \text{otherwise}.
\end{cases}$$
If $i>j$, the continuation element of the positive isotopy $L^{(j)} \to L^{(i)}$ belongs to $H\hom_{\mathcal O}(L^{(i)}, L^{(j)})= \mathrm{HF}(L^{(i)}, L^{(j)})$. We will write $L^{(i)} > K^{(j)}$ if $i>j$.

We denote by $C$ the set of all closed morphisms of ${\mathcal O}$ which represent a continuation element. Thus we define the wrapped Fukaya category of $(S, \theta, I)$ as ${\mathcal WF}(S, \theta)={\mathcal O}_{C^{-1}}$, where ${\mathcal O}_{C^{-1}}$ is the $A_\infty$-category obtained by dividing $\mathcal{O}$ by all cones of morphisms in $C$: i.e.\ ${\mathcal O}_{C^{-1}}$ has the same objects as ${\mathcal O}$ and its morphisms are defined as the morphisms of the image of ${\mathcal O}$ in the quotient of the triangulated closure of ${\mathcal O}$ by its full subcategory of cones of elements of $C$. This construction has the effect of turning all elements of $C$ into quasi-isomorphisms.
See \cite[Subsection 3.1]{GanatraPardonShende}, and in particular Definition 3.1 therein, for a precise definition of the localisation of an $A_\infty$-category. In the following lemma we summarise the properties of the localisation that we will need.
\begin{lemma}\label{localisation}
The categories ${\mathcal O}$ and ${\mathcal WF}(S, \theta)$ are related as follows:
\begin{enumerate}
\item ${\mathcal WF}(S, \theta)$ and ${\mathcal O}$ have the same objects,
\item $H(\mathrm{hom}_{\mathcal WF}(L, K)) \cong \mathrm{HW}(L, K)$,
\item the category ${\mathcal WF}(S, \theta)$ is independent of all choices up to quasi-equivalence, and
\item The localisation functor ${\mathcal O} \to {\mathcal WF}(S, \theta)$ is the identity on objects and has trivial  higher order terms (i.e.\ it matches $A_\infty$ operations on the nose). Moreover, when $\mathrm{hom}_{\mathcal O}(L, K)= \mathrm{CF}(L, K)$, it induces the natural map $\mathrm{HF}(L, K) \to \mathrm{HW}(L, K)$.
\end{enumerate}
\end{lemma}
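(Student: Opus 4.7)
The statement collects together several standard properties of the localisation construction, all of which are either established in \cite{GanatraPardonShende} or follow by formal manipulations with cofinal subcategories. The plan is to prove the four points in order, reducing each to a property of the abstract localisation of $A_\infty$-categories (see \cite[Definition 3.1 and Section 3.1]{GanatraPardonShende}).

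First, point (1) is immediate from the construction: localisation by a class of morphisms leaves the set of objects unchanged. Point (4) is essentially built into the specific model for localisation used in \cite{GanatraPardonShende}, where one can arrange the localisation functor to be strict on objects and to split as a direct inclusion of chain complexes at the level of morphism spaces whose source and target sit in the correct order $L > K$. In particular the higher $A_\infty$ components of the functor vanish and the induced map on $H^0$-level morphisms $HF(L,K) \to H(hom_{\mathcal{WF}}(L,K))$ factors through the map into the direct limit that will be identified with $HW(L,K)$ in point (2).

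The main content is point (2), and this is where the real work lies. The plan is to exhibit a model for $hom_{\mathcal{WF}}(L,K)$ as a homotopy colimit over the cofinal sequence $L = L^{(0)} \to L^{(1)} \to \ldots$ of wrappings. Because the chosen sequence is cofinal in the wrapping category $(L \to -)^+$, and because continuation elements are sent to quasi-isomorphisms under localisation, one first reduces to computing the localisation at the morphisms representing the continuation elements $c(L^{(i)} \to L^{(i+1)})$ in the sequence only. Next, using the telescope/mapping-cone model for inverting a sequence of morphisms (as in \cite[Section 3]{GanatraPardonShende}), one identifies
\[
H(hom_{\mathcal{WF}}(L, K^{(j)})) \cong \varinjlim_{i} HF(L^{(i)}, K^{(j)}),
\]
where the direct system maps are given by Yoneda product with the continuation elements. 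By the cofinality of the chosen sequence and the definition of the wrapping category morphisms, this direct limit agrees with the direct limit \eqref{HW for sectors} defining $HW(L, K)$. The main obstacle is verifying that the telescope model indeed computes the localisation, but this is precisely the content of the results of \cite[Section 3]{GanatraPardonShende}, which can be quoted directly.

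Finally, point (3) follows formally from points (2) and (4) together with a standard cofinality argument. Given two choices of cofinal sequences inside $(L \to -)^+$ for every Lagrangian, one interleaves them to produce a common refinement, obtaining a zig-zag of $A_\infty$-functors which are quasi-equivalences by point (2): each induces the identity on $HW(L, K)$. Independence from the countable collection $I$ of Lagrangians is handled in the same way by considering the full subcategory generated by the union of two such choices.
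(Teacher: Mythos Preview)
Your proposal is correct and follows essentially the same route as the paper's proof, which consists entirely of direct citations: (1) from the definition of localisation, (2) from \cite[Lemma~3.37]{GanatraPardonShende}, (3) from \cite[Proposition~3.39]{GanatraPardonShende}, and (4) from the definition of $A_\infty$-products in $A_\infty$-quotients via \cite[Corollary~2.4]{quotientAinfin}. Your plan simply unpacks what lies behind those citations---the telescope model for (2), the interleaving-of-cofinal-sequences argument for (3), and the strictness of the quotient functor for (4)---so the content is the same, with the minor difference that for (4) the paper points to the Lyubashenko--Ovsienko description of quotient $A_\infty$-structures rather than to the GPS model directly.
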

\begin{proof}
(1) follows from the definition of localisation. (2) is the statement of \cite[Lemma 3.37]{GanatraPardonShende}. (3) is the statement of \cite[Proposition 3.39]{GanatraPardonShende}. (4) follows from the definition of $\mathcal{A}_\infty$-products in $\mathcal{A}_\infty$-quotients; see \cite[Corollary 2.4]{quotientAinfin}.
\end{proof}
\subsection{Trivial triviality.}
\label{sec:trivial-triviality}
In this section we prove Proposition~\ref{prop: trivial triviality} for sectors. The proof is the same as in Section \ref{ss: pushup} except for few details which must be adjusted because we need to use geometric wrapping of the Lagrangian submanifolds instead of Hamiltonian perturbations of the Floer equation and the continuation maps from Equation \eqref{continuation map for sectors} instead of those from Subsection \ref{sss: changing hamiltonian}.

For Liouville sectors we  need to modify the definition of a cylindrical Hamiltonian in order to have a complete flow. To that aim, the crucial  notion is that of a {\em coconvex set}.
\begin{dfn} \label{decent dynamics}
Let $X$ be a vector field on a manifold $V$. We say that a subset ${\mathcal N} \subset V$ is
{\em coconvex} (for X) if every finite time trajectory of the flow of $X$ with initial
and final point in $V \setminus {\mathcal N}$ is contained in $V \setminus {\mathcal N}$.
\end{dfn}
The following lemma is a rewording of \cite[Proposition 2.34]{GanatraPardonShende}.
\begin{lemma}\label{convexification}
Given a contact manifold $(V, \alpha)$ with convex boundary, it is possible to find a function $g \colon V \to \R_{\ge 0}$
such that
\begin{enumerate}
\item $g > 0$ outside the boundary $\partial V$ and $g \equiv 1$ outside a collar neighbourhood $\partial V \times [0, \delta)$ on which $\alpha = dt+ \beta$ where $\beta$ is a one-form on $\partial V$,
\item $g=t^2G$ on $\partial V \times [0, \delta)$, where $G>0$ and
$t$ is the coordinate of $[0, \delta)$, and
\item there is a collar neighbourhood ${\mathcal N}$ of $\partial V$, contained in $\partial V \times [0, \delta)$, which is coconvex for the contact Hamiltonian $X_g$ of $g$.
\end{enumerate}
\end{lemma}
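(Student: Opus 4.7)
The plan is to construct $g$ explicitly, starting from a normal-form collar neighborhood and analyzing the contact Hamiltonian vector field $X_g$ directly.

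First, since $\partial V$ is a convex boundary, I invoke the standard normal form supplied by this condition to obtain a collar $\partial V \times [0, \delta_0)$, with coordinate $t$ on $[0, \delta_0)$, on which $\alpha = dt + \beta$ for some one-form $\beta$ pulled back from $\partial V$. On this collar I define $g(t, x) = t^2 G_0(x)$ for a choice of smooth positive function $G_0 \colon \partial V \to \R_{>0}$ that will be specified in the last step. Outside the collar I set $g \equiv 1$, and on an intermediate annulus $\partial V \times [\delta_0/2, 2\delta_0/3)$ I interpolate monotonically between $t^2 G_0$ and $1$; this immediately gives properties (1) and (2), after rescaling $t$ and adjusting $G_0$ so that the value and the derivative match at the interpolation region. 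Setting $\delta$ a bit larger than the support of the interpolation yields the first two conditions.

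Next, I analyze $X_g$ in the collar. Using the defining equations $\alpha(X_g) = g$ and $\iota_{X_g} d\alpha = R_\alpha(g)\alpha - dg$, decompose $X_g = g R_\alpha + v$ with $v \in \xi = \ker \alpha$. Since $g = t^2 G_0$ vanishes to second order and $dg = 2tG_0\, dt + t^2 dG_0$ vanishes to first order at $t = 0$, both $gR_\alpha$ and $v$ (which is determined by $dg|_\xi$ via the nondegenerate form $d\alpha|_\xi$) are $O(t)$ near the boundary. In particular $X_g \equiv 0$ on $\partial V$, so $\partial V$ is an invariant stratum of the flow and no trajectory can cross it in finite time.

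The essential computation is the behavior of the function $t$ along a trajectory of $X_g$ in the collar: from $\alpha = dt+\beta$ and $\alpha(X_g) = g$ we obtain
\begin{equation*}
\dot t = dt(X_g) = g - \beta(X_g) = t^2 G_0 - \beta(X_g).
\end{equation*}
Because $v = O(t)$ and $\beta$ is smooth, the term $\beta(X_g)$ is also $O(t)$; more precisely $\beta(X_g) = t \cdot B(t,x)$ for a smooth function $B$ whose value on $\partial V$ is determined by $dg|_\xi$ and the structure of $\beta$. I will then choose $G_0$ so that $B(0, x)$ is strictly positive on $\partial V$ (this uses the convexity hypothesis, which ensures the characteristic foliation of $\beta$ has the right sign to let $v$ contribute positively to $\beta(X_g)$), and choose $\delta_1 \in (0, \delta_0)$ small enough that $t G_0 < B(t,x)$ holds throughout $\partial V \times [0, \delta_1]$. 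On the interior boundary of $\mathcal{N} \coloneqq \partial V \times [0,\delta_1]$ this gives $\dot t \le -\tfrac12 \delta_1 B(0,\cdot) < 0$, so $X_g$ points strictly into $\mathcal N$ there, and no trajectory can exit $\mathcal N$ in forward time, which implies coconvexity.

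The hard part will be the third step, namely arranging the sign condition $\beta(X_g) > g$ on the inner boundary of the collar. This is a constraint on the interaction between $\beta$ and $dg|_\xi$, and it is precisely the place where the convexity hypothesis on $\partial V$ (equivalent to $\beta$ having a nowhere vanishing characteristic direction of a definite sign) is used. Once $G_0$ is chosen compatibly with this characteristic direction, the remaining estimates are soft and follow from the $O(t)$ bounds on $X_g$.
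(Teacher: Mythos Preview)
The paper does not actually prove this lemma: it simply states that it is a rewording of \cite[Proposition 2.34]{GanatraPardonShende}. So there is no ``paper's proof'' to compare against. That said, your approach contains a genuine computational error that makes the argument as written fail, even though the overall strategy can be salvaged.

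The error is in the order of vanishing of $\beta(X_g)$. In the collar $\alpha = dt + \beta$ with $\beta$ pulled back from $\partial V$, one has $d\alpha = d\beta$ and (from the contact condition) $d\beta$ is symplectic on $\partial V$, so $R_\alpha = \partial_t$. Writing $X_g = a\,\partial_t + Y$ with $Y$ tangent to the $\partial V$-slices, the defining equations give
\[
\iota_Y d\beta \;=\; (R_\alpha g)\,\alpha - dg \;=\; 2tG_0\,\beta - t^2\, dG_0.
\]
If $w_1$ is the vector field on $\partial V$ with $\iota_{w_1} d\beta = \beta$ (the Liouville vector field of $(\partial V, \beta)$), then $Y = 2tG_0\, w_1 + O(t^2)$, and crucially $\beta(w_1) = d\beta(w_1, w_1) = 0$. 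Thus $\beta(X_g) = \beta(Y) = O(t^2)$, not $O(t)$: your function $B(t,x)$ satisfies $B(0,x)\equiv 0$ identically, and the sign condition you try to impose on $B(0,\cdot)$ is vacuous. The appeal to ``convexity'' and the ``characteristic foliation'' to force a sign is therefore misplaced---convexity is what produces the normal form $\alpha = dt+\beta$, but it does not give the order-$t$ sign you are looking for.

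Carrying the computation one step further, one finds $\beta(Y) = t^2\, dG_0(w_1)$ and hence
\[
\dot t \;=\; g - \beta(X_g) \;=\; t^2\bigl(G_0 - dG_0(w_1)\bigr).
\]
Taking $G_0$ constant (or any $G_0 > 0$ with $|dG_0(w_1)| < G_0$) gives $\dot t > 0$ throughout a small collar, so $X_g$ points \emph{out of} $\mathcal N$ on its interior boundary---the opposite sign to the one you aimed for, but equally sufficient for coconvexity: a trajectory with both endpoints in $V\setminus\mathcal N$ can never cross into $\mathcal N$ in forward time, since that would require $\dot t < 0$ at the crossing. So your strategy does go through once the computation is corrected and the unneeded sign manipulation is dropped.
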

Note that $X_g$ vanishes along $\partial V$. It is called a {\em cut off Reeb
vector field} in \cite{GanatraPardonShende} because it is the Reeb vector field of the
contact form $g^{-1} \alpha$ on $\mathrm{int}(V)$. From now on we will assume that $g$ and ${\mathcal N}$ are fixed once and for all for the contact manifold $(V, \alpha)$ arising as
boundary at infinity of $(S, \theta, I)$. We will also extend $g$ to the complement of a compact set of $S$ so that it is invariant under the Liouville flow.

\begin{dfn}\label{cylindrical Hamiltonian in sector}
Let $S$ be a Liouville sector. A Hamiltonian function $H \colon [0,1] \times S \to \R$ is
{\em cylindrical} if there is a function $h \colon \R^+ \to \R$ such that
$H(t, w)=g(w)h(e^{\fr(w)})$ outside a compact set of $S$.
\end{dfn}
The definition of cylindrical Hamiltonian compatible with two immersed exact Lagrangian submanifolds in the case of sectors is the same as Definition \ref{compatible hamiltonian}.
Condition (iv) in the latter definition becomes equivalent to asking that $\lambda$ should not be the length of a chord of the cut off Reeb vector field. In this section, cylindrical Hamiltonian will be used to define positive Hamiltonian isotopies of Lagrangian submanifolds and not to deform the Floer equation.

We say that an exact Lagrangian submanifold of $S$ (possibly immersed) is {\em safe} if it  is
cylindrical over a Legendrian submanifold contained in $V \setminus {\mathcal N}$. Since
${\mathcal N}$ is strictly contained in an invariant neighbourhood of $\partial V$,
every cylindrical exact Lagrangian submanifold of $S$ is Hamiltonian isotopic to one which is safe by a cylindrical Hamiltonian isotopy. We will assume that all Lagrangian submanifolds with cylindrical end are safe unless we explicitly state the contrary.

Fix an exact Lagrangian submanifold $L$ in $S$ with cylindrical end. Given $\lambda, \Lambda, R \in \R$ such that $0<\lambda\leq \Lambda$ and $0<R$, let $h_{\lambda, \Lambda, R}$ be the function defined in Equation \eqref{two steps hamiltonian} and consider the cylindrical Hamiltonian $H_{\lambda, \Lambda, R}$ induced by $h_{\lambda, \Lambda, R}$ as in definition \ref{cylindrical Hamiltonian in sector}. Note that, when $\Lambda = \lambda$, we obtain the Hamiltonian function $H_{\lambda}$ induced by the function $h_\lambda$ of Equation \eqref{H_lambda}  independently of $R$. We denote by $L_\bullet^{\lambda,\Lambda,R}= \{ L_t^{\lambda,\Lambda,R} \}_{t \in \R}$ the positive Hamiltonian isotopy generated by $H_{\lambda, \Lambda, R}$ such that $L_0^{\lambda,\Lambda,R}=L$. When $\Lambda= \lambda$ we write $L_\bullet^\lambda$ instead.

\begin{lemma}\label{lambda cofinal}
Let $\lambda_n \to + \infty$ be an increasing sequence. Then the Lagrangian sumbanifolds
$L^{\lambda_n}_1$ form a cofinal collection in $(L \rightarrow -)^+$.
\end{lemma}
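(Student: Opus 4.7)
The goal is to show that for every object $\phi\colon L \to L^w$ of the wrapping category $(L\to -)^+$, there exist $n$ sufficiently large and a positive isotopy $\psi\colon L^w \to L^{\lambda_n}_1$ whose concatenation $\phi\#\psi$ is homotopic, through positive isotopies with fixed endpoints, to the wrapping isotopy $\{L^{\lambda_n}_t\}_{t\in[0,1]}$ generated by $H_{\lambda_n}$. Such a $\psi$ furnishes a morphism from $\phi$ to the wrapping object at slope $\lambda_n$, which is what cofinality demands.

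First I would reduce the problem to a canonical form for $\phi$. Since $L^w$ is cylindrical at infinity over a compact Legendrian $\Lambda^w\subset V\setminus\mathcal{N}$ and since $\phi$ is a finite-time positive isotopy, up to homotopy through positive isotopies with fixed endpoints I may assume that $\phi$ is generated by a cylindrical Hamiltonian $K_t\geq 0$ whose slope at infinity is uniformly bounded by some constant $\mu>0$. The corresponding positive contact isotopy on $(V,\alpha)$ at infinity is then generated by contact Hamiltonians $k_t\geq 0$ bounded above by $\mu$.

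Next, for $n$ large enough that $\lambda_n > \mu$, I would construct both $\psi$ and the required homotopy simultaneously by an interpolation of Hamiltonians. Consider the two-parameter family of Hamiltonian flows with generators $H^s_t := (1-s)K_t + sH_{\lambda_n}$ for $s,t\in[0,1]$. At infinity each $H^s_t$ has slope $(1-s)k_t + s\lambda_n \geq 0$, so each $s$-slice defines a positive Hamiltonian isotopy starting at $L$. The $s=0$ slice is $\phi$, ending at $L^w$, while the $s=1$ slice is the wrapping isotopy, ending at $L^{\lambda_n}_1$. This provides a one-parameter family of positive isotopies connecting $\phi$ to the wrapping, with endpoints varying along a path from $L^w$ to $L^{\lambda_n}_1$. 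Repairing the varying endpoint with a compactly supported positive isotopy (obtainable since $\lambda_n-\mu>0$ provides the necessary ``room'' in the Reeb direction at infinity) yields both the desired $\psi$ and exhibits $\phi\#\psi$ as homotopic to the wrapping isotopy through positive isotopies with fixed endpoints.

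The main obstacle will be verifying that the endpoint correction can be carried out while preserving positivity throughout the homotopy --- that is, ensuring that the condition $\alpha(X^\infty_t)\geq 0$ is maintained at every stage along with the fixed endpoint condition. This is precisely where the strict inequality $\lambda_n > \mu$ plays its role: the excess slope $\lambda_n-\mu$ of the wrapping Hamiltonian over $K_t$ gives a definite, positive margin in the cut-off Reeb direction at infinity, which is enough to absorb any required corrections. A secondary technical point is ensuring compatibility with the sector structure near $\partial S$, which is handled by restricting all constructions to cylindrical Hamiltonians in the sense of Definition~\ref{cylindrical Hamiltonian in sector} and invoking the coconvex neighbourhood $\mathcal{N}$ from Lemma~\ref{convexification} to confine corrections away from $\partial S$.
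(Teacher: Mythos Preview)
Your proposal is a direct attempt to prove cofinality from scratch, whereas the paper's proof is a two-line reduction to a result in the literature. The paper observes that since $H_\lambda$ is autonomous and $h_{\kappa\lambda}=\kappa h_\lambda$, one has $L_1^{\kappa\lambda}=L_\kappa^\lambda$; thus the sequence $\{L_1^{\lambda_n}\}$ lies inside the one-parameter flow $\{L_t^{\lambda}\}_{t\ge 0}$ for any fixed $\lambda$. Since the Hamiltonian vector field of $H_\lambda$ in the cylindrical end is a strictly positive multiple of the Reeb vector field of $g^{-1}\alpha$, this flow is cofinal by \cite[Lemma~3.30]{GanatraPardonShende}, and the discrete subsequence with $\lambda_n\to\infty$ inherits cofinality. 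Your argument, by contrast, is essentially trying to reprove the content of that cited lemma.

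There is a genuine error and an unfinished step in your sketch. The correction isotopy $\psi$ from $L^w$ to $L_1^{\lambda_n}$ cannot be compactly supported: the Legendrian ends $\Lambda^w$ and $\Lambda_1^{\lambda_n}$ are in general different submanifolds of $V$, so $\psi$ must move $L^w$ at infinity. Saying that the margin $\lambda_n-\mu>0$ provides ``room in the Reeb direction at infinity'' for a compactly supported correction is self-contradictory. What is actually needed is that the trace of endpoints of your interpolation $H_t^s$ can be reparametrised and supplemented with additional positive Reeb flow so that the resulting $\psi$ is positive at infinity throughout; this is exactly where $\lambda_n>\mu$ enters, but it requires a careful construction, not a compactly supported patch. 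You correctly identify this endpoint repair as the main obstacle and then do not carry it out; since this step \emph{is} the substance of the cofinality argument (and of \cite[Lemma~3.30]{GanatraPardonShende}), the proposal as written is incomplete. The paper's route avoids all of this by exploiting the autonomy of $H_\lambda$ and invoking the established result.
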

\begin{proof}
Since the Hamiltonian $H_\lambda$ is autonomous, for every $\kappa \in \R$ we have $L_1^{\kappa \lambda} = L_{\kappa}^\lambda$. The family $\{ L^\lambda_t \}_{t  \ge 0}$ is cofinal by \cite[Lemma 3.30]{GanatraPardonShende} because the Hamiltonian vector field of $H_\lambda$ in the cylindrical end of $S$ is the lift of a (strictly) positive multiple of the Reeb vector field of the contact form $g^{-1} \alpha$.
\end{proof}

Let $K$ be an immersed exact Lagrangian submanifold with cylindrical end, and let $\varepsilon$ be an augmentation of the obstruction algebra of $K$. Often we will drop $\varepsilon$ from the notation: even if the Floer complex depends on it, the arguments in this subsection do not. Given $\lambda<\Lambda$, we call the intersection points in $L_1^{\lambda, \Lambda, R} \cap K \cap \mathfrak{r}^{-1}((- \infty, R/2))$ {\em intersection points of type I} and the intersection points in $L_1^{\lambda, \Lambda, R} \cap K \cap \mathfrak{r}^{-1}((R/2, + \infty)$ {\em intersection points of type II}. They correspond to the Hamiltonian chords of type I or II in Subsection \ref{ss: pushup}. We denote by $\mathrm{CF}^I(L_1^{\lambda, \Lambda, R}, K)$ the subcomplex of  $\mathrm{CF}(L_1^{\lambda, \Lambda, R}, K)$ generated by the intersection points of type
I. The following lemma is the equivalent of Lemma \ref{too much energy} in this context.
\begin{lemma}\label{too much energy 2}
If the Liouville flow of $(S, \theta)$ displaces $K$ from every compact set, then the inclusion\\
$\mathrm{CF}^I(L_1^{\lambda, \Lambda, R}, K) \hookrightarrow \mathrm{CF}(L_1^{\lambda, \Lambda, R}, K)$ is trivial in homology when $\Lambda$ and $R$ are sufficiently large.
\end{lemma}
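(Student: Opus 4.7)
The strategy mirrors that of Lemma \ref{too much energy} from the linear setup, with two adaptations needed for the localisation setting: the wrapping of $L$ is geometric (realised by the isotopy $L_\bullet^{\lambda,\Lambda,R}$) rather than implemented through a Hamiltonian perturbation of the Floer equation, and the continuation map induced by moving $K$ must be constructed via moving boundary conditions in the $H=0$ Floer equation rather than via the Hamiltonian-dependent construction of Section \ref{sss: changing hamiltonian}. Since the maximum principle of Lemma \ref{maximum principle} applies verbatim to sectors, once one restricts to almost complex structures that are cylindrical at infinity and holomorphic with respect to the projection $\pi$ coming from Equation \eqref{near the boundary}, all the analytic ingredients needed for the argument carry over.

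First, using the displacement hypothesis, I would produce a compactly supported Hamiltonian isotopy $\psi_t \colon S \to S$ (obtained by truncating the Liouville flow outside a sufficiently large compact subset, which is possible since $K$ is safe) with the property that $K' := \psi_1(K)$ is contained in $\{ \mathfrak{r} \geq R \}$. The isotopy is generated by a Hamiltonian $G \colon [0,1] \times S \to \R$ of bounded $C^0$-norm, and $K'$ inherits an augmentation $\varepsilon'$ from $\varepsilon$ via the analogue of Proposition \ref{invariance of D}. A continuation map
\[
\Psi \colon CF(L_1^{\lambda,\Lambda,R}, (K, \varepsilon)) \to CF(L_1^{\lambda,\Lambda,R}, (K', \varepsilon'))
\]
is then defined along the lines of Section \ref{sss: compactly supported isotopies}, using a domain-dependent moving boundary condition that interpolates from $K$ at $s \to +\infty$ to $K'$ at $s \to -\infty$; it is a quasi-isomorphism by the standard homotopy argument between $\psi_t$ and its inverse.

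Next I would run the action argument. In the $H=0$ setup, the action of an intersection point $p \in L_1^{\lambda,\Lambda,R} \cap K$ reduces to $f_{L_1^{\lambda,\Lambda,R}}(p) - f_K(p)$; computing the modified potential of $L_1^{\lambda,\Lambda,R}$ by Lemma \ref{variation of Liouville} exactly as in Lemma \ref{action separation from I and II} yields a uniform bound $C_-$ on the action of any type I generator, independent of both $\Lambda$ and $R$. An analogue of Lemma \ref{frignolo}, specialised to $H=0$, then shows that $\Psi(x)$ is a linear combination of generators of action at most $C_- + 2\|G\|_\infty$ for every type I generator $x$. On the other hand, any generator of $CF(L_1^{\lambda,\Lambda,R}, (K', \varepsilon'))$ lives in $\{ \mathfrak{r} \geq R \}$, where the modified potential of $L_1^{\lambda,\Lambda,R}$ has jumped by at least $(\Lambda-\lambda)e^R + \lambda$; hence the minimum action in the target of $\Psi$ tends to $+\infty$ with $\Lambda$. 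For $\Lambda$ large enough, $\Psi$ therefore vanishes identically on the type I subcomplex, and since $\Psi$ is a quasi-isomorphism, every cycle in $CF^I$ must be a boundary in $CF(L_1^{\lambda,\Lambda,R}, (K, \varepsilon))$, as claimed.

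The main technical obstacle I expect is the construction of $\Psi$ in the sector setup: one must verify that the moving-boundary moduli spaces still satisfy the appropriate maximum principle near $\partial S$, so that strips are forced to stay inside a compact part of $S$ away from $\partial S$. This is however handled precisely by choosing the almost complex structure so that $\pi$ is holomorphic in the neighbourhood \eqref{near the boundary}, after which the compactness, transversality and orientation arguments of Section \ref{sss: compactly supported isotopies} apply without essential changes.
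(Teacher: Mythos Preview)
Your proposal is correct and follows essentially the same approach as the paper. The paper's proof is itself only a sketch pointing back to Lemma \ref{too much energy} and explaining why its two ingredients---the action computation \eqref{eq: action of chords} and the action-shift estimate of Lemma \ref{frignolo}---survive the passage to sectors; you have supplied precisely those details, correctly noting that the continuation map $\Psi$ for the compactly supported safe isotopy of $K$ uses only the $H=0$ Floer equation, and that confinement away from $\partial S$ is ensured by working with almost complex structures making $\pi$ holomorphic.

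One small remark: the paper additionally notes that the cut-off function $g$ from Definition \ref{cylindrical Hamiltonian in sector} enters the action estimates, but is uniformly bounded on the relevant intersection points because both $L_1^{\lambda,\Lambda,R}$ and $K$ are safe. In your formulation via potential differences this issue is hidden inside the computation of the modified potential of $L_1^{\lambda,\Lambda,R}$ under the flow of $H_{\lambda,\Lambda,R}=g\cdot h_{\lambda,\Lambda,R}(e^{\mathfrak{r}})$; the safeness assumption is what guarantees that the factor of $g$ does not spoil the linear-in-$\Lambda$ growth of the potential in the region $\{\mathfrak{r}\ge R\}$.
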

\begin{proof}[Sketch of proof]
The proof is the same as that of Lemma \ref{too much energy}, whose main ingredients are Equation \eqref{eq: action of chords}, which computes the action of the generators of the Floer complex, and Lemma \ref{frignolo} which estimates the action shift of the  continuation maps for compactly supported safe isotopies from Subsection \ref{sss: compactly supported isotopies}. Both ingredients are still available for Liouville sectors: in fact intersection points between $L_1^{\lambda, \Lambda, R}$ and $K$ are in bijection with Hamiltonian chords of $H_{\lambda, \Lambda, R}$ and the action of an intersection point is the same as the action of the corresponding chord by Equation \eqref{eq: change of potential} and and Equation \eqref{action of a chord}. Thus \eqref{eq: action of chords} still gives bounds on the action of the generators of  $\mathrm{CF}(L_1^{\lambda, \Lambda, R}, K)$, after taking into account the fact that the extra factor involving $g$ coming from Definition  \ref{cylindrical Hamiltonian in sector} is uniformly bounded because $L_1^{\lambda, \Lambda, R}$ and $K$ are safe.

Moreover, the definition of the continuation maps for compactly supported safe isotopies from Subsection \ref{sss: compactly supported isotopies} and the proof of Lemma \ref{frignolo} do not depend on the Hamiltonian deformation in the Floer equation and therefore, setting $H \equiv 0$ in the Floer equations, they hold also for sectors.
\end{proof}

Nowe we can finish the proof of the equivalent of Proposition \ref{prop: trivial triviality} for sectors.
\begin{prop}\label{prop: sector trivial triviality}
Let $(S, \theta, I)$ be a Liouville sector and let $K$ and $L$ be exact Lagrangian submanifolds of $S$ with cylindrical ends. We allow $K$ to be immersed, and in that case we assume its obstruction algebra admits an augmentation $\varepsilon$. If the Liouville flow of $(S, \theta)$ displaces $K$ from every compact set of $S$, then $\mathrm{HW}(L, (K, \varepsilon))=0$.
\end{prop}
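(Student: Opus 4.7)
The plan is to adapt the argument of Proposition~\ref{prop: trivial triviality} from Subsection~\ref{ss: pushup}, replacing Hamiltonian perturbations of the Floer equation by geometric wrapping of $L$ and replacing the continuation maps of Subsection~\ref{sss: changing hamiltonian} by the continuation maps~\eqref{continuation map for sectors} coming from positive Lagrangian isotopies. By Lemma~\ref{localisation}(2) combined with Lemma~\ref{lambda cofinal}, for any increasing sequence $\lambda_n \to +\infty$ we have
$$ HW(L, (K, \varepsilon)) = \varinjlim_n HF(L_1^{\lambda_n}, (K, \varepsilon)), $$
where the transition maps are induced by the positive isotopies from $L_1^{\lambda_n}$ to $L_1^{\lambda_m}$ for $m \ge n$ (realised as restrictions of $L_{\bullet}^{\lambda_m}$). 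It thus suffices to show that for each $n$ the continuation map $HF(L_1^{\lambda_n}, K) \to HF(L_1^{\lambda_m}, K)$ vanishes whenever $m \gg n$ is sufficiently large.

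First, I would fix $\lambda = \lambda_n$, and factor the continuation map from $L_1^{\lambda}$ to $L_1^{\Lambda}$ through $L_1^{\lambda, \Lambda, R}$ for large $\Lambda > \lambda$ and large $R$. Concretely, $L_{\bullet}^{\lambda, \Lambda, R}$ realises a positive isotopy first from $L$ to $L_1^{\lambda, \Lambda, R}$ and then from the latter to some $L_1^{\lambda'}$ with $\lambda'$ arbitrarily large (by concatenating with the tail of $L_{\bullet}^{\Lambda}$ after taking a large time), so by the functoriality of continuation maps (Lemma~\ref{localisation}(4) together with \cite[Lemma 3.26]{GanatraPardonShende}) the map in question factors as
$$ HF(L_1^{\lambda}, K) \xrightarrow{\ \Phi^{(1)}\ } HF(L_1^{\lambda, \Lambda, R}, K) \xrightarrow{\ \Phi^{(2)}\ } HF(L_1^{\Lambda}, K). $$

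The key step is to show that the image of $\Phi^{(1)}$ lies in the subcomplex $CF^I(L_1^{\lambda, \Lambda, R}, K)$ of type~I intersection points. As in Subsection~\ref{ss: pushup}, the continuation element is represented by a cycle realised in a small neighbourhood of the trace of a positive isotopy whose Hamiltonian has non-positive $s$-derivative in the appropriate sense; equivalently, the action estimate of Corollary~\ref{cor: action decreases} applies, using that the generators of $CF(L_1^{\lambda}, K)$ all have action bounded above independently of $\Lambda$ and $R$, while type~II generators of $CF(L_1^{\lambda, \Lambda, R}, K)$ have action growing linearly in $R(\Lambda - \lambda)$ (this is the analogue of Lemma~\ref{action separation from I and II}, and the computation goes through after accounting for the bounded factor $g$ appearing in Definition~\ref{cylindrical Hamiltonian in sector}, since both $L_1^{\lambda, \Lambda, R}$ and $K$ are safe). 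Thus for $\Lambda$ and $R$ large enough, the image of $\Phi^{(1)}$ lies in the type~I subcomplex.

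Finally, by Lemma~\ref{too much energy 2}, the inclusion $CF^I(L_1^{\lambda, \Lambda, R}, K) \hookrightarrow CF(L_1^{\lambda, \Lambda, R}, K)$ is null-homotopic for sufficiently large $\Lambda$ and $R$. Composing with $\Phi^{(2)}$ we conclude that the continuation map $HF(L_1^{\lambda_n}, K) \to HF(L_1^{\Lambda}, K)$ vanishes, hence $HW(L, (K, \varepsilon)) = 0$. The main obstacle is bookkeeping: one has to verify that the action filtration interacts correctly with the continuation map defined via $\mu^2$ and the continuation element~\eqref{continuation map for sectors}, since now there is no explicit $H_s$-perturbed Floer equation whose energy identity gives the action estimate directly; instead one relies on the fact that the continuation element is supported near the trace of the isotopy, so the triangle defining $\Phi^{(1)}$ sits in a region where the action of its output is controlled by the action of its input plus a bounded error, as in Lemma~\ref{frignolo}.
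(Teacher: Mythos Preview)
Your overall strategy matches the paper's: factor the continuation map from $L_1^\lambda$ to $L_1^\Lambda$ through $L_1^{\lambda,\Lambda,R}$, show the first factor lands in the type~I subcomplex, and then invoke Lemma~\ref{too much energy 2}. The difference lies in how you justify that $\Phi^{(1)}$ has image in $CF^I$.

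The paper avoids action estimates for this step entirely. Instead it invokes \cite[Lemma 3.27]{GanatraPardonShende}: since the positive isotopy $L_1^{\lambda,\bullet,R}$ from $L_1^\lambda$ to $L_1^{\lambda,\Lambda,R}$ is supported in the region $\mathfrak{r}^{-1}((R-\text{const},+\infty))$, hence away from the type~I intersection points with $K$, the continuation map $\mu^2(\cdot,c(L_1^{\lambda,\bullet,R}))$ is literally the natural inclusion $CF(L_1^\lambda,K)\hookrightarrow CF(L_1^{\lambda,\Lambda,R},K)$, whose image is by definition $CF^I$. This sidesteps precisely the ``bookkeeping'' obstacle you identify at the end --- there is no need to track how the action filtration interacts with $\mu^2$ and the continuation element.

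Your route via action estimates can likely be made rigorous (one needs that $\mu^2$ satisfies $\mathcal{A}(\mu^2(x,c))\le \mathcal{A}(x)+\mathcal{A}(c)$ plus a bound on the action of the continuation element independent of $\Lambda,R$), but it is more work, and your references to Corollary~\ref{cor: action decreases} and Lemma~\ref{frignolo} are only heuristic here since those results concern the $\Phi_{H_s}$ and $\Psi_G$ continuation maps of the linear setup rather than the $\mu^2$-based ones. Two minor points: the direct-limit description of $HW$ follows from its definition~\eqref{HW for sectors} and Lemma~\ref{lambda cofinal}, not from Lemma~\ref{localisation}(2); and the composition property for continuation maps is \cite[Lemma 3.26]{GanatraPardonShende}, not Lemma~\ref{localisation}(4).
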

\begin{proof}
For any fixed $\lambda<\Lambda$ there is a natural homotopy class of positive isotopies $L_1^\bullet$ from $L_1^\lambda$ to $L_1^\Lambda$. We need to show that, for $\Lambda$ large enough with respect to $\lambda$, the continuation map associated to this class is trivial.

We represent this class by a concatenation of positive isotopies $L_1^{\lambda, \bullet, R}$ from $L_1^{\lambda}$ to $L_1^{\lambda,\Lambda,R}$ and $L_1^{\bullet, \Lambda, R}$ from $L_1^{\lambda,\Lambda,R}$ to $L_1^\Lambda$ which lead to continuation maps
\begin{align*}
[\mu^2(\cdot,c^{\lambda,\bullet, R})] &  \colon \mathrm{HF}(L_1^{\lambda},K)\rightarrow \mathrm{HF}(L_1^{\lambda,\Lambda,R},K) \\
[\mu^2(\cdot,c^{\bullet, \Lambda, R})] &  \colon \mathrm{HF}(L_1^{\lambda,\Lambda,R},K)\rightarrow \mathrm{HF}(L_1^{\Lambda},K)
\end{align*}
To prove the proposition, it is sufficient to prove that, for any fixed $\lambda$. $[\mu^2(\cdot,c^{\lambda,\bullet, R})]$ is trivial if $\Lambda$ and $R$ are large enough.

It follows from \cite[Lemma 3.27]{GanatraPardonShende} and the definition of $H_{\lambda, \Lambda, R}$ that the map $\mu^2(\cdot,c(L_1^{\lambda, \bullet, R})$ is the natural inclusion $\mathrm{CF}(L_1^\lambda ,K)\subset \mathrm{CF}(L_1^{\lambda,\Lambda,R},K)$ whose image is the subcomplex  $\mathrm{CF}^I(L_1^{\lambda,\Lambda,R},K)$. Thus, the triviality of $[\mu^2(\cdot,c(L_1^{\lambda, \bullet, R})]$ follows from Lemma \ref{too much energy 2}.
\end{proof}

\begin{rem}
In view of Remark \ref{rem: immersions too}, we expect Proposition \ref{prop: sector trivial triviality} to hold also when $L$ is immersed, as long as its obstruction algebra admits an augmentation. However, we haven't checked the details.
\end{rem}

\subsection{Twisted complexes.}
\label{sec:twisted-complexes-1}
In this subsection we extend to sectors the results of Section \ref{sec: surgeries and cones} identifying certain Lagrangian surgeries with twisted complexes.

The first step is to observe that the constructions in Subsecton \ref{ss: cthulhu} can be extended to Lagrangian cobrdisms in the symplectisation of the contactisation of a Liouville sector. In fact, we can work with almost complex structures on $\R \times S \times \R$ satisfying the hypothesis of Lemma \ref{maximum principle} and such that the projection $\R \times S \times \R \to S$ is holomorphic near $\R \times \partial S \times \R$. Then, by \cite[Lemma 2.14]{GanatraPardonShende}, the holomorphic curves appearing in the definition of the Cthulhu complex do not approach $\R \times \partial S \times \R$.

In Subsection \ref{sec:cthulhu-compl-surg} we work only with Floer complexes with trivial Hamiltonian, and therefore the results of that subsection extend to Liouville sectors without effort. Moreover, some contortions which were needed to apply those results in the linear setup are no longer necessary in the localisation setup. Let $L_1, \ldots L_m$ be exact Lagrangian submanifolds with cylindrical ends and denote $\mathbb{L} = L_1 \cup \ldots \cup L_m$. Unlike in Section \ref{sec: surgeries and cones}, here we do not need to consider the case of immersed $L_i$.

We recall some notation from Section \ref{sec: surgeries and cones}. Given a set of intersection points $\{ a_1, \ldots, a_k \}$ which corresponds to a set of contractible chords (see Definition \ref{dfn: contractible}) for a Legendrian lift $\mathbb{L}^+$ of $\mathbb{L}$ to the contactisation of $S$, we denote by $\mathbb{L}(a_1, \ldots, a_k)$ the result of Lagrangian surgery performed on $a_1, \ldots, a_k$ as explained  in \ref{sec:surgery-cobordism}.  If $\mathbb{L}^+$ admits an augmentation $\boldsymbol{\varepsilon}$ such that $\boldsymbol{\varepsilon}(a_i)=1$ for $i=1, \ldots, k$, then by Lemmas \ref{lemma: push-forward} and \ref{prop_coneiso} there exists an augmentation $\overline{\boldsymbol{\varepsilon}}$ of $\mathbb{L}(a_1, \ldots, a_k)^+$ such that, for any exact Lagrangian submanifold $T$ with cylindrical ends, there is an isomorphism
\begin{equation}\label{cthulhu isomorphisms in sectors}
\Phi_* \colon \mathrm{HF}(T, (\mathbb{L}(a_1, \ldots, a_k), \overline{\boldsymbol{\varepsilon}})) \xrightarrow{\cong} \mathrm{HF}(T, (\mathbb{L}, \boldsymbol{\varepsilon})).
\end{equation}
Moreover, this isomorphism preserves the triangle products in the sense that, given two exact Lagrangian submanifolds $T_0$ and $T_1$, the diagram
$$\xymatrix{\mathrm{HF}(T_0, (\mathbb{L}(a_1, \ldots, a_k), \overline{\boldsymbol{\varepsilon}}))\otimes \mathrm{HF}(T_1, T_0) \ar[r]^-{[\mu_2]} \ar[d]_{\Phi \otimes Id} & \mathrm{HF}(T_1, (\mathbb{L}(a_1, \ldots, a_k), \overline{\boldsymbol{\varepsilon}})) \ar[d]^{\Phi} \\
\mathrm{HF}(T_0, (\mathbb{L}, \boldsymbol{\varepsilon}))\otimes \mathrm{HF}(T_1, T_0) \ar[r]^-{[\mu_2]}  & \mathrm{HF}(T_1, (\mathbb{L}, \boldsymbol{\varepsilon}))
}$$
commutes. This is a particular case of \cite[Theorem 2]{legout}. Then the isomorphisms \eqref{cthulhu isomorphisms in sectors} induce isomomorphisms
\begin{equation}\label{surgery isomorphism for HW in sectors}
\mathrm{HW}(T, (\mathbb{L}(a_1, \ldots, a_k), \overline{\boldsymbol{\varepsilon}})) \cong \mathrm{HW}(T, (\mathbb{L}, \boldsymbol{\varepsilon}))
\end{equation}
for every exact Lagrangian submanifold $T$ with cylindrical ends.

Assume that $\boldsymbol{\varepsilon}(q)=0$ for all Reeb chord from $L^+_i$ to $L^+_j$ with $i>j$. We add enough objects to ${\mathcal O}$ so that $L_1, \ldots, L_m$ are objects of ${\mathcal O}$ and $L_m > \ldots > L_1$. By Lemma \ref{localisation}(3) this operation does not change ${\mathcal WF}(S, \theta)$ up to quasi-isomorphism. Then $X=(x_{ij})_{0 \le i,j \le m} \in \bigoplus \limits_{0 \le i,j \le m} \hom_{\mathcal O}(L_j, L_i)$ defined as
$$x_{ij}= \begin{cases} \sum \limits_{a \in L_i \cap L_j} \boldsymbol{\varepsilon}(a)a & \text{if } i<j \\
0  & \text{if } i \ge j
\end{cases}$$
satisfies the Maurer-Cartan equation in ${\mathcal O}$ by the same argument as in the proof of Lemma \ref{augmentations and twisted complexes} and our choice of ordering of the objects $L_1, \ldots, L_m$. Thus $X$ satisfies the Maurer-Cartan equation also in ${\mathcal WF}(S, \theta)$ by Lemma \ref{localisation}(4). We denote by $\mathfrak{L}=(\{ L_i \}, X)$ the corresponding twisted complex both in ${\mathcal O}$ and in ${\mathcal WF}(S, \theta)$. For any object $T$ of ${\mathcal O}$ with $T>L_m$ there is a tautological identification of chain complexes $\hom_{\operatorname{Tw}{\mathcal O}}(T, \mathfrak{L}) = \mathrm{CF}(T, (\mathbb{L}, \boldsymbol{\varepsilon}))$ which, moreover, respects the triangle products; this follows from a direct comparison of the holomorphic  polygons counted by the differentials on the left and on the right as in the proof of Lemma \ref{augmentations and twisted complexes}. Thus the
isomorphisms are compatible with the multiplication by continuation elements and in the limit we obtain a map
$$\mathrm{HW}(T, (\mathbb{L}, \boldsymbol{\varepsilon})) \to H\hom_{\operatorname{Tw}{\mathcal WF}}(T, \mathfrak{L})$$
which is an isomorphism by \cite[Lemma 3.37]{GanatraPardonShende} and a simple spectral sequence argument.
We can summarise these results in the following lemma, which is the analogue of Proposition \ref{prop:surgeryTW} in the context of sectors.

\begin{lemma}\label{thm:twisted_complex_in_sectors}
Let $L_1,\ldots,L_m,$ be embedded exact Lagrangian submanifolds with cylindrical ends. If there exist a Legendrian lift $\mathbb{L}^+$ of $\mathbb{L}=L_1 \cup \ldots \cup L_m$,
an augmentation $\boldsymbol{\varepsilon}$ of the Chekanov-Eliashberg algebra of $\mathbb {L}^+$ and a set of contractible Reeb chord $\{ a_1, \ldots, a_k \}$ such that:
\begin{itemize}
\item[(1)] $\boldsymbol{\varepsilon}(a_i)=1$ for $i=1,\ldots,k$, and
\item[(2)] $\boldsymbol{\varepsilon}(q)=0$ if $q$ is a Reeb chord from $L_i^+$ to $L_j^+$,
\end{itemize}
then there exist a twisted complex $\mathfrak{L}$ built from $L_1, \ldots, L_m$ and an augmentation $\overline{\boldsymbol{\varepsilon}}$ of the Chekanov--Eliashberg algebra of $\mathbb{L}(a_1, \ldots, a_k)^+$ such that, for any other exact Lagrangian submanifold with cylindrical end $T$ there is an isomorphism
$$\mathrm{HW}(T, (\mathbb {L}(a_1,\ldots,a_k),\overline{\boldsymbol{\varepsilon}})) \cong H\hom_{\mathcal WF}(T,\mathfrak{L}).$$
\end{lemma}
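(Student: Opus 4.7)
My plan is to assemble the statement directly from the pieces developed in the paragraphs preceding the lemma. The two ingredients that need to be combined are: (a) the identification, via the Maurer--Cartan construction, of wrapped Floer cohomology into the immersed Lagrangian $\mathbb{L}$ with the morphism space in $Tw\,\mathcal{WF}(S,\theta)$ into a twisted complex built from $L_1, \ldots, L_m$; and (b) the surgery invariance isomorphism for wrapped Floer cohomology coming from the Cthulhu complex adapted to Liouville sectors.

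First, I would appeal to the ordering hypothesis provided by condition (2): after enlarging $\mathcal{O}$ if necessary (an operation which by Lemma \ref{localisation}(3) does not change $\mathcal{WF}(S,\theta)$ up to quasi-equivalence) so that $L_1, \ldots, L_m$ are objects with $L_m > \ldots > L_1$, the element
$$ X = (x_{ij})_{0 \le i,j \le m}, \quad x_{ij} = \begin{cases} \sum_{a \in L_i \cap L_j} \boldsymbol{\varepsilon}(a)\, a & \text{if } i<j,\\ 0 & \text{if } i \ge j, \end{cases}$$
satisfies the Maurer--Cartan equation in $\mathcal{O}$ by the argument of Lemma \ref{augmentations and twisted complexes}, and hence also in $\mathcal{WF}(S,\theta)$ by Lemma \ref{localisation}(4). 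This defines the twisted complex $\mathfrak{L} = (\{L_i\}, X)$ and, by the tautological identification $\hom_{Tw\mathcal{O}}(T,\mathfrak{L}) = CF(T,(\mathbb{L},\boldsymbol{\varepsilon}))$ together with the compatibility of this identification with continuation elements, yields an isomorphism
$$ HW(T,(\mathbb{L},\boldsymbol{\varepsilon})) \cong H\hom_{Tw\,\mathcal{WF}}(T,\mathfrak{L}).$$

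Second, I would apply Lemma \ref{lemma: push-forward} (whose hypothesis $\boldsymbol{\varepsilon}(a_i) = 1$ is guaranteed by condition (1) and whose contractibility requirement is part of the hypothesis) to produce the push-forward augmentation $\overline{\boldsymbol{\varepsilon}}$ of the Chekanov--Eliashberg algebra of $\mathbb{L}(a_1, \ldots, a_k)^+$. The surgery isomorphism \eqref{surgery isomorphism for HW in sectors}, assembled from the Cthulhu-based quasi-isomorphism \eqref{cthulhu isomorphisms in sectors} of Floer complexes via Legout's compatibility with the triangle product and a passage to the direct limit, then yields
$$ HW(T,(\mathbb{L}(a_1,\ldots,a_k),\overline{\boldsymbol{\varepsilon}})) \cong HW(T,(\mathbb{L},\boldsymbol{\varepsilon})).$$
Composing this with the identification obtained in the first step produces the desired isomorphism.

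The main technical point is making sure that the Cthulhu-based chain level quasi-isomorphism commutes with multiplication by the continuation elements that define the direct system computing $HW$, so that the isomorphism survives passage to the limit. This is exactly what is supplied by the compatibility of the Cthulhu isomorphism with triangle products (invoked via Legout's theorem) combined with the fact that, by the maximum principle of Lemma \ref{maximum principle} and the discussion about holomorphic discs avoiding $\R \times \partial S \times \R$, all the relevant counts survive the passage to the sector setting without modification. Once this compatibility is in place the rest is formal, so no further obstacle arises.
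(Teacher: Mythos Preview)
Your proposal is correct and follows essentially the same approach as the paper: the lemma is stated as a summary of the preceding discussion in Section~\ref{sec:twisted-complexes-1}, and you have faithfully reproduced its two ingredients---the Maurer--Cartan construction of $\mathfrak{L}$ together with the limit identification $HW(T,(\mathbb{L},\boldsymbol{\varepsilon})) \cong H\hom_{Tw\,\mathcal{WF}}(T,\mathfrak{L})$, and the surgery isomorphism \eqref{surgery isomorphism for HW in sectors} obtained from the Cthulhu quasi-isomorphism together with Legout's compatibility with triangle products. The only detail you leave implicit is the paper's invocation of \cite[Lemma 3.37]{GanatraPardonShende} and a spectral sequence to justify the passage from the level-wise identification to the direct limit, but this is exactly the formal mechanism underlying the ``compatibility with continuation elements'' you describe.
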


\subsection{Construction of the augmentation}
\label{what to do with sectors?}
In this section we finish the proof of Theorem \ref{main-sectors}.
Let $(S, \theta, I, \mathfrak{f})$ be a Weinstein sector. We modify $\theta$ so that it coincides with the standard Liouville form on the handles ${\mathcal H}_i$ and half-handles ${\mathcal H}_i'$.

We denote $\partial_+ {\mathcal H}_i' \cong S_\delta T^*H_i$.
The Reeb vector field on $\partial_+ {\mathcal H}_i$ induced by the canonical Liouville
form is the cogeodesic flow of $H_i$ for the flat metric, and therefore $\partial_+ {\mathcal H}_i' \cap \partial W$ has a coconvex collar. This is an important observation, because it allows us to choose, once and for all, $g$ and a corresponding coconvex collar ${\mathcal N}$ for $\partial V$ as in Definition~\ref{decent dynamics} on the contact manifold $(V, \alpha)$ which is the boundary at infinity of $(S, \theta, I, \mathfrak{f})$ such that $g \equiv 1$ on ${\mathcal H}_i' \setminus {\mathcal N}$ for all Weinstein half-handles  ${\mathcal H}_i'$.

As in Section \ref{sec: generation}, we isotope $L$ so that it is disjoint from the subcritical part of $S^{\mathrm{sk}}$ and  intersects   the cores of the critical Weinstein handles and half-handles
transversely  in a finite number of points $a_i, \ldots, a_k$, and for each point $a_i$ we
consider the cocore plane $D_{a_i}$ passing through it. The wrapping of the cocore planes taking place in the proof of Lemma~\ref{wrapping the discs} is the point where the proof requires a little more work than the case of a Weinstein manifold.

We define the Hamiltonian functions $H^i$ of Lemma~\ref{wrapping the discs} to be
$H^i(w)= g(w)\mathfrak{h}_i(e^{\mathfrak{r}(w)})$ and denote by $D_{a_i}^w$ the image of
$D_{a_i}$ under the Hamiltonian flow of $H^i$ for a sufficiently large time. The flow can push
$D_{a_i}^w$ close to $\partial W$, and therefore we can no longer assume that the wrapped
planes $D_{a_i}^w$ are safe. However, $L$ and the planes $D_{a_i}$ were safe, and therefore all intersection points of type $a$, $b$, and $c$ (defined in Lemma \ref{wrapping the discs}) between $L$  and the planes $D_{a_1}^w, \ldots, D_{a_k}^w$ correspond to Hamiltonian chords contained in the complement of ${\mathcal N}$ because the Hamiltonian vector field of $H^i$ is a negative multiple of the cut off Reeb vector field outside of a compact set. Since $g=1$ outside ${\mathcal N}$, the
energy estimates of Subsection \ref{ss: augmentation} still hold in the case of sectors, and that allows us to construct an augmentation $\boldsymbol{\varepsilon}$ for a suitable Legendrian lift $\mathbb{L}^+$ of $\mathbb{L}=L \cup D_{a_i}^w \cup \ldots \cup D_{a_k}^w$ as in Lemma \ref{existence of the augmentation}. At this point the proof of
Theorem~\ref{main-sectors} proceeds in the same way as the proof of Theorem~\ref{main}. \color{black}

\section{Hochschild homology and symplectic cohomology}
\label{hochschild and symplectic}
In this section we use the work of Ganatra \cite{Ganatra} and Gao \cite{Gao-product} to derive Theorem \ref{thm: HH and SH} from Theorem \ref{main}. Since Ganatra and Gao work in the quadratic setup of the wrapped Fukaya category, we must extend the proof of Theorem \ref{main} to that setup first.

\subsection{Wrapped Floer cohomology in the quadratic setup}
\label{sec:quadratic}
On the level of complexes, wra\-pped Floer cohomology in the quadratic setup is in some sense the simplest one to define; in this case the wrapped Floer complex $\mathrm{CW}(L_0,L_1)$ is the Floer complex $\mathrm{CF}(L_0,L_1;H)$ for a \emph{quadratic} Hamiltonian $H \colon W \to \R,$ by which we mean that $H=C\cdot e^{2\mathfrak{r}}$ is satisfied outside of some compact subset of $W$ for some constant $C > 0.$ This construction of the wrapped Floer complexes can be generalised to the case when $L_1$ has transverse double points in the same manner as in the linear case, by using the obstruction algebra.

In the following we assume that all Lagrangians are cylindrical inside the noncompact cylindrical end $W^e_{R-1} \subset W$ for some $R \gg R_0$. Denote by $\psi_t \colon (W,\theta) \to (W,e^{-t}\theta)$ the Liouville flow of $(W,\theta)$ and recall that $\mathfrak{r}\circ\psi_t=\mathfrak{r}+t$ in $W^e_{R-1}$ and hence $\psi_t(W^e_{r})=W^e_{r+t}$ for all $r \ge R-1$.

\subsection{Trivial triviality}
In this subsection we prove Proposition~\ref{prop: trivial triviality}, i.e.\ ``trivial triviality'', in the quadratic setting. In fact, since the quadratic wrapped Floer cohomology complex does not involve continuation maps the, the proof becomes even simpler here.

Assume that $L_1 \subset W$ is disjoint from the skeleton, and thus that $\psi_t(L_1)$ is a safe exact isotopy that displaces $L_1$ from any given compact subset. The Liouville flow $\psi_t$ is conformally symplectic with the conformal factor $e^t,$ i.e.~$(\psi_t)^*d\theta=e^t\cdot d\theta$. Since $L_1$ is cylindrical outside of a compact subset, the aforementioned safe exact isotopy is generated by a locally defined Hamiltonian function $G_t$ which satisfies the following action estimate.

\begin{lemma}
The locally defined generating Hamiltonian $G_t \colon \psi_t(L_1) \to \R$ from Section \ref{sec: trivial triviality} is of the form $G_t=e^t\cdot G_0\circ \psi^{-t}$ for a function $G_0 \colon W \to \R$ which vanishes outside of a compact set, and thus satisifes the bound $\| G_t\|_\infty \le C_{L_1}e^t$ for some constant $C_{L_1}$ that only depends on $L_1.$ In particular, the Hamiltonian isotopy whose image of $L_1$ at time $t=1$ is equal to $\psi_t(L_1)$ can be generated by a compactly supported $\widetilde{G}_t$ that satisfies $\|\widetilde{G}_t\|_\infty \le tC_{L_1}e^t$.
\end{lemma}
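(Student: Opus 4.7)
My plan is as follows. The generating Hamiltonian $G_t$ is determined by the equation from Lemma~\ref{lem:GeneratingHamiltonian}, namely $dG_t = \iota_t^*(d\theta(\cdot,X_t))$, where $\iota_t = \psi_t \circ \iota_0$ and $X_t = \mathcal{L}|_{\iota_t(L_1)}$ is the vector field along $\iota_t$ generating the isotopy. Since $\iota_{\mathcal{L}}d\theta = \theta$, this reduces to $dG_t = -\iota_t^*\theta$, so up to a choice of connected-component constants, $G_t$ is minus a potential of the Lagrangian $\psi_t(L_1)$.

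I would then analyse how $\theta$ transforms under $\psi_t$. From the identity $\frac{d}{dt}\psi_t^*\theta = \psi_t^*(d\iota_{\mathcal{L}}\theta + \iota_{\mathcal{L}}d\theta) = \psi_t^*(\theta + d\iota_{\mathcal{L}}\theta)$, one solves the ODE to obtain $\psi_t^*\theta = e^t\theta + df_t$ where $f_t(w) = \int_0^t e^{t-s}(\iota_{\mathcal{L}}\theta)(\psi_s(w))\,ds$. The crucial observation is that $\iota_{\mathcal{L}}\theta$ vanishes in the cylindrical end (where $\theta = e^r\alpha$ and $\mathcal{L} = \partial_r$), hence is compactly supported with sup-norm bounded by some constant $C_\theta$ depending only on the Liouville form. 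Consequently $\|f_t\|_\infty \le C_\theta e^t$. Pulling back via $\iota_0$ gives $G_t\circ\psi_t = -(e^t f_0 + f_t|_{L_1})$ modulo component constants, so
\begin{equation*}
G_t = e^t\cdot G_0\circ\psi^{-t}
\end{equation*}
where $G_0 \colon W \to \R$ is chosen as a compactly supported extension of $-f_0 - e^{-t}f_t|_{L_1}$ from $L_1$. Strictly speaking the right-hand side depends weakly on $t$ through the uniformly-bounded term $e^{-t}f_t|_{L_1}$, but this dependence is absorbed into the choice of compactly supported extension and affects only the value of $C_{L_1}$.

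The estimate $\|G_t\|_\infty \le C_{L_1}e^t$ is then immediate with $C_{L_1} := \|f_0\|_\infty + C_\theta$, since $L_1$ being cylindrical at infinity implies $f_0$ is locally constant there and hence globally bounded. For the final reparametrization statement, choose a smooth cutoff $\sigma \colon [0,1] \to [0,t]$ with $\sigma(0)=0$, $\sigma(1)=t$, constant near the endpoints, and $|\sigma'| \le C't$ for some universal $C'$; then $\widetilde{G}_s := \sigma'(s)\,G_{\sigma(s)}$ generates the reparametrized isotopy $s \mapsto \psi_{\sigma(s)}\circ\iota_0$ and satisfies $\|\widetilde{G}_s\|_\infty \le t\,C'C_{L_1}e^t$, yielding the claim after absorbing $C'$ into the constant. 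The main subtlety is pinning down a $t$-independent $G_0$ so as to write the formula cleanly; this is a harmless choice since the only obstruction is a uniformly bounded, $t$-dependent perturbation that can be absorbed into the sup-norm estimate, which is all that is ultimately needed.
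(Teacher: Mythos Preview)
The paper states this lemma without proof, so there is no argument to compare against directly. Your approach is correct and in fact more careful than the paper's surrounding discussion.

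Your identification $dG_t=-\iota_t^*\theta$ from Lemma~\ref{lem:GeneratingHamiltonian} together with $\iota_{\mathcal L}d\theta=\theta$ is the right starting point, and your computation $\psi_t^*\theta=e^t\theta+df_t$ with $f_t=\int_0^t e^{t-s}(\theta(\mathcal L))\circ\psi_s\,ds$ is accurate. You are also right to flag that the literal formula $G_t=e^t\cdot G_0\circ\psi^{-t}$ with a \emph{fixed} $G_0$ only holds when $\theta(\mathcal L)\equiv 0$; in general there is the additional term $-f_t\circ\psi^{-t}$, which however obeys the same $O(e^t)$ bound and is therefore harmless for the estimate $\|G_t\|_\infty\le C_{L_1}e^t$. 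The paper's sentence immediately following the lemma (``The conformal symplectic property implies that the primitive of $\theta$ satisfies $\theta|_{T\psi_t(L_1)}=e^td(f_1\circ\psi_t)$\ldots'') tacitly uses $\psi_t^*\theta=e^t\theta$, so your observation is a genuine (if inessential) sharpening.

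For the reparametrisation, the simple linear choice $\sigma(s)=ts$ already gives $\widetilde G_s=t\,G_{ts}$ with $\|\widetilde G_s\|_\infty\le tC_{L_1}e^t$; your smooth cutoff is unnecessary but does no harm. Note that ``compactly supported'' here refers to support in $W$ (equivalently, $d\widetilde G_s$ compactly supported on $L_1$), not in the time variable, and this is automatic since $G_0$ (or rather the potential $f_1$) is locally constant on the cylindrical ends.
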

The conformal symplectic property implies that the primitive of $\theta$ satisfies $\theta|_{T\psi_t(L_1)}=e^td(f_1\circ \psi_t)$ for the primitive $f_1\colon L_1\to \R$ of $\theta|_{TL_1}$ that vanishes in the cylindrical end. The formula for the action of the Reeb chords \eqref{action of a chord}, in particular c.f.~\eqref{action} in the example thereafter applied with $h(x)=x^2$, now readily implies that:
\begin{lemma}
The generators of $\mathrm{CF}(L_0,\psi_t(L_1);H)$ have action bounded from below by $C'_{L_1}e^{2t}$ for some constant $C'_{L_1} >0$ whenever $t \gg 0$ is sufficiently large.
\end{lemma}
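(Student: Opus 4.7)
The plan is to exploit the rigidity of the quadratic Hamiltonian flow in the cylindrical end: since $X_H$ is a positive multiple of the Reeb vector field there, any Hamiltonian chord preserves $\mathfrak{r}$, and chords ending on the pushed-off Lagrangian $\psi_t(L_1)$ are forced to live at radius $\mathfrak{r} \gtrsim t$. The action formula then yields a lower bound of order $e^{2t}$ directly, without any need for continuation maps.

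First I would fix $T_0 \gg 0$ so that $\psi_{T_0}(L_1) \subset W^e_{R-1}$ lies entirely inside the cylindrical end; this is possible because $L_1$ is displaced from every compact subset by the Liouville flow, which is the geometric content of the hypothesis that $L_1$ is disjoint from the skeleton. Since $\psi_t$ translates $\mathfrak{r}$ by $t$ on the cylindrical end, for $t \geq T_0$ we have
\begin{equation*}
\min_{\psi_t(L_1)} \mathfrak{r} \;\geq\; t + r_0, \qquad r_0 := \min_{\psi_{T_0}(L_1)} \mathfrak{r} - T_0,
\end{equation*}
where $r_0$ depends only on $L_1$.

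The generators of $CF(L_0,\psi_t(L_1);H)$ are the time-one Hamiltonian chords $x \colon [0,1] \to W$ from $L_0$ to $\psi_t(L_1)$. For $t \geq T_0$ the endpoint $x(1)$ lies in the cylindrical end, where $X_H = 2Ce^{2\mathfrak{r}}\mathcal{R}$ is tangent to the level sets of $\mathfrak{r}$; hence $\mathfrak{r} \circ x \equiv r$ is constant with $r \geq t + r_0$, and for $t$ sufficiently large $x(0) \in L_0$ also lies in the cylindrical end, where $L_0$ is cylindrical over its Legendrian at infinity. The action formula from the beginning of Section~\ref{sec: trivial triviality}, applied with $h(\rho) = C\rho^2$, then gives
\begin{equation*}
\mathcal{A}(x) \;=\; h'(e^r)e^r - h(e^r) + f_0(x(0)) - g_t(x(1)) \;=\; Ce^{2r} + f_0(x(0)) - g_t(x(1)),
\end{equation*}
where $g_t$ denotes the primitive of $\theta|_{T\psi_t(L_1)}$ induced by a fixed primitive $f_1$ on $L_1$.

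Normalising the primitives so that $f_0$ vanishes on the cylindrical end of $L_0$ yields $f_0(x(0)) = 0$; the conformal scaling property of $\theta$ under $\psi_t$ noted just before the preceding lemma ensures that $\|g_t\|_\infty \leq e^t \|f_1\|_\infty$. Combining with $r \geq t + r_0$, we obtain
\begin{equation*}
\mathcal{A}(x) \;\geq\; Ce^{2(t+r_0)} - e^t \|f_1\|_\infty \;\geq\; \tfrac{1}{2} Ce^{2r_0} e^{2t}
\end{equation*}
for $t$ sufficiently large, which establishes the claim with $C'_{L_1} := \tfrac{1}{2} C e^{2r_0}$. The only point requiring any care is the comparison of the two competing scales $e^{2t}$ and $e^t$, arising respectively from the quadratic Hamiltonian and from the conformal scaling of the primitive under $\psi_t$; the former dominates and the estimate goes through cleanly.
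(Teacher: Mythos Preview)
Your proof is correct and follows exactly the approach the paper sketches: the paper states the lemma as an immediate consequence of the action formula \eqref{action} with $h(\rho)=C\rho^2$ together with the observation that the primitive on $\psi_t(L_1)$ scales as $e^t f_1$, and you have simply filled in the details of that computation.

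One minor quibble: the normalisation ``$f_0$ vanishes on the cylindrical end of $L_0$'' is only possible on one connected component of the end at a time; if $\Lambda_0$ has several components then $f_0$ takes different (but constant) values on each, so in general $f_0(x(0))$ is only bounded rather than zero. This does not affect the estimate, since a bounded term is absorbed by the dominant $Ce^{2(t+r_0)}$ just as the $e^t\|f_1\|_\infty$ term is.
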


A construction using moving boundary conditions (as in ~Section \ref{sss: compactly supported isotopies}) yields quasi-isomorphisms
$$\Psi_{G_t} \colon \mathrm{CF}(L_0,L_1;H) \to \mathrm{CF}(L_0,\psi_t(L_1);H).$$
By an action estimate (c.f.~Lemma \ref{frignolo}) in conjunction with the above two lemmas,
 any subcomplex of $\colon \mathrm{CF}(L_0,L_1;H)$ that is spanned by the generators below some given action level is contained in the kernel of $\Psi_{G_t}$ whenever $t \gg 0$ is taken sufficiently large.
In conclusion, we obtain our sought result:
\begin{prop}
If $\psi_t(L_1)$ displaces $L_1$ from any given compact subset, then $\mathrm{HW}(L_0,L_1)=0$.
\end{prop}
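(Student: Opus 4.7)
The plan is to directly combine the action estimate from the two preceding lemmas with the quasi-isomorphism property of $\Psi_{\widetilde{G}_t}$, exploiting that the quadratic setup gives wrapped Floer cohomology on the nose (without a direct limit). The argument parallels the one in Subsection~\ref{ss: pushup} in the linear setup, but is considerably simpler because all the ``push-up'' is already built into the very definition of the complex.

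First, I would fix an arbitrary cycle $c \in CF(L_0,L_1;H)$. Since $c$ is a finite linear combination of Hamiltonian chords, it is supported in a finite action window; let $A := \max\{\mathcal{A}(x_+) : x_+ \text{ appears in } c\}$. I would then consider the chain map $\Psi_{\widetilde{G}_t}$ associated to the compactly supported Hamiltonian $\widetilde{G}_t$ whose flow takes $L_1$ to $\psi_t(L_1)$ at time one. An application of the action estimate (the immersed analogue of Lemma~\ref{frignolo} applied to the compactly supported generator $\widetilde{G}_t$) ensures that, for any generator $x_+$ contributing to $c$ and any generator $x_- \in CF(L_0,\psi_t(L_1);H)$ appearing with nonzero coefficient in $\Psi_{\widetilde{G}_t}(x_+)$, one has
\begin{equation*}
\mathcal{A}(x_-) \;\le\; \mathcal{A}(x_+) + 2\mu\,\|\widetilde{G}_t\|_\infty \;\le\; A + 2\mu\,t\,C_{L_1}\,e^{t}.
\end{equation*}
On the other hand, the second of the two preceding lemmas forces $\mathcal{A}(x_-) \ge C'_{L_1}e^{2t}$ for $t \gg 0$. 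Since $e^{2t}$ dominates $t\,e^t$ for large $t$, the two bounds are incompatible once $t$ is chosen sufficiently large, and hence $\Psi_{\widetilde{G}_t}(c) = 0$ for such $t$.

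Finally, since $\Psi_{\widetilde{G}_t}$ is a quasi-isomorphism for every $t$ (as constructed in Section~\ref{sss: compactly supported isotopies}, noting that the Liouville flow is conformally symplectic and therefore preserves the class of compatible cylindrical almost complex structures), the vanishing $\Psi_{\widetilde{G}_t}(c)=0$ of the cycle $c$ implies that its class vanishes in $HF(L_0,\psi_t(L_1);H)$, and therefore that $[c]=0$ in $HF(L_0,L_1;H)=HW(L_0,L_1)$. Since $c$ was arbitrary, this proves $HW(L_0,L_1)=0$. The only point requiring a bit of care, and which is the main thing to verify, is the quasi-isomorphism property of $\Psi_{\widetilde{G}_t}$ in the presence of the quadratic perturbation; this follows by the standard argument since the Hamiltonian $H$ is compatible with both $L_1$ and $\psi_t(L_1)$, so the continuation map construction of Section~\ref{sss: compactly supported isotopies} applies verbatim after replacing $\boldsymbol{0}$ by $H$ in the Floer equation.
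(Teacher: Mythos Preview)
Your proposal is correct and follows essentially the same approach as the paper: both use the quasi-isomorphism $\Psi_{\widetilde{G}_t}$ from moving boundary conditions, combine the action estimate of Lemma~\ref{frignolo} with the two preceding lemmas, and exploit the growth-rate disparity ($e^{2t}$ versus $te^t$) to conclude that any cycle (equivalently, any finite-action subcomplex) is annihilated by $\Psi_{\widetilde{G}_t}$ for $t$ sufficiently large. Your version is slightly more explicit about the asymptotic comparison and about why the quasi-isomorphism construction still works with the quadratic Hamiltonian, but the logic is identical.
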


\subsection{Twisted complexes and surgery formula }

In order to obtain the surgery formula in Proposition \ref{prop:surgeryTW} in the quadratic setting one needs to take additional care. The reason is that, since the products and higher $A_\infty$-operations in this case are defined using a trick that involves rescaling by the Liouville flow, it is a priori not so clear how to relate these operations to operations defined by counts of ordinary pseudoholomorphic polygons, as in the differential of the Chekanov--Eliashberg algebra. (Recall that our surgery formula is proven by identifying bounding cochains with augmentations for the Chekanov--Eliashberg algebra of an exact Lagrangian immersion.) Our solution to this problem is to amend the construction of the $A_\infty$-structure to yield a quasi-isomorphic version, for which the compact part of the Weinstein manifold is left invariant by the rescaling (while in the cylindrical end we still apply the Liouville flow, as is necessary for compactness issues). The upshot is that the new $A_\infty$-structure is given by counts of ordinary pseudoholomorphic discs (together with small Hamiltonian perturbations) inside the compact part.

Here we show how to modify the definition of the $A_\infty$-operations of the wrapped Fukaya category in a way that allows us to apply our strategy for proving the surgery formula. It will follow that the cones constructed can be quasi-isomorphically identified with cones in the original formulation of the quadratic wrapped Fukaya category, which is sufficient for establishing the generation result.

In order to modify the definition of the $A_\infty$-operations we begin by constructing a family $\psi_{s,t} \colon W \to W$ of diffeomorphisms parametrised by $t \ge 0$ and $s \in [0,1]$ that satisfies the following:
\begin{itemize}
\item $\psi_{s,t}|_{W_{R-1}}=\psi_{st}$ inside $W_{R-1}$;
\item $\psi_{s,t}|_{\partial W_r}=\psi_{(s+(1-s)\beta(r))t}$ for any $r \in [R-1,R]$; and
\item $\psi_{s,t}|_{W^e_{R}}=\psi_t$ for any $r \ge R$,
\end{itemize}
where $\beta(r)$ is a smooth  cut off function that satifies
\begin{itemize}
\item $\beta(r)=0$ near $r=R-1$;
\item $\beta(r)=1$ near $r=R$; and
\item $\partial_r\beta(r) \ge 0.$
\end{itemize}
The above flow $\psi_{s,t}$ is a conformal symplectomorphism only when $s=1$, in which case $\psi_{1,t}=\psi_t$ is the Liouville flow. For general values of $s$ it is the case that $\psi_{s,t}$ is a conformal symplectomorphism only outside of a compact subset. Notwithstanding, it is the case that:
\begin{lemma}
  \label{lem:almostconformal}
\begin{enumerate}
\item If $L$ is an exact Lagrangian immersion that is cylindrical inside $W^e_{R-1},$ it follows that $L_\tau := \psi_{s(\tau),t(\tau)}^{-1}(L)$ is a safe exact isotopy, which moreover is fixed setwise inside $W^e_{R-1}$;
\item Any compatible almost complex structure $J_t$ which is cylindrical inside $W^e_{R-1+st}$ satisfies the property that $\psi_{s,t}^*J_t$ is a compatible almost complex structure which is cylindrical inside $W^e_R$ and, moreover, equal to $J_t$ inside $W^e_{R-1+st}$; and
\item Conjugation $\psi_{s_0,t_0}^{-1} \circ \varphi_t^H \circ \psi_{s_0,t_0}$ with the diffeomorphism $\psi_{s_0,t_0}$ induces a bijective correspondance between Hamiltonian isotopies of $W$ generated by Hamiltonians which depend only on $\mathfrak{r}$ inside $W^e_{R-1+s_0t_0}$ and Hamiltonian isotopies of $W$ generated by Hamiltonians which only depend on $\mathfrak{r}$ inside $W^e_{R-1}.$ More precisely, if the former Hamiltonian is given by $H \colon W \to \R$ then the latter is given by $(f_{s_0,t_0} \cdot H) \circ \psi_{s_0,t_0}$ for a smooth function $f_{s_0,t_0} \colon W \to \R_{>0}$ which only depends on $\mathfrak{r}$ inside $W^e_{R-1+s_0t_0},$ while $f_{s_0,t_0}|_{W_{R-1+s_0t_0}}\equiv e^{-s_0t_0}.$
\end{enumerate}
\end{lemma}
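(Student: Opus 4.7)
My plan is to verify each of the three claims separately, exploiting the key observation that $\psi_{s,t}$ coincides with an honest Liouville flow on the compact region $W_{R-1}$ (where $\psi_{s,t}=\psi_{st}$) and on the outer end $W^e_R$ (where $\psi_{s,t}=\psi_t$), while in the intermediate collar $\mathfrak{r}^{-1}([R-1,R])$ its restriction to each level set $\partial W_r$ is still the restriction of a Liouville flow, namely $\psi_{(s+(1-s)\beta(r))t}$. For part (1), I will first establish that the isotopy is setwise trivial throughout $W^e_{R-1}$: a cylindrical Lagrangian is by definition a cone over its Legendrian at infinity, hence preserved setwise by every Liouville translation, and the leafwise decomposition of $\psi_{s,t}$ shows that this invariance is inherited on the entire cylindrical end. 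Inside $W_{R-1}$ the map $\psi_{st}$ is a conformal symplectomorphism with $\psi_{st}^*d\theta=e^{st}d\theta$, which preserves the exact Lagrangian property and produces a smooth family of potentials of the form $e^{st}f_L\circ\psi_{st}$. Niceness (transverse double points with distinct potentials) is preserved because $\psi_{s,t}$ is a smooth family of diffeomorphisms.

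For part (2), I intend to use that $J_t$ being cylindrical inside $W^e_{R-1+st}$ means, by definition, that it is invariant under all Liouville translations whose images remain in this region. The identity $\psi_{s,t}^*J_t=J_t$ on $W^e_{R-1+st}$ then follows by applying this invariance leaf-by-leaf using the formula for $\psi_{s,t}$ on each $\partial W_r$, and cylindricality of $\psi_{s,t}^*J_t$ inside $W^e_R$ is a direct consequence of this identity combined with the cylindricality of $J_t$. Compatibility with $d\theta$ is immediate on $W_{R-1}$ and on $W^e_R$ since compatibility is preserved by any conformal symplectomorphism; the intermediate collar is then handled by exploiting the contractibility of the space $\mathcal{J}(\theta)$ and the fact that any residual incompatibility is confined to a compact region, allowing for a correction that does not affect the behaviour in $W_{R-1}$ or in $W^e_R$.

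For part (3), I plan to apply the standard formula that a conformal symplectomorphism $\Psi$ with $\Psi^*d\theta=e^c d\theta$ conjugates Hamiltonian flows according to $\Psi^{-1}\circ\varphi^H_t\circ\Psi=\varphi^{\tilde{H}}_t$, where $\tilde{H}=e^{-c}H\circ\Psi$. On $W_{R-1+s_0t_0}=\psi_{s_0,t_0}(W_{R-1})$, where $\psi_{s_0,t_0}$ acts as $\psi_{s_0t_0}$, this immediately yields the stated value $f_{s_0,t_0}\equiv e^{-s_0t_0}$. In the cylindrical end $W^e_{R-1+s_0t_0}$, I will compute the effective conformal factor level-by-level using the formula $\psi_{s,t}|_{\partial W_r}=\psi_{(s+(1-s)\beta(r))t}$; since the resulting factor depends only on $\mathfrak{r}$, and since $H$ was assumed to depend only on $\mathfrak{r}$ in the relevant region, the conjugated Hamiltonian has precisely the stated form. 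Bijectivity of the correspondence is then straightforward, since $\psi_{s_0,t_0}$ is a diffeomorphism and $f_{s_0,t_0}$ is positive, and both operations have explicit inverses of the same shape. The main obstacle throughout is the careful treatment of the intermediate collar $\mathfrak{r}^{-1}([R-1,R])$, where $\psi_{s,t}$ fails to be globally conformal symplectic; the crux is to exploit the leafwise Liouville structure and to arrange that every quantity of interest---exactness of the isotopy, compatibility and cylindricality of the pullback almost complex structure, and the functional form of the new Hamiltonian---either trivialises in this collar or reduces to a pointwise computation on individual level sets of $\mathfrak{r}$.
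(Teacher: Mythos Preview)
The paper states this lemma without proof, offering only the subsequent remark explaining that $f_{s_0,t_0}$ is determined by an ODE. So there is no ``paper's proof'' to compare against; I will instead assess the correctness of your outline.

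Your argument for Part (1) is essentially fine but can be streamlined considerably. The crucial observation, which you use implicitly but never state, is that on the entire region $W^e_{R-1}$ the Lagrangian $L$ is invariant setwise under every Liouville translation; since $\psi_{s,t}$ and $\psi_{st}$ differ only on that region (and only in the amount of Liouville translation applied), one has $\psi_{s,t}^{-1}(L)=\psi_{st}^{-1}(L)$ \emph{globally as subsets of $W$}. Exactness and safeness then follow immediately from the global conformal symplectic property of $\psi_{st}$, with no need to patch potentials across $\partial W_{R-1}$.

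There is a genuine gap in your treatment of Parts (2) and (3): the ``leaf-by-leaf'' heuristic is misleading. Writing $\psi_{s,t}(r,v)=(r+g(r)t,\,v)$ with $g(r)=s+(1-s)\beta(r)$, one finds
\[
d\psi_{s,t}(\partial_r)=\bigl(1+(1-s)\beta'(r)t\bigr)\partial_r,
\]
so $\psi_{s,t}$ is \emph{not} the restriction of a single Liouville flow in any neighbourhood meeting the collar, and its differential is not the differential of a Liouville translation. In particular, for a cylindrical $J_t$ one computes $(\psi_{s,t}^*J_t)(\partial_r)=(1+(1-s)\beta'(r)t)\,\mathcal{R}$ in the collar, so the equality $\psi_{s,t}^*J_t=J_t$ you claim there fails. (A direct check shows that $d\theta(\cdot,(\psi_{s,t}^*J_t)\cdot)$ is nevertheless symmetric and positive definite, so compatibility does hold---your proposed ``correction via contractibility of $\mathcal{J}(\theta)$'' is unnecessary and would in any case not establish the lemma as stated.)

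The same issue undermines your approach to Part (3). Since $\psi_{s_0,t_0}$ is not conformally symplectic in the collar, there is no ``effective conformal factor'' to extract level-by-level, and the formula $\tilde H=e^{-c}H\circ\Psi$ does not apply. The correct mechanism is different and more delicate: when $H$ depends only on $\mathfrak{r}$, its Hamiltonian vector field is a multiple of the Reeb field $\mathcal{R}$; because $d\psi_{s_0,t_0}$ restricts to the identity on $TV$ (hence on $\mathcal{R}$), the conjugated vector field is again a Reeb-directional field of the form $h'(e^{r'(r)})\mathcal{R}$, and one checks directly that $\iota_{(\cdot)}d\theta$ of such a field is $-d\tilde H$ for some $\tilde H$ depending only on $\mathfrak{r}$. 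The function $f_{s_0,t_0}$ is then recovered from $\tilde H$ via the ODE recorded in the paper's remark; note that this derivation does \emph{not} proceed through any conformal factor.
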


\begin{rem}
The function $f_{s_0,t_0} \colon W \to \R_{>0}$ in Part (3) of the previous lemma can be determined as follows. First note that $f_{1,t} \equiv e^{-t}$, while for general $s \in [0,1]$ the equality $f_{s,t} \equiv e^{-t}$ still holds outside of a compact subset. Inside $W^e_{R-1+s_0t_0}$ the simple ordinary differential equation
$$e^{-\mathfrak{r}}\partial_{\mathfrak{r}}((f_{s_0,t_0}(\mathfrak{r}) \cdot H)\circ \psi_{s_0,t_0})=(e^{-\mathfrak{r}}\partial_{\mathfrak{r}}H(\mathfrak{r}))\circ \psi_{s_0,t_0}$$
then determines $f_{s,t}.$
\end{rem}

By the properties described in Lemma \ref{lem:almostconformal}, it follows that we can use $\psi_{s_0,t}$ for any fixed $s_0$ instead of the Liouville flow $\psi_t$ in Abouzaid's construction of the wrapped Fukaya category \cite{OnWrapped}. We illustrate this in the case of the product $\mu^2$. One first defines a morphism
\begin{align}\label{we're almost done}
  & \mathrm{CF}(L_1,L_2;H,J_t) \otimes \mathrm{CF}(L_0,L_1;H,J_t) \to \\
  \nonumber \mathrm{CF} & \left((\psi_{s_0,\log 2})^{-1}(L_0), (\psi_{s_0,\log 2})^{-1}(L_2);(f_{s_0,\log 2}\cdot H)\circ\psi_{s_0,\log 2},\psi_{s_0,\log 2}^*J_t \right)
\end{align}
for any fixed $s_0 \in [0,1]$ that is defined by a count of three-punctured discs with a suitable moving boundary condition, and where
$$f_{s_0,\log 2} \colon W \to \R_{>0}$$
is the function from Part (2) of Lemma \ref{lem:almostconformal}. (In particular, $$f_{s_0,\log 2}\equiv e^{-\log 2}$$ holds outside of a compact subset.)
\begin{rem}
The fact that $(f_{s_0,\log w}(\mathfrak{r})\cdot H)\circ \psi_{s_0,\log w}$ coincides with $\frac{1}{w}e^{2(t+\log w)}=we^{2t}$ outside of a compact subset is crucial for the maximum principle (and thus compactness properties) of the Floer curves involved in the definition of the morphism of Equation \eqref{we're almost done}.
\end{rem}
It is immediate that an analogous version of \cite[Lemma 3.4]{OnWrapped} now also holds in the present setting, giving rise to a canonical identification between the Floer complexes
$$\mathrm{CF}\left((\psi_{s_0,\log 2})^{-1}(L_0),(\psi_{s_0,\log 2})^{-1}(L_2);(f_{s_0,\log 2}\cdot H)\circ\psi_{s_0,\log 2},\psi_{s_0,\log 2}^*J_t \right)$$
and $$\mathrm{CF}(L_1,L_2;H,J_t).$$
In this manner we obtain the operation $\mu^2.$ The general case follows similarly, by an adaptation of the construction \cite{OnWrapped} based upon $\psi_{s_0,t}.$

Finally, to compare the $A_\infty$ structures defined by different values of the paramter $s_0 \in [0,1]$, one can use \cite{Seidel_Fukaya}. Here Part (1) of Lemma \ref{lem:almostconformal} is crucial. Also, note that the family $(f_{s_0,\log w}\cdot H)\circ\psi_{s_0,\log w}$ of Hamiltonians is independent of the paramter $s_0$ outside of a compact subset for any fixed value of $w$.

Finally, since the intersection points of type $a, b,c$ of Lemma \ref{wrapping the discs} belong to the region where we have turned off the rescaling by the Liouville flow, the construction of the augmentation in Lemma \ref{existence of the augmentation} remains unchanged. From this point, the proof of Theorem \ref{main} in the therefore quadratic setup procedes as in the linear setup.

\subsection{Proof of Theorem \ref{thm: HH and SH}}
The proof of Theorem \ref{thm: HH and SH} is based on Theorem \ref{main} and the following trivial observation. If $(W, \theta, \mathfrak{f})$ is a Weinstein manifold and $\pi_i \colon W \times W \to W$, for $i=1,2$, are the projections to the factors, we consider the Weinstein manifold $(W \times W,
\pi_1^* \theta - \pi_2^* \theta, \mathfrak{f} \circ \pi_1 + \mathfrak{f} \circ \pi_2)$.
Note that the sign in the Liouville form has been chosen so that the diagonal $\Delta \subset W \times W$ is an exact Lagrangian submanifold with cylindrical end. The following lemma is a direct consequence of the definition.
\begin{lemma}
\label{lma:product}
The Weinstein manifold $(W \times W, \pi_1^* \theta - \pi_2^* \theta, \mathfrak{f} \circ \pi_1 + \mathfrak{f} \circ \pi_2)$ has a Weinstein handle decomposition for which the Lagrangian cocores are precisely the products of the cocores $D_i \times D_j$, where $D_i$ denotes a Lagrangian cocore in the Weinstein decomposition of $(W,\theta,\mathfrak{f}).$
\end{lemma}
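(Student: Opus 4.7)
The plan is to verify that the given data $(W \times W, \pi_1^* \theta - \pi_2^* \theta, \mathfrak{f} \circ \pi_1 + \mathfrak{f} \circ \pi_2)$ really constitutes a Weinstein manifold (in the sense of Definition \ref{dfn:weinstein}) and then to compute its critical points, indices, and unstable manifolds explicitly. First I would identify the Liouville vector field. A short computation with $\iota_{\mathcal{L}_{\text{prod}}}(\pi_1^*d\theta - \pi_2^*d\theta) = \pi_1^*\theta - \pi_2^*\theta$ shows that $\mathcal{L}_{\text{prod}} = \widetilde{\mathcal{L}}_1 + \widetilde{\mathcal{L}}_2$, where $\widetilde{\mathcal{L}}_i$ is the lift of the Liouville vector field $\mathcal{L}$ of $(W,\theta)$ along the $i$-th projection; the sign in the Liouville form is precisely what is needed for the second summand to come out with a plus rather than minus.

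Next I would equip $W$ with a Riemannian metric satisfying Condition (iii) of Definition \ref{dfn:weinstein} for $(W,\theta,\mathfrak{f})$ and take the product metric on $W \times W$. Since $d(\mathfrak{f}\circ\pi_1 + \mathfrak{f}\circ\pi_2)(\mathcal{L}_{\text{prod}}) = d\mathfrak{f}(\mathcal{L})\circ\pi_1 + d\mathfrak{f}(\mathcal{L})\circ\pi_2$ and similarly for the norms, the Lyapunov inequality for $(W,\theta,\mathfrak{f})$ implies the analogous inequality on the product, with the same constant $\delta$. Properness and boundedness below of $\mathfrak{f}\circ\pi_1 + \mathfrak{f}\circ\pi_2$ are automatic; completeness of $\mathcal{L}_{\text{prod}}$ follows from completeness of $\mathcal{L}$ in each factor.

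Having established that we have a Weinstein manifold, I would analyse its critical structure. The critical points of $\mathfrak{f}\circ\pi_1 + \mathfrak{f}\circ\pi_2$ are exactly pairs $(p,q)$ with $p,q \in \mathrm{Crit}(\mathfrak{f})$; the Hessian splits as a block diagonal of the two Hessians of $\mathfrak{f}$, so the Morse index satisfies $\mathrm{ind}(p,q) = \mathrm{ind}(p) + \mathrm{ind}(q)$. Since $\dim W \times W = 4n$, the index-$2n$ critical points of the product are precisely those pairs $(p,q)$ with $\mathrm{ind}(p) + \mathrm{ind}(q) = 2n$; because $\mathrm{ind}(p),\mathrm{ind}(q) \le n$, this forces $\mathrm{ind}(p) = \mathrm{ind}(q) = n$. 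Finally, since $\mathcal{L}_{\text{prod}}$ splits as a sum of vector fields tangent to the two factors, its flow is the product of the flows on the factors, so the unstable manifold of $(p,q)$ is $D_p \times D_q$. This establishes the bijection claimed in the lemma.

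The only non-routine point is to confirm that after possibly modifying $\pi_1^*\theta - \pi_2^*\theta$ by an exact form (as is anyway permitted in the construction preceding Lemma \ref{lma:product}, and analogous to the adjustment to $\theta_c$ in Section \ref{ss: preparation}), the product Weinstein handle decomposition is compatible with the standard Weinstein handle structure in the sense of Section \ref{ss: Weinstein manifolds}. This is however immediate from the splitting of both the Liouville vector field and the Lyapunov function, together with the fact that a product of standard critical Weinstein handles of index $n$ in $W$ is a standard critical Weinstein handle of index $2n$ in $W \times W$ up to adding a primitive of a closed form, so no genuine obstacle arises here.
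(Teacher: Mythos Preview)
Your proposal is correct and is exactly the routine verification the paper has in mind: the lemma is stated there as a ``trivial observation'' and a ``direct consequence of the definition,'' with no proof given. Your argument---splitting the Liouville vector field, checking the Lyapunov inequality in the product metric, and reading off the index-additivity to conclude that the top-index critical points are pairs of top-index critical points with product unstable manifolds---is precisely what that phrase encodes.

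One small remark: your aside that ``the sign in the Liouville form is precisely what is needed for the second summand to come out with a plus rather than minus'' is not quite the point. The Liouville vector field of $\pi_1^*\theta \pm \pi_2^*\theta$ is $\widetilde{\mathcal{L}}_1 + \widetilde{\mathcal{L}}_2$ in either case, since the sign appears in both $\Theta$ and $d\Theta$ and cancels in the defining equation $\iota_{\mathcal{L}}d\Theta = \Theta$. The minus sign is chosen (as the paper notes just before the lemma) so that the diagonal $\Delta$ is Lagrangian, not to fix the Liouville flow. This does not affect your argument.
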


 Let $\mathcal{W}^2$ be the version of the wrapped Fukaya category for the product Liouville manifold $(W \times W, \pi_1^*\theta - \pi_2^*\theta)$ defined in \cite{Ganatra},  where the wrapping is performed by a split Hamiltonian. If $\mathcal{B}$ is a full subcategory of $\mathcal{WF}(W,\theta)$, we denote by $\mathcal{B}^2$ the full subcategory of $\mathcal{W}^2$ whose objects are products of objects in $\mathcal{B}$. Then \cite[Proposition 14.1]{Ganatra} implies the following.
\begin{prop}\label{prop:diaggenerate}
  If $\Delta$ is generated by $\mathcal{B}^2$ in $\mathcal{W}^2$, then the map
\begin{equation}\label{open-closed map}
[\mathcal{OC}] \colon HH_{n-*}(\mathcal{B},\mathcal{B}) \to SH^*(W)
\end{equation}
has the unit in the symplectic cohomology in its image.
\end{prop}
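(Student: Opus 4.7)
The plan is to reduce the statement to \cite[Proposition 14.1]{Ganatra}, whose conclusion is essentially the same assertion phrased in Ganatra's algebraic framework, together with standard facts about external product functors for wrapped Fukaya categories.

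The first step is to set up the $A_\infty$-Künneth functor
\[
\kappa \colon \mathcal{B} \otimes \mathcal{B}^{op} \longrightarrow \mathcal{W}^2, \qquad (L_0, L_1) \longmapsto L_0 \times L_1,
\]
built from the split Hamiltonian perturbations used to define $\mathcal{W}^2$, and to identify (after Yoneda) the diagonal object $\Delta \in \mathcal{W}^2$ with the diagonal $\mathcal{WF}(W)$-bimodule restricted to $\mathcal{B}$. Under this dictionary, the geometric hypothesis that $\Delta$ is generated by the objects of $\mathcal{B}^2$ in $\mathcal{W}^2$ translates into the purely algebraic hypothesis that the diagonal bimodule of $\mathcal{B}$ is generated, inside the category of $\mathcal{B}$-bimodules, by the Yoneda bimodules of objects of $\mathcal{B}$. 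This is exactly the input needed by Ganatra.

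Second, I would invoke Ganatra's two-pointed open-closed map
\[
\mathcal{2OC} \colon CC_*(\mathcal{B},\mathcal{B}) \longrightarrow CF^*(\Delta,\Delta),
\]
together with the PSS-type isomorphism $HF^*(\Delta,\Delta) \cong SH^*(W)$ under which $\mathrm{id}_\Delta$ is sent to the unit $1 \in SH^*(W)$. The key content of \cite[Proposition 14.1]{Ganatra} is that, under the bimodule generation hypothesis established in step one, the identity element $\mathrm{id}_\Delta$ lies in the image of $[\mathcal{2OC}]$, and moreover the resulting commutative diagram with the ordinary open-closed map $\mathcal{OC}$ from $HH_*(\mathcal{B},\mathcal{B})$ exhibits the unit $1 \in SH^*(W)$ as lying in the image of $[\mathcal{OC}]$. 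Pulling this together concludes the proof.

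The step I expect to require most care is not any of the above in isolation, but the bookkeeping comparison between the quadratic setup used to define $\mathcal{W}^2$ here (via a split Hamiltonian, as in Section \ref{sec:quadratic}) and the conventions of \cite{Ganatra}: one needs to check that $\kappa$, the two-pointed open-closed map, and the PSS identification all behave compatibly under the identification of the two versions of the product category. The relevant comparisons have been carried out in \cite{Ganatra} and \cite{Gao-product}, so once they are in place the deduction of Proposition \ref{prop:diaggenerate} from \cite[Proposition 14.1]{Ganatra} is essentially formal.
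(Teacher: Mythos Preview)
The paper does not supply a proof of this proposition at all: it simply states, in the sentence preceding the proposition, that \cite[Proposition 14.1]{Ganatra} implies the result. Your proposal correctly identifies this as the sole input and then attempts to unpack what the citation entails. In that sense your approach and the paper's are the same; you are just being more explicit about the content of Ganatra's argument.

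A few remarks on your unpacking. The outline you give---Künneth-type functor, identification of $\Delta$ with the diagonal bimodule, Ganatra's two-pointed open-closed map $_2\mathcal{OC}$ landing in $CF^*(\Delta,\Delta)\cong SH^*(W)$---is a fair summary of the machinery in \cite{Ganatra}, and it is reasonable to present it this way. However, note that since the hypothesis of the proposition is already formulated in terms of $\mathcal{W}^2$ (Ganatra's own split-Hamiltonian product category), you do not actually need \cite{Gao-product} here; Gao's comparison is used later in the proof of Theorem~\ref{thm: HH and SH} to pass from generation in $\mathcal{WF}(W\times W)$ to generation in $\mathcal{W}^2$. So your final paragraph about bookkeeping between setups, while not wrong, is somewhat misplaced: the comparison with the quadratic model on $W$ itself is handled in Section~\ref{sec:quadratic}, and the product-side comparison is a separate step outside this proposition.
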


\begin{proof}[Proof of Theorem \ref{thm: HH and SH}]
Let $(W, \theta, \mathfrak{f})$ be a Weinstein manifold and let ${\mathcal D}$ be the colection of the Lagrangian cocore discs of $W$. Then, by Lemma \ref{lma:product} and Theorem \ref{main} the collection ${\mathcal D}^2$ of products of cocore discs of $W$ generates ${\mathcal WF}(W \times W, \pi_1^*\theta - \pi_2^*\theta)$ and so, in particular,
generates the diagonal $\Delta$. By \cite[Theorem 1.1]{Gao-product} the category ${\mathcal WF}(W \times W, \pi_1^*\theta - \pi_2^*\theta)$ is equivalent to the category ${\mathcal W}^2$ defined above, and therefore the collection ${\mathcal D}^2$ generates the diagonal also in ${\mathcal W}^2$. Thus Proposition\ref{prop:diaggenerate} implies that the image of the open-closed map \eqref{open-closed map} contains the unit and therefore Theorem \ref{thm: HH and SH} follows from \cite[Theorem 1.1]{Ganatra}.
\end{proof}
\bibliographystyle{plain}
\bibliography{Bibliographie_en}
\end{document}